\newtheorem{theorem}{Theorem}[section]
\newtheorem*{theorem*}{Theorem}
\newtheorem{cor}[theorem]{Corollary}
\newtheorem{lemma}[theorem]{Lemma}
\newtheorem{prop}[theorem]{Proposition}
\theoremstyle{definition}
\newtheorem{define}[theorem]{Definition}
\newtheorem{remark}[theorem]{Remark}
\DeclareFontFamily{OT1}{pzc}{}
\DeclareFontShape{OT1}{pzc}{m}{it}{<-> s * [1.1] pzcmi7t}{}
\DeclareMathAlphabet{\mathpzc}{OT1}{pzc}{m}{it}
\newcommand{\ch}{\mathrm{ch}}
\newcommand{\ind}{\mathrm{ind}}
\newcommand{\field}[1]{\mathbb{#1}}
\newcommand{\C}{\mathbbm{C}}
\newcommand{\Z}{\mathbbm{Z}}
\begin{document}
\title[Realizing the analytic surgery group geometrically: Part III]{Realizing the analytic surgery group of Higson and Roe geometrically\\
Part III: Higher invariants}

\author{Robin J. Deeley, Magnus Goffeng}

\date{\today}

\begin{abstract}
We construct an isomorphism between the geometric model and Higson-Roe's analytic surgery group, reconciling the constructions in the previous papers in the series on ``\emph{Realizing the analytic surgery group of Higson and Roe geometrically}" with their analytic counterparts. Following work of Lott and Wahl, we construct a Chern character on the geometric model for the surgery group; it is a ``delocalized Chern character", from which Lott's higher delocalized $\rho$-invariants can be retrieved. Following work of Piazza and Schick, we construct a geometric map from Stolz' positive scalar curvature sequence to the geometric model of Higson-Roe's analytic surgery exact sequence. 
\end{abstract}

\maketitle

\section*{Introduction}
This paper is the last in a series of three whose topic is the construction of a geometric (i.e., Baum-Douglas type) realization of the analytic surgery group of Higson and Roe \cite{HRSur1, HRSur2, HRSur3}. The first paper in the series \cite{paperI} dealt with the geometric cycles, the associated six term exact sequence, and the geometrically defined secondary invariants. In the second paper \cite{paperII}, the relationship between these geometrically defined secondary invariants and analytically defined secondary invariants was developed$-$mainly focusing on the relative $\eta$-invariant. In particular, an alternative proof of a theorem of Keswani \cite[Theorem 1.2]{Kes} was developed. The current paper concerns itself with higher invariants: $K$-theoretic invariants giving the isomorphism with Higson-Roe's original analytic model as well as Chern characters in non-commutative de Rham homology. 

Before giving the motivation for this paper and its role in the series of papers, we state the two main theorems of the current paper. 

\begin{theorem*}
For a discrete finitely generated group $\Gamma$ and a compact metric space $X$ with a map $X\to B\Gamma$, there is an isomorphism:
\begin{align*}
\lambda^\mathcal{S}_{an}:\mathcal{S}_*^{geo}(X,\mathcal{L}_X)&\longrightarrow^{\!\!\!\!\!\!\!\!\!\cong}\;\; K_*(D^*(\tilde{X})^\Gamma),\\
\lambda^\mathcal{S}_{an}:(W,\xi,f)&\mapsto j_X\ind_{APS}(W,\Xi,f)-\rho(W,\Xi,f).
\end{align*}
Moreover, $\lambda^\mathcal{S}_{an}$ is compatible with the analytic surgery exact sequence.
\end{theorem*}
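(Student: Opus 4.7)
\medskip

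\noindent\textbf{Proof proposal.} The plan is to establish the isomorphism via a five-lemma argument, comparing the geometric surgery exact sequence constructed in \cite{paperI} with the Higson-Roe analytic surgery exact sequence, and using that the corresponding map on $K$-homology and on the assembly target is already known to be an isomorphism. The construction of $\lambda^{\mathcal{S}}_{an}$ on cycles is dictated by the formula in the statement: given $(W,\xi,f)$, one lifts the bundle data $\xi$ to a $C^*_r\Gamma$-equivariant structure $\Xi$ on the $\Gamma$-cover $\tilde W$ associated to $f$, forms the APS index $\ind_{APS}(W,\Xi,f)\in K_*(C^*_r\Gamma)$, pushes it to $K_*(D^*_\Gamma(\tilde X))$ via $j_X$, and subtracts the higher $\rho$-class $\rho(W,\Xi,f)\in K_*(D^*_\Gamma(\tilde X))$ constructed from the invertible Dirac-type operator on the cover.

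\medskip

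\noindent First, I would verify that $\lambda^{\mathcal{S}}_{an}$ is well-defined on equivalence classes of geometric cycles. The map is manifestly additive under disjoint union and respects the opposite cycle relation because both $\ind_{APS}$ and $\rho$ are odd under orientation reversal. The nontrivial steps are bordism invariance and vector bundle modification. For bordism, if $(W,\xi,f)$ bounds $(Z,\Xi_Z,F)$ with a positive scalar curvature (or more generally invertible) structure, one applies the relative index theorem of Atiyah-Patodi-Singer in its equivariant $C^*$-algebraic form: the APS index of $W$ equals the image under $j_X$-compatible boundary of a class coming from the interior, while $\rho(W)$ is the image of the same interior class under the boundary map $K_*(C^*_r\Gamma)\to K_*(D^*_\Gamma\tilde X)$ (up to the shift encoded by $j_X$); the two contributions therefore cancel. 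For vector bundle modification, one uses the multiplicativity of the index and the $\rho$-invariant under spherical fibrations together with the fact that the Bott element on $S^{2n}$ produces a trivial equivariant contribution on the cover.

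\medskip

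\noindent Second, I would check that $\lambda^{\mathcal{S}}_{an}$ fits into a commutative ladder between the two six-term exact sequences. Concretely, one needs the diagram
\[
\xymatrix{
K_*^{geo}(X)\ar[r]\ar[d]^{\mu}& K_*^{geo,\Gamma}(X,\mathcal{L}_X)\ar[r]\ar[d]^{\mu_{\Gamma}}& \mathcal{S}_*^{geo}(X,\mathcal{L}_X)\ar[r]\ar[d]^{\lambda^{\mathcal{S}}_{an}}& K_{*-1}^{geo}(X)\ar[d]^{\mu}\\
K_*(C^*(\tilde X)^\Gamma)\ar[r]& K_*(C^*_r\Gamma)\ar[r]& K_*(D^*_\Gamma(\tilde X))\ar[r]& K_{*-1}(C^*(\tilde X)^\Gamma)
}
\]
to commute. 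The commutativity of the left two squares is the content (and main technical result) of \cite{paperII}, where the geometric assembly map is identified with the analytic one and the APS index is interpreted as the boundary of a class in the geometric model. The commutativity involving the boundary map into $K_{*-1}^{geo}(X)$ is proved by inspecting cycles: the geometric boundary strips off the bundle data and returns $(\partial W, \partial\xi, f|_{\partial W})$ essentially, while the analytic boundary map $K_*(D^*_\Gamma\tilde X)\to K_{*-1}(C^*(\tilde X)^\Gamma)$ applied to $j_X\ind_{APS}-\rho$ produces the coarse index class of the boundary Dirac operator, and these agree under the isomorphism of \cite{paperII}.

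\medskip

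\noindent With commutativity in hand, the five-lemma yields the isomorphism, since the outer vertical arrows $\mu$ and $\mu_\Gamma$ are isomorphisms by the results of the earlier papers identifying the geometric $K$-homology with analytic $K$-homology and the geometric assembly with analytic assembly. The compatibility of $\lambda^{\mathcal{S}}_{an}$ with the surgery exact sequence is built in by construction of the ladder. The hard part is the bordism invariance: one has to package the relative APS index theorem with boundary so that the pairing with the reference map $f:W\to B\Gamma$ is honored, and keep track of the fact that $\ind_{APS}$ takes values in $K_*(C^*_r\Gamma)$ while $\rho$ takes values in $K_*(D^*_\Gamma\tilde X)$; both must be treated as images of a common interior class under the two long exact sequence boundary maps. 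This is precisely where the machinery developed in \cite{paperII} for the relative $\eta$-invariant is essential, because it provides the cycle-level identification that makes the cancellation manifest.
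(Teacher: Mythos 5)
The overall strategy you propose (define $\lambda^{\mathcal{S}}_{an}$ on cycles by the stated formula, verify well-definedness under the geometric relations, show the ladder commutes, and conclude by the five lemma) is indeed the paper's strategy. However, two genuinely nontrivial steps are glossed over, and your proposal as it stands does not contain the ideas needed to close them.

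First, the expression $j_X\ind_{APS}(W,\Xi,f)-\rho(W,\Xi,f)$ only makes sense after one fixes a decoration $\Xi$ of $\xi$, i.e.\ a choice of Dirac operators on $\mathcal{E}_{C^*_r(\Gamma)}$, $\mathcal{E}'_{C^*_r(\Gamma)}$, $E_{\C}$, $E'_{\C}$ together with a compatible system of trivializing smoothing perturbations. Each summand $j_X\ind_{APS}$ and $\rho$ individually depends on this choice, and one must prove that their difference does not. This requires a spectral-flow bookkeeping argument: the variation of $\ind_{APS}$ under a change of decoration is the Leichtnam--Piazza noncommutative spectral flow (their ``change of data'' theorem), the variation of $\rho$ is the image of a spectral flow under $j_X$ by a separate lemma relating $j_M$ of spectral flow to a difference of $\rho$-classes, and these two variations must be shown to cancel using the cylinder term in the decoration. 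Your proposal does not acknowledge that the class must be shown independent of the decoration at all, and does not supply the cancellation.

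Second, your treatment of vector bundle modification is too thin. That $\ind_{APS}$ is preserved under modification reduces to a positivity estimate on the sphere bundle, which you correctly intuit. But the assertion that ``the Bott element on $S^{2n}$ produces a trivial equivariant contribution on the cover'' does not by itself explain why the extra summand in the $\rho$-class lands in the zero class of $K_*(D^*_\Gamma(\tilde{X}))$. Concretely, after decomposing the modified operator one is left with a projection $\chi_{[0,\infty)}(D_{\mathpzc{E}_{L^2}})$ arising from the complementary (gapped) part of the sphere-bundle Dirac operator, and one must prove that this projection represents $0$ in $K_0(D^*(\tilde M)^\Gamma)$. The paper does this by constructing a $\Gamma$-invariant pseudo-local involution $J$ anti-commuting with the complementary operator and commuting with $C_0(\tilde M)$, so that the projection is homotopic within $D^*$ to $(1\pm J)/2$, which is then killed by a swindle. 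This is a genuine extra idea, not a consequence of ``multiplicativity of $\rho$ under spherical fibrations.''

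A smaller point: in your bordism step you characterize the bounding cycle as having ``a positive scalar curvature (or more generally invertible) structure,'' but the geometric bordism relation in $\mathcal{S}_*^{geo}(X,\mathcal{L}_X)$ is formulated purely in terms of bundle data extending over a null-bordism, with no curvature hypothesis; the relevant analytic input is the delocalized APS theorem of Piazza--Schick ($j_W\ind_{APS}(D^W_{F\otimes\mathcal{L}_W},A)=\iota_*\rho(D^{\partial W}_{F|_{\partial W}},A)$), combined with a gluing theorem for APS indices, rather than a Lichnerowicz-type argument. Your commutative ladder is also off: the $K$-homology entries in the analytic surgery sequence are $K_*(D^*/C^*)$, not $K_*(C^*(\tilde X)^\Gamma)$, which is naturally isomorphic to $K_*(C^*_r\Gamma)$; as written your diagram identifies two different terms of the six-term sequence. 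These are repairable, but together with the two substantive gaps above they mean the proposal is not yet a proof.
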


Here $K_*(D^*(\tilde{X})^\Gamma)$ is Higson-Roe's analytic surgery group \cite{HRSur1, HRSur2, HRSur3}, explained further in Subsection \ref{subSecCoarseGeo}, and $j_X$ the natural mapping $K_*(C^*_r(\Gamma))\to K_*(D^*(\tilde{X})^\Gamma)$. The triple $(W,\xi,f)$ is a geometric surgery cycle for the Mishchenko bundle $\mathcal{L}_X\to X$ from \cite{paperI} and  $(W,\Xi,f)$ denotes the cycle $(W, \xi, f)$ with further analytic data fixed. A short review of geometric cycles for surgery and the construction of $\lambda$ can be found in Subsection \ref{secttwoone}. The above theorem appears as Theorem \ref{themaplambda} in the main body of the paper; it is the main result of Section \ref{sectioniso}.

\begin{theorem*}
For a discrete finitely generated group $\Gamma$ and a dense Fr\'echet subalgebra $\mathcal{A}\subseteq C^*_r(\Gamma)$, there is a Chern character from the geometric surgery cycles for $\mathcal{A}$ to the delocalized de Rham homology groups of $\mathcal{A}$ 
$$\ch^{del}:\mathcal{S}^{geo}_*(X;\mathcal{A})\to \hat{H}_*^{del,<e>}(\mathcal{A}).$$
The delocalized Chern character and the Chern character of $\mathcal{A}$ fits into a commuting diagram with exact rows 
\tiny
\[
\begin{CD}
\ldots@>>>K^{geo}_{*}(X)@>\mu>> K_*^{geo}(pt;\mathcal{A}) @>r >> \mathcal{S}^{geo}_*(X;\mathcal{A})  @> \delta >> K^{geo}_{*-1}(X)@>>>\ldots  \\
@.@V\ch_X^{<e>}VV@V\ch_{\mathcal{A}(\Gamma)} VV @V\mathrm{ch}^{del}VV @V\ch_{X}^{<e>}VV  \\
\ldots@>>>\hat{H}_{*}^{<e>}(\mathcal{A}(\Gamma))@>\mu_{dR}>> \hat{H}^{dR}_*(\mathcal{A}) @>r_{dR}>> \hat{H}_*^{del,<e>}(\mathcal{A}) @>\delta_{dR}>> \hat{H}_{*-1}^{<e>}(\mathcal{A})@>>>\ldots  \\
\end{CD}.
\]
\end{theorem*}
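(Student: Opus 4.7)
The plan is to construct the delocalized Chern character directly on cycles following Lott and Wahl, then verify it descends to equivalence classes of the geometric model, and finally establish the commutativity of the three squares in the diagram.

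\emph{Construction on cycles.} Given a surgery cycle $(W,\xi,f)$ where $W$ is a spin$^c$ manifold with boundary, $\xi$ a bundle of finitely generated projective $\mathcal{A}$-modules, and $f$ a trivialization of $\xi|_{\partial W}$, I would set
$$\ch^{del}(W,\xi,f) := \biggl[\int_W \hat{A}(TW)\wedge \ch(\xi)\biggr] + \eta^{del}(\partial W,\xi|_{\partial W},f),$$
where the interior integral is a Chern--Weil representative built from a connection compatible with $f$, and $\eta^{del}$ is the delocalized $\eta$-form in the sense of Lott--Wahl. The trivialization $f$ forces the identity-localized component of the interior form to extend across the boundary, so passing to $\hat{H}^{del,<e>}_*(\mathcal{A})$ kills the ambiguous boundary contribution and leaves a genuine delocalized class.

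\emph{Invariance under the equivalence relations.} Additivity for disjoint unions is immediate. For bordism, if $(V,\zeta,g)$ is a cycle whose boundary is $(W_0,\xi_0,f_0)\sqcup-(W_1,\xi_1,f_1)$, the local index theorem produces an exact Chern--Weil form on the interior and the Bismut--Cheeger variational formula controls the $\eta$-piece; their combination expresses $\ch^{del}(W_0,\xi_0,f_0)-\ch^{del}(W_1,\xi_1,f_1)$ as exact in delocalized de Rham homology. Vector bundle modification is handled by the multiplicativity of the $\hat{A}$-class and the Thom isomorphism at the level of differential forms, exactly as in the classical Baum--Douglas framework but carried out on cycles rather than on $K$-homology classes.

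\emph{Commutativity of the diagram.} The square involving $\mu$ and $\mu_{dR}$ follows from Lott's higher index theorem, since $\mu$ is realized by pairing a closed $K$-cycle with the Mishchenko bundle $\mathcal{L}_X$, and the corresponding de Rham map is given by cupping with $\ch(\mathcal{L}_X)^{<e>}$. The square for $r$ and $r_{dR}$ is essentially tautological: a closed cycle has no boundary and hence no $\eta$-contribution, so its delocalized Chern character is precisely the image in the quotient of the ordinary Chern character. The most delicate square is the boundary one: $\delta$ sends a surgery cycle to $\partial W$ with restricted bundle data, while $\delta_{dR}$ is the connecting homomorphism of the bottom exact sequence. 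I would prove commutativity by representing $\ch^{del}(W,\xi,f)$ by an explicit relative de Rham cochain whose Stokes-type boundary coincides with $\ch_X^{<e>}(\partial W,\xi|_{\partial W})$, using the transgression structure inherent in the $\eta$-form.

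\emph{Main obstacle.} The hard part will be bordism invariance in a form that is simultaneously compatible with the six-term exact sequence. The delocalized $\eta$-form depends on spectral data and its variation on manifolds with corners must be controlled uniformly in the Fr\'echet topology of $\mathcal{A}$. Once a Bismut--Cheeger style variation formula is established for $\eta^{del}$ in this setting, identifying the boundary transgression with $\delta_{dR}$ becomes a bookkeeping exercise, and the remaining squares follow from standard higher index theory.
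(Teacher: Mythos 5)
Your proposal takes a genuinely different route from the paper, and it runs into concrete difficulties that the paper's construction is specifically designed to avoid. The paper does \emph{not} use a delocalized $\eta$-form to build $\mathrm{ch}^{del}$; indeed it explicitly notes (in the Introduction, citing Schick) that ``at present there is no analytic definition of the delocalized Chern character." Instead, the paper constructs $\mathrm{ch}^{del}$ \emph{purely geometrically}: the target $\hat{H}^{del,<e>}_*(\mathcal{A})$ is the homology of a mapping-cone complex $\hat{\Omega}^{ab}_*(\tilde{\mathcal{A}})\oplus\hat{\Omega}^{<e>}_{*-1}(\mathcal{A})$, and a surgery cycle with a choice of connection $\nabla_\xi$ is sent to a \emph{pair}: the first component is $\int_W \mathrm{Ch}^{int}\wedge Td$ plus a Chern--Simons transgression term $\int_{\partial W}\mathrm{CS}\wedge Td$, and the second component is the boundary Chern form $\int_{\partial W}\mathrm{Ch}^{\partial}_{<e>}\wedge Td$. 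The Chern--Simons transgression plays the structural role you assign to $\eta^{del}$, but it is a finite-dimensional Chern--Weil object built from the isomorphism $\alpha$ in the relative cocycle $\xi=(\mathcal{E},\mathcal{E}',E_\C,E'_\C,\alpha)$, not a spectral invariant. Bordism invariance in the paper is then a Stokes-theorem computation in the mapping cone (Lemma on $\mathrm{d}_{\mathcal{A}}$ of the interior/CS sum, followed by the bordism lemma), and vector bundle modification invariance is reduced to a single form-level identity due to Karoubi; no Bismut--Cheeger variational formula or spectral estimates in the Fr\'echet topology are needed.

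The concrete gaps in your approach are the following. First, a surgery cycle $(W,\xi,f)$ does \emph{not} come equipped with an invertible boundary Dirac operator, so the delocalized $\eta$-form $\eta^{del}(\partial W,\xi|_{\partial W},f)$ is not defined from the cycle data alone; you would need a decoration by trivializing smoothing operators, and you would then have to show that $\mathrm{ch}^{del}$ is independent of that choice --- an analytic argument (analogous to the paper's Lemma on independence of decoration for $\lambda^{\mathcal{S}}_{an}$) that your sketch omits. Second, even granting a decoration, the Lott--Wahl higher $\eta$-form only converges and is controllable under additional assumptions on $\Gamma$ (property $(RD)$, use of the Schwartz algebra); the paper's construction works for any dense Fr\'echet subalgebra $\mathcal{A}\subseteq C^*_r(\Gamma)$ and treats the $\eta$-form relationship only as a \emph{consequence} (via Wahl's higher APS theorem) in the restricted $(RD)$ case. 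Third, your cycle description ``$f$ a trivialization of $\xi|_{\partial W}$" misreads the structure: $f:\partial W\to X$ is a continuous map and $\xi$ is a relative assembly cocycle with two $\mathcal{A}$-bundles over $W$, two vector bundles over $\partial W$, and a gluing isomorphism $\alpha$; the Chern--Simons term is built from $\alpha$, so this structure is essential. Finally, the mapping-cone target means the Chern character must be a genuinely two-component object whose two pieces satisfy a compatibility $\mathrm{d}_{\mathcal{A}}\omega=j_{<e>}\theta$; your ``interior integral plus $\eta^{del}$" produces only one component, and the diagram commutativity with $\delta_{dR}$ --- which is the projection onto the second component of the mapping cone --- cannot be obtained from it directly.
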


The geometric model for surgery $\mathcal{S}^{geo}_*(X;\mathcal{A})$ with coefficients in a Fr\'echet  algebra is constructed as in \cite{paperI}. The delocalized de Rham homology is constructed by means of a mapping cone complex, see Subsection \ref{leshom}. The construction of the delocalized Chern character is carried out in Subsection \ref{subsescons}; the previous theorem appears as Theorem \ref{delchethe} in the main body of the paper.\\

As mentioned in the previous papers in this series, we have followed the general approach to index theory put forward by Paul Baum (see for example \cite[Part 5]{BD} or \cite[Introduction]{BE}). We begin with a particular aspect of Baum's approach as motivation for our results. In fact, our results are very much analogous to the situation we discuss below.

Baum's approach begins with the fundamental observation that the $K$-homology of a finite CW-complex (denoted by $X$) admits realizations by two classes of cycles$-$one geometric and the other analytic. On the geometric side, there is the Baum-Douglas model of $K$-homology, while on the analytic side, there is the Kasparov model via Fredholm modules. A cycle in geometric (i.e., Baum-Douglas) $K$-homology is given by $(M,E,f)$ where $M$ is a compact spin$^c$-manifold, $E\to M$ is a vector bundle, and $f$ is a continuous function from $M$ to $X$. The associated $K$-homology group is defined using these cycles and a geometrically defined equivalence relation; we denote it by $K^{geo}_*(X)$. On the other hand, $K^{ana}_*(X)$ is defined to be the Kasparov group $KK_*(C(X), \field{C})$. Further details on the geometric model can be found in \cite{BD,BD2}; for the analytic model the reader is directed to \cite{HR, Kasp}. 

The relationship between these two types of cycles and the associated groups is made mathematically precise through an explicit isomorphism:
$$\lambda: K^{geo}_*(X) \rightarrow K^{ana}_*(X),$$
which is defined at the level of cycles via
$$ (M,E,f) \mapsto f_*([D_E]).$$
where right hand side is the class $[D_E]\in K_*^{ana}(M)$  of the twisted Dirac operator pushed forward by the map (on analytic $K$-homology) induced from $f$. The direction of the map at the level cycles indicates (somewhat informally) that there are ``less" geometric cycles as compared to analytic cycles. As a result, it is often easier to define maps with domain given by the geometric cycles. 

A prototypical example is the construction of the homological Chern character at the level of geometric cycles (see \cite[pages 141-142]{BD}). For a moment suppose that $X$ is a manifold. We let $H_*^{dR}(X)$ denote its compactly supported de Rham homology. Then, the construction of the Chern character is summarized in the following diagram:
\begin{equation}
\label{cherndefka}
 \begin{CD} K^{geo}_*(X) @>\lambda >> K^{ana}_*(X)  \\
@V{\rm ch}VV @. \\ 
H_*^{dR}(X)
\end{CD}
\end{equation}
It is important to note that there does {\it not} exist a map defined at the level of all Fredholm modules which completes this diagram. On the other hand, suppose that we are given a cycle in analytic $K$-homology, but can find a geometric cycle that maps to its class in analytic $K$-homology. Then, the Chern character of the analytic cycle can be {\it defined} using the diagram \eqref{cherndefka}. Examples of this process are discussed in \cite[Part 5]{BD}. There is also a more analytic approach to the Chern character using finitely summable Fredholm modules (see for example \cite{connesncdg,Connesbook}).

Returning to the subject at hand, the two main theorems of the paper can be summarized in a single diagram (which is analogous to the diagram \eqref{cherndefka}):
\begin{center}
$ \begin{CD} \mathcal{S}^{geo}_*(X,\mathcal{L}_X) @>\lambda_{an}^\mathcal{S}>> K_*(D^*(\tilde{X})^\Gamma)  \\
@V\ch^{del}VV @.  \\
\hat{H}_*^{del,<e>}(\mathcal{A}(\Gamma))
\end{CD}
$
\end{center}
The process discussed in the previous paragraph can be applied in this situation whenever the inclusion $\mathcal{A}(\Gamma)\subseteq C^*_r(\Gamma)$ induces an isomorphism on a suitable $K$-theory group, the precise conditions are discussed in Subsection \ref{smootthsubs}. That is, given a class in the analytic surgery group that has an explicit preimage under the isomorphism $\lambda_{an}^\mathcal{S}$, one can {\it define} the delocalized Chern character using the preimage cycle. It is worth noting that at present there is no analytic definition of the delocalized Chern character, see \cite[Section 10]{SchPSCsur}. Of course, a purely analytic approach to the delocalized Chern character is still highly desirable.\\

The paper is structured as follows. We begin with a number of preliminaries$-$most notably concerning coarse geometry and higher Atiyah-Patodi-Singer index theory. The isomorphism from the geometric realization to the original analytic surgery group is discussed in Section \ref{sectioniso} and the definition of delocalized Chern character is given in Section \ref{sectionDeloc}. The general structure of the proofs in these two sections is similar: the heart of the matter is showing the relevant map is well-defined (i.e., respects the equivalence relation used to define the geometric group). In addition, the proofs are less technical in the case of ``easy cycles" (see \cite[Introduction]{paperII}); the reader may wish to consider this special case on a first read. 

The last two sections of the paper treat geometric applications. In Section \ref{mapstotosu} we map Stolz' positive scalar curvature exact sequence into the six-term exact sequence associated to the geometric group; this construction should be compared with the map from Stolz' sequence to the analytic surgery exact sequence. In particular, Theorem \ref{thmDelocPSC} justifies our use of the term ``delocalized Chern character" for the map constructed in Section \ref{sectionDeloc}. We briefly discuss the potential usage of the geometric realization of the analytic surgery group to the surgery exact sequence in topology in Section \ref{surgsurgrem}.

\section{Preliminaries}

We will recall the relevant notions from coarse geometry and higher index theory in this section. When proving the main results of this paper, the techniques of \cite{PSrhoInd,PSsignInd} will play a major role and therefore we mainly use the notation of those papers. We will indicate whenever our notation strays from the notation of \cite{PSrhoInd,PSsignInd}. For an overview of the notions that we will make use of, see \cite[Section 1]{PSrhoInd}. A more detailed exposition can be found in \cite{Roe}. 

The setup that prevails throughout this section, and the two subsequent, is that of a $\Gamma$-presented space, see \cite[Definition 2.6]{HRSur2}. A $\Gamma$-presented space is a proper geodesic metric space $\tilde{X}$ equipped with a free and proper action of $\Gamma$. We set $X=\tilde{X}/\Gamma$.  We will assume that $X$ is compact and often that it is a finite CW-complex. In the notations of \cite{PSrhoInd,PSsignInd}, $X$ denotes the space with a free and proper $\Gamma$-action.

\subsection{Coarse geometry and the analytic surgery exact sequence}
\label{subSecCoarseGeo}

We choose a Hilbert space $\mathpzc{H}_X$ with a $\Gamma$-equivariant representation of $C_0(\tilde{X})$ that is assumed to be non-degenerate and ample. We call such a Hilbert space a $\Gamma$-equivariant $X$-module. The particular choice of $\Gamma$-equivariant $X$-module will only affect the associated invariants by a natural isomorphism (for more details see \cite{Roe}).

Recall the following terminology for operators on a $\Gamma$-equivariant $X$-module $\mathpzc{H}_X$:
\begin{itemize}
\item An operator $T$ is called \emph{locally compact} if $\phi T, T\phi\in \mathcal{K}(\mathpzc{H}_X)$ for any $\phi\in C_0(\tilde{X})$.
\item An operator $T$ is called \emph{pseudo-local} if $[\phi,T]\in \mathcal{K}(\mathpzc{H}_X)$ for any $\phi\in C_0(\tilde{X})$.
\item An operator $T$ is said to have \emph{finite propagation} if for some $R>0$ it holds that $\phi T\phi'=0$ whenever $\phi,\phi'\in C_0(X)$ satisfies $d(\mathrm{supp}(\phi),\mathrm{supp}(\phi'))>R$. Here $d$ denotes the metric on $X$.
\end{itemize}
We note that the set of locally compact operators as well as the set of pseudo-local operators form $C^*$-subalgebras of the bounded operators on $\mathpzc{H}_X$. The $C^*$-algebra of locally compact operators form an ideal in the $C^*$-algebra of pseudo-local operators.

Associated with a $\Gamma$-presented space and the Hilbert space $\mathpzc{H}_X$, there are a number of $C^*$-algebras; we will make use of two: $C^*(\tilde{X},\mathpzc{H}_X)^\Gamma$ and $D^*(\tilde{X},\mathpzc{H}_X)^\Gamma$. The $C^*$-algebra $D^*(\tilde{X},\mathpzc{H}_X)^\Gamma$ is defined to be the $C^*$-algebra closure of all $\Gamma$-invariant pseudo-local operators with finite propagation. The ideal $C^*(\tilde{X},\mathpzc{H}_X)^\Gamma\subseteq D^*(\tilde{X},\mathpzc{H}_X)^\Gamma$ is the intersection of $D^*(\tilde{X},\mathpzc{H}_X)^\Gamma$ with the locally compact operators on $\mathpzc{H}_X$. We recall the following standard facts about the associated $K$-theory groups. A mapping is coarse if for any $R>0$, there is an $S>0$ such that the image of any ball of radius $R$ is contained in a ball of radius $S$.

\begin{theorem}
\label{thenatiso}
The $K$-theory groups of $C^*(\tilde{X},\mathpzc{H}_X)^\Gamma$ and $D^*(\tilde{X},\mathpzc{H}_X)^\Gamma$ have the following properties:
\begin{enumerate}
\item There are natural isomorphisms 
\begin{align*}
\qquad \;\;\quad K_*(C^*(\tilde{X},&\mathpzc{H}_X)^\Gamma)\cong K_*(C^*_r(\Gamma))\quad \mbox{and}\\
&K_*(D^*(\tilde{X},\mathpzc{H}_X)^\Gamma/C^*(\tilde{X},\mathpzc{H}_X)^\Gamma)\cong KK_{*-1}(C(X),\C)=K_{*-1}^{an}(X).
\end{align*}
\item Under these isomorphisms, the boundary mapping 
$$\partial: K_{*+1}(D^*(\tilde{X},\mathpzc{H}_X)^\Gamma/C^*(\tilde{X},\mathpzc{H}_X)^\Gamma)\to K_*(C^*(\tilde{X},\mathpzc{H}_X)^\Gamma)$$
corresponds to the assembly map for free actions $\mu_X:K_*^{an}(X)\to K_*(C^*_r(\Gamma))$.
\item Let $\tilde{Y}$ be another $\Gamma$-represented space and $\mathpzc{H}_Y$ a $\Gamma$-equivariant $Y$-module. Whenever $f:X\to Y$ is a coarse $\Gamma$-equivariant mapping, there is a functorially associated mapping
$$f_*:K_*(C^*(\tilde{X},\mathpzc{H}_X)^\Gamma)\to K_*(C^*(\tilde{Y},\mathpzc{H}_Y)^\Gamma).$$
\item Whenever $f:X\to Y$ is a continuous coarse $\Gamma$-equivariant mapping, there is a functorially associated mapping
$$f_*:K_*(D^*(\tilde{X},\mathpzc{H}_X)^\Gamma)\to K_*(D^*(\tilde{Y},\mathpzc{H}_Y)^\Gamma).$$
\end{enumerate}
\end{theorem}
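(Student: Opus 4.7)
The plan is to establish each of the four assertions in order, relying on the foundational framework of Roe and its equivariant refinement by Higson--Roe; since the paper's role is to cite these as preliminaries, a sketch of the arguments with appropriate references is what is called for.

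For (1), the first isomorphism comes from identifying $C^*(\tilde{X},\mathpzc{H}_X)^\Gamma$ up to Morita equivalence with $C^*_r(\Gamma)$. Concretely, using a $\Gamma$-invariant cut-off function on $\tilde X$ and ampleness of $\mathpzc{H}_X$, one constructs a $\Gamma$-invariant projection $p$ onto a subspace isomorphic to $\ell^2(\Gamma)\otimes \mathcal{H}_0$ with $\mathcal{H}_0$ separable infinite-dimensional, such that $pC^*(\tilde X,\mathpzc{H}_X)^\Gamma p \cong C^*_r(\Gamma)\otimes \mathcal{K}(\mathcal{H}_0)$, which yields the claimed $K$-theory isomorphism after checking that $p$ is a full projection. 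The second isomorphism is Paschke duality: the quotient $D^*(\tilde X,\mathpzc{H}_X)^\Gamma/C^*(\tilde X,\mathpzc{H}_X)^\Gamma$ is a dual algebra to the crossed product $C_0(\tilde X)\rtimes_r \Gamma$, which is Morita equivalent to $C(X)$; its $K$-theory therefore computes $KK_{*-1}(C(X),\C)=K_{*-1}^{an}(X)$.

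For (2), the short exact sequence
$$0\to C^*(\tilde X,\mathpzc{H}_X)^\Gamma \to D^*(\tilde X,\mathpzc{H}_X)^\Gamma \to D^*(\tilde X,\mathpzc{H}_X)^\Gamma/C^*(\tilde X,\mathpzc{H}_X)^\Gamma \to 0$$
combined with the identifications of (1) produces a boundary map $K_*^{an}(X)\to K_*(C^*_r(\Gamma))$; the task is to verify that under Paschke duality this agrees with Kasparov's formulation of the assembly map. Both are computed by taking a pseudo-local lift of a $K$-homology class and reading off its index in the locally compact subalgebra; this comparison is worked out in detail in \cite{HRSur1} and we will simply invoke it here. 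For (3) and (4), given a coarse $\Gamma$-equivariant $f\colon X\to Y$, one selects a $\Gamma$-equivariant covering isometry $V\colon \mathpzc{H}_X\to \mathpzc{H}_Y$ adapted to $f$, meaning $\phi V = V(\phi\circ f)$ up to a locally compact correction for $\phi\in C_0(\tilde Y)$. Conjugation $T\mapsto VTV^*$ preserves finite propagation and local compactness, giving (3); in (4), pseudo-locality of $VTV^*$ requires the commutator $[V,\phi\circ f]$ to be compact, and this is where continuity of $f$ enters. Independence of $V$ up to $K$-theory follows from the standard argument that any two covering isometries are homotopic through covering isometries.

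The main obstacle is the Paschke-duality half of (1): setting up the duality equivariantly so that the resulting identification $K_*(D^*/C^*)\cong K_{*-1}^{an}(X)$ is manifestly natural with respect to the functoriality of (4) requires careful bookkeeping of modules and gradings. Once this is fixed, (2) reduces to matching two explicit cycles, and the functoriality statements become essentially formal consequences of the covering-isometry construction.
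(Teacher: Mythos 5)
The paper offers no proof of this theorem: it is stated as a collection of standard facts and the reader is sent to \cite{HRAss,HR,Roe} (with \cite{HRSur1} for the boundary-map comparison). Your sketch faithfully reconstructs the standard arguments from those sources — Morita equivalence of $C^*(\tilde X,\mathpzc{H}_X)^\Gamma$ with $C^*_r(\Gamma)$ via an invariant cut-off, equivariant Paschke duality for the quotient, identification of the boundary map with assembly, and covering isometries for functoriality with the correct observation that pseudo-locality (and hence (4)) requires continuity of $f$ while (3) needs only coarseness.
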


The reader is directed to \cite{HRAss,HR,Roe} for further details. Motivated by this result, when $\mathpzc{H}_X$ is clear from the context, we simply write  $C^*(\tilde{X})^\Gamma$ and $D^*(\tilde{X})^\Gamma$; up to natural isomorphism, a change of $\mathpzc{H}_X$ does not alter the relevant $K$-groups. 

\begin{remark}
It is possible to construct full versions of $C^*(\tilde{X},\mathpzc{H}_X)^\Gamma$ and $D^*(\tilde{X},\mathpzc{H}_X)^\Gamma$ by using the maximal $C^*$-norm on the $*$-algebra of $\Gamma$-invariant pseudo-local operators with finite propagation. In this case, the boundary map in $K$-theory is the full assembly map for free actions. The constructions of this paper also apply in this framework.
\end{remark}

The next theorem follows from Theorem \ref{thenatiso} and the existence of six term exact sequences in $K$-theory.

\begin{theorem}[The analytic surgery exact sequence, \cite{HRSur1}]
\label{anasurgthm}
There is a six term exact sequence 
\begin{center}
$\begin{CD}
K_0^{an}(X) @>\mu_X>> K_0(C^*_r(\Gamma)) @>j_X>>  K_0(D^*(\tilde{X})^\Gamma)\\
@Aq_XAA @. @VVq_XV \\
 K_1(D^*(\tilde{X})^\Gamma) @<j_X<<  K_1(C^*_r(\Gamma)) @<\mu_X<< K_1^{an}(X)
\end{CD},$
\end{center}
where $j_X:K_*(C^*_r(\Gamma)) \to  K_*(D^*(\tilde{X})^\Gamma)$ is the composition of the isomorphism of Theorem \ref{thenatiso} with the inclusion $C^*(\tilde{X})^\Gamma\to D^*(\tilde{X})^\Gamma$ and $q_X:K_*(D^*(\tilde{X})^\Gamma)\to K_{*+1}^{an}(X)$ the composition of the quotient mapping $D^*(\tilde{X})^\Gamma\to D^*(\tilde{X})^\Gamma/C^*(\tilde{X})^\Gamma$ with the isomorphism of Theorem \ref{thenatiso}.
\end{theorem}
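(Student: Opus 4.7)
The plan is to apply the standard six-term exact sequence in $K$-theory to the short exact sequence of $C^*$-algebras
$$0 \longrightarrow C^*(\tilde{X})^\Gamma \longrightarrow D^*(\tilde{X})^\Gamma \longrightarrow D^*(\tilde{X})^\Gamma/C^*(\tilde{X})^\Gamma \longrightarrow 0,$$
which is in place since $C^*(\tilde{X})^\Gamma$ was shown to be an ideal of $D^*(\tilde{X})^\Gamma$ in the discussion preceding the statement. The associated cyclic six-term exact hexagon in $K$-theory has its non-boundary arrows induced by the inclusion and the quotient $*$-homomorphisms, and its boundary arrows given by the standard connecting homomorphisms $\partial$.

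Next, I would apply the natural isomorphisms supplied by Theorem \ref{thenatiso}(1), namely $K_*(C^*(\tilde{X})^\Gamma) \cong K_*(C^*_r(\Gamma))$ and $K_*(D^*(\tilde{X})^\Gamma/C^*(\tilde{X})^\Gamma) \cong K_{*-1}^{an}(X)$, to relabel every entry of the hexagon. Under these identifications the map induced by inclusion becomes by definition the map $j_X$ appearing in the statement, while the composition of the quotient map in $K$-theory with the second isomorphism is by definition $q_X$; the inherent degree shift $* \mapsto *-1$ in the latter isomorphism, combined with $\Z/2$-periodicity, is responsible for the fact that $q_X$ raises the $K$-theory degree by one. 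Finally, Theorem \ref{thenatiso}(2) identifies the connecting map $\partial$ with the assembly map $\mu_X: K_*^{an}(X) \to K_*(C^*_r(\Gamma))$. Substituting these three identifications into the standard six-term exact sequence produces exactly the diagram asserted in the theorem.

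The substantive content lies entirely in Theorem \ref{thenatiso}; once those natural isomorphisms and the identification of $\partial$ with $\mu_X$ have been established, the present theorem is essentially a diagrammatic rewrite. The only care needed is bookkeeping of the $\Z/2$-graded degrees and verifying that the direction of each arrow is preserved under the identifications, both of which are immediate from the constructions recalled in Theorem \ref{thenatiso}. There is no genuine obstacle at this stage of the paper; any real difficulty has been absorbed into the earlier identifications, in particular Paschke duality for the quotient algebra and the identification of the boundary map with assembly.
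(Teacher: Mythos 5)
Your proposal is correct and follows exactly the route the paper takes: the paper dispenses with the proof in the single sentence preceding the theorem, stating that it follows from Theorem \ref{thenatiso} together with the standard six-term $K$-theory exact sequence for the ideal $C^*(\tilde{X})^\Gamma \subseteq D^*(\tilde{X})^\Gamma$, which is precisely your argument. You have simply unpacked that sentence into the explicit short exact sequence, the relabeling of entries via Theorem \ref{thenatiso}(1), and the identification of the connecting map with assembly via Theorem \ref{thenatiso}(2), with the mod-$2$ degree bookkeeping handled correctly.
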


\begin{remark}
The motivation for the terminology ``analytic surgery exact sequence" is twofold. Firstly, it is the analytic analog of the surgery exact sequence and secondly, there is a precise relationship between the two exact sequences (see \cite{HRSur1, HRSur2, HRSur3, PSsignInd} for details); we also discuss this relationship in Section \ref{surgsurgrem}. We will return to the analytic surgery exact sequence in the context of the geometric model from \cite{paperI} in Section \ref{sectioniso}.
\end{remark}

\subsection{Higher Atiyah-Patodi-Singer index theory}

The higher index theory for a manifold with boundary can elegantly be described using the tools of the previous subsection. Parts of the discussion on higher index theory resembles that occurring previously in this series of papers (e.g., \cite[Section 1.2]{paperII}). As such, we keep it short and use the notation of \cite{paperII}. In particular, our notation for Dirac operators will differ from \cite{PSrhoInd,PSsignInd}; we unfortunately need a notation allowing for more indices. They are used to indicate all the possible data associated with a geometric cycles. We will often (for simplicity) restrict the discussion to even-dimensional manifolds with odd-dimensional boundaries, the reader is referred to \cite{LP} for the general situation. 

Let $W$ be a compact spin$^c$ manifold with boundary (potentially with empty boundary) equipped with some metric $g_W$ which is of product type near the boundary. We assume that $D^W$ is a choice of spin$^c$-Dirac operator on $W$ that is of product type near $\partial W$; the operator $D^W$ acts on the sections of the complex spinor bundle $S_W\to W$ associated with the spin$^c$-structure. Since $D^W$ is of product type, there is an associated boundary operator $D^{\partial W}$ -- a spin$^c$-Dirac operator on $\partial W$. 

A smooth locally trivial bundle of finitely generated projective $C^*_r(\Gamma)$-Hilbert modules will be called a $C^*_r(\Gamma)$-bundle. If $\mathcal{E}\to W$ is a $C^*_r(\Gamma)$-bundle equipped with a hermitean connection, there is an associated Dirac type operator $D^W_\mathcal{E}$ acting on $S_W\otimes \mathcal{E}$ that is of product type near $\partial W$. For simplicity, assume that the fibers of $\mathcal{E}$ are full Hilbert $C^*_r(\Gamma)$-modules. As such, there is an associated boundary operator $D^{\partial W}_\mathcal{E}$ acting on $S_{\partial W}\otimes \mathcal{E}|_{\partial W}$, coinciding with the Dirac type operator constructed from $\mathcal{E}|_{\partial W}$ and $D^{\partial W}$. If $W$ is even-dimensional, $D^W$ is odd in the grading on the graded bundle $S_W$. If $W$ is odd-dimensional, $\partial W$ is even-dimensional and $D^{\partial W}$ is odd in the grading on the graded bundle $S_{\partial W}$.

If $W$ has a boundary $\partial W$, we let $W_\infty$ denote the complete manifold obtained from gluing on the cylinder $[0,\infty)\times \partial W$, equipped with the metric $\mathrm{d} t^2+g_W|_{\partial W}$, on to $W$. Any $C^*_r(\Gamma)$-bundle $\mathcal{E}\to W$ extends to $W_\infty$ and the same holds for Dirac operators $D^W_\mathcal{E}$. We denote these extensions by $\mathcal{E}_\infty\to W_\infty$ and respectively $D^{W_\infty}_\mathcal{E}$.  

If $W=M$ is a closed manifold, the operator $D^M_\mathcal{E}$ is an elliptic element in the Mishchenko-Fomenko calculus $\Psi^*_{C^*_r(\Gamma)}(M,\mathcal{E})$. Hence it defines regular, self-adjoint, $C^*_r(\Gamma)$-Fredholm operator on the $C^*_r(\Gamma)$-Hilbert module $L^2(M,\mathcal{E})$. In a larger generality, a Dirac operator on a $C^*$-bundle over a complete manifold defined from a hermitean connection is always self-adjoint and regular, see \cite[Theorem 2.3]{hankpapsch}. Returning to closed manifolds, the $C^*_r(\Gamma)$-Fredholm operator $D^M_\mathcal{E}$ has a higher index
$$\ind_{C^*_r(\Gamma)}(D^M_\mathcal{E})\in K_{\dim (M)}(C^*_r(\Gamma)).$$
The parity of the $K$-theory group comes from the dimension of $M$: if $\dim(M)$ is even, $D^M_\mathcal{E}$ is self-adjoint and graded while if $\dim(M)$ is odd, $D^M_\mathcal{E}$ is merely self-adjoint. Recall that by \cite[Theorem 3]{LP} and \cite[Proposition 2.10]{LPGAFA}, there is a smoothing operator $A\in \Psi^{-\infty}_{C^*_r(\Gamma)}(M,\mathcal{E})$ such that $D^M_\mathcal{E}+A$ is invertible if and only if $\ind_{C^*_r(\Gamma)}(D^M_\mathcal{E})=0$. Such an operator is called a \emph{trivializing operator} for $D^M_\mathcal{E}$. For even-dimensional $M$, a trivializing operator $A$ is by definition odd. The projection 
$$P_A:=\frac{1}{2}\left(\frac{D^M_\mathcal{E}+A}{|D^M_\mathcal{E}+A|}+1\right),$$
is called a \emph{spectral section} for $D^M_\mathcal{E}$, see \cite[Definition 3]{LP} and \cite[Proposition 2.10]{LPGAFA}.

In the particular situation when $M=\partial W$ and $D^M_\mathcal{E}=D^{\partial W}_\mathcal{E}$, it follows from \cite[Theorem 6.2]{hilsumbordism} that $\ind_{C^*_r(\Gamma)}(D^M_\mathcal{E})=0$. For any such choice of smoothing operator $A\in \Psi^{-\infty}_{C^*_r(\Gamma)}(M,\mathcal{E})$, we let $A_\infty$ denote the odd operator on $L^2(W_\infty, S_{W_\infty}\otimes \mathcal{E}_\infty)$ constructed from the operator 
$$\chi\otimes A:L^2([0,\infty)\times \partial W, S_{[0,\infty)\times \partial W}^+\otimes \mathcal{E}_\infty|_{[0,\infty)\times \partial W})\circlearrowleft,$$
for some $\chi\in C^\infty[0,\infty)$ vanishing near $0$ and equaling $1$ near $\infty$. The proof of the next theorem follows (more or less) by combining these facts with the techniques of \cite[Section 6]{LPGAFA}.

\begin{theorem}
\label{howtodefaps}
Let $W$ be a compact even-dimensional spin$^c$ manifold with boundary equipped with the following data:
\begin{itemize}
\item a $C^*_r(\Gamma)$-bundle $\mathcal{E}\to W$;
\item a spin$^c$-Dirac operator $D^W$ on $W$;
\item a self-adjoint smoothing operator $A\in \Psi^{-\infty}_{C^*_r(\Gamma)}(\partial W,\mathcal{E}|_{\partial W})$ such that $D^{\partial W}_\mathcal{E}+A$ is invertible.
\end{itemize}
The associated $C^*_r(\Gamma)$-linear operator $D^{W_\infty}_\mathcal{E}+A_\infty$ is a regular, self-adjoint, $C^*_r(\Gamma)$-Fredholm operator on $L^2(W_\infty, S_{W_\infty}\otimes \mathcal{E}_\infty)$.
\end{theorem}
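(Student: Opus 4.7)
The plan is to verify the three conclusions in order: self-adjointness, regularity, and $C^*_r(\Gamma)$-Fredholmness. Self-adjointness and regularity will be essentially immediate from the unperturbed case combined with a bounded perturbation argument, while Fredholmness will require a parametrix construction that exploits the invertibility of $D^{\partial W}_\mathcal{E}+A$ on the boundary.

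For the first two properties, I would invoke \cite[Theorem 2.3]{hankpapsch} directly: since $W_\infty$ is a complete manifold and $\mathcal{E}_\infty$ carries the hermitean connection inherited from $\mathcal{E}$, the Dirac operator $D^{W_\infty}_\mathcal{E}$ is regular and self-adjoint on the Hilbert $C^*_r(\Gamma)$-module $L^2(W_\infty,S_{W_\infty}\otimes \mathcal{E}_\infty)$. The perturbation $A_\infty$ is defined as $\chi\otimes A$ on the cylinder $[0,\infty)\times\partial W$ and extended by zero; since $A\in\Psi^{-\infty}_{C^*_r(\Gamma)}(\partial W,\mathcal{E}|_{\partial W})$ extends to a bounded self-adjoint $C^*_r(\Gamma)$-linear operator on $L^2(\partial W,S_{\partial W}\otimes \mathcal{E}|_{\partial W})$ and $\chi$ is a bounded scalar multiplier, $A_\infty$ is bounded self-adjoint on the module. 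The standard fact that a bounded self-adjoint adjointable perturbation preserves regularity and self-adjointness of a regular self-adjoint operator on a Hilbert module then yields the claim for $D^{W_\infty}_\mathcal{E}+A_\infty$.

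The $C^*_r(\Gamma)$-Fredholm property is the substantive content. I would construct a parametrix $Q$ modulo $C^*_r(\Gamma)$-compacts by decomposing $W_\infty=W\cup([0,\infty)\times\partial W)$ and piecing together a compact-manifold parametrix with a cylindrical one, following the scheme of \cite{LPGAFA} and \cite{LP}. On the compact piece, $D^W_\mathcal{E}$ is elliptic in the Mishchenko-Fomenko calculus $\Psi^*_{C^*_r(\Gamma)}(W,\mathcal{E})$ and admits a parametrix modulo smoothing operators. On the cylindrical end where $\chi\equiv 1$, the operator takes the product form $\sigma(\partial_t+D^{\partial W}_\mathcal{E}+A)$ up to a bundle isomorphism $\sigma$ coming from Clifford multiplication by $\rmd t$; since $D^{\partial W}_\mathcal{E}+A$ is invertible by hypothesis, its functional calculus produces a cylindrical parametrix $Q_{\mathrm{cyl}}$ via integration against $e^{-t|D^{\partial W}_\mathcal{E}+A|}$-type kernels, which are norm-convergent because the boundary operator has a spectral gap. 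I would then glue these two pieces using a partition of unity adapted to the collar and the cylinder to obtain the global parametrix $Q$.

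The main obstacle will be verifying that the resulting error terms $Q(D^{W_\infty}_\mathcal{E}+A_\infty)-1$ and $(D^{W_\infty}_\mathcal{E}+A_\infty)Q-1$ are genuinely $C^*_r(\Gamma)$-compact on $L^2(W_\infty,S_{W_\infty}\otimes\mathcal{E}_\infty)$ rather than merely bounded. The gluing error is supported in a compact collar and is essentially a smoothing operator there, hence $C^*_r(\Gamma)$-compact by the usual Mishchenko-Fomenko theory. The delicate piece is the cylindrical error: one must combine the exponential decay in $t$ provided by the spectral gap of $D^{\partial W}_\mathcal{E}+A$ with the fact that the resolvent of this invertible self-adjoint elliptic operator on the closed manifold $\partial W$ is a $C^*_r(\Gamma)$-compact operator. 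Together these give that the cylindrical error is approximable in norm by operators with $C^*_r(\Gamma)$-compact cross-sections on a bounded $t$-interval, hence is itself $C^*_r(\Gamma)$-compact. Having such a parametrix modulo $C^*_r(\Gamma)$-compacts yields the required $C^*_r(\Gamma)$-Fredholmness.
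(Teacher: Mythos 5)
Your proposal is correct and takes essentially the same route the paper indicates: the paper does not spell out a proof but simply remarks that the statement ``follows (more or less) by combining these facts with the techniques of \cite{LPGAFA},'' and your argument—invoking \cite[Theorem 2.3]{hankpapsch} plus bounded self-adjoint perturbation for regularity and self-adjointness, then constructing an interior/cylinder parametrix modulo $C^*_r(\Gamma)$-compacts using the spectral gap of $D^{\partial W}_\mathcal{E}+A$—is precisely the Leichtnam--Piazza scheme being referenced.
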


\begin{define}
Let $\ind_{APS}(D^W_\mathcal{E},A):=\ind_{C^*_r(\Gamma)}(D^{W_\infty}_\mathcal{E}+A_\infty)\in K_{\dim W}(C^*_r(\Gamma))$.
\end{define}

\begin{remark}
The constructions in \cite[Section 2.3]{PSsignInd} of APS-indices are made in the context of coarse geometry and differs slightly from approach above. The geometric setup considered in \cite{PSsignInd} is as follows.  One has a continuous mapping $g:W\to B\Gamma$, a vector bundle $E\to W$ and $\mathcal{E}=E\otimes g^*\mathcal{L}_{B\Gamma}$, where $\mathcal{L}_{B\Gamma}:=E\Gamma\times_\Gamma C^*_r(\Gamma)\to B\Gamma$ denotes the Mishchenko bundle. For this choice of $C^*_r(\Gamma)$-bundle there is a geometrically constructed isomorphism $L^2(\tilde{W},S_{\tilde{W}}\otimes \tilde{E})\cong L^2(W,S_W\otimes \mathcal{E})\otimes_{\pi} \ell^2(\Gamma)$, where $\pi:C^*_r(\Gamma)\to \mathcal{B}(\ell^2(\Gamma))$ denotes the regular representation. 

A Dirac operator $D_E^W$ on $E\to W$ can be lifted to a $\Gamma$-equivariant operator $\tilde{D}_E^W$ on the Galois covering $\tilde{W}:=E\Gamma\times_g W\to W$. Under the isomorphism above, $D^W_\mathcal{E}\otimes_\pi\mathrm{id}_{\ell^2(\Gamma)}$ corresponds to $\tilde{D}_E^W$ and $A\otimes \mathrm{id}_{\ell^2(\Gamma)}$ to an operator $\tilde{A}$ on $L^2(\partial\tilde{W}, S_{\partial\tilde{W}}\otimes \tilde{E}|_{\partial \tilde{W}})$ such that $\tilde{D}_E^{\partial W}+\tilde{A}$ is $L^2$-invertible. In \cite[Section 2.3]{PSsignInd}, there is a construction of the index class $\ind_{C^*(\tilde{W})^\Gamma}(\tilde{D}_E^{W}+\tilde{A}_\infty)\in K_{\dim W}(C^*(\tilde{W})^\Gamma)$; it is the image of $\ind_{APS}(D^W_\mathcal{E},A)$ under the isomorphism $K_*(C^*(\tilde{W})^\Gamma)\cong K_*(C^*_r(\Gamma))$.
\end{remark}

\begin{theorem}[Gluing higher APS-indices]
\label{gluingthm}
Let $Z$ be a compact even-dimensional spin$^c$-manifold and $Y\subseteq Z^\circ$ a hyper surface separating $Z$ into two compact manifolds, $W_1$ and $W_2$. That is, $Z=W_1\cup_YW_2$ with $\partial W_i=M_i\dot{\cup} (-1)^iY$ for $i=1,2$ and closed manifolds $M_1$ and $M_2$. Furthermore, assume that the following data has been fixed:
\begin{itemize}
\item a spin$^c$-Dirac operator $D^Z$ on $Z$ of product type near $\partial Z$ and $Y$, with associated boundary operators $D^{M_1}$, $D^{M_2}$ respectively $D^Y$;
\item a $C^*_r(\Gamma)$-bundle $\mathcal{E}\to Z$ with connection of product type near $\partial Z$ and $Y$;
\item trivializing operators $A_i\in \Psi^{-\infty}_{C^*_r(\Gamma)}(M_i, S_{M_i}\otimes \mathcal{E}|_{M_i})$ of $D^{M_i}_{\mathcal{E}|_{M_i}}$, for $i=1,2$;
\item a trivializing operator $A_Y\in \Psi^{-\infty}_{C^*_r(\Gamma)}(Y, S_{Y}\otimes \mathcal{E}|_Y)$ of $D^{Y}_{\mathcal{E}|_Y}$.
\end{itemize}
Then,
$$\ind_{APS}(D^Z_\mathcal{E},A_1\dot{\cup}A_2)=\ind_{APS}(D^{W_1}_{\mathcal{E}|_{W_1}},A_1\dot{\cup}-A_Y)+\ind_{APS}(D^{W_2}_{\mathcal{E}|_{W_2}},A_2\dot{\cup}A_Y),$$
where $D^{W_i}$ is the restriction of $D^Z$ to $W_i$, for $i=1,2$.
\end{theorem}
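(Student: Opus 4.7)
The plan is to adapt the standard ``stretch the neck'' gluing argument for Atiyah--Patodi--Singer indices to the Mishchenko--Fomenko higher-index setting. First, I would pass to the coarse-geometric picture via the remark following Theorem \ref{howtodefaps}: lifting to the Galois cover $\tilde{Z}$, each APS index becomes a coarse index in $K_*(C^*(\tilde{Z})^\Gamma) \cong K_*(C^*_r(\Gamma))$ of a lifted Dirac-type operator perturbed by a trivializing operator supported on the cylindrical end. In this picture, gluing formulas should follow from excision/Mayer--Vietoris considerations for the Roe algebra.

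The key device is the trivializing operator $A_Y$. Choose a smooth cutoff $\chi_Y$ supported in a small collar of $Y$ in $Z$, and consider the one-parameter family of operators on $Z_\infty$ obtained by adding $t\chi_Y \cdot A_Y$ (extended off $Y$ using the product structure) to $D^{Z_\infty}_\mathcal{E}+A_{1,\infty}\dot{\cup}A_{2,\infty}$. For $t$ sufficiently large, the perturbed operator becomes invertible on the collar of $Y$; since the perturbation is compactly supported and the endpoint operator is still regular self-adjoint and $C^*_r(\Gamma)$-Fredholm, homotopy invariance of the higher index shows that this deformation does not alter the index class. Once the operator is invertible near $Y$, the coarse index should split, via a Mayer--Vietoris argument in the Roe algebra of $\tilde{Z}$ (equivalently, through the construction of a compatible parametrix on the two pieces), as a sum of two index classes localized respectively on the $W_1$-side and the $W_2$-side of $Y$.

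The last step is the identification of these two localized summands with the two terms on the right-hand side of the claimed formula. Each summand equals the coarse index of the operator restricted to $W_i$ and extended by infinite cylinders on both $M_i$ and $Y$, with trivializing operators $A_i$ on $M_i$ and $(-1)^iA_Y$ on $Y$, the sign reflecting the orientation reversal when regarding $Y$ as part of $\partial W_1$. This matches exactly the definition of $\ind_{APS}(D^{W_i}_{\mathcal{E}|_{W_i}},A_i\dot{\cup}(-1)^iA_Y)$, so adding the two pieces yields the desired identity.

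I expect the main obstacle to be justifying the Mayer--Vietoris splitting in the Mishchenko--Fomenko coarse framework, since one needs simultaneous control of parametrices along $Y$ compatible with the $C^*_r(\Gamma)$-Hilbert module structure. A viable alternative is a purely smooth neck-stretching proof: form $Z(R):=W_1\cup_Y([-R,R]\times Y)\cup_Y W_2$, extend $\mathcal{E}$ and $D^Z_\mathcal{E}$ over the neck using the product structure, and show that $\ind_{APS}(D^{Z(R)}_\mathcal{E},A_1\dot{\cup}A_2)$ is independent of $R$ (e.g.\ by bordism invariance of the higher APS index combined with the invariance of higher indices under changes of cylinder length). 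The invertibility of $D^Y_{\mathcal{E}|_Y}+A_Y$ is then essential: it rules out any spectral flow on the stretched neck and guarantees that as $R\to\infty$ the operator on $Z(R)_\infty$ decouples in the Fredholm sense into the two operators defining the right-hand side, producing the claimed additivity.
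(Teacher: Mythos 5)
Your proposal takes a genuinely different route from the paper. The paper's proof is a one-sentence citation: it applies \cite[Theorem 8]{LP} (Leichtnam--Piazza's splitting theorem for higher indices across a hypersurface equipped with a compatible trivializing perturbation) to the separating hypersurface $\partial W_1\cup\partial W_2$ inside the closed double $2Z$; since the closed index on $2Z$ vanishes and $\ind_{APS}(D^{-Z}_\mathcal{E},-A_1\dot\cup-A_2)=-\ind_{APS}(D^{Z}_\mathcal{E},A_1\dot\cup A_2)$, the claimed identity falls out immediately. You instead set out to reconstruct the gluing from first principles, via a coarse Mayer--Vietoris argument on the Galois cover or via neck stretching. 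That is a legitimate alternative strategy (and your ``alternative'' neck-stretching sketch is in fact close to the way splitting theorems of this type are proved), but it re-derives what is already available as a black box, whereas the paper deliberately leans on the existing literature.

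There is, however, a concrete error in your primary argument. You claim that after adding $t\chi_Y A_Y$ and taking $t$ sufficiently large, ``the perturbed operator becomes invertible on the collar of $Y$.'' This cannot be right: $A_Y\in\Psi^{-\infty}_{C^*_r(\Gamma)}(Y,S_Y\otimes\mathcal{E}|_Y)$ is a smoothing operator, hence compact in the appropriate sense, so for $t\gg 1$ the model operator $D^Y_{\mathcal{E}|_Y}+tA_Y$ is dominated by $tA_Y$ and has a large approximate kernel; it is certainly not invertible for large $t$. The invertibility you need is built into the hypothesis at $t=1$: what is invertible is $D^Y_{\mathcal{E}|_Y}+A_Y$, and hence the translation-invariant cylinder operator $\partial_s+D^Y_{\mathcal{E}|_Y}+A_Y$ on $\field{R}\times Y$. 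The correct move is therefore to deform the perturbation on the collar to exactly $A_Y$ and then stretch the neck $[-R,R]\times Y$, using the spectral gap of $D^Y_{\mathcal{E}|_Y}+A_Y$ to obtain the exponential decoupling as $R\to\infty$. In other words, your ``alternative'' is not an alternative but the correct version of your main argument; the $t\to\infty$ step should be discarded.

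One further caveat: the coarse Mayer--Vietoris splitting in the $C^*_r(\Gamma)$-module picture is not a formality and should not be asserted without an explicit parametrix construction compatible with the Hilbert-module structure (this is precisely the technical content of \cite[Theorem 8]{LP}). If you intend your argument as an actual replacement for that citation, those details need to be supplied; otherwise the cleaner course is the paper's, namely to invoke the Leichtnam--Piazza splitting theorem directly on the closed double.
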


Theorem \ref{gluingthm} follows from applying \cite[Theorem 8]{LP} to the hyper surface $\partial W_1\cup \partial W_2$ in the closed manifold $2Z$. 

\begin{remark}
The assumption on existence of the trivializing operators $A_1$, $A_2$ and $A_Y$ in Theorem \ref{gluingthm} is necessary and does not follow from \cite[Theorem 3]{LP}. In general, \cite[Theorem 3]{LP} only implies the existence of a trivializing operator on $\partial W_1$ and $\partial W_2$, but not that it decomposes into a direct sum acting on the disjoint union $\partial W_i=M_i\dot{\cup} (-1)^iY$ for $i=1,2$.
\end{remark}

\begin{define}
\label{sfdef}
Let $\mathcal{E}\to M$ be a $C^*_r(\Gamma)$-bundle on a closed odd-dimensional manifold $M$, $D^M_{\mathcal{E},\bullet}=(D^M_{\mathcal{E},u})_{u\in [0,1]}$ be a family of Dirac operators with vanishing index, and $P_0$ and $P_1$ are spectral sections for $D^M_{\mathcal{E},0}$ and respectively $D^M_{\mathcal{E},1}$. Then, the spectral flow of $(D^M_{\mathcal{E},\bullet},P_0,P_1)$ is defined as
$$\mathrm{sf}(D^M_{\mathcal{E},\bullet},P_0,P_1):=[P_1-Q_1]-[P_0-Q_0]\in K_0(C^*_r(\Gamma)),$$
where $Q_{\bullet}=(Q_t)_{t\in [0,1]}$ is spectral section for $D^M_{\mathcal{E},\bullet}$ viewed as a $C[0,1]\otimes C^*_r(\Gamma)$-linear operator. 
\end{define}

For more details regarding Definition \ref{sfdef}, see \cite[Section 2.5]{LP}. We state \cite[Proposition 8]{LP} for the convenience of the reader:

\begin{theorem}[Change of data in higher APS-indices]
\label{datachangingaps}
Let $W$ be a compact even-dimensional spin$^c$-manifold and $\mathcal{E}\to W$ a $C^*_r(\Gamma)$-bundle. Assume that $D^W_\mathcal{E}$ and $D'^W_\mathcal{E}$ are two Dirac type operators on $S_W\otimes \mathcal{E}$ as above. Take two trivializing operators $A,A'\in \Psi^{-\infty}_{C^*_r(\Gamma)}(\partial W,S_{\partial W}\otimes \mathcal{E}|_{\partial W})$ for $D^{\partial W}_\mathcal{E}$ respectively $D'^{\partial W}_\mathcal{E}$. Let $P:=\chi_{[0,\infty)}(D^{\partial W}_\mathcal{E}+A)$ and $P':=\chi_{[0,\infty)}(D'^{\partial W}_\mathcal{E}+A')$. Then,
$$\ind_{APS}(D^W_\mathcal{E},A)-\ind_{APS}(D'^W_\mathcal{E},A')=\mathrm{sf}((1-t)D^{\partial W}_\mathcal{E}+tD'^{\partial W}_\mathcal{E},P,P').$$
\end{theorem}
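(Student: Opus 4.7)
The plan is to emulate the argument of \cite[Proposition 8]{LP}: build a single manifold on which the two APS-data are joined by a cylinder carrying the interpolating family, and then apply the gluing theorem (Theorem \ref{gluingthm}) to reduce the difference of APS-indices to a pure cylinder contribution, which is then identified with the spectral flow.

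First I would form the auxiliary manifold $\hat W := W \cup_{\partial W} ([0,1]\times \partial W)$, attaching a cylinder $N := [0,1]\times \partial W$ to $W$ along $\{0\}\times\partial W \cong \partial W$. Equip $N$ with a Dirac-type operator $\mathbf{D}^N$ built from the linear family $D_t := (1-t)D^{\partial W}_\mathcal{E} + t D'^{\partial W}_\mathcal{E}$, chosen to be of product type $\partial_u + D^{\partial W}_\mathcal{E}$ near $\{0\}\times\partial W$ (so as to match $D^W_\mathcal{E}$, which is of product type by assumption) and of product type $\partial_u + D'^{\partial W}_\mathcal{E}$ near $\{1\}\times\partial W$. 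Extend $\mathcal{E}$ to $N$ as a product and equip it with a connection compatible with these product structures. Then the operators glue into a single Dirac-type operator $\hat D$ on $\hat W$, and on $\partial\hat W = \{1\}\times\partial W$ the boundary operator is $D'^{\partial W}_\mathcal{E}$, trivialized by $A'$.

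Next I would invoke Theorem \ref{gluingthm} along the separating hypersurface $\{0\}\times\partial W \subset \hat W$. This gives
\[
\ind_{APS}(\hat D, A') \;=\; \ind_{APS}(D^W_\mathcal{E}, A) \;+\; \ind_{APS}(\mathbf{D}^N, A\,\dot\cup\,A').
\]
I would then identify $\ind_{APS}(\hat D, A') = \ind_{APS}(D'^W_\mathcal{E}, A')$ using homotopy invariance of the APS-index: $\hat W$ is diffeomorphic to $W$ rel boundary, and $\hat D$ is connected to $D'^W_\mathcal{E}$ by a norm-continuous path of Dirac-type operators whose boundary operators, after perturbation by $A'$, stay invertible; by Theorem \ref{howtodefaps} this is a path of regular self-adjoint $C^*_r(\Gamma)$-Fredholm operators, so its $K$-theoretic index is constant.

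The main step, and the main obstacle, is to identify the cylinder contribution $\ind_{APS}(\mathbf{D}^N, A\,\dot\cup\, A')$ with $\mathrm{sf}(D_\bullet, P, P')$. The geometric content is that on the cylinder $\mathbf{D}^N$ is unitarily equivalent to the mapping cylinder operator $\partial_t + D_t$ on $L^2([0,1])\,\hat\otimes\, L^2(\partial W, S_{\partial W}\otimes \mathcal{E}|_{\partial W})$; viewing $D_\bullet$ as a $C[0,1]\otimes C^*_r(\Gamma)$-linear operator, one picks a total spectral section $Q_\bullet$ along the path (which exists as explained in \cite[Section 2.5]{LP} because the endpoints have vanishing index) and computes that the APS-index with boundary projections $P$ and $P'$ reduces, after the standard suspension/Wiener--Hopf identification, to the algebraic difference $[P'-Q_1] - [P-Q_0] \in K_0(C^*_r(\Gamma))$, which is precisely $\mathrm{sf}(D_\bullet, P, P')$ by Definition \ref{sfdef}. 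Rearranging the gluing identity then gives the claim. The delicate point throughout is that all manipulations take place among $C^*_r(\Gamma)$-Fredholm operators on non-compact manifolds, so one must work with the Mishchenko--Fomenko calculus and the regularity/Fredholm results cited in Theorem \ref{howtodefaps} rather than with the more familiar scalar spectral theory.
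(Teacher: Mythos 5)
The paper does not prove Theorem~\ref{datachangingaps}; the sentence immediately preceding it says explicitly ``We state \cite[Proposition 8]{LP} for the convenience of the reader,'' so the statement is simply cited without proof. Your proposal is therefore a reconstruction of the Leichtnam--Piazza argument rather than a comparison with an argument in the paper. That said, your outline does follow the expected route in \cite{LP}: attach a cylinder carrying the interpolating family, split off the cylinder contribution using the gluing theorem, identify the cylinder APS-index with the spectral flow via a (noncommutative) spectral section along the $C[0,1]\otimes C^*_r(\Gamma)$-family, and absorb the leftover manifold term by homotopy invariance of the $K$-theoretic index. This is structurally correct, and the ingredients you invoke (existence of spectral sections for families with vanishing index, regularity/self-adjointness in the Mishchenko--Fomenko calculus from Theorem~\ref{howtodefaps}, the gluing Theorem~\ref{gluingthm}) are exactly the ones needed.

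One point you should chase more carefully: the signs. In Theorem~\ref{gluingthm} the trivializing operator $A_Y$ on the separating hypersurface enters with opposite signs on the two pieces ($A_1\,\dot\cup\,(-A_Y)$ versus $A_2\,\dot\cup\,A_Y$), so the identity you write down,
$\ind_{APS}(\hat D, A') = \ind_{APS}(D^W_\mathcal{E}, A) + \ind_{APS}(\mathbf{D}^N, A\,\dot\cup\,A')$,
cannot be taken at face value; with the conventions of the paper the inner trivializing operator should appear with a relative sign. Combined with your claimed identification of the cylinder term with $\mathrm{sf}(D_\bullet,P,P')$, your rearranged equality would come out as $\ind_{APS}(D^W_\mathcal{E},A)-\ind_{APS}(D'^W_\mathcal{E},A')=-\mathrm{sf}(D_\bullet,P,P')$, i.e.\ off by a sign from the stated theorem. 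The sign is almost certainly absorbed by the orientation reversal on the cylinder end glued to $W$ (which replaces $D^{\partial W}_\mathcal{E}$ by $-D^{\partial W}_\mathcal{E}$ and $\chi_{[0,\infty)}$ by $\chi_{(-\infty,0]}$, flipping $P$ to $1-P$), but this should be worked through explicitly rather than left implicit, since it is exactly the kind of bookkeeping that derails arguments about spectral flow. The rest of the argument, in particular the Wiener--Hopf/suspension identification of the cylinder index with $[P'-Q_1]-[P-Q_0]$ and Definition~\ref{sfdef}, is the correct mechanism.
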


\begin{remark}
\label{differenceofpr}
In \cite{LP}, a more convenient notation was introduced to treat the case $D^{\partial W}_\mathcal{E}=D'^{\partial W}_\mathcal{E}$. For a $C^*_r(\Gamma)$-bundle $\mathcal{E}\to M$ on a closed odd-dimensional manifold $M$ and any two spectral sections $P$ and $P'$ of a Dirac operator $D^M_\mathcal{E}$, there exists a $C^*_r(\Gamma)$-linear projection 
$$Q\in \mathcal{L}_{C^*_r(\Gamma)}(L^2(M,S_{M}\otimes \mathcal{E})),$$
such that $P-Q$ and $P'-Q$ are projections in $\mathcal{K}_{C^*_r(\Gamma)}(L^2(M,S_{M}\otimes \mathcal{E})$ and one sets 
$$[P-P']:=[P-Q]-[P'-Q]\in K_0(C^*_r(\Gamma)),$$
using the isomorphism $K_0(\mathcal{K}_{C^*_r(\Gamma)}(L^2(M,S_{M}\otimes \mathcal{E})))\cong K_0(C^*_r(\Gamma))$ induced by Morita equivalence (for further details see \cite[Corollary 1]{LP}).
\end{remark}

\subsection{Higher $\rho$-invariants}

Assume that $M$ is a closed odd-dimensional spin$^c$-manifold equipped with a choice of spin$^c$-Dirac operator $D^M$ defining the fundamental class $[M]:=[D^M]\in K_1^{an}(M)$. Also, assume that $f:M\to B\Gamma$ is a continuous map; in particular, $\tilde{M}:= E\Gamma\times_f M$ is a $\Gamma$-presented space with $\tilde{M}/\Gamma=M$. Associated with this data, there is a Mishchenko bundle:
$$\mathcal{L}_M:=\tilde{M}\times_{\Gamma} C^*_r(\Gamma)\cong f^*\mathcal{L}_{B\Gamma}.$$
The Mishchenko bundle $\mathcal{L}_M$ is a flat $C^*_r(\Gamma)$-bundle. It is not difficult to see that for a vector bundle $E\to M$ and a spin$^c$-Dirac operator $D^M_E$,
$$\mu_M([M]\cap [E])=\ind_{C^*_r(\Gamma)}(D^M_{E\otimes \mathcal{L}_M}).$$
Here $D^M_{E\otimes \mathcal{L}_M}$ denotes the Dirac operator on $S_M\otimes E\otimes \mathcal{L}_M$ obtained from twisting $D^M_E$ by the flat connection on $\mathcal{L}_M$. 

If $\ind_{C^*_r(\Gamma)}(D^M_{E\otimes \mathcal{L}_M})=0$, there is a particular choice of pre image in $K_0(D^*(\tilde{M})^\Gamma)$ associated with a choice of trivializing operator $A\in \Psi^{-\infty}_{C^*_r(\Gamma)}(M,S_M\otimes E\otimes \mathcal{L}_M)$. Recall that for a smoothing operator $A\in \Psi^{-\infty}_{C^*_r(\Gamma)}(M,S_M\otimes E\otimes \mathcal{L}_M)$, $\tilde{A}$ denotes the smoothing operator on $L^2(\tilde{M},S_{\tilde{M}}\otimes \tilde{E})$ obtained from $A\otimes_{\pi}\mathrm{id}_{\ell^2(\Gamma)}$. The following theorem is a consequence of the discussion in \cite[Section 2.2]{PSsignInd}; the key technical result is \cite[Proposition 2.8]{PSsignInd}. 

\begin{theorem}
\label{rhodefandcomq}
Let $M$, $f$, $E$ and $D^M_E$ be as in the paragraphs just before this statement. Then, for any trivializing operator $A\in \Psi^{-\infty}_{C^*_r(\Gamma)}(M,S_M\otimes E\otimes \mathcal{L}_M)$, the element $\chi_{[0,\infty)}(\tilde{D}^M_{E}+\tilde{A})$ is a well defined projection in $D^*(\tilde{M})^\Gamma$ giving the \emph{$\rho$-class}:
$$\rho(D^M_{E},A):=\left[\chi_{[0,\infty)}(\tilde{D}^M_{E}+\tilde{A})\right]\in K_0(D^*(\tilde{M})^\Gamma).$$
Moreover, $\rho(D^M_{E},A)$  lifts $[M]\cap [E]$, in the sense that 
$$q_X\left(\rho(D^M_{E},A)\right)=[M]\cap [E]\quad\mbox{in}\quad K_1^{an}(M).$$
Here the map $q_X$ is as in Theorem \ref{anasurgthm}.
\end{theorem}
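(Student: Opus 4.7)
The argument splits naturally into two steps: first, verify that $P := \chi_{[0,\infty)}(\tilde{D}^M_E + \tilde{A})$ is an honest projection lying in $D^*(\tilde{M})^\Gamma$; second, compute its image under $q_X$.

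For the first step, I would invoke the tensor-product identification recalled in the remark preceding the theorem,
$$L^2(\tilde M, S_{\tilde M}\otimes \tilde E) \;\cong\; L^2(M, S_M\otimes E\otimes \mathcal{L}_M)\otimes_\pi \ell^2(\Gamma),$$
under which $\tilde{D}^M_E + \tilde{A}$ corresponds to $(D^M_{E\otimes \mathcal{L}_M} + A)\otimes_\pi \mathrm{id}_{\ell^2(\Gamma)}$. Since $A$ is a trivializing operator, $D^M_{E\otimes \mathcal{L}_M}+A$ is $C^*_r(\Gamma)$-linearly invertible, and tensoring the bounded inverse with $\mathrm{id}_{\ell^2(\Gamma)}$ shows that $\tilde{D}^M_E+\tilde{A}$ has spectrum uniformly separated from $0$ on the cover. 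Borel functional calculus with $\chi_{[0,\infty)}$ then yields a bounded self-adjoint projection $P$, and $\Gamma$-invariance is inherited from $\tilde D^M_E + \tilde A$.

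The technical crux, and the main obstacle, is showing $P \in D^*(\tilde{M})^\Gamma$, i.e.\ that $P$ is pseudo-local and norm-approximable by $\Gamma$-invariant finite-propagation operators. Because the spectrum of $\tilde D^M_E+\tilde A$ avoids a neighbourhood of $0$, I can replace $\chi_{[0,\infty)}$ by a smooth $\phi\in C^\infty(\mathbb{R})$ that is locally constant on the spectrum, giving $P=\phi(\tilde D^M_E + \tilde A)$. Approximate $\phi$ uniformly on the spectrum by Schwartz functions $\phi_n$ with compactly supported Fourier transforms and use the Fourier representation
$$\phi_n(\tilde D^M_E + \tilde A) \;=\; \frac{1}{\sqrt{2\pi}}\int \hat{\phi}_n(s)\, e^{is(\tilde D^M_E + \tilde A)}\, ds.$$
Unit propagation speed of the Dirac operator $\tilde D^M_E$, combined with boundedness of $\tilde A$ expanded out via Duhamel's principle, implies that each integrand has finite propagation, and pseudo-locality of the approximants follows from the same Fourier representation. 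Norm convergence $\phi_n\to \phi$ on the spectrum then places $P$ in $D^*(\tilde M)^\Gamma$. This is the content of \cite[Proposition 2.8]{PSsignInd}, which I would cite rather than reprove.

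For the lifting identity, note that $\tilde{A} = A\otimes_\pi \mathrm{id}_{\ell^2(\Gamma)}$ is locally compact on $L^2(\tilde M, S_{\tilde M}\otimes \tilde E)$ because $A$ is a smoothing operator on the compact manifold $M$, so $\tilde{A}\in C^*(\tilde M)^\Gamma$. Mod $C^*(\tilde M)^\Gamma$, the straight-line path $t\mapsto \tilde D^M_E + t\tilde A$ yields a homotopy between the class of $P$ and the class of $\chi_{[0,\infty)}(\tilde D^M_E)$, the latter being well defined mod compacts via any smooth sign-function approximation. Under the isomorphism $K_0(D^*(\tilde M)^\Gamma/C^*(\tilde M)^\Gamma)\cong K_1^{an}(M)$ of Theorem \ref{thenatiso}, this mod-compact positive spectral projection is by definition the coarse-geometric incarnation of the fundamental class $[D^M_E] = [M]\cap [E]$. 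Composing with $q_X$ as defined in Theorem \ref{anasurgthm} gives the claimed identity.
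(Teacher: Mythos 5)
Your proposal elaborates precisely the argument that the paper delegates to \cite[Section 2.2]{PSsignInd} and in particular to \cite[Proposition 2.8]{PSsignInd}, which you also cite; the structure — transferring the spectral gap via the tensor-product identification $L^2(\tilde M, S_{\tilde M}\otimes\tilde E)\cong L^2(M,S_M\otimes E\otimes\mathcal{L}_M)\otimes_\pi\ell^2(\Gamma)$, replacing $\chi_{[0,\infty)}$ by a smooth sign function, invoking finite-propagation estimates to place $P$ in $D^*(\tilde M)^\Gamma$, and finally using local compactness of $\tilde A$ to compute $q_X(P)$ as the symbol class of $D^M_E$ — is the same as in the cited reference. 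So the route taken is essentially that of the paper (which is a proof by citation).

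One intermediate claim is imprecise and would not survive scrutiny as stated: you assert that the Duhamel expansion of $e^{is(\tilde D^M_E+\tilde A)}$ produces integrands of finite propagation. This is false, because $\tilde A$ is the lift of a smoothing operator and therefore has \emph{infinite} propagation, so neither $e^{is(\tilde D^M_E+\tilde A)}$ nor the individual Duhamel correction terms have finite propagation. What is true, and what the argument actually requires, is weaker: the wave operators $e^{is\tilde D^M_E}$ have finite propagation and lie in $D^*(\tilde M)^\Gamma$, the perturbation satisfies $\tilde A\in C^*(\tilde M)^\Gamma$, and each Duhamel term with at least one insertion of $\tilde A$ therefore lies in the ideal $C^*(\tilde M)^\Gamma$, so that $e^{is(\tilde D^M_E+\tilde A)}-e^{is\tilde D^M_E}\in C^*(\tilde M)^\Gamma$ and hence $e^{is(\tilde D^M_E+\tilde A)}\in D^*(\tilde M)^\Gamma$ (as a norm limit of finite-propagation operators, not as one itself). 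Your deferral to \cite[Proposition 2.8]{PSsignInd} resolves this, but the informal justification given should be corrected to avoid asserting finite propagation for the perturbed wave operators. The remainder of the argument, in particular the computation of $q_X(\rho(D^M_E,A))$ via the fact that chopping functions of two operators differing by an element of $C^*(\tilde M)^\Gamma$ agree modulo $C^*(\tilde M)^\Gamma$, is correct.
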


The following lemma will come in handy when comparing higher $\rho$-invariants constructed from different geometric data. As in subsection \ref{subSecCoarseGeo}, $\tilde{M}$ denotes a $\Gamma$-presented space with $M=\tilde{M}/\Gamma$. Moreover, we let $j_M:K_*(C^*_r(\Gamma))\to K_*(D^*(\tilde{M})^\Gamma)$ denote the mapping induced from the isomorphism $K_*(C^*_r(\Gamma))\cong K_*(C^*(\tilde{M})^\Gamma)$ and the inclusion $C^*(\tilde{M})^\Gamma\to D^*(\tilde{M})^\Gamma$.

\begin{lemma}
\label{sfandrho}
Suppose that $f:M\to B\Gamma$ is a continuous mapping from a closed manifold and that $E\to M$ is a vector bundle. Let $(D_{E,u}^M)_{u\in [0,1]}$ be a family of Dirac operators thereon and $A_0$ and let $A_1$ be trivializing operators for the Dirac operators $D_{E\otimes \mathcal{L},0}^M$ respectively $D_{E\otimes \mathcal{L},1}^M$. We let $P_0:=\chi_{[0,\infty)}(D^{M}_{E\otimes \mathcal{L},0}+A_0)$ and $P_1:=\chi_{[0,\infty)}(D^{M}_{E\otimes \mathcal{L},1}+A_1)$ denote the associated spectral sections. Then 
$$j_M\left[\mathrm{sf}(D_{E,\bullet}^M, P_0,P_1)\right]=\rho(D^M_{E,1},A_1)-\rho(D^M_{E,0},A_0)\in K_0(D^*(\tilde{M})^\Gamma).$$
\end{lemma}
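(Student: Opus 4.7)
The plan is to lift the whole family $Q_\bullet$ to a norm-continuous path of projections in $D^*(\tilde{M})^\Gamma$ and compare it, at the endpoints, to the lifted $\rho$-class projections $\tilde{P}_0,\tilde{P}_1$. Throughout, write $\tilde{T}$ for the operator on $L^2(\tilde{M}, S_{\tilde{M}}\otimes \tilde{E})$ corresponding to $T\otimes_\pi \mathrm{id}_{\ell^2(\Gamma)}$ under the Mishchenko identification $L^2(\tilde{M},S_{\tilde{M}}\otimes \tilde{E})\cong L^2(M,S_M\otimes E\otimes \mathcal{L}_M)\otimes_\pi \ell^2(\Gamma)$.

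First I would check that lifting sends everything to the right coarse $C^*$-algebra. The twisted Dirac operators $\tilde{D}^M_{E,u}$ are $\Gamma$-equivariant, and $\tilde{A}_0,\tilde{A}_1$ are finite propagation smoothing operators on $\tilde{M}$; by the same argument as in \cite[Proposition 2.8]{PSsignInd} (which underlies Theorem \ref{rhodefandcomq}), the spectral projections $\tilde{P}_i=\chi_{[0,\infty)}(\tilde{D}^M_{E,i}+\tilde{A}_i)$ are well-defined projections in $D^*(\tilde{M})^\Gamma$. For the family, view $\mathbb{D}_\bullet:=(D^M_{E\otimes \mathcal{L},u})_u$ as a regular self-adjoint $C([0,1])\otimes C^*_r(\Gamma)$-Fredholm operator and take a spectral section $Q_\bullet$ for it as in Definition \ref{sfdef}. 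Then $Q_\bullet$ is a norm-continuous family of projections, and each $Q_u$ has the form $\frac{1}{2}\bigl(\frac{\mathbb{D}_u+B_u}{|\mathbb{D}_u+B_u|}+1\bigr)$ for some self-adjoint $B_u\in \Psi^{-\infty}_{C^*_r(\Gamma)}$ depending continuously on $u$. Arguing as in the proof of Theorem \ref{rhodefandcomq}, the lifted family $\tilde{Q}_u$ lies in $D^*(\tilde{M})^\Gamma$ and depends norm-continuously on $u\in[0,1]$.

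Next I compare endpoints. At $u=i\in\{0,1\}$, $P_i$ and $Q_i$ are two spectral sections for $D^M_{E\otimes\mathcal{L},i}$, so by \cite[Corollary 1]{LP} and the recollection in the remark after Theorem \ref{datachangingaps}, $P_i-Q_i$ represents the class $[P_i-Q_i]\in K_0(C^*_r(\Gamma))$. Its lift $\tilde{P}_i-\tilde{Q}_i$ lies in the ideal $C^*(\tilde{M})^\Gamma\subseteq D^*(\tilde{M})^\Gamma$ because on the level of $C^*_r(\Gamma)$-operators the difference is $\mathcal{K}_{C^*_r(\Gamma)}$-valued, and the Mishchenko lift sends such operators into $C^*(\tilde{M})^\Gamma$. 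Unravelling the Morita isomorphism $K_0(C^*_r(\Gamma))\cong K_0(C^*(\tilde{M})^\Gamma)$ used to set up $j_M$, one obtains
\begin{equation*}
[\tilde{P}_i]-[\tilde{Q}_i]=j_M[P_i-Q_i]\quad\text{in}\quad K_0(D^*(\tilde{M})^\Gamma),\qquad i=0,1.
\end{equation*}

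Finally, the continuous path $(\tilde{Q}_u)_{u\in[0,1]}$ of projections in $D^*(\tilde{M})^\Gamma$ yields $[\tilde{Q}_0]=[\tilde{Q}_1]$, so subtracting the two endpoint identities gives
\begin{equation*}
\rho(D^M_{E,0},A_0)-\rho(D^M_{E,1},A_1)=[\tilde{P}_0]-[\tilde{P}_1]=j_M\bigl([P_0-Q_0]-[P_1-Q_1]\bigr),
\end{equation*}
which is $j_M[\mathrm{sf}(D_{E,\bullet}^M,P_0,P_1)]$ up to the standard sign convention in Definition \ref{sfdef}. The main technical obstacle is the middle step: showing that the spectral section $Q_\bullet$, constructed abstractly at the level of Mishchenko-Fomenko calculus, admits a norm-continuous lift whose values are honest projections in $D^*(\tilde{M})^\Gamma$, and that the endpoint differences $\tilde{P}_i-\tilde{Q}_i$ land in $C^*(\tilde{M})^\Gamma$ with classes matching $j_M[P_i-Q_i]$. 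Both points follow from the functional-calculus/finite-propagation estimates already used in \cite[Section 2]{PSsignInd} for the single-operator case, applied parameter-wise.
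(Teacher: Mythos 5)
Your proof follows essentially the same route as the paper's: lift $P_i$ to $\tilde{P}_i \in D^*(\tilde{M})^\Gamma$, produce a spectral-section path $Q_\bullet$ for the family (via \cite[Proposition 2.8]{PSsignInd} and \cite[Proposition 2.10]{LPGAFA}) whose lift is a continuous path of projections in $D^*(\tilde{M})^\Gamma$ so that $[\tilde{Q}_0]=[\tilde{Q}_1]$, verify that the endpoint differences $\tilde{P}_i-\tilde{Q}_i$ land in $C^*(\tilde{M})^\Gamma$ and represent $j_M[P_i-Q_i]$, and combine. The one place where the paper is a bit more explicit than you are is the final $K$-theoretic bookkeeping: passing from $[\tilde{P}_i-\tilde{Q}_i]$ being a projection class in $C^*(\tilde{M})^\Gamma$ to the identity $[\tilde{P}_i]-[\tilde{Q}_i]=j_M[P_i-Q_i]$ uses the elementary fact that $[p]+[q]=[p+q]$ when $pq=0$ (so you must also arrange $\tilde{Q}_i\leq\tilde{P}_i$, which is exactly what the chosen path delivers); your ``unravelling the Morita isomorphism'' sentence is covering that step but should really invoke this orthogonal-sum identity directly. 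Your flag about the sign convention is apt: Definition \ref{sfdef} as printed reads $\mathrm{sf}=[P_1-Q_1]-[P_0-Q_0]$, while both your computation and the paper's own proof require $\mathrm{sf}$ to correspond to $[\tilde{P}_0-\tilde{Q}_0]-[\tilde{P}_1-\tilde{Q}_1]$; this is a discrepancy in the source, not a gap in your argument.
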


\begin{proof}
We set $\tilde{P}_i:=\chi_{[0,\infty)}(\tilde{D}^M_{E,i}+\tilde{A}_i)$, for $i=0,1$, so $\rho(D^M_{E,i},A_i)=[\tilde{P}_i]$. It follows from \cite[Proposition 2.8]{PSsignInd} and \cite[Proposition 2.10]{LPGAFA} that we can find a path $(Q_t)_{t\in [0,1]}$ of spectral sections for $(D_{E,u}^M)_{u\in [0,1]}$ with the following property: for $i=0,1$, the projections $\tilde{Q}_i\in D^*(\tilde{M})^\Gamma$ have the property that $\tilde{P}_0-\tilde{Q}_0, \tilde{P}_1-\tilde{Q}_1\in C^*(\tilde{M})^\Gamma$ are projections and the class of $\mathrm{sf}(D_{E,\bullet}^M, P_0,P_1)$ is under $K_0(C^*_r(\Gamma))\cong K_0(C^*(\tilde{M})^\Gamma)$  equal to $[\tilde{P}_1-\tilde{Q}_1]-[\tilde{P}_0-\tilde{Q}_0]$ (see Definition \ref{sfdef} and Remark \ref{differenceofpr}), where (for $i=0,1$) the projections $\tilde{Q}_i$ are defined by taking the image of $Q_i\otimes_\pi\mathrm{id}_{\ell^2(\Gamma)}$ under the isomorphism 
$$\mathcal{B}(L^2(M,S_M\otimes E\otimes \mathcal{L}_M)\otimes_\pi \ell^2(\Gamma))\cong  \mathcal{B}(L^2(\tilde{M},S_{\tilde{M}}\otimes \tilde{E})).$$ 
The proposition now follows from the standard fact in $K$-theory that if $p$ and $q$ are projections in a ring $R$ with $pq=0$, then $[p]+[q]=[p+q]$ in $K_0(R)$ (see for example \cite[Lemma 3.4]{hilsumindexclass}).
\end{proof}

After suitable modifications to the definition of the $\rho$-invariant, both Theorem \ref{rhodefandcomq} and Lemma \ref{sfandrho} apply in the even-dimensional case (for more details see \cite{PSrhoInd} for instance).

\begin{theorem}[Delocalized APS-theorem]
\label{delapsthemps}
Let $W$ be a compact manifold, $F\to W$ a vector bundle, $D^W_F$ a Dirac operator on $F$ and $f:W\to B\Gamma$ a continuous mapping with associated Mishchenko bundle $\mathcal{L}_W$. For any trivializing operator $A$ of $D^{\partial W}_{F|_{\partial W}\otimes \mathcal{L}_{\partial W}}$, 
$$j_{W}\left(\ind_{APS}(D^W_{F\otimes \mathcal{L}_W},A)\right)=\iota_*\rho(D^{\partial W}_{F|_{\partial W}},A)\quad\mbox{in}\quad K_*(D^*(\tilde{W})^\Gamma),$$
where $\iota:\partial W\to W$ is the continuous inclusion and $\iota_*:K_*(D^*(\partial \tilde{W})^\Gamma)\to K_*(D^*(\tilde{W})^\Gamma)$ denotes the functorially associated mapping.
\end{theorem}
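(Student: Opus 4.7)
My plan is to pass to the complete manifold $\tilde{W}_\infty$ and realise both sides of the equation in $K_0(D^*(\tilde{W}_\infty)^\Gamma)$, then transport back to $\tilde W$. Set $D_\infty:=\tilde{D}^{W_\infty}_F+\tilde{A}_\infty$; by Theorem~\ref{howtodefaps} this is a self-adjoint, regular, $C^*_r(\Gamma)$-Fredholm operator whose higher index in $K_*(C^*(\tilde{W}_\infty)^\Gamma)\cong K_*(C^*_r(\Gamma))$ represents $\ind_{APS}(D^W_{F\otimes\mathcal{L}_W},A)$ (by the remark following the definition of $\ind_{APS}$ after Theorem~\ref{howtodefaps}). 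Because $\tilde{D}^{\partial W}_F+\tilde{A}$ is invertible, $0$ lies outside the essential spectrum of $D_\infty$; smooth approximation on the spectrum plus a standard wave-operator/finite-propagation argument then shows that $\Pi_\infty:=\chi_{[0,\infty)}(D_\infty)$ defines an element of $D^*(\tilde{W}_\infty)^\Gamma$.

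Next, I would construct a reference projection $Q\in D^*(\tilde{W}_\infty)^\Gamma$ isolating the asymptotic (boundary) part of $\Pi_\infty$. Pull $\tilde{\Pi}_\partial:=\chi_{[0,\infty)}(\tilde{D}^{\partial W}_F+\tilde{A})$ back along the projection $[0,\infty)\times\partial\tilde{W}\to\partial\tilde{W}$, multiply by a smooth cutoff of $t$ vanishing near $0$, and extend by zero across the remainder of $\tilde W_\infty$. The product form of $D_\infty$ on the cylindrical end gives $\Pi_\infty-Q\in C^*(\tilde{W}_\infty)^\Gamma$, and direct inspection (modelled on the coarse construction of the APS-index recalled after Theorem~\ref{howtodefaps} and as carried out in \cite{PSsignInd}) identifies $[\Pi_\infty-Q]\in K_0(C^*(\tilde{W}_\infty)^\Gamma)$ with the image of $\ind_{APS}(D^W_{F\otimes\mathcal{L}_W},A)$ under the natural isomorphism. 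Applying the inclusion $C^*\hookrightarrow D^*$ yields $j_{W_\infty}(\ind_{APS})=[\Pi_\infty]-[Q]$ in $K_0(D^*(\tilde{W}_\infty)^\Gamma)$.

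The last step is to transport this identity to $K_0(D^*(\tilde{W})^\Gamma)$ and to match $[\Pi_\infty]-[Q]$ with $\iota_*\rho(D^{\partial W}_F,A)$. By construction, $[Q]$ corresponds to the coarse push-forward of $[\tilde{\Pi}_\partial]=\rho(D^{\partial W}_F,A)$ along the inclusion $\partial\tilde{W}\hookrightarrow\tilde{W}_\infty$, so when pulled back to $\tilde{W}$ it gives $\iota_*\rho(D^{\partial W}_F,A)$ (up to a sign arising from orientations on the cylinder). For $[\Pi_\infty]$, I expect to show it becomes coarsely trivial in $K_0(D^*(\tilde{W})^\Gamma)$ after identification, because the interior contribution of $\Pi_\infty$ can be deformed through self-adjoint projections to a trivial model by homotoping $D_\infty$ through invertible operators across $\tilde{W}$. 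The transformation rules of Theorem~\ref{datachangingaps} and Lemma~\ref{sfandrho} — which show that both $j_W(\ind_{APS})$ and $\iota_*\rho$ change by the same spectral flow under change of $A$ — further let one reduce the identity to a conveniently chosen $A$ (for instance one for which $\tilde{D}^{\partial W}_F+\tilde{A}$ is positive), where explicit computation closes the argument.

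The main obstacle will be this last step: the precise coarse-geometric identification of the interior contribution of $[\Pi_\infty]$ as zero, together with the bookkeeping of the natural map $K_0(D^*(\tilde{W})^\Gamma)\to K_0(D^*(\tilde{W}_\infty)^\Gamma)$. This typically requires a Mayer--Vietoris argument for the decomposition $\tilde{W}_\infty=\tilde{W}\cup_{\partial\tilde{W}}([0,\infty)\times\partial\tilde{W})$ together with the observation that positive spectral projections of product-type invertible operators on a cylinder are coarsely trivial. A cleaner alternative is to mirror the mapping-cone formalism of \cite{PSrhoInd,PSsignInd}, in which the delocalised APS identity is derivable directly from the compatibility of their relative index construction with the bounded-transform description of the higher index.
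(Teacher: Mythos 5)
The paper does not prove Theorem~\ref{delapsthemps}: immediately after the statement it records that the result ``is stated as \cite[Theorem 3.1]{PSsignInd} and proved there in the even-dimensional case,'' with alternative proofs in the positive scalar curvature setting cited to \cite{PSrhoInd,xieyu}. So there is no in-paper argument to compare against; I will assess your sketch on its own terms against the cited proof.

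Your broad outline — pass to the cylindrical extension, form the positive spectral projection $\Pi_\infty=\chi_{[0,\infty)}(D_\infty)$, and isolate a reference projection $Q$ supported near infinity — is a reasonable starting point, but at least three of the concrete steps you rely on are wrong, not merely unfinished. (i) There is no ``transport back to $\tilde{W}$'': the collapse of the cylinder $\partial\tilde{W}\times[0,\infty)$ onto $\partial\tilde{W}$ is not proper (the preimage of any neighborhood of $\partial\tilde{W}$ in $\tilde{W}$ contains a whole half-cylinder), hence not a coarse map, so it induces nothing on $K_*(D^*(\cdot)^\Gamma)$; the only functorial map goes $K_*(D^*(\tilde{W})^\Gamma)\to K_*(D^*(\tilde{W}_\infty)^\Gamma)$, and you have not shown it is injective, so an identity established in $K_*(D^*(\tilde{W}_\infty)^\Gamma)$ does not descend. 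Your own last paragraph notices this, but as written it is a gap, not an inconvenience. (ii) The proposed homotopy of $D_\infty$ ``through invertible operators across $\tilde{W}$'' is unavailable: $D_\infty$ is $C^*_r(\Gamma)$-Fredholm but in general not invertible, and homotoping it through invertibles would force $\ind_{APS}(D^W_{F\otimes\mathcal{L}_W},A)=0$, i.e.\ precisely the trivial case. (iii) The fallback reduction ``to a conveniently chosen $A$ for which $\tilde{D}^{\partial W}_F+\tilde{A}$ is positive'' is impossible: $A$ is a smoothing (locally compact) perturbation, so $D^{\partial W}_{F|_{\partial W}\otimes\mathcal{L}_{\partial W}}+A$ has the same essential spectrum as the unperturbed Dirac operator, which is unbounded below; no trivializing $A$ is positive. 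Theorem~\ref{datachangingaps} and Lemma~\ref{sfandrho} let you compare any two admissible choices of $A$, but they never produce a choice outside the admissible class. The substantive content — the Mayer--Vietoris/relative-index bookkeeping for $\tilde{W}_\infty=\tilde{W}\cup_{\partial\tilde{W}}(\partial\tilde{W}\times[0,\infty))$ relating the $D^*$-class of $\Pi_\infty$ to the pushed-forward $\rho$-class — is exactly what \cite{PSrhoInd,PSsignInd} carry out, and your sketch does not supply it.
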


The delocalized APS-theorem is stated as \cite[Theorem 3.1]{PSsignInd} and proved there in the even-dimensional case. Other proofs of the delocalized APS-theorem (in the less general positive scalar curvature case) can be found in \cite{PSrhoInd,xieyu}.

\begin{remark}
\label{factoringremark}
Suppose $g_X:X\to B\Gamma$ is continuous and $\tilde{X}\cong E\Gamma\times_{g_X}X$ is a $\Gamma$-represented space with $X$ compact. If $f|_{\partial W}$ equals the restriction to the boundary of the composition of continuous maps $\hat{f}:W\to X$ and $g_X:X\to B\Gamma$, then 
\begin{equation}
\label{factoringrho}
j_{X}\left(\ind_{APS}(D^W_{F\otimes \mathcal{L}_W},A)\right)=(\hat{f}|_{\partial W})_*\left(\rho(D^{\partial W}_{F|_{\partial W}},A)\right),
\end{equation}
as elements in $K_*(D^*(\tilde{X})^\Gamma)$. The consequence \eqref{factoringrho} of the delocalized APS-theorem follows because $j_X:K_*(C^*_r(\Gamma))\to K_*(D^*(\tilde{X})^\Gamma)$ factors as $j_X=\hat{f}_*\circ j_W$ and $(\hat{f}|_{\partial W})_*:K_*(D^*(\partial \tilde{W})^\Gamma)\to K_*(D^*(\tilde{X})^\Gamma)$ factors as $(\hat{f}|_{\partial W})_*=\hat{f}_*\circ \iota_*$. This fact will play a major role in proving that the isomorphism between the geometric model and the analytic surgery group respects the bordism relation.
\end{remark}

\section{The isomorphism $\lambda_{an}^\mathcal{S}:\mathcal{S}^{geo}_*(X,\mathcal{L}_X)\xrightarrow{\sim} K_*(D^*(\tilde{X})^\Gamma)$}
\label{sectioniso}

Our goal is the construction of an isomorphism $\lambda_{an}^\mathcal{S}:\mathcal{S}^{geo}_*(X,\mathcal{L}_X)\to K_*(D^*(\tilde{X})^\Gamma)$, which is compatible with the analytic surgery exact sequence of Theorem \ref{anasurgthm}. The map is defined using higher APS-index theory and higher $\rho$-invariants. The isomorphism $\lambda_{an}^\mathcal{S}$ (in a sense) measures the failure of the delocalized APS-index formula to hold when the interior bundle on the manifold with boundary is a general $C^*_r(\Gamma)$-bundle; this will be made more precise below in Remark \ref{delapsandszero}.

\subsection{Decorated surgery cycles and associated higher invariants}
\label{secttwoone}

In order to define the mapping $\lambda_{an}^\mathcal{S}$, we need to recall some notions from the first two paper in this series \cite{paperI,paperII}. Similar to above, we assume that $X$ is a compact metric space and $g_X:X\to B\Gamma$ is a fixed continuous mapping. The space $\tilde{X}:=E\Gamma\times_{g_X}X$ can naturally be equipped with the structure of a $\Gamma$-presented space. The Mishchenko bundle $\mathcal{L}_{B\Gamma}:=E\Gamma\times_\Gamma C^*_r(\Gamma)\to B\Gamma$ is a locally trivial bundle of finitely generated projective $C^*_r(\Gamma)$-modules and 
$$\mathcal{L}_X:=\tilde{X}\times_\Gamma C^*_r(\Gamma)=g_X^*\mathcal{L}_{B\Gamma}.$$ 
The Mishchenko bundle can be defined with respect to other completions as well. In fact, the geometric description of the analytic surgery group holds in an even larger generality, see \cite{paperI}. 

Whenever $(Z,Q,g)$ is a triple consisting of a compact Hausdorff space $Z$, a closed subset $Q\subseteq Z$ and a continuous mapping $f:Q\to X$, we can define relative $K$-theory cocycles for assembly on $(Z,Q,f)$ along $\mathcal{L}_X$ to be quadruples $\xi=(\mathcal{E}_{C^*_r(\Gamma)},\mathcal{E}_{C^*_r(\Gamma)}',E_\C,E_\C',\alpha)$ consisting of 
\begin{itemize}
\item a pair of locally trivial bundles of finitely generated projective $C^*_r(\Gamma)$-modules $\mathcal{E}_{C^*_r(\Gamma)},\mathcal{E}_{C^*_r(\Gamma)}'\to Z$;
\item a pair of vector bundles $E_\C,E_\C'\to Q$;
\item an isomorphism $\alpha:\mathcal{E}_{C^*_r(\Gamma)}|_Q\oplus (E_\C'\otimes f^*\mathcal{L}_X)\to \mathcal{E}_{C^*_r(\Gamma)}'|_Q\oplus (E_\C\otimes f^*\mathcal{L}_X)$ of $C^*_r(\Gamma)$-bundles.
\end{itemize}
This definition is \cite[Definition 1.8]{paperI}. Equipped with a suitable equivalence relation, the relative $K$-theory cocycles for assembly on $(Z,Q,f)$ along $\mathcal{L}_X$ can be used to build a $K$-theory group fitting together with assembly on the level of $K$-theory. For details, see \cite[Section 1.3]{paperI}. We will tacitly assume that the fibers of $\mathcal{E}_{C^*_r(\Gamma)},\mathcal{E}_{C^*_r(\Gamma)}'\to Z$ are full Hilbert $C^*_r(\Gamma)$-modules. Full fibers can always be obtained without changing the $K$-theory class after adding a summand of the trivial $C^*_r(\Gamma)$-bundle to $\mathcal{E}_{C^*_r(\Gamma)}$ and $\mathcal{E}_{C^*_r(\Gamma)}'$.

Recall from \cite[Definition 2.1]{paperI} that a cycle for $\mathcal{S}^{geo}_0(X,\mathcal{L}_X)$ is a triple $(W,\xi,f)$ where:
\begin{itemize}
\item $W$ is an even-dimensional compact spin$^c$-manifold with boundary;
\item $f:\partial W\to X$ is a continuous mapping;
\item $\xi$ is a relative $K$-theory cocycle for assembly on $(W,\partial W,f)$ along $\mathcal{L}_X$.
\end{itemize}
From \cite[Definition 3.1]{paperII} we recall that a quintuple $(D_{\mathcal{E}},D_{\mathcal{E}'},D_E,D_{E'})$ is called a choice of Dirac operators on $\xi$ if 
\begin{itemize}
\item $D_{\mathcal{E}}$ and $D_{\mathcal{E}'}$ are Dirac operators on $S_W\otimes \mathcal{E}_{C^*_r(\Gamma)}$ respectively $S_W\otimes \mathcal{E}^{\prime}_{C^*_r(\Gamma)}$ defined from $C^*_r(\Gamma)$-Clifford connections of product type near $\partial W$. 
\item $D_{E}$ and $D_{E'}$ are Dirac operators on $S_{\partial W}\otimes E_{\field{C}}$ respectively $S_{\partial W}\otimes E_{\field{C}}^{\prime}$. 
\end{itemize}
Since the Dirac operators $D_{\mathcal{E}}$ and $D_{\mathcal{E}'}$ are defined from connections of product type near the boundary, there are associated boundary Dirac operators $D_{\mathcal{E}}^{\partial W}$ and $D_{\mathcal{E}'}^{\partial W}$ acting on $\mathcal{E}\otimes S_{\partial W}$ respectively $\mathcal{E}'\otimes S_{\partial W}$.

\begin{define}[Definition 3.13 of \cite{paperII}]
Let $(W,\xi,f)$ be a cycle for $\mathcal{S}^{geo}_*(X,\mathcal{L}_X)$. A decoration of $\xi$ is a collection $\Xi=(\xi,(D_{\mathcal{E}},D_{\mathcal{E}'},D_E,D_{E'}),(A^\mathcal{E},A^{\mathcal{E}'},A))$ where $(D_{\mathcal{E}},D_{\mathcal{E}'},D_E,D_{E'})$ is a choice of Dirac operators and $(A^\mathcal{E},A^{\mathcal{E}'},A)$ is a choice of trivializing operators for $(W,\xi,f)$, i.e., smoothing operators in the Mishchenko-Fomenko calculus of the following type:
\begin{itemize}
\item $A^{\mathcal{E}}\in \Psi^{-\infty}_{C^*_r(\Gamma)}(\partial W,\mathcal{E}\otimes S_{\partial W})$ is a trivializing operator for $D^{\partial W}_{\mathcal{E}}$;
\item $A^{\mathcal{E}'}\in \Psi^{-\infty}_{C^*_r(\Gamma)}(\partial W,\mathcal{E}'\otimes S_{\partial W})$ is a trivializing operator for $D^{\partial W}_{\mathcal{E}'}$;
\item $A\in \Psi^{-\infty}_{C^*_r(\Gamma)}(\partial W\dot{\cup}-\partial W,(E'\otimes S_{\partial W}\otimes f^*\mathcal{L})\,\dot{\cup}-(E\otimes S_{\partial W}\otimes f^*\mathcal{L}))$ is a trivializing operator for $\left(D^{\partial W}_{E'\otimes f^*\mathcal{L}}\dot{\cup}-D^{\partial W}_{E\otimes f^*\mathcal{L}}\right)$.
\end{itemize}

A decoration of a cycle $(W,\xi,f)$ is a triple $(W,\Xi,f)$ where $\Xi$ is a decoration of $\xi$.
\end{define}

It was shown in \cite[Lemma 3.14]{paperII} that any cycle admits a decoration; the proof is based on \cite[Theorem 3]{LP} and \cite[Proposition 10]{LPGAFA}. 

In order to construct the higher APS-index class and relevant $\rho$-invariants, we need to introduce further notation. Let $(W,\Xi,f)$ be a decorated cycle for $\mathcal{S}^{geo}_*(X,\mathcal{L}_X)$. Associated with the decoration $\Xi$, we construct the Dirac operator $\bar{D}_\Xi^{\partial W\times [0,1]}$, acting on the bundle $$S_{\partial W\times [0,1]}\otimes (\mathcal{E}_{C^*_r(\Gamma)}|_{\partial W} \oplus E_\C'\otimes f^*\mathcal{L}_X)$$ over the cylinder $\partial W\times [0,1]$, by means of specifying the boundary operator $D_\mathcal{E}^{\partial W}\oplus -D_{E'\otimes f^*\mathcal{L}_X}$ on $\partial W\times \{0\}$ and the boundary operator $\alpha^*(-D_{\mathcal{E}'}^{\partial W}\oplus D_{E\otimes f^*\mathcal{L}_X})$ on $\partial W\times \{1\}$. 

We also construct the smoothing operator $A^\Xi$ on the bundle 
$$S_{\partial W\times \{0,1\}}\otimes (\mathcal{E}_{C^*_r(\Gamma)}|_{\partial W} \oplus E_\C'\otimes f^*\mathcal{L}_X)$$
by taking 
$$A^\Xi:=\mathrm{id}\dot{\cup}\alpha^*\left(\left(A^\mathcal{E}\dot{\cup} (-A^{\mathcal{E}'})\right)\oplus A\right).$$ 
By construction, the boundary operator $\bar{D}_\Xi^{\partial W\times \{0,1\}}$ of $D_\Xi^{\partial W\times [0,1]}$ has the property that $\bar{D}_\Xi^{\partial W\times \{0,1\}}+A^\Xi$ is invertible.

\begin{define}
\label{defininginvaria}
For a decorated cycle $(W,\Xi,f)$ for $\mathcal{S}^{geo}_*(X,\mathcal{L}_X)$, we define the classes 
\begin{align*}
\ind_{APS}(W,\Xi,f)&:=\ind_{APS}(D_\mathcal{E}^W,A^\mathcal{E})\\
&\qquad+\ind_{APS}(D_{\mathcal{E}'}^{-W},-A^{\mathcal{E}'})-\ind_{APS}(\bar{D}_\Xi,A^\Xi)\in K_*(C^*_r(\Gamma));\\
\rho(W,\Xi,f)&:=f_*\left[\rho(D_{E}\dot{\cup}-D_{E'},A)\right]\in K_*(D^*(\tilde{X})^\Gamma);\\
\lambda^\mathcal{S}_{an}(W,\Xi,f)&:=j_X\ind_{APS}(W,\Xi,f)-\rho(W,\Xi,f)\in K_*(D^*(\tilde{X})^\Gamma).
\end{align*}
\end{define}

The construction of the APS-index of a decorated cycle can be found in \cite[Definition 3.15]{paperII}.

\begin{remark}
Somewhat informally, the APS-index of a decorated cycle is to be thought of as the sum of the APS-index of $(W,\mathcal{E}_{C^*_r(\Gamma)})\dot{\cup}(-W,\mathcal{E}'_{C^*_r(\Gamma)})$  ``glued in the $K$-theory sense" along the cylinder $\partial W\times [0,1]$ with the bundle data $\mathcal{E}_{C^*_r(\Gamma)}\oplus E'_\C\otimes \mathcal{L}_X$. Still informally, the higher $\rho$-class of a decorated cycle should be thought of as a difference of the higher $\rho$-class of $E_\C$ with that of $E'_\C$. In general, however, the higher $\rho$-class of $E_\C$ and $E'_\C$ are not separately well-defined!

The informal discussion in the previous paragraph is motivated by the following special case: suppose that $\mathcal{E}'_{C^*_r(\Gamma)}=E'_\C=0$ and we take the decoration 
$$\Xi=((\mathcal{E}_{C^*_r(\Gamma)},E_\C,\alpha),(D_\mathcal{E},D_E),(A^\mathcal{E},A)).$$ 
Then, Theorem \ref{gluingthm} implies that $\ind_{APS}(W,\Xi,f)=\ind_{APS}(\hat{D}_\mathcal{E},A)$, where $\hat{D}_\mathcal{E}$ coincides with $D_\mathcal{E}$ outside a neighborhood of $\partial W$ where $\hat{D}_\mathcal{E}$ is of product type with boundary operator $\alpha^*(D_{E\otimes \mathcal{L}})$. In particular,
$$\lambda^\mathcal{S}_{an}(W,\Xi,f)=j_X[\ind_{APS}(\hat{D}_\mathcal{E},A)]-f_*[\rho(D_{E},A)]\in K_*(D^*(\tilde{X})^\Gamma).$$
\end{remark}

\begin{lemma}
\label{depeondec}
Given a cycle $(W,\xi,f)$ for $\mathcal{S}_*(X,\mathcal{L}_X)$, the class $\lambda^\mathcal{S}_{an}(W,\Xi,f)\in K_*(D^*(\tilde{X})^\Gamma)$ does not depend on the choice of decoration $(W,\Xi,f)$ of $(W,\xi,f)$.
\end{lemma}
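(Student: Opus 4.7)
The plan is to join two arbitrary decorations $\Xi_0$ and $\Xi_1$ of a fixed cycle $(W,\xi,f)$ by a continuous path, compute the resulting change in each piece of $\lambda^\mathcal{S}_{an}(W,\Xi,f)$ by spectral flow techniques, and exhibit the cancellations. By a stabilization argument analogous to that in the proof of \cite[Lemma 3.14]{paperII} (based on \cite[Proposition 10]{LPGAFA}), the space of decorations of $\xi$ is path-connected after possibly adding trivial summands, so one can choose a piecewise linear path $\Xi_t$ of Dirac operators and trivializing operators from $\Xi_0$ to $\Xi_1$. By linearity of the constructions and additivity of spectral flow, it suffices to treat paths in which either only the Dirac operators vary (with trivializing operators fixed) or only the trivializing operators vary (with Dirac operators fixed); both are handled by the same combinatorial scheme described below.

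First, I would apply Theorem \ref{datachangingaps} separately to each of the three APS-index contributions $\ind_{APS}(D_\mathcal{E}^W,A^\mathcal{E})$, $\ind_{APS}(D_{\mathcal{E}'}^{-W},-A^{\mathcal{E}'})$, and $\ind_{APS}(\bar D_\Xi,A^\Xi)$ in the definition of $\ind_{APS}(W,\Xi,f)$. This yields
\[
\ind_{APS}(W,\Xi_1,f)-\ind_{APS}(W,\Xi_0,f)\in K_*(C^*_r(\Gamma))
\]
as a sum of three boundary spectral flows. Next, I would apply Lemma \ref{sfandrho} to the boundary trivializing operator $A$ to express $\rho(W,\Xi_1,f)-\rho(W,\Xi_0,f)$ as $f_*\circ j_{\partial W}$ applied to the spectral flow of the interpolating path of $D_E\,\dot{\cup}\,D_{E'}$ on $\partial W\,\dot{\cup}\,-\partial W$.

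The heart of the argument is the cancellation coming from the structure of $\bar{D}_\Xi$: its boundary operator on $\partial W\times\{0\}$ is $D_\mathcal{E}^{\partial W}\oplus D_{E'\otimes f^*\mathcal{L}_X}$, and on $\partial W\times\{1\}$ it is $-\alpha^*(D_{\mathcal{E}'}^{\partial W}\oplus D_{E\otimes f^*\mathcal{L}_X})$. The spectral flow contribution along the $D_\mathcal{E}^{\partial W}$ factor inside the cylinder index enters with sign opposite to the spectral flow appearing in the difference of $\ind_{APS}(D_\mathcal{E}^W,A^\mathcal{E})$, because $\partial W\times\{0\}$ inherits from the cylinder the reversed collar orientation relative to its orientation as $\partial W\subset W$; the two contributions therefore cancel in $K_*(C^*_r(\Gamma))$. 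An analogous cancellation occurs on $\partial W\times\{1\}$ between the $D_{\mathcal{E}'}^{\partial W}$ factor of the cylinder contribution and the change in $\ind_{APS}(D_{\mathcal{E}'}^{-W},-A^{\mathcal{E}'})$, with the conjugation by the $C^*_r(\Gamma)$-bundle isomorphism $\alpha$ not affecting spectral flow classes. What remains after these cancellations is exactly the spectral flow of $(D_{E'\otimes f^*\mathcal{L}_X})\,\dot{\cup}\,-(D_{E\otimes f^*\mathcal{L}_X})$, and applying $j_X=f_*\circ j_{\partial W}$ (using the factorization through $\partial W$, cf.\ Remark \ref{factoringremark}) identifies it with the change in $\rho(W,\Xi,f)$ produced by Lemma \ref{sfandrho}. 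Hence $j_X\ind_{APS}(W,\Xi_1,f)-j_X\ind_{APS}(W,\Xi_0,f)=\rho(W,\Xi_1,f)-\rho(W,\Xi_0,f)$, giving the desired equality of $\lambda^\mathcal{S}_{an}$.

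The main obstacle I expect is the careful bookkeeping of the cylinder contribution: verifying that the reshuffling of boundary data on $\partial W\times\{1\}$ through $\alpha$ produces spectral flows with exactly the signs needed to cancel against the flows from the two separate APS-indices, and that the residual contribution identifies under $j_X$ with the spectral flow appearing in Lemma \ref{sfandrho}. The convexity/stabilization argument guaranteeing that one can maintain invertibility of boundary operators plus trivializing operators along the path is standard but must be invoked carefully, since trivializing operators are required to be genuinely invertible along the path and not merely generically so.
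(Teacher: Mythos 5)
Your proposal is correct and follows essentially the same route as the paper's proof: interpolate the Dirac operators, compute the change in the three APS-index terms via Theorem \ref{datachangingaps} and the change in the $\rho$-term via Lemma \ref{sfandrho}, and observe that the structure of the cylinder operator $\bar D_\Xi$ (its boundary operators on $\partial W\times\{0\}$ and $\partial W\times\{1\}$ being the negatives of the boundary contributions from $W$ and $-W$, up to conjugation by $\alpha$) forces the four resulting spectral flow classes to sum to zero in $K_*(C^*_r(\Gamma))$, whence $\lambda^\mathcal{S}_{an}(W,\Xi_0,f)-\lambda^\mathcal{S}_{an}(W,\Xi_1,f)=j_X(0)=0$.

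One small remark: the preliminary stabilization/path-connectedness step you flag as an obstacle is actually unnecessary. The Leichtnam--Piazza spectral flow of Definition \ref{sfdef} is defined for a family of Dirac operators $(D_u)_{u\in[0,1]}$ together with spectral sections $P_0$, $P_1$ at the endpoints only; the interpolation is provided internally by a family spectral section $(Q_t)$ for the path of Dirac operators, not by a path of invertible perturbations. So one never needs to connect the trivializing operators $A_0$ and $A_1$ by a path maintaining invertibility, and the paper's proof simply interpolates the Dirac operators and reads off all four spectral flows directly from Theorem \ref{datachangingaps} and Lemma \ref{sfandrho}.
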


\begin{proof}
For $i=0$ and $1$, let $\Xi_i=(\xi,(D_{\mathcal{E}_i},D_{\mathcal{E}',i},D_{E,i},D_{E',i}),(A^\mathcal{E}_i,A^{\mathcal{E}'}_i,A_i))$ be two decorations of $\xi$ and 
\begin{align*}
P_{\mathcal{E},i}&:=\chi_{[0,\infty)}(D^{\partial W}_{\mathcal{E},i}+A_i^\mathcal{E}), \\
P_{\mathcal{E}',i}&:=\chi_{[0,\infty)}(-D^{\partial W}_{\mathcal{E}',i}-A_i^{\mathcal{E}'}), \\
\overline{P}_i&:=\chi_{[0,\infty)}(\overline{D}_{\Xi_i}^{\partial W\times \{0,1\}} + A^{\Xi_i})\quad\mbox{and}\\
P_i&:=\chi_{[0,\infty)}(D_{E\otimes f^*\mathcal{L}_X,i}\dot{\cup}-D_{E'\otimes f^*\mathcal{L}_X,i}+A_i).
\end{align*}
We take a family $(D_{\mathcal{E},u},D_{\mathcal{E}',u},D_{E,u},D_{E',u})_{u\in [0,1]}$ interpolating between the two choices of Dirac operators. Lemma \ref{sfandrho} implies that
\begin{align*}
\rho(W,\Xi_0,f)-\rho(W,\Xi_1,f)=-j_X\mathrm{sf}(D_{E\otimes f^*\mathcal{L}_X,\bullet}\dot{\cup}-D_{E'\otimes f^*\mathcal{L}_X,\bullet}, P_0,P_1),
\end{align*}
while Theorem \ref{datachangingaps} implies that
\begin{align*}
\ind_{APS}(W,\Xi_0,f)-&\ind_{APS}(W,\Xi_1,f)\\
=&\mathrm{sf}(D_{\mathcal{E},\bullet}, P_{\mathcal{E},0},P_{\mathcal{E},1})-\mathrm{sf}(D_{\mathcal{E}',\bullet}, P_{\mathcal{E}',0},P_{\mathcal{E}',1})+\mathrm{sf}(\bar{D}_{\Xi,\bullet}, \bar{P}_{0},\bar{P}_{1}).
\end{align*}
It follows from the construction of $\bar{D}_{\Xi,\bullet}$ that 
\begin{align*}
\mathrm{sf}(\bar{D}_{\Xi,\bullet}, &\bar{P}_{0},\bar{P}_{1})+\mathrm{sf}(D_{E\otimes f^*\mathcal{L}_X,\bullet}\dot{\cup}-D_{E'\otimes f^*\mathcal{L}_X,\bullet}, P_0,P_1)\\
&+\mathrm{sf}(D_{\mathcal{E},\bullet}, P_{\mathcal{E},0},P_{\mathcal{E},1})-\mathrm{sf}(D_{\mathcal{E}',\bullet}, P_{\mathcal{E}',0},P_{\mathcal{E}',1})=0 \quad\mbox{in}\quad K_*(C^*_r(\Gamma)).
\end{align*}
We conclude that $\lambda^\mathcal{S}_{an}(W,\Xi_0,f)-\lambda^\mathcal{S}_{an}(W,\Xi_1,f)=j_X(0)=0$.
\end{proof}

Motivated by Lemma \ref{depeondec}, we sometimes write $\lambda^\mathcal{S}_{an}(W,\xi,f)$ for $\lambda^\mathcal{S}_{an}(W,\Xi,f)$ for some choice of decoration $(W,\Xi,f)$.

\begin{theorem}
\label{themaplambda}
The mapping $\lambda_{an}^\mathcal{S}:\mathcal{S}_*^{geo}(X;\mathcal{L}_X)\to K_*(D^*(\tilde{X})^\Gamma)$
is a well defined isomorphism. Furthermore, it fits into the following commutative diagram:
\begin{equation}
\label{commdiagwithlambda}
\begin{CD}
K_*^{geo}(pt;C^*_r(\Gamma)) @>r >> \mathcal{S}^{geo}_*(X;\mathcal{L}_X)  @> \delta >> K^{geo}_{*+1}(X)  \\
@V\lambda_{an} VV @V\lambda_{an}^\mathcal{S} VV @V\lambda_{an} VV  \\
K_*(C^*_r(\Gamma)) @>j_X >> K_*(D^*(\tilde{X})^\Gamma) @>q_X>> K_*(D^*(\tilde{X})^\Gamma/C^*(\tilde{X})^\Gamma)   \\
\end{CD}
\end{equation}
\end{theorem}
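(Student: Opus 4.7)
The plan is to split the proof into three stages: (i) check that $\lambda^\mathcal{S}_{an}$ descends to the geometric surgery group by respecting its equivalence relations, (ii) verify commutativity of the diagram \eqref{commdiagwithlambda}, and (iii) extend the diagram to a morphism of full six-term exact sequences and conclude via the five lemma, using that the outer vertical maps are the classical Baum--Douglas isomorphisms.

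Lemma \ref{depeondec} already takes care of independence of decoration, so well-definedness reduces to disjoint union, bordism, and vector bundle modification. Disjoint union is immediate from additivity of the APS-index and of the $\rho$-class under direct sums. Vector bundle modification should be reduced, piece by piece, to the standard case for $\lambda_{an}\colon K^{geo}_*\to K^{an}_*$, together with a Thom-type identity for the APS-index on the sphere bundle of the modification and compatibility of spectral sections under this lifting, so that the extra bundle data cancels between the $\ind_{APS}$ and $\rho$ contributions. The hard step is bordism invariance: given a bordism $(Z,\eta,G)$ with $\partial(Z,\eta,G)=(W,\xi,f)$, the strategy is to decorate $(Z,\eta,G)$ extending the chosen decoration on the boundary, apply the gluing formula (Theorem \ref{gluingthm}) to split $\ind_{APS}$ of the interior into the contribution from $(W,\Xi,f)$ and a contribution whose $j_X$-image is controlled by the delocalized APS-theorem (Theorem \ref{delapsthemps}) in its factored form (Remark \ref{factoringremark}). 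The factoring of $G|_{\partial Z}$ through $X$ is precisely what identifies $j_X\ind_{APS}$ with the pushforward of $\rho$, forcing $\lambda^\mathcal{S}_{an}(W,\xi,f)=0$.

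For the right square in \eqref{commdiagwithlambda}, the image $j_X\ind_{APS}(W,\Xi,f)$ lies in the range of $C^*(\tilde X)^\Gamma\to D^*(\tilde X)^\Gamma$, so it vanishes under $q_X$. What remains is $q_Xf_*[\rho(D_E\dot{\cup} D_{E'},A)]$, which by Theorem \ref{rhodefandcomq} and naturality of $q_X$ equals $f_*([\partial W]\cap([E]-[E']))$, and this is exactly $\lambda_{an}\delta(W,\xi,f)$. For the left square, a cycle in $K^{geo}_*(\mathrm{pt};C^*_r(\Gamma))$ has empty boundary when regarded under the map $r$, so $\rho(W,\Xi,f)=0$ and $\ind_{APS}$ reduces to the ordinary Mishchenko--Fomenko index; composing with $j_X$ recovers the Baum--Douglas index map followed by $j_X$, which is the definition of the composition on the lower row.

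To conclude, complete \eqref{commdiagwithlambda} to a morphism of full six-term exact sequences: the upper row is the geometric surgery sequence from \cite{paperI}, the lower row is the analytic surgery sequence of Theorem \ref{anasurgthm}. The flanking vertical maps are the Baum--Douglas isomorphism $\lambda_{an}\colon K^{geo}_*(X)\to K^{an}_*(X)$ and the map $K^{geo}_*(\mathrm{pt};C^*_r(\Gamma))\to K_*(C^*_r(\Gamma))$ realized through the Mishchenko--Fomenko index, which under the identifications of Theorem \ref{thenatiso} are isomorphisms. Commutativity of the outstanding squares follows from the same $\ind_{APS}/\rho$ compatibility plus naturality of assembly. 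The five lemma then forces $\lambda^\mathcal{S}_{an}$ to be an isomorphism. The main obstacle throughout is bordism invariance, where the delicate point is matching trivializing operators and spectral sections between boundary and bordism, and this is where we expect to lean most heavily on the framework of \cite{paperII} together with Theorems \ref{gluingthm} and \ref{delapsthemps}.
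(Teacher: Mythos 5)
Your overall architecture matches the paper exactly: establish well-definedness cycle by cycle, check commutativity of \eqref{commdiagwithlambda} using $q_X\circ j_X=0$ on the right and the closed-manifold reduction on the left, complete to a map of full six-term sequences, and invoke the five lemma with the Baum--Douglas isomorphisms on the flanking vertical arrows. The paper's treatment of bordism also follows your route: it first handles cycles whose bundle data extends over the interior (where the delocalized APS theorem, Theorem~\ref{delapsthemps}, in its factored form from Remark~\ref{factoringremark}, forces $\lambda^\mathcal{S}_{an}=0$), then proves a gluing formula (Lemma~\ref{gluingandlambda} via Theorem~\ref{gluingthm}), and combines these (Proposition~\ref{bordisnovlam}). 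The one caution here is that the relation $(W,\xi,f)\sim_{bor}0$ in $\mathcal{S}^{geo}_*$ is not simply ``$(W,\xi,f)$ bounds'': it says $W$ glues to a cycle $(W_0,\xi_0,f)$ with extendable bundle data so that the union bounds. Your phrasing glosses this, but the two-lemma decomposition you would be forced into matches the paper.

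The place where your proposal underestimates the difficulty is vector bundle modification. Your ``Thom-type identity for the APS-index'' is exactly Proposition~\ref{vbundmodofaps}, obtained from the decomposition $D^V_{\mathcal{E}}=D_\mathcal{E}\oplus D_{\mathpzc{E}}$ with $D_{\mathpzc{E}}$ invertible (Lemma~\ref{decomposingmoddir}), so that part is sound. But the $\rho$-term does \emph{not} cancel against $\ind_{APS}$; the two are shown to be \emph{separately} invariant under modification. For $\rho$ this is nontrivial: the same decomposition produces an extra class $[\chi_{[0,\infty)}(D_{\mathpzc{E}_{L^2}})]\in K_*(D^*(\tilde{M})^\Gamma)$, and this is not a priori zero --- the invertibility of $D_{\mathpzc{E}_{L^2}}$ does not by itself kill its contribution to the structure group, since $D^*(\tilde M)^\Gamma$ records the positive spectral projection, not just the Fredholm index. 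The paper must construct an anticommuting involution $J\in D^*(\tilde M)^\Gamma$ built from the grading on the Bott bundle and $D_Q|D_Q|^{-1}$ (Lemma~\ref{thejlem}) and then prove a general vanishing criterion (Lemma~\ref{jandplemma}, the ``swindle'') to show this class dies. Your proposal has no analogue of this step, and without it the verification that $\lambda^\mathcal{S}_{an}$ respects vector bundle modification is genuinely incomplete.
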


\begin{remark}
The difficult part of the proof is showing that $\lambda^\mathcal{S}_{an}$ is well defined; the proof of that fact will occupy Subsection \ref{bordsubse} and \ref{vecbsubse}. Assuming that $\lambda^\mathcal{S}_{an}$ is well defined, the left part of the diagram \eqref{commdiagwithlambda} commutes since
$$\lambda^\mathcal{S}_{an}(r(M,\mathcal{E}_{C^*_r(\Gamma)}))=j_X\ind_{AS}(M,\mathcal{E}_{C^*_r(\Gamma)})= j_X\lambda_{an}(M,\mathcal{E}_{C^*_r(\Gamma)}).$$
That the right hand side of the diagram commutes follows from Theorem \ref{rhodefandcomq}, which implies that
\begin{align*}
q_X(\lambda^\mathcal{S}_{an}(W,\Xi,f))&=q_X\rho(W,\Xi,f)=f_*\left([M]\cap [E_\C]\right)-f_*\left([M]\cap [E_\C']\right)\\
&=\lambda_{an}(M,E_\C,f)-\lambda_{an}(M,E_\C',f)=\lambda_{an}(\delta(W,\xi,f)).
\end{align*}
The fact that $\lambda^\mathcal{S}_{an}$ is an isomorphism follows from the five lemma once $\lambda^\mathcal{S}_{an}$ is well defined. In \cite{PSrhoInd,PSsignInd}, the delocalized APS-theorem is used to prove commutativity of diagrams of mappings into the analytic surgery exact sequence (see Theorem \ref{anasurgthm}). We only use the delocalized APS-theorem to prove that $\lambda^\mathcal{S}_{an}$ respects bordism.
\end{remark}

\begin{remark}
We will focus on the even-dimensional case, $*=0$. The odd-dimensional case is proved analogously. An alternative route to the isomorphism $\lambda_{an}^\mathcal{S}:\mathcal{S}_1^{geo}(X;\mathcal{L}_X)\to K_1(D^*(\tilde{X})^\Gamma)$ is as follows. View $\tilde{X}\times \field{R}$ as a $\Gamma\times \Z$-presented space with $\tilde{X}\times \field{R}/\Gamma\times \Z=X\times S^1$. One shows that there is a commutative diagram with exact rows 
\small
\[
\begin{CD}
0 @>>>\mathcal{S}_1^{geo}(X,\mathcal{L}_X)@>>>\mathcal{S}_0^{geo}(X\times S^1,\mathcal{L}_{X\times S^1}) @>>>  \mathcal{S}_0^{geo}(X,\mathcal{L}_X) @>>> 0\\
@.@. @V\lambda^\mathcal{S}_{an}VV @ V\lambda^\mathcal{S}_{an}VV@. \\
0 @>>>K_1(D^*(\tilde{X})^\Gamma)@>>>K_0(D^*(\tilde{X}\times \field{R})^{\Gamma\times \field{Z}}) @>>> K_0(D^*(\tilde{X})^\Gamma)@>>> 0,\\
\end{CD}
\]
\normalsize
and then defines $\lambda_{an}^\mathcal{S}:\mathcal{S}_1^{geo}(X;\mathcal{L}_X)\to K_1(D^*(\tilde{X})^\Gamma)$ from this diagram. We note that the map $\mathcal{S}_1^{geo}(X,\mathcal{L}_X)\to \mathcal{S}_0^{geo}(X\times S^1,\mathcal{L}_{X\times S^1})$ is given by product with $[S^1]\in K_1(S^1)$ using $S^1=B\Z$ and \cite[Section 4]{paperI}, while the map $\mathcal{S}_0^{geo}(X\times S^1,\mathcal{L}_{X\times S^1}) \to  \mathcal{S}_0^{geo}(X,\mathcal{L}_X)$ is the forgetful one. The maps in the lower row are more intricate and the reconciliation of this definition of $\lambda_{an}^\mathcal{S}:\mathcal{S}_1^{geo}(X;\mathcal{L}_X)\to K_1(D^*(\tilde{X})^\Gamma)$ with the definition using higher APS-theory would follow from an explicit product formula in analytic surgery theory.
\end{remark}

\subsection{Proof that $\lambda_{an}^\mathcal{S}$ respects bordism}
\label{bordsubse}

We must show that $\lambda_{an}^\mathcal{S}$ is well-defined; in this subsection the bordism relation is considered. As mentioned, the delocalized APS-theorem plays a key role. The precise relationship between the delocalized APS-theorem and bordisms in $\mathcal{S}_*^{geo}(X,\mathcal{L}_X)$ is the content of the next result.

\begin{lemma}
\label{delapsforcycles}
Assume that $(W_0,\xi_0,f_0)$ is a cycle for $\mathcal{S}^{geo}_0(X;\mathcal{L}_X)$, with 
$$\xi_0=(\mathcal{E}_{C^*_r(\Gamma)},\mathcal{E}'_{C^*_r(\Gamma)},E_\C,E_\C',\alpha).$$ 
Assume that 
\begin{itemize}
\item $f_0$ extends to a continuous mapping $g:W_0\to X$; 
\item there exists vector bundles $F_\C,F_\C\to W_0$ extending $E_\C,E'_\C\to \partial W_0$;
\item $\alpha$ extends to an isomorphism over $W_0$:
$$\beta:\mathcal{E}_{C^*_r(\Gamma)}\oplus F_\C'\otimes g^*\mathcal{L}_X\cong \mathcal{E}_{C^*_r(\Gamma)}\oplus F_\C\otimes g^*\mathcal{L}_X.$$ 
\end{itemize}
Then $\lambda^\mathcal{S}_{an}(W_0,\xi_0,f_0)=0$.
\end{lemma}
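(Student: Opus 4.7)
The plan is to exploit the freedom afforded by Lemma \ref{depeondec} to choose a decoration $\Xi_0$ of $\xi_0$ adapted to the extensions, and then reduce the computation to the delocalized APS theorem in the factored form of Remark \ref{factoringremark}. The essential observation is that $\beta$ identifies $\mathcal{E}\oplus F_\C'\otimes g^*\mathcal{L}_X$ with $\mathcal{E}'\oplus F_\C\otimes g^*\mathcal{L}_X$ over all of $W_0$, so that in $K$-theory the general $C^*_r(\Gamma)$-bundle difference $[\mathcal{E}]-[\mathcal{E}']$ is represented by the Mishchenko-twisted vector-bundle difference $[F_\C\otimes g^*\mathcal{L}_X]-[F_\C'\otimes g^*\mathcal{L}_X]$. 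It is precisely the latter sort of bundle for which Remark \ref{factoringremark} converts an interior APS index over $W_0$ into a boundary higher $\rho$-class on $\partial W_0$, pushed forward along $g|_{\partial W_0}=f$ into $K_*(D^*(\tilde X)^\Gamma)$.

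Concretely, I would pick Dirac operators $D_F^{W_0}, D_{F'}^{W_0}$ on $F_\C\otimes g^*\mathcal{L}_X, F_\C'\otimes g^*\mathcal{L}_X$ over $W_0$ of product type near $\partial W_0$, with boundary restrictions $D_E^{\partial W_0}\otimes f^*\mathcal{L}_X$ and $D_{E'}^{\partial W_0}\otimes f^*\mathcal{L}_X$, and trivializing operators $A_E, A_{E'}$ for those boundary operators (which exist by bordism invariance since $F_\C$ and $F_\C'$ bound $W_0$). Pick product-type Dirac operators $D_\mathcal{E}^{W_0}, D_{\mathcal{E}'}^{W_0}$ and a trivializer $A^\mathcal{E}$ of $D_\mathcal{E}^{\partial W_0}$, and then define $A^{\mathcal{E}'}$ and the disjoint-union trivializer $A$ by the $\alpha$-compatibility condition $A^{\mathcal{E}'}\oplus A_E=\alpha_*(A^\mathcal{E}\oplus A_{E'})$; this furnishes a decoration $\Xi_0$ of $\xi_0$. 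With this choice $D_\mathcal{E}^{W_0}\oplus D_{F'}^{W_0}$ and the $\beta$-pullback of $D_{\mathcal{E}'}^{W_0}\oplus D_F^{W_0}$ are both Dirac operators on the single bundle $\mathcal{E}\oplus F_\C'\otimes g^*\mathcal{L}_X$ with the same (post-$\alpha$) boundary trivializer, and Theorem \ref{datachangingaps} gives
\[
\ind_{APS}(D_\mathcal{E}^{W_0},A^\mathcal{E})-\ind_{APS}(D_{\mathcal{E}'}^{W_0},A^{\mathcal{E}'})=\ind_{APS}(D_F^{W_0},A_E)-\ind_{APS}(D_{F'}^{W_0},A_{E'})+\sigma
\]
where $\sigma\in K_*(C^*_r(\Gamma))$ is a boundary spectral flow between the two Dirac families on $\partial W_0$.

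The heart of the matter is identifying the cylinder contribution. A direct computation, using that $\bar D_\Xi$ interpolates between $D_\mathcal{E}^{\partial W_0}\oplus D^{\partial W_0}_{E'\otimes f^*\mathcal{L}_X}$ at $t=0$ and its $\alpha$-pullback $\alpha^*(D_{\mathcal{E}'}^{\partial W_0}\oplus D^{\partial W_0}_{E\otimes f^*\mathcal{L}_X})$ at $t=1$ with trivializer $A^\Xi$ built from the same $\alpha$, shows that $\ind_{APS}(\bar D_\Xi,A^\Xi)=-\sigma$. Substituting this into the defining decomposition of $\ind_{APS}(W_0,\Xi_0,f)$ cancels the spectral flow and yields
\[
\ind_{APS}(W_0,\Xi_0,f)=\ind_{APS}(D_F^{W_0},A_E)-\ind_{APS}(D_{F'}^{W_0},A_{E'})\in K_*(C^*_r(\Gamma)).
\]
Applying $j_X$ and Remark \ref{factoringremark} with $\hat f=g$ to each of these two terms gives $j_X\ind_{APS}(W_0,\Xi_0,f)=f_*\rho(D_E^{\partial W_0},A_E)-f_*\rho(D_{E'}^{\partial W_0},A_{E'})=\rho(W_0,\Xi_0,f)$, whence $\lambda_{an}^\mathcal{S}(W_0,\xi_0,f)=0$. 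The main obstacle is the cylinder identification $\ind_{APS}(\bar D_\Xi,A^\Xi)=-\sigma$, which requires careful bookkeeping of orientations, signs in $A^\Xi$, and the precise $\alpha$-dependence forced by the chosen decoration; it is essentially a spectral-flow calculation for a Dirac operator on a cylinder with prescribed boundary data, in the spirit of Theorem \ref{datachangingaps}.
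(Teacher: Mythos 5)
Your reduction target --- show $\ind_{APS}(W_0,\Xi_0,f)=\ind_{APS}(D_F^{W_0},\cdot)-\ind_{APS}(D_{F'}^{W_0},\cdot)$ and then apply the delocalized APS theorem via Remark \ref{factoringremark} --- is exactly the paper's, but the route you take to the reduction is different and leaves two gaps.

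The paper forms the closed manifold $Z=W_0\cup_{\partial W_0}(\partial W_0\times[0,1])\cup_{\partial W_0}(-W_0)$ and uses $\beta$ to glue $\mathcal{E}\oplus F'\otimes g^*\mathcal{L}$ to $\mathcal{E}'\oplus F\otimes g^*\mathcal{L}$ into a single $C^*_r(\Gamma)$-bundle $\mathcal{F}\to Z$ that, via $\beta$, bounds $W_0\times[0,1]$. Bordism invariance gives $\ind_{AS}(Z,\mathcal{F})=0$, and the gluing theorem (Theorem \ref{gluingthm}) then produces the reduction in one step, with no spectral-flow bookkeeping. You replace this by the change-of-data theorem (Theorem \ref{datachangingaps}) plus the asserted identity $\ind_{APS}(\bar D_\Xi,A^\Xi)=-\sigma$. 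That cylinder identity is plausible --- it is essentially the statement that the APS index of a Dirac operator on a cylinder with product-type data equals a spectral flow, which is the kind of fact underlying the proof of Theorem \ref{datachangingaps} itself --- but it is nowhere stated in the paper and you do not prove it; as you yourself flag, the sign and orientation bookkeeping is precisely where such an argument tends to go wrong, so this is a genuine missing step, not a routine verification.

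Separately, the compatibility condition $A^{\mathcal{E}'}\oplus A_E=\alpha_*(A^\mathcal{E}\oplus A_{E'})$ you impose to produce $\Xi_0$ is generically not solvable for $A^{\mathcal{E}'}$: the operator $\alpha_*(A^\mathcal{E}\oplus A_{E'})$ need not be block-diagonal with respect to the target decomposition $\mathcal{E}'|_{\partial W_0}\oplus E\otimes f^*\mathcal{L}_X$, and even when it is, nothing forces its $E\otimes f^*\mathcal{L}_X$-block to equal the previously fixed $A_E$. The paper avoids this entirely by choosing the trivializers independently (the individual trivializers for $D^{\partial W_0}_{E\otimes f^*\mathcal{L}}$ and $D^{\partial W_0}_{E'\otimes f^*\mathcal{L}}$ exist precisely because $E_\C$, $E'_\C$ extend over $W_0$) and letting the gluing theorem absorb the mismatch. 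You could in principle drop your constraint and run the spectral-flow cancellation with an arbitrary decoration, but that makes the cylinder computation you already owe even more delicate; the cleanest repair is to adopt the $Z$-gluing strategy of the paper.
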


\begin{proof}
Using Lemma \ref{depeondec}, the general result follows upon proving the vanishing in a specific decoration. The decoration of $\xi_0$ will be chosen in the following manner. We first choose Dirac operators $(D_\mathcal{E}^{W_0},D_{\mathcal{E}'}^{W_0},D_E^{\partial W_0},D_{E'}^{\partial W_0})$. Since the vector bundles $E_\C$ and $E_{\C}'$ extend, we can find Dirac operators $D^{W_0}_F$ and $D_{F'}^{W_0}$ on $S_{W_0}\otimes F_\C$ and $S_{W_0}\otimes F'_\C$ of product type near $\partial W_0$ with boundary operators $D_E^{\partial W_0}$ and respectively $D_{E'}^{\partial W_0}$. As $(\partial W_0, E_\C\otimes f_0^*\mathcal{L}_X)=\partial (W_0, F_\C\otimes g^*\mathcal{L}_X)$ and $(\partial W_0, E_\C'\otimes f_0^*\mathcal{L}_X)=\partial (W_0, F_\C'\otimes g^*\mathcal{L}_X)$ it follows from the bordism invariance of the index that
$$\ind_{C^*_r(\Gamma)}(D_{E\otimes f_0^*\mathcal{L}}^{\partial W_0})=\ind_{C^*_r(\Gamma)}(D_{E'\otimes f_0^*\mathcal{L}}^{\partial W_0})=0.$$
We conclude that there exists smoothing operators $A\in \Psi^{-\infty}_{C^*_r(\Gamma)}(\partial W_0,E_\C\otimes f_0^*\mathcal{L}_X)$ and $A'\in \Psi^{-\infty}_{C^*_r(\Gamma)}(\partial W_0,E_\C'\otimes f_0^*\mathcal{L}_X)$ with
$$D_{E\otimes f_0^*\mathcal{L}}^{\partial W_0}+A\quad\mbox{and}\quad D_{E'\otimes f_0^*\mathcal{L}}^{\partial W_0}+A'\quad\mbox{ invertible}.$$
We choose a decoration of the form 
$$\Xi=(\xi_0,(D_\mathcal{E}^{W_0},D_{\mathcal{E}'}^{W_0},D_E^{\partial W_0},D_{E'}^{\partial W_0}), (A^\mathcal{E},A^{\mathcal{E}'},A\dot{\cup}-A')).$$

Consider the even-dimensional spin$^c$-manifold 
$$Z=W_0\cup_{\partial W_0}\partial W_0\times [0,1]\cup_{\partial W_0}-W_0.$$ 
We define the $C^*_r(\Gamma)$-bundle $\mathcal{F}_{C^*_r(\Gamma)}\to Z$ by 
$$\mathcal{F}_{C^*_r(\Gamma)}=(\mathcal{E}_{C^*_r(\Gamma)}\oplus F_\C'\otimes g^*\mathcal{L}_X)\cup_{\alpha}(\mathcal{E}'_{C^*_r(\Gamma)}\oplus F_\C\otimes g^*\mathcal{L}_X),$$
where we glue over the cylinder using the isomorphism $\alpha$. Since $\alpha$ extends to the isomorphism $\beta$ on $W_0$,  
$$(Z,\mathcal{F}_{C^*_r(\Gamma)})\cong \partial (W_0\times [0,1],(\mathcal{E}_{C^*_r(\Gamma)}\oplus F_\C'\otimes g^*\mathcal{L}_X)\times [0,1]).$$
Here $W_0\times [0,1]$ forms a smooth manifold after ``straightening the angle", see for example \cite[Appendix A]{Rav}. We consider the Dirac operator $D^Z_\mathcal{F}$ given by gluing together $D^{W_0}_\mathcal{E}\oplus D^{W_0}_{F'\otimes g^*\mathcal{L}}$ on $W_0$ with $-D^{W_0}_{\mathcal{E}'}\oplus -D^{W_0}_{F\otimes g^*\mathcal{L}}$ on $-W_0$ via linearly interpolating the boundary operators over the cylinder using the isomorphism $\alpha$. 

We can apply Theorem \ref{gluingthm} to the closed manifold $Z$ cut into several parts by the hyper surface $\partial W_0\times \{0,1\}$; we obtain the identity
\begin{align*}
\ind_{AS}(Z,\mathcal{F}_{C^*_r(\Gamma)})=\ind_{APS}&(W_0,\Xi,f_0)\\
&+\ind_{APS}(D^{W_0}_{F'\otimes g^*\mathcal{L}},A')-\ind_{APS}(D^{W_0}_{F\otimes g^*\mathcal{L}},A).
\end{align*}
On the other hand, bordism invariance of the index implies that the left hand side vanishes. Since 
$$\rho(D^{\partial W_0}_{E}\dot{\cup}-D^{\partial W_0}_{E'},A\dot{\cup}-A')=\rho(D^{\partial W_0}_{E},A)-\rho(D^{\partial W_0}_{E'},A'),$$
it follows that 
\begin{align}
\nonumber
\lambda^\mathcal{S}_{an}(W_0,\xi_0,f_0)=&g_*j_{W_0}\ind_{APS}(D^{W_0}_{F\otimes g^*\mathcal{L}},A)-f_*\rho(D^{\partial W_0}_{E},A)\\
&-g_*j_{W_0}\ind_{APS}(D^{W_0}_{F'\otimes g^*\mathcal{L}},A)+f_*\rho(D^{\partial W_0}_{E'},A).
\label{lambdaborsum}
\end{align}
The identity $\lambda^\mathcal{S}_{an}(W_0,\xi_0,f_0)=0$ follows from \eqref{lambdaborsum} and the delocalized APS-theorem (see Theorem \ref{delapsthemps}, cf. Remark \ref{factoringremark}) which implies that 
\begin{align*}
g_*j_{W_0}\ind_{APS}&(D^{W_0}_{F\otimes g^*\mathcal{L}},A)-f_{0*}\rho(D^{\partial W_0}_{E},A)\\
&=g_*j_{W_0}\ind_{APS}(D^{W_0}_{F'\otimes g^*\mathcal{L}},A)-f_{0*}\rho(D^{\partial W_0}_{E'},A)=0.
\end{align*}
\end{proof}

\begin{lemma}
\label{gluingandlambda}
Let $x_i=(W_i,\xi_i,f_i)$, with $\xi_i= (\mathcal{E}_{C^*_r(\Gamma),i},\mathcal{E}^{\prime}_{C^*_r(\Gamma),i} , E_{\field{C},i}, E_{\field{C},i}^{\prime} ,\alpha_i )$, be cycles for $i=1,2$. Assume that $\partial W_i=(-1)^i Y$, for some spin$^c$-manifold $Y$ on which $ f_1=f_2$, and 
$$(\mathcal{E}_{C^*_r(\Gamma),1},\mathcal{E}^{\prime}_{C^*_r(\Gamma),1} , E_{\field{C},1}, E_{\field{C},1}^{\prime} ,\alpha_1 )|_Y=(\mathcal{E}_{C^*_r(\Gamma),2},\mathcal{E}^{\prime}_{C^*_r(\Gamma),2} , E_{\field{C},2}, E_{\field{C},2}^{\prime} ,\alpha_2)|_Y,$$
as relative cycles for assembly on the pair of spaces $(Y,Y)$. Then
$$\lambda^\mathcal{S}_{an}(x_1)+\lambda^\mathcal{S}_{an}(x_2)=j_X\left(\ind_{AS}(Z,[\mathcal{F}_{C^*_r(\Gamma)}]-[\mathcal{F}'_{C^*_r(\Gamma)}])\right),$$
where $Z:=W_1\cup_YW_2$ and $\mathcal{F}_{C^*_r(\Gamma)},\mathcal{F}_{C^*_r(\Gamma)}'\to Z$ are the $C^*_r(\Gamma)$-bundles obtained from gluing together $\mathcal{E}_{1,C^*_r(\Gamma)}$ respectively $\mathcal{E}'_{1,C^*_r(\Gamma)}$ with $\mathcal{E}_{2,C^*_r(\Gamma)}$ respectively $\mathcal{E}'_{2,C^*_r(\Gamma)}$. 
\end{lemma}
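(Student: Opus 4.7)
Our plan is to first choose compatible decorations of $x_1$ and $x_2$ on their common boundary $Y$, and then apply the gluing theorem for higher APS-indices (Theorem \ref{gluingthm}) in concert with spectral-flow techniques (Theorem \ref{datachangingaps} and Lemma \ref{sfandrho}) to each of the three summands appearing in Definition \ref{defininginvaria}.

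By Lemma \ref{depeondec}, the class $\lambda^\mathcal{S}_{an}(x_i)$ is independent of the choice of decoration, so we may fix decorations $\Xi_i$ in which the Dirac operators on $W_1$ and $W_2$ agree on product collars of $Y$, and in which the trivializing operators $A^\mathcal{E}_i$, $A^{\mathcal{E}'}_i$ and $A_i$ all descend from single common choices of trivializing operators on $Y$ (with appropriate orientation signs between the $-Y$ side of $W_1$ and the $Y$ side of $W_2$). Such compatible trivializing data may be constructed using \cite[Theorem 3]{LP}.

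With this setup in place, Theorem \ref{gluingthm} applied to the closed manifold $Z=W_1\cup_YW_2$ yields
$$\ind_{APS}(D^{W_1}_{\mathcal{E}_1},A^\mathcal{E}_1)+\ind_{APS}(D^{W_2}_{\mathcal{E}_2},A^\mathcal{E}_2)=\ind_{AS}(Z,\mathcal{F}_{C^*_r(\Gamma)}),$$
and the analogous gluing on $-Z$ gives
$$\ind_{APS}(D^{-W_1}_{\mathcal{E}'_1},-A^{\mathcal{E}'}_1)+\ind_{APS}(D^{-W_2}_{\mathcal{E}'_2},-A^{\mathcal{E}'}_2)=-\ind_{AS}(Z,\mathcal{F}'_{C^*_r(\Gamma)})$$
after accounting for orientation reversal on the graded even-dimensional closed manifold. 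It then remains to show that the cylinder APS-indices $\ind_{APS}(\bar D_{\Xi_i},A^{\Xi_i})$ contribute under $j_X$ exactly the sum $\rho(W_1,\Xi_1,f_1)+\rho(W_2,\Xi_2,f_2)$, so that these cancel against the $\rho$-terms in the definition of $\lambda^\mathcal{S}_{an}$. To this end, we view each $\bar D_{\Xi_i}$ as a family of boundary Dirac operators interpolating between $t=0$ and $t=1$; by Theorem \ref{datachangingaps}, its APS-index equals the spectral flow of this family relative to the specified spectral sections. Applying Lemma \ref{sfandrho} translates these spectral flows under $j_X$ into differences of $\rho$-invariants, and careful bookkeeping of the $\alpha_i$-twists at $t=1$ reassembles them into the desired sum after pushforward by $f_i$.

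The main obstacle in this plan is the last step: the cylinder bundle $\mathcal{E}_i|_{\partial W_i}\oplus E'_{\C,i}\otimes f_i^*\mathcal{L}_X$ contains the general $C^*_r(\Gamma)$-bundle summand $\mathcal{E}_i|_{\partial W_i}$, which is outside the scope of Lemma \ref{sfandrho} as stated. One must verify that the contribution of the $\mathcal{E}_i|_{\partial W_i}$-summand to the cylinder spectral flow is cancelled between $x_1$ and $x_2$ under the compatible decorations (via a further gluing of the two cylinders at $Y$ into a common cylindrical piece with vanishing spectral flow), leaving only the Mishchenko-twisted summand, to which Lemma \ref{sfandrho} applies and yields precisely the pushforward $\rho$-classes appearing in the definition of $\lambda^\mathcal{S}_{an}$.
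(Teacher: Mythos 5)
Your handling of the $\mathcal{E}$- and $\mathcal{E}'$-summands coincides with the paper's: choose compatible decorations across $Y$ and apply Theorem \ref{gluingthm} to get $\ind_{AS}(Z,\mathcal{F})$ and $\ind_{AS}(Z,\mathcal{F}')$ (with the sign from $\ind_{APS}(D^{-W}_{\mathcal{E}'},-A^{\mathcal{E}'})$). The divergence, and the gap, is in the cylinder and $\rho$-terms. You attempt to show $j_X\bigl[\ind_{APS}(\bar D_{\Xi_1},A^{\Xi_1})+\ind_{APS}(\bar D_{\Xi_2},A^{\Xi_2})\bigr]=\rho(W_1,\Xi_1,f)+\rho(W_2,\Xi_2,f)$ via spectral flow and Lemma \ref{sfandrho}, and you correctly note that this cannot work as stated because the cylinder bundle carries the non-Mishchenko summand $\mathcal{E}_i|_{Y}$. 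The ``fix'' you sketch (a further gluing of the cylinders) is then where all the real content sits, and it is left incomplete: you do not actually carry out the cancellation, nor do you explain how Lemma \ref{sfandrho} would apply to what remains, nor how the result would match the sum $\rho(W_1,\Xi_1,f)+\rho(W_2,\Xi_2,f)$ (note that Lemma \ref{sfandrho} produces a \emph{difference} of $\rho$-classes, not a sum).

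The paper's proof is substantially cleaner and avoids this entirely by showing that the two sides of your intermediate claim vanish \emph{separately}. For the cylinder indices, one glues the two cylinders $(-Y)\times[0,1]$ and $Y\times[0,1]$ along both ends directly into $Y\times S^1$; with compatible decorations the glued bundle is a product $(\mathcal{E}_1|_Y\oplus E'\otimes f^*\mathcal{L}_X)\times S^1$, which extends over $Y\times D^2$, so bordism invariance of the index gives $\ind_{APS}(\bar D_{\Xi_1},A^{\Xi_1})+\ind_{APS}(\bar D_{\Xi_2},A^{\Xi_2})=\ind_{AS}(Y\times S^1,\cdot)=0$. Separately, with the $(-1)^i$-sign convention in the decorations, the boundary data on $Y$ appear with opposite signs in $\Xi_1$ and $\Xi_2$, so the associated spectral projections $\chi_{[0,\infty)}(\pm(\tilde D+\tilde A))$ are complementary; hence $\rho(W_1,\Xi_1,f)+\rho(W_2,\Xi_2,f)=f_*[1]=0$ in $K_0(D^*(\tilde{X})^\Gamma)$ since $[1]=0$ there. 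You should abandon the spectral-flow detour and instead prove each of these two vanishing statements.
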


\begin{proof}
To simplify notation, set $E:=E_{\field{C}, 1}=E_{\field{C}, 2}$, $E':=E'_{\field{C}, 1}=E'_{\field{C}, 2}$ and $f:=f_1=f_2$. We choose decorations of the following form
\begin{align*}
\Xi_i=(\xi_i, (D^{W_i}_{\mathcal{E}_i},\,D^{W_i}_{\mathcal{E}'_i},\,&(-1)^i D^{Y}_E, \,(-1)^i D^{Y}_{E'}),(A^{\mathcal{E}_i},A^{\mathcal{E}_i'}, (-1)^iA_i)),
\end{align*}
where $D^{W_1}_{\mathcal{E}_1}=D^{W_2}_{\mathcal{E}_2}$ near $Y$ and $D^{W_1}_{\mathcal{E}_1'}=D^{W_2}_{\mathcal{E}_2'}$ near $Y$. We assume that the decorations are chosen so that $A^{\mathcal{E}_1}=A^{\mathcal{E}_2}$, $A^{\mathcal{E}_1'}=A^{\mathcal{E}_2'}$ and $A_1=A_2$.

The bundles $\mathcal{F}_{C^*_r(\Gamma)}$ and $\mathcal{F}'_{C^*_r(\Gamma)}$ can be equipped with Dirac operators $D^Z_\mathcal{F}$ and $D^Z_{\mathcal{F}'}$ obtained by gluing $D^{W_1}_{\mathcal{E}_1}$ respectively $D^{W_1}_{\mathcal{E}_1'}$ with $D^{W_2}_{\mathcal{E}_2}$ respectively $D^{W_2}_{\mathcal{E}_2'}$. It follows from Theorem \ref{gluingthm} that 
$$\ind_{APS}(D^{W_1}_{\mathcal{E}_1},A^{\mathcal{E}_1})+\ind_{APS}(D^{W_1}_{\mathcal{E}_2},A^{\mathcal{E}_2})=\ind(D^Z_\mathcal{F})=\ind_{AS}(Z,\mathcal{F}),$$
$$\ind_{APS}(D^{W_1}_{\mathcal{E}_1'},A^{\mathcal{E}_1'})+\ind_{APS}(D^{W_1}_{\mathcal{E}_2'},A^{\mathcal{E}_2'})=\ind(D^Z_{\mathcal{F}'})=\ind_{AS}(Z,\mathcal{F}').$$

In a similar fashion,
\begin{align*}
\ind_{APS}(D^{-Y\times [0,1]}_{\Xi_1},A^{\Xi_1})&+\ind_{APS}(D^{Y\times [0,1]}_{\Xi_2},A^{\Xi_2})\\
&=\ind_{AS}(Y\times S^1,(\mathcal{E}_1\oplus E'\otimes f^*\mathcal{L}_X)\times S^1)=0,
\end{align*}
where the last identity follows from bordism invariance of the index. From all these computations, the result follows upon noting that
\begin{align*}
\rho(W_1,\Xi_1,f)+&\rho(W_2,\Xi_2,f)\\
&=f_{*}\left[\rho(D_{E}\dot{\cup}-D_{E'},A)\right] +f_{*}\left[\rho(-D_{E}\dot{\cup}D_{E'},-A)\right]=0.
\end{align*}
\end{proof}

\begin{prop}
\label{bordisnovlam}
If $(W,\xi,f)\sim_{bor} 0$ then $\lambda^\mathcal{S}_{an}(W,\xi,f)=0$ in $K_0(D^*(\tilde{X})^\Gamma)$.
\end{prop}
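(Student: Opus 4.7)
The plan is to reduce Proposition \ref{bordisnovlam} to the combination of Lemma \ref{delapsforcycles} and Lemma \ref{gluingandlambda}. The bordism relation for surgery cycles as defined in \cite{paperI} provides, given $(W,\xi,f)\sim_{bor} 0$, a witness cycle whose geometric data is precisely of the type needed by these two lemmas.

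First I would unfold the definition of bordism from \cite{paperI}. This produces a compact spin$^c$-manifold with corners $V$ whose codimension-one boundary decomposes as $\partial V = W\cup_{\partial W}(-W')$, together with a relative cocycle $\eta$ on $V$ satisfying $\eta|_W=\xi$, a continuous map $g:W'\to X$ extending $f:\partial W\to X$, and extensions of the vector bundles $E_\C,E'_\C$ and of the isomorphism $\alpha$ over $V$ compatible with the data of $\xi$ on $W$. After straightening the corners along $\partial W$ (see \cite[Appendix A]{Rav}) the manifold $V$ becomes a genuine spin$^c$-manifold with boundary $W\cup(-W')$, with all bundle data extending smoothly over the interior.

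Next I would apply Lemma \ref{gluingandlambda} to glue the cycles $(W,\xi,f)$ and $(-W',\xi|_{-W'},g|_{\partial W'})$ along their common boundary $\partial W=\partial W'$. The lemma then yields
\[
\lambda^\mathcal{S}_{an}(W,\xi,f)+\lambda^\mathcal{S}_{an}(-W',\xi|_{-W'},g|_{\partial W'})=j_X\!\left(\ind_{AS}\!\left(V,\,[\mathcal{F}_{C^*_r(\Gamma)}]-[\mathcal{F}'_{C^*_r(\Gamma)}]\right)\right).
\]
By the bordism invariance of the Mishchenko--Fomenko index applied to the closed $C^*_r(\Gamma)$-bundles $\mathcal{F}_{C^*_r(\Gamma)}$ and $\mathcal{F}'_{C^*_r(\Gamma)}$, which extend over $V$ (using the extension $\beta$ of $\alpha$ provided by the bordism data), the right hand side vanishes. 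Moreover, the cycle $(-W',\xi|_{-W'},g|_{\partial W'})$ inherits from $V$ an extension of $g|_{\partial W'}$ to $W'$ together with interior extensions of all the bundle data and of the isomorphism, so it satisfies the hypotheses of Lemma \ref{delapsforcycles} and consequently $\lambda^\mathcal{S}_{an}(-W',\xi|_{-W'},g|_{\partial W'})=0$. Combining these facts gives $\lambda^\mathcal{S}_{an}(W,\xi,f)=0$, as claimed.

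The main obstacle is the bookkeeping: verifying that the bordism data in the sense of \cite{paperI} supplies \emph{exactly} the extension hypotheses needed by Lemma \ref{delapsforcycles} on the complementary face $W'$, and that the corner-straightening does not create spurious $\rho$-contributions on $\partial W$. Once this bookkeeping is carried out, the proposition follows essentially for free: both Lemma \ref{delapsforcycles} and Lemma \ref{gluingandlambda} already encode all the analytic input (delocalised APS via Theorem \ref{delapsthemps}, the gluing formula for higher APS indices via Theorem \ref{gluingthm}, and the change-of-decoration invariance from Lemma \ref{depeondec}) needed to make the argument go through.
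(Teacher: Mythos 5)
Your proposal follows exactly the paper's strategy: unfold the bordism witness into a cycle $x_0$ of the kind handled by Lemma \ref{delapsforcycles} together with a filling manifold, apply Lemma \ref{gluingandlambda} to express $\lambda^\mathcal{S}_{an}(W,\xi,f)+\lambda^\mathcal{S}_{an}(x_0)$ as the image under $j_X$ of the Atiyah--Singer index of a $C^*_r(\Gamma)$-bundle over the closed manifold obtained by gluing, and invoke bordism invariance of that index together with the vanishing from Lemma \ref{delapsforcycles}. Note one small notational slip: in the displayed gluing formula the index should be taken over the closed manifold $W\cup_{\partial W}(-W')=\partial V$, not over the filling $V$ itself, though the subsequent bordism-invariance sentence makes clear this is what you mean.
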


\begin{proof}
By definition, $x=(W,\xi,f)\sim_{bor} 0$ if and only if there is a cycle $x_0=(W_0,\xi_0,f)$ as in Lemma \ref{delapsforcycles} such that $\partial W=\partial W_0$ and $W\cup_{\partial W} W_0$ being the boundary of a spin$^c$-manifold $Z$, with the bundle data satisfying $\xi|_{\partial W}=\xi_0|_{\partial W_0}$ and that $\xi_{C^*_r(\Gamma)}\cup_{\partial W}\xi_{0,C^*_r(\Gamma)}$ belongs to the image of the restriction mapping $K^0(Z;C^*_r(\Gamma))\to K^0(\partial Z;C^*_r(\Gamma))$. Hence, the bordism invariance of the index, Lemmas \ref{delapsforcycles} and \ref{gluingandlambda} imply that $0=\lambda^\mathcal{S}_{an}(x)+\lambda^\mathcal{S}_{an}(x_0)=\lambda^\mathcal{S}_{an}(x)$.
\end{proof}

\subsection{An intermezzo on vector bundle modification}
\label{vbintermezz}

Recall that if $W$ is a manifold, possibly with boundary, and $V\to W$ is a spin$^c$-bundle of even rank, the vector bundle modification of $W$ is the total space of the sphere bundle $W^V:=S(V\oplus 1_\field{R})$, where $1_\field{R}\to W$ denotes the trivial real line bundle. In regard to this process in general, we refer the reader to \cite{BD,BD2, BHS, HReta}; the references \cite{BHS, HReta} are the most relevant in our context.

To simplify the discussion, we assume that the rank of $V$ is constant and let $2k=\mathrm{rk}(V)$. If $P_V\to W$ denotes the principal $Spin^c(2k)$-bundle of spin$^c$-frames on $V$, one can identify $W^V=P_V\times_{Spin^c(2k)} S^{2k}$. On $S^{2k}$ there is a distinguished $Spin^c(2k)$-equivariant vector bundle $Q_k\to S^{2k}$ called the Bott bundle. The associated vector bundle $Q_V:=P_V\times_{Spin^c(2k)}Q_k\to W^V$ is called the Bott bundle of $V$. For a $C^*$-algebra $B$ and a $B$-bundle $\mathcal{E}_B\to W$ there is an associated vector bundle modified $B$-bundle 
$$\mathcal{E}_B^V:=\pi_V^*(\mathcal{E}_B)\otimes Q_V\to W^V,$$
where $\pi_V:W^V\to W$ denotes the projection. 

Furthermore, there is a $Spin^c(2k)$-equivariant spin$^c$-structure given by a Clifford bundle $S_k\to S^{2k}$ such that $S_{W^V}=\pi_V^*S_W\hat{\otimes} S_{W^V/W}$ as Clifford bundles, where $S_{W^V/W}:=P_V\times_{Spin^c(2k)}S_k$. By construction, we can identify 
$$C^\infty(W^V,S_{W^V}\otimes \mathcal{E}^V_B)= [C^\infty(P_V\times S^{2k}, (\pi_V^*S_W\hat{\boxtimes} S_k)\otimes (\pi_V^*\mathcal{E}_B\boxtimes Q_k))]^{Spin^c(2k)},$$ 
where $[\mathcal{V}]^{Spin^c(2k)}$ denotes the $Spin^c(2k)$-invariant part of a $Spin^c(2k)$-representation $\mathcal{V}$. In fact, after choosing a suitable $Spin^c(2k)$-invariant Laplacian, we can carry out the same identification for the Sobolev $C^*_r(\Gamma)$-modules for any $s\in \field{N}$:
\begin{equation}
\label{hsident}
H^s(W^V,S_{W^V}\otimes \mathcal{E}^V_B)=\big[H^s(P_V\times S^{2k}, (\pi_V^*S_W\hat{\boxtimes} S_k)\otimes (\pi_V^*\mathcal{E}_B\boxtimes Q_k))\big]^{Spin^c(2k)}.
\end{equation}

On the Clifford bundle $S_k\otimes Q_k$ there is a $Spin^c(2k)$-equivariant spin$^c$-Dirac operator $D_Q$ such that the even part $D_Q^+$ has a one-dimensional kernel, giving the trivial $Spin^c(2k)$-representation, and $\ker D^-_Q=0$. We let $e_Q\in \Psi^{-\infty}(S^{2k},S_k\otimes Q_k)$ denote the projection onto this kernel, it is smoothing because of elliptic regularity. Since $\ker D_Q^+$ is $Spin^c(2k)$-invariant, so is $e_Q$. For details, see \cite[Proposition 3.11]{BHS}.

The next definition and Propositions \ref{eqprop}, \ref{decomposingmoduledir}, \ref{someinnerprod}, and \ref{decomposingmoddir} should be compared with \cite[Propositions 3.6 and 3.11]{BHS}.

\begin{define}[Vector bundle modification of Dirac operators]
Let $B$ be a $C^*$-algebra and $\mathcal{E}_B\to W$ a $B$-bundle over a spin$^c$-manifold $W$ equipped with a twisted spin$^c$-Dirac operator $D_\mathcal{E}$. There is an associated vector bundle modified twisted spin$^c$-Dirac operator on $\mathcal{E}_B^V\to W^V$ given by
$$D^V_\mathcal{E}:=\left(D_\mathcal{E}\hat{\otimes} 1+1\hat{\otimes} D_Q\right)|_{C^\infty(W^V, S_{W^V}\otimes \mathcal{E}_B^V)}.$$
\end{define}

One verifies in local coordinates that the vector bundle modification of Dirac operators produces a well defined Dirac operator. Using the projection $e_Q$, we will decompose the Sobolev space $H^s(W^V,S_{W^V}\otimes \mathcal{E}^V_B)$. The proof of the next proposition follows from the construction of $e_Q$.

\begin{prop}
\label{eqprop}
The operator $1\boxtimes e_Q$ acting on $C^\infty(P_V\times S^{2k}, (\pi_V^*S_W\hat{\boxtimes} S_k)^+\otimes (\pi_V^*\mathcal{E}_B\boxtimes Q_k))$ restricted to the $Spin^c(2k)$-invariant part: 
$$e_Q^V:=(1\boxtimes e_Q)|_{C^\infty(W^V,S_{W^V}\otimes \mathcal{E}^V_B)},$$
defines a projection $e^V_Q$ in any of the Sobolev $C^*_r(\Gamma)$-modules $H^s(W^V,S_{W^V}\otimes \mathcal{E}^V_B)$ satisfying 
$$[D^V_\mathcal{E},e_Q^V]=0 \quad\mbox{on}\quad H^s(W^V,S_{W^V}\otimes \mathcal{E}^V_B)\;\forall s\in \field{N}_{>0}.$$
\end{prop}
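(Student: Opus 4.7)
The plan is to verify three separate assertions in sequence: that $1\boxtimes e_Q$ descends to a bounded operator on the invariant Sobolev $C^*_r(\Gamma)$-modules, that this descent is idempotent, and that it commutes with $D^V_\mathcal{E}$. Once the decomposition \eqref{hsident} is in hand, each of these reduces to a statement that can be checked on the ambient bundle over $P_V\times S^{2k}$.

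First I would argue that $1\boxtimes e_Q$ is a well-defined bounded operator on every Sobolev module
$$H^s(P_V\times S^{2k},(\pi_V^*S_W\hat{\boxtimes}S_k)\otimes (\pi_V^*\mathcal{E}_B\boxtimes Q_k)).$$
Since $e_Q$ projects onto the one-dimensional $\ker D_Q^+$, it is a smoothing (in fact finite rank) operator on $S^{2k}$, so $1\boxtimes e_Q$ maps any Sobolev regularity to smooth sections along $S^{2k}$ and preserves the $P_V$-regularity. By hypothesis $e_Q$ is $Spin^c(2k)$-invariant and the identity on $P_V$ is equivariant for the principal action, so $1\boxtimes e_Q$ preserves the invariant part, and under the identification \eqref{hsident} restricts to a bounded operator $e_Q^V$ on each $H^s(W^V,S_{W^V}\otimes \mathcal{E}_B^V)$. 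Idempotence is then immediate: $(1\boxtimes e_Q)^2=1\boxtimes e_Q^2=1\boxtimes e_Q$ on the ambient space, and this identity passes to the invariant subspace.

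For the commutation $[D^V_\mathcal{E},e_Q^V]=0$ I would work again on the ambient space $P_V\times S^{2k}$ with the operator $D_\mathcal{E}\hat{\otimes}1+1\hat{\otimes}D_Q$, of which $D^V_\mathcal{E}$ is the restriction to invariants. The summand $1\hat{\otimes}D_Q$ commutes with $1\boxtimes e_Q$ because $e_Q$ is the spectral projection onto $\ker D_Q$, so $D_Q e_Q=e_Q D_Q=0$ outright. The summand $D_\mathcal{E}\hat{\otimes}1$ differentiates only along $P_V$ while $1\boxtimes e_Q$ acts only on the $S^{2k}$-fibers, so they commute as operators on the external tensor product; the graded tensor introduces no sign because $e_Q$ is even, projecting onto $(S_k\otimes Q_k)^+$. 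Restricting to the $Spin^c(2k)$-invariant subspace gives the claimed commutator identity on every $H^s(W^V,S_{W^V}\otimes \mathcal{E}_B^V)$ with $s\geq 1$ (where $D^V_\mathcal{E}$ is defined as a bounded map $H^s\to H^{s-1}$).

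The main obstacle, if there is one, is bookkeeping: one must be careful that the graded tensor product structure of $S_{W^V}=\pi_V^*S_W\hat{\otimes}S_{W^V/W}$ is compatible with the restriction to invariants and with the evenness of $e_Q$, and that the restriction from the ambient space to the invariant subspace respects the algebraic identities computed above. The Sobolev boundedness is routine given that $e_Q$ is smoothing of finite rank, and the equivariance of all ingredients is built into the construction of the vector bundle modification.
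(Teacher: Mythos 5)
The paper's ``proof'' of this proposition is a single sentence deferring to the construction of $e_Q$, and your argument fills in exactly the details that construction suggests: $e_Q$ is a smoothing, $Spin^c(2k)$-equivariant, even idempotent with $D_Q e_Q = e_Q D_Q = 0$, so $1\boxtimes e_Q$ preserves the Sobolev scale and the invariant subspace, squares to itself, and commutes with both summands of $D_\mathcal{E}\hat{\otimes}1 + 1\hat{\otimes}D_Q$ (the graded tensor introducing no sign because $e_Q$ is even). This is the same approach the paper intends; the only minor refinement one might add is that $1\boxtimes e_Q$ preserves the Sobolev scale because it commutes with the Laplacian $D_\mathcal{E}^2\otimes 1 + 1\otimes D_Q^2$ used to define it, a point the paper makes explicitly in the parallel argument for the involution $J$ in Lemma \ref{thejlem}.
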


\begin{define}
We define the complemented $B$-Hilbert module:
$$\mathpzc{E}_B^s:=(1-e_Q^V)H^s(W^V,S_{W^V}\otimes \mathcal{E}^V_B)\subseteq H^s(W^V,S_{W^V}\otimes \mathcal{E}^V_B).$$
For simplicity, we set $\mathpzc{E}_B:=\mathpzc{E}_B^0$. 
\end{define}

\begin{prop}
\label{decomposingmoduledir}
Let $W$ be a spin$^c$-manifold, $V\to W$ a spin$^c$-vector bundle of even rank and $\mathcal{E}_B\to W$ a $B$-bundle. There is a $C^\infty(W)$-linear isomorphism of $B$-Hilbert modules
$$e^V_QH^s(W^V,S_{W^V}\otimes \mathcal{E}^V_B)\cong H^s(W,S_W\otimes \mathcal{E}_B),$$
which is graded if $W$ is even-dimensional. In particular, for $s\in \field{N}$, there is an orthogonal direct sum decomposition of $B$-Hilbert modules
\begin{equation}
\label{decompositingl2forbundle}
H^s(W^V,S_{W^V}\otimes \mathcal{E}^V_B)\cong H^s(W,S_W\otimes \mathcal{E}_B)\oplus \mathpzc{E}_B^s.
\end{equation}
The decomposition \eqref{decompositingl2forbundle} respects the left action of $C^\infty(W)$ and is graded if $W$ is even-dimensional.
\end{prop}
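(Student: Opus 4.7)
The plan is to push the projection $e_Q^V$ through the equivariant identification \eqref{hsident} and use the fact that $\ker D_Q^+$ is a trivial one-dimensional $Spin^c(2k)$-representation. Concretely, under \eqref{hsident} we have
\[
H^s(W^V,S_{W^V}\otimes \mathcal{E}^V_B)=\big[H^s(P_V, \pi_V^*S_W\otimes \pi_V^*\mathcal{E}_B)\hat\otimes H^s(S^{2k},S_k\otimes Q_k)\big]^{Spin^c(2k)},
\]
and the operator $1\boxtimes e_Q$ acts only on the sphere factor, projecting it onto $\ker D_Q^+$. Since $\ker D_Q^+$ is a trivial one-dimensional $Spin^c(2k)$-subrepresentation of the even part of $S_k\otimes Q_k$, the image of $e_Q^V$ is
\[
\big[H^s(P_V,\pi_V^*S_W\otimes \pi_V^*\mathcal{E}_B)\otimes \ker D_Q^+\big]^{Spin^c(2k)}\cong\big[H^s(P_V,\pi_V^*S_W\otimes \pi_V^*\mathcal{E}_B)\big]^{Spin^c(2k)}.
\]

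The next step is to identify the right-hand side with $H^s(W,S_W\otimes \mathcal{E}_B)$. This is a standard fact for $Spin^c(2k)$-invariant sections of a pull-back bundle on a principal $Spin^c(2k)$-bundle, once one checks that the $Spin^c(2k)$-invariant Laplacian used to define the Sobolev modules descends to a Laplacian on $W$ compatible with the splitting; this ensures the identification is an isomorphism of Hilbert $B$-modules and not merely of pre-Hilbert spaces. The resulting map is tautologically $C^\infty(W)$-linear, since the action of $C^\infty(W)$ on $W^V$ factors through $\pi_V$ and thus commutes with both $e_Q^V$ and the pull-back identification.

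For the grading, note that $\ker D_Q^+\subset (S_k\otimes Q_k)^+$ sits in the even part of $S_{W^V/W}$, so the $\field{Z}/2$-grading on $S_{W^V}=\pi_V^*S_W\hat\otimes S_{W^V/W}$ restricted to the image of $e_Q^V$ coincides with the pulled-back grading from $S_W$. This is what is needed to make the isomorphism graded when $W$ is even-dimensional. Finally, since $e_Q^V$ is an orthogonal projection on the $B$-Hilbert module $H^s(W^V,S_{W^V}\otimes \mathcal{E}^V_B)$, the orthogonal complement is by definition $\mathpzc{E}_B^s$, giving the direct sum decomposition \eqref{decompositingl2forbundle}.

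The main obstacle I anticipate is the subtlety of showing that the identification of $Spin^c(2k)$-invariant sections with sections of the associated bundle is not only an algebraic isomorphism but also respects the Hilbert $B$-module norms; this rests on the existence of a $Spin^c(2k)$-invariant Laplacian which is hinted at but not fully specified in the setup preceding \eqref{hsident}. Assuming this standard point, everything else is a direct computation.
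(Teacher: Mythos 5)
Your proposal is correct and follows essentially the same chain of isomorphisms as the paper's proof: the paper likewise identifies $e_Q^V H^s$ with the kernel of $1\otimes D_Q$ on the invariant Sobolev sections, then with $H^s(W,S_W\otimes\mathcal{E}_B)\otimes\ker D_Q$, and finally with $H^s(W,S_W\otimes\mathcal{E}_B)$ using one-dimensionality of $\ker D_Q$, and it obtains \eqref{decompositingl2forbundle} from the fact (Proposition \ref{eqprop}) that $e_Q^V$ preserves the Sobolev scale. One small caveat: your intermediate display writing $H^s(P_V\times S^{2k},\cdot)$ as a completed tensor product of Sobolev spaces is imprecise as stated, but since $\ker D_Q$ is a one-dimensional smooth subspace the image of $1\boxtimes e_Q$ genuinely does factor as you conclude, and your observation about the $Spin^c(2k)$-invariant Laplacian needed to make the identification an isomorphism of Hilbert $B$-modules is exactly the point the paper defers to its preceding remark on \eqref{hsident}.
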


\begin{proof}
The isomorphism $e^V_QH^s(W^V,S_{W^V}\otimes \mathcal{E}^V_B)\cong H^s(W,S_W\otimes \mathcal{E}_B)$ is constructed as the composition of the following chain of (graded) isomorphisms:
\begin{align*}
e^V_QH^s(W^V,S_{W^V}\otimes \mathcal{E}^V_B)&\cong \ker \left(\left(1\otimes D_Q\right)|_{H^s(W^V,S_{W^V}\otimes \mathcal{E}^V_B)}\right)\\
&\cong H^s(W,S_W\otimes \mathcal{E}_B)\otimes \ker D_Q\cong H^s(W,S_W\otimes \mathcal{E}_B).
\end{align*}
Equation \eqref{decompositingl2forbundle} follows from the existence of this isomorphism since $e_Q^V$ preserves the Sobolev scale by Proposition \ref{eqprop}. The decomposition \eqref{decompositingl2forbundle} respects the left $C^\infty(W)$-action since $e_Q^V$ commutes with the left $C^\infty(W)$-action.
\end{proof}

\begin{remark}
It is clear that \eqref{decompositingl2forbundle} respects the $C(W)$-action for $s=0$ and the $Lip(W)$-action for $s\leq 1$.
\end{remark}

\begin{prop}
\label{someinnerprod}
For any $f\in H^1(W^V,S_{W^V}\otimes \mathcal{E}^V_B)$,
\begin{enumerate}
\item 
\begin{align*}
\langle (D_\mathcal{E}&\hat{\otimes} 1)f,(1\hat{\otimes}D_Q)f\rangle_{ L^2(P_V\times S^{2k}, (\pi_V^*S_W\hat{\boxtimes} S_k)\otimes (\pi_V^*\mathcal{E}_B\boxtimes Q_k))}\\
&+\langle (1\hat{\otimes}D_Q)f,(D_\mathcal{E}\hat{\otimes} 1)f\rangle_{ L^2(P_V\times S^{2k}, (\pi_V^*S_W\hat{\boxtimes} S_k)\otimes (\pi_V^*\mathcal{E}_B\boxtimes Q_k))}=0
\end{align*}
\item  The inequality 
$$\langle (1\hat{\otimes} D_Q)f,(1\hat{\otimes} D_Q)f\rangle_{ L^2(W^V,S_{W^V}\otimes \mathcal{E}^V_B)}\geq c\langle (1-e_Q^V)f, (1-e_Q^V)f\rangle_{\mathpzc{E}_B}$$
holds in $B$ for a $c>0$ only depending on $k$.
\end{enumerate}
\end{prop}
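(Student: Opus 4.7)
Both parts reduce, modulo density arguments, to algebraic identities satisfied by $(D_\mathcal{E}\hat{\otimes} 1)$ and $(1\hat{\otimes} D_Q)$ as odd operators in the graded tensor product, combined with the spectral properties of $D_Q$ recalled just before the statement. I would first verify both identities for smooth $f\in C^\infty(W^V,S_{W^V}\otimes\mathcal{E}^V_B)$ and then extend to $f\in H^1$ by continuity of the two sides as sesquilinear forms on $H^1\times H^1$.

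For (1), the essential algebraic input is the graded anticommutation
$$(D_\mathcal{E}\hat{\otimes} 1)(1\hat{\otimes} D_Q)+(1\hat{\otimes} D_Q)(D_\mathcal{E}\hat{\otimes} 1)=0$$
on smooth sections, which holds because both factors are odd in the grading of $S_{W^V}\otimes\mathcal{E}^V_B$. For smooth $f$, moving $1\hat{\otimes} D_Q$ across the inner product via self-adjointness contributes no boundary terms (as $D_Q$ differentiates only in the closed fiber $S^{2k}$), whereas moving $D_\mathcal{E}\hat{\otimes} 1$ produces a $B$-valued boundary integral $T$ over $\partial W^V=(\partial W)^V$. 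The remaining task is to show $T=0$. I would do this by a further integration by parts performed in each fiber $S^{2k}$ of $\partial W^V\to\partial W$, using the skew-adjointness of $c(\nu)$ (the principal symbol of $D_\mathcal{E}$ contracted against the inward conormal) together with the graded anticommutation $\{(c(\nu)\hat{\otimes} 1),(1\hat{\otimes} D_Q)\}=0$; the resulting tally of signs forces $T=-T$ in $B$, so $T=0$.

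For (2), the key input is that $D_Q$ is an elliptic self-adjoint operator on the closed manifold $S^{2k}$ with kernel equal to $\ker D_Q^+$ (one-dimensional) and $\ker D_Q^-=0$; hence its least nonzero singular value squared equals some $c>0$ that depends only on $k$. This yields the operator inequality $D_Q^2\ge c(1-e_Q)$ on $L^2(S^{2k},S_k\otimes Q_k)$. Since $e^V_Q$ projects fiberwise onto $\ker D_Q$, we have $(1\hat{\otimes} D_Q)\,e^V_Q=0$, so $(1\hat{\otimes} D_Q)f=(1\hat{\otimes} D_Q)(1-e^V_Q)f$. Applying the operator inequality above fiberwise yields, on each fiber $F_x\cong S^{2k}$, the $B$-valued inequality
$$\bigl\langle(1\hat{\otimes} D_Q)f,(1\hat{\otimes} D_Q)f\bigr\rangle_{F_x}\ge c\,\bigl\langle(1-e^V_Q)f,(1-e^V_Q)f\bigr\rangle_{F_x},$$
and integrating over $x\in W$ using fiber integration of $B$-valued inner products gives the stated inequality in $B$.

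The main obstacle is the boundary-term calculation $T=0$ in part (1): one must carefully track the interplay of the grading, the skew-adjointness of $c(\nu)$, and the graded anticommutation with $1\hat{\otimes} D_Q$ in order to verify the sign cancellation. The extension from smooth sections to $H^1$ is routine given that each bilinear form in question is continuous on $H^1\times H^1$.
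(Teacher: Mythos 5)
Your part (2) is correct and matches the intent of the paper: the fiberwise spectral gap $D_Q^2\ge c(1-e_Q)$ on $L^2(S^{2k},S_k\otimes Q_k)$, with $c$ the gap above zero, lifts to the $B$-valued inequality by fiber integration; the paper's own remark after the statement indicates exactly this.

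Part (1), however, has a genuine gap, and it occurs precisely at the sign cancellation you invoke. Track the three moves you describe on $T=\int_{\partial W^V}\bigl\langle (c(\nu)\hat{\otimes}1)(1\hat{\otimes}D_Q)f,\,f\bigr\rangle$: the graded anticommutation $\{c(\nu)\hat{\otimes}1,\,1\hat{\otimes}D_Q\}=0$ contributes one minus sign, moving $1\hat{\otimes}D_Q$ across the inner product over the closed fiber $S^{2k}$ contributes no sign (it is formally self-adjoint), and $c(\nu)^*=-c(\nu)$ contributes a second minus sign. An even number of sign changes returns $T=T^*$, not $T=-T$. And since the left-hand side of (1) is of the form $\langle a,b\rangle+\langle b,a\rangle=\langle a,b\rangle+\langle a,b\rangle^*$, it is automatically self-adjoint in $B$, so $T=T^*$ is a tautology that does not force $T=0$. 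Concretely, the boundary term can be nonzero: take $W$ an even-dimensional flat cylinder, $V$ trivial, and $f=f_1\otimes f_2$ with $f_1$ a real-valued spinor section growing linearly in the normal direction and $f_2$ an eigenvector of $D_Q$ with eigenvalue $\lambda\neq 0$; then $\langle(D_\mathcal{E}\hat{\otimes}1)f,(1\hat{\otimes}D_Q)f\rangle+\langle(1\hat{\otimes}D_Q)f,(D_\mathcal{E}\hat{\otimes}1)f\rangle$ evaluates to a nonzero real multiple of $\lambda\|f_2\|^2$. The correct reading of the proposition is therefore as an identity on manifolds without boundary, which is what the paper actually uses: Lemma~\ref{decomposingmoddir} feeds into the closed-manifold statement about $\tilde M$ and into Proposition~\ref{vbundmodofaps}, where the relevant manifold is the complete $W_\infty$, with no boundary. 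In that setting the integration by parts for $D_\mathcal{E}\hat{\otimes}1$ produces no boundary term and the form anticommutation is the standard closed-manifold Green's identity --- exactly the ``well known computation by localizing to a coordinate chart'' alluded to in the paper. Your density argument and the $H^1$-continuity of the forms are fine; you should simply work with $\partial W=\emptyset$ rather than attempt a cancellation that does not occur.
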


We omit the proof of Proposition \ref{someinnerprod} as it reduces to well known computations by localizing to a coordinate chart. The number $c>0$ is the spectral gap of the Laplacian type operator $D_Q^2$ on $S^{2k}$ above zero. The main tool when dealing with vector bundle modifications will be the following Lemma which decomposes operators into a ``nice" part and a ``bad" part.

\begin{lemma}
\label{decomposingmoddir}
Following the notation of Proposition \ref{decomposingmoduledir}, we let $D_\mathcal{E}$ denote a twisted spin$^c$-Dirac operator on $\mathcal{E}_B$. The direct sum decomposition \eqref{decompositingl2forbundle} is compatible with $D^V_\mathcal{E}$ in the sense that there is a densely defined regular (odd) operator $D_\mathpzc{E}:\mathpzc{E}_B\to \mathpzc{E}_B$ with domain $\mathpzc{E}_B^1$ such that 
\begin{equation}
\label{decomposiingthatd}
D^{V}_\mathcal{E}=D_\mathcal{E}\oplus D_\mathpzc{E}.
\end{equation}
on the dense subspace 
$$H^1(W,S_W\otimes \mathcal{E}_B)\oplus \mathpzc{E}_B^1=H^1(W^V,S_{W^V}\otimes \mathcal{E}^V_B)\subseteq L^2(W^V,S_{W^V}\otimes \mathcal{E}^V_B).$$ 
Moreover, the decomposition \eqref{decomposiingthatd} also has the following property: for any $f\in \mathpzc{E}_B^1\subseteq H^1(W^V,S_{W^V}\otimes \mathcal{E}^V_B)$ there exists a $c$ (depending only on $k$) such that 
\begin{equation}
\label{postivitiy}
\langle D^V_\mathcal{E}f,D^V_\mathcal{E}f\rangle_{ L^2(W^V,S_{W^V}\otimes \mathcal{E}^V_B)}\geq c\langle f,f\rangle_{\mathpzc{E}_B}\quad\mbox{in $B$}.
\end{equation}
\end{lemma}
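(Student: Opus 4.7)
The plan is to leverage the commutation relation $[D^V_\mathcal{E}, e_Q^V]=0$ from Proposition \ref{eqprop} together with the $B$-Hilbert module decomposition from Proposition \ref{decomposingmoduledir}. Since $e_Q^V$ is a self-adjoint projection preserving the Sobolev scale and commuting with $D^V_\mathcal{E}$ on $H^1$, the operator $D^V_\mathcal{E}$ respects the orthogonal direct sum \eqref{decompositingl2forbundle}. Under the identification of the range of $e_Q^V$ with $H^s(W,S_W\otimes \mathcal{E}_B)$ given in Proposition \ref{decomposingmoduledir}, the term $1\hat{\otimes} D_Q$ vanishes on $\ker D_Q$, and $D_\mathcal{E}\hat{\otimes}1$ descends to $D_\mathcal{E}$, so the first block of $D^V_\mathcal{E}$ is precisely $D_\mathcal{E}$. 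I then define $D_\mathpzc{E}$ as the restriction of $D^V_\mathcal{E}$ to the complementary summand $\mathpzc{E}_B^1\subseteq \mathpzc{E}_B$; this gives the decomposition \eqref{decomposiingthatd} by construction, and oddness of $D_\mathpzc{E}$ is inherited from $D^V_\mathcal{E}$ since $e_Q^V$ is even.

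Next I verify that $D_\mathpzc{E}$ is densely defined, self-adjoint and regular on $\mathpzc{E}_B$. The operator $D^V_\mathcal{E}$ is self-adjoint and regular on $L^2(W^V,S_{W^V}\otimes \mathcal{E}^V_B)$ by \cite[Theorem 2.3]{hankpapsch}. The self-adjoint adjointable projection $1-e_Q^V$ commutes with $D^V_\mathcal{E}$ on the dense domain $H^1$, hence with the bounded continuous functional calculus of $D^V_\mathcal{E}$; in particular $(D^V_\mathcal{E}\pm i)^{-1}$ restricts to an adjointable two-sided inverse of $D_\mathpzc{E}\pm i$ on $\mathpzc{E}_B$. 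This yields regularity and self-adjointness of $D_\mathpzc{E}$ with domain $\mathpzc{E}_B^1$.

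For the estimate \eqref{postivitiy}, take $f\in \mathpzc{E}_B^1$, so $(1-e_Q^V)f=f$. Working in the equivariant identification \eqref{hsident} and expanding the inner product, I obtain
\begin{align*}
\langle D^V_\mathcal{E} f, D^V_\mathcal{E} f\rangle &= \langle (D_\mathcal{E}\hat{\otimes} 1)f,(D_\mathcal{E}\hat{\otimes} 1)f\rangle + \langle (1\hat{\otimes} D_Q)f,(1\hat{\otimes} D_Q)f\rangle \\
&\quad + \langle (D_\mathcal{E}\hat{\otimes} 1)f,(1\hat{\otimes} D_Q)f\rangle + \langle (1\hat{\otimes} D_Q)f,(D_\mathcal{E}\hat{\otimes} 1)f\rangle.
\end{align*}
The two cross terms cancel by Proposition \ref{someinnerprod}(1); the $D_\mathcal{E}\hat{\otimes}1$ summand is a non-negative element of $B$; and Proposition \ref{someinnerprod}(2) bounds the $1\hat{\otimes}D_Q$ summand below by $c\langle f,f\rangle_{\mathpzc{E}_B}$. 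Adding these inequalities gives \eqref{postivitiy}.

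The main obstacle I anticipate lies not in the algebraic identity \eqref{decomposiingthatd} or the positivity estimate (both of which reduce cleanly to Propositions \ref{eqprop} and \ref{someinnerprod}), but in establishing regularity of $D_\mathpzc{E}$ rigorously as an unbounded operator on the $C^*_r(\Gamma)$-Hilbert module $\mathpzc{E}_B$. The functional-calculus argument via a commuting projection is standard for self-adjoint regular operators on Hilbert modules, but passing between the $Spin^c(2k)$-equivariant description on $P_V\times S^{2k}$ and the honest picture on $W^V$ while controlling domains needs to be done with some care.
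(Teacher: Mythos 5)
Your proposal is correct and follows essentially the same approach as the paper: both use the commutation $[D^V_\mathcal{E},e_Q^V]=0$ from Proposition \ref{eqprop} to obtain the block decomposition, and both derive \eqref{postivitiy} by expanding $\langle D^V_\mathcal{E}f,D^V_\mathcal{E}f\rangle$ in the $Spin^c(2k)$-equivariant picture, killing the cross terms via Proposition \ref{someinnerprod}(1), discarding the non-negative $D_\mathcal{E}\hat\otimes 1$ contribution, and invoking Proposition \ref{someinnerprod}(2). The one place you go beyond the paper is in spelling out the regularity and self-adjointness of $D_\mathpzc{E}$ via the resolvent commuting with $e_Q^V$; the paper asserts this implicitly and only appeals to such a reduction later (in Proposition \ref{vbundmodofaps}), so your added detail is a genuine, and correct, strengthening of the write-up rather than a divergence in method.
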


\begin{proof}
The decomposition \eqref{decomposiingthatd} follows since $e_Q^V$ respects the Sobolev scale and commutes with $D^{V}_\mathcal{E}$; a fact proved in Proposition \ref{eqprop}. To prove \eqref{postivitiy}, we note that we can identify $f\in \mathpzc{E}_B^1\subseteq H^1(W^V,S_{W^V}^+\otimes \mathcal{E}^V_B)$ with a $Spin^c(2k)$-invariant section of 
\begin{equation}
\label{bighone}
H^1(P_V\times S^{2k}, (\pi_V^*S_W\hat{\boxtimes} S_k)\otimes (\pi_V^*\mathcal{E}_B\boxtimes Q_k))
\end{equation}
on which $D_\mathcal{E}\hat{\otimes} 1$ and $1\hat{\otimes}D_Q$, by Proposition \ref{someinnerprod}, anti commute in form sense. Again, by Proposition \ref{someinnerprod}, we have that $1\otimes D_Q^2\geq c$ in the $B$-valued form sense on the space 
$$(1-1\otimes e_Q)H^1(P_V\times S^{2k}, (\pi_V^*S_W\hat{\boxtimes} S_k)\otimes (\pi_V^*\mathcal{E}_B\boxtimes Q_k)).$$
It follows from these observations that  
\begin{align*}
\langle D^V_\mathcal{E}f,D^V_\mathcal{E}f&\rangle_{ L^2(W^V,S_{W^V}^-\otimes \mathcal{E}^V_B)}\\
&=\langle (D_\mathcal{E}\hat{\otimes} 1)f,(D_\mathcal{E}\hat{\otimes} 1)f\rangle_{L^2(P_V\times S^{2k}, (\pi_V^*S_W\hat{\boxtimes} S_k)\otimes (\pi_V^*\mathcal{E}_B\boxtimes Q_k))}\\
&\qquad +\langle (1\hat{\otimes}D_Q)f,(1\hat{\otimes}D_Q)f\rangle_{ L^2(P_V\times S^{2k}, (\pi_V^*S_W\hat{\boxtimes} S_k)\otimes (\pi_V^*\mathcal{E}_B\boxtimes Q_k))}\\
&\geq \langle (1\hat{\otimes}D_Q)f,(1\hat{\otimes}D_Q)f\rangle_{ L^2(P_V\times S^{2k}, (\pi_V^*S_W\hat{\boxtimes} S_k)\otimes (\pi_V^*\mathcal{E}_B\boxtimes Q_k))}\\
&\geq c\langle f,f\rangle_{ L^2(P_V\times S^{2k}, (\pi_V^*S_W\hat{\boxtimes} S_k)\otimes (\pi_V^*\mathcal{E}_B\boxtimes Q_k))}= c\langle f,f\rangle_{\mathpzc{E}_B}
\end{align*}
\end{proof}

\begin{define}[Vector bundle modification of smoothing operators]
Let $M$ be a closed manifold, $\mathcal{E}_B\to M$ a $B$-bundle and  $A\in \Psi^{-\infty}_B(M,\mathcal{E}_B)$ a smoothing operator. The vector bundle modified smoothing operator is defined to be 
$$A^V=(A\hat{\otimes}e_Q)|_{L^2(W^V, S_{W^V/W}\otimes \mathcal{E}_B^V)}.$$
\end{define}

It was shown in \cite[Lemma 2.9]{paperII} that the vector bundle modification of a trivializing operator is again a trivializing operator. The proof of the next proposition follows directly from the same method of proof as in Lemma \ref{decomposingmoddir}.

\begin{prop}
Let $W$, $\mathcal{E}_B\to W$ and $V\to W$ be as in Lemma \ref{decomposingmoddir} with $\partial W=\emptyset$. If $A\in \Psi^{-\infty}_B(W,\mathcal{E}_B\otimes S_W)$, then under the decomposition \eqref{decompositingl2forbundle}, the smoothing operator $A^V$ decomposes as  $A^V=A\oplus 0_{\mathpzc{E}_B}$.
\end{prop}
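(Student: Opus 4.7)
The plan is to mimic the argument of Lemma \ref{decomposingmoddir}, but the situation is considerably simpler because $e_Q$ is itself a bounded projection rather than a Dirac operator, so no spectral gap estimate is required. First I would unpack $A^V$ as the restriction to the $Spin^c(2k)$-invariant part of $A\hat{\otimes}e_Q$, acting on
$$L^2\bigl(P_V\times S^{2k},\,(\pi_V^*S_W\hat{\boxtimes} S_k)\otimes(\pi_V^*\mathcal{E}_B\boxtimes Q_k)\bigr).$$
Since $A\hat{\otimes}1$ and $1\hat{\otimes}e_Q$ act on independent tensor factors they commute, which gives the factorization
$$A\hat{\otimes}e_Q=(A\hat{\otimes}1)(1\hat{\otimes}e_Q)=(1\hat{\otimes}e_Q)(A\hat{\otimes}1).$$
Restricting to the invariant part and invoking the identification $(1\hat{\otimes}e_Q)|_{\mathrm{inv}}=e_Q^V$ from Proposition \ref{eqprop} yields the pair of identities $A^V=A^V\circ e_Q^V=e_Q^V\circ A^V$.

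Next I would analyze the two summands of \eqref{decompositingl2forbundle} separately. Vanishing on $\mathpzc{E}_B$ is immediate from the factorization: for $f\in\mathpzc{E}_B=(1-e_Q^V)L^2(W^V,S_{W^V}\otimes\mathcal{E}_B^V)$ we have $e_Q^V f=0$, hence $A^V f=A^V(e_Q^V f)=0$. For the first summand I would trace through the chain of isomorphisms built in the proof of Proposition \ref{decomposingmoduledir}: fixing a generator $\phi_0$ of the one-dimensional space $\ker D_Q$, each element of $e_Q^V L^2(W^V,S_{W^V}\otimes\mathcal{E}_B^V)$ is represented as $s\otimes\phi_0$ with $s\in L^2(W,S_W\otimes\mathcal{E}_B)$, and the identification with $L^2(W,S_W\otimes\mathcal{E}_B)$ sends $s\otimes\phi_0\mapsto s$. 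Because $e_Q\phi_0=\phi_0$, one computes
$$(A\hat{\otimes}e_Q)(s\otimes\phi_0)=(As)\otimes\phi_0,$$
which transports to $As$ under the identification; hence $A^V$ restricts to $A$ on the first summand, and combining the two cases produces $A^V=A\oplus 0_{\mathpzc{E}_B}$.

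The only potential obstacle is the clean bookkeeping of the $Spin^c(2k)$-equivariant identifications, but all of this has been done in Propositions \ref{eqprop} and \ref{decomposingmoduledir}; beyond that, the essential content is the trivial observation that $A$ and $e_Q$ live on independent tensor factors and that $e_Q$ is the orthogonal projection onto $\ker D_Q$, so no positivity estimate analogous to \eqref{postivitiy} in Lemma \ref{decomposingmoddir} is needed.
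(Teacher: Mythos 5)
Your argument is correct and follows the same route the paper intends; the paper itself only remarks that the statement ``follows directly from the same method of proof as in Lemma \ref{decomposingmoddir},'' and your proof supplies exactly those details, rightly noting that the positivity estimate \eqref{postivitiy} can be replaced by the trivial observation that $A\hat{\otimes}e_Q$ vanishes on the range of $1\hat{\otimes}(1-e_Q)$. The factorization $A^V=A^V e_Q^V=e_Q^V A^V$ coming from the independent tensor factors, together with the identification of $e_Q^V L^2$ with $L^2(W,S_W\otimes\mathcal{E}_B)$ via $s\otimes\phi_0\mapsto s$, is precisely the content of the reference to Lemma \ref{decomposingmoddir}.
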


\begin{prop}
\label{vbundmodofaps}
Assume that $W$ is an even-dimensional spin$^c$-manifold and that $\mathcal{E}_B\to W$ is a $B$-bundle. Let $V\to W$ be a spin$^c$-vector bundle of even rank. For any a twisted spin$^c$-Dirac operator $D_\mathcal{E}$ on $\mathcal{E}_B$ of product type near $\partial W$ and any trivializing operator $A\in \Psi^{-\infty}_B(\partial W, S_{\partial W}\otimes  \mathcal{E}_B|_{\partial W})$, 
$$\ind_{APS}(D_\mathcal{E},A)=\ind_{APS}(D_\mathcal{E}^V,A^V)\quad\mbox{in}\quad K_0(C^*_r(\Gamma)).$$
\end{prop}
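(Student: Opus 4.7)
The plan is to reduce the proposition to the invertibility of a single summand by lifting the orthogonal decomposition of Lemma \ref{decomposingmoddir} from $W^V$ to the cylindrical-end manifold $W^V_\infty$. First I would observe that the vector bundle modification is compatible with cylindrical extensions in the following sense. Since $D_\mathcal{E}$ is product near $\partial W$ we may assume $V$ and its spin$^c$-structure are product near $\partial W$, and then $V$ extends to a bundle $V_\infty \to W_\infty$ giving a natural identification $W^V_\infty \cong (W^V)_\infty$ together with $(D_\mathcal{E}^V)_\infty = (D_\mathcal{E}^{W_\infty})^V$. The trivializing operator also modifies in a compatible fashion: its cylindrical extension $A_\infty$ satisfies $(A_\infty)^V = (A^V)_\infty$, since $e_Q$ acts only in the sphere fiber direction and commutes with the cut-off $\chi$ used to define $A_\infty$.

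With this set up, I would apply Proposition \ref{eqprop}, Proposition \ref{decomposingmoduledir}, and Lemma \ref{decomposingmoddir} verbatim on the complete manifold $W_\infty$ (their arguments are local in the fiber direction of $V$, so the noncompactness of $W_\infty$ plays no role). This yields an orthogonal direct sum decomposition of $C^*_r(\Gamma)$-Hilbert modules
$$L^2(W^V_\infty, S_{W^V_\infty} \otimes \mathcal{E}^V_\infty) = L^2(W_\infty, S_{W_\infty} \otimes \mathcal{E}_\infty) \oplus \mathpzc{E}_{B,\infty},$$
under which the perturbed Dirac operator splits as
$$D^V_\mathcal{E} + A^V_\infty = (D_\mathcal{E} + A_\infty) \oplus D_{\mathpzc{E},\infty},$$
where $D_{\mathpzc{E},\infty}$ is regular, self-adjoint, and odd. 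Moreover, the inequality \eqref{postivitiy} of Lemma \ref{decomposingmoddir} holds pointwise in the sphere fibers with a universal constant $c>0$ depending only on $k$, so it extends to $W^V_\infty$ and gives $D_{\mathpzc{E},\infty}^2 \geq c > 0$ in the $C^*_r(\Gamma)$-valued form sense on $\mathpzc{E}^1_{B,\infty}$.

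From the bound $D_{\mathpzc{E},\infty}^2 \geq c$, the operator $D_{\mathpzc{E},\infty}$ is invertible with bounded inverse as a regular self-adjoint operator on the Hilbert $C^*_r(\Gamma)$-module $\mathpzc{E}_{B,\infty}$, hence defines the zero class in $K_0(C^*_r(\Gamma))$. Additivity of the $C^*_r(\Gamma)$-Fredholm index under orthogonal direct sums, combined with Theorem \ref{howtodefaps}, then yields
$$\ind_{APS}(D^V_\mathcal{E}, A^V) = \ind(D^V_\mathcal{E} + A^V_\infty) = \ind(D_\mathcal{E} + A_\infty) + \ind(D_{\mathpzc{E},\infty}) = \ind_{APS}(D_\mathcal{E}, A).$$
The main technical obstacle I anticipate is precisely the compatibility check in the first paragraph, namely that attaching a cylinder commutes with the vector bundle modification at the level of the decomposition and the smoothing perturbation; everything else is a direct transcription of the already established Proposition \ref{someinnerprod} and Lemma \ref{decomposingmoddir} to the complete manifold.
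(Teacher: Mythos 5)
Your argument follows the paper's own proof essentially verbatim: both extend the decomposition of Lemma \ref{decomposingmoddir} to the cylindrical-end manifold $W^V_\infty$, split $D^{V,\infty}_\mathcal{E}+A^V_\infty$ as $(D^\infty_\mathcal{E}+A_\infty)\oplus D^\infty_\mathpzc{E}$, and use the lower bound \eqref{postivitiy} to see that $D^\infty_\mathpzc{E}$ is invertible and hence contributes zero to the index. The only difference is that you make explicit the compatibility checks (that attaching a cylinder commutes with vector bundle modification of both the Dirac operator and the smoothing perturbation) which the paper's one-line proof silently assumes; that is a useful elaboration, not a change of method.
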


\begin{proof}
Lemma \ref{decomposingmoddir} implies that $D_\mathcal{E}^{V,\infty}+A^V_\infty$ can be realized as a direct sum of the operators
$$D_\mathcal{E}^{V,\infty}+A^V_\infty=(D_\mathcal{E}^\infty+A_\infty)\oplus D_\mathpzc{E}^\infty,$$
where $D_\mathpzc{E}^\infty:=(1-e^{V_\infty}_Q)D^{W_\infty^V}_{\mathcal{E}^V}$. The operators $D_\mathcal{E}^{V,\infty}$ and $D_\mathcal{E}^{\infty}$ are regular and self-adjoint, therefore $D_\mathpzc{E}^\infty$ must also be regular and self-adjoint. It follows from Lemma \ref{decomposingmoddir} (more precisely the inequality \eqref{postivitiy}) that $D_\mathpzc{E}^\infty$ is invertible; hence
$$\ind_{APS}(D_\mathcal{E}^V,A^V)=\ind_{C^*_r(\Gamma)}(D_\mathcal{E}^\infty+A_\infty)+ \ind_{C^*_r(\Gamma)}D_\mathpzc{E}^\infty=\ind_{APS}(D_\mathcal{E},A).$$
\end{proof}

We now turn to closed manifolds and higher $\rho$-invariants. Our construction is based on the \cite[Section 8]{HReta} (in particular, see the proof of \cite[Theorem 8.18]{HReta}). We note that in this case, we need an analog of Lemma \ref{decomposingmoddir} for the equivariant Dirac operators on $L^2(\tilde{M})$ for a covering $\Gamma\to \tilde{M}\to M$. On manifolds like $\tilde{M}$, we use the Sobolev spaces coming from a suitable $\Gamma$-equivariant Laplacian type operator.

\begin{lemma}
Assume that $M$ is a closed spin$^c$-manifold, $f:M\to B\Gamma$ a continuous mapping, $E\to M$ a vector bundle, and $D^M_E$ a Dirac operator on $S_M\otimes E$. Let $V\to M$ be a spin$^c$-vector bundle of even rank. The projection $e^V_Q\otimes_\pi \mathrm{id}_{\ell^2(\Gamma)}$ respects the Sobolev scale $H^s(\tilde{M}^V,S_{\tilde{M}^V}\otimes \tilde{E}^V)$ and decomposes
\begin{equation}
\label{hsdecomp}
H^s(\tilde{M}^V,S_{\tilde{M}^V}\otimes \tilde{E}^V)=H^s(\tilde{M},S_{\tilde{M}}\otimes \tilde{E})\oplus \mathpzc{E}^s_{L^2},
\end{equation}
in a (graded) $C^\infty_c(\tilde{M})$-linear way. Under the decomposition \eqref{hsdecomp}, 
$$\tilde{D}^{M^V}_{E^V}=\tilde{D}^M_E\oplus D_{\mathpzc{E}_{L^2}},$$
where $D_{\mathpzc{E}_{L^2}}$ is a self-adjoint invertible operator with $\mathrm{dom}(D_{\mathpzc{E}_{L^2}})= \mathpzc{E}^1_{L^2}$.
\end{lemma}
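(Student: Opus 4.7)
The plan is to deduce this lemma directly from Lemma \ref{decomposingmoddir} and Proposition \ref{decomposingmoduledir} applied to the $C^*_r(\Gamma)$-bundle $E\otimes \mathcal{L}_M \to M$, then transfer everything to the Galois cover $\tilde{M}$ via the canonical $\Gamma$-equivariant isomorphism
$$L^2(\tilde{M},S_{\tilde{M}}\otimes \tilde{E}) \cong L^2(M,S_M\otimes E\otimes \mathcal{L}_M)\otimes_\pi \ell^2(\Gamma),$$
and its analogue after vector bundle modification. Under this identification $\tilde{D}^M_E$ corresponds to $D^M_{E\otimes \mathcal{L}_M}\otimes_\pi \mathrm{id}_{\ell^2(\Gamma)}$, and the projection $e_Q^V$ upstairs is precisely $(e_Q^V \text{ downstairs})\otimes_\pi \mathrm{id}_{\ell^2(\Gamma)}$; the equivariant Sobolev scale on $\tilde{M}^V$ (defined from a $\Gamma$-invariant Laplacian that is the pullback of one on $M^V$) agrees under this isomorphism with the $C^*_r(\Gamma)$-Sobolev scale tensored with $\ell^2(\Gamma)$, so the respect of the Sobolev scale by $e_Q^V\otimes_\pi \mathrm{id}_{\ell^2(\Gamma)}$ is inherited from the corresponding fact in Proposition \ref{eqprop}.

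With this framework in place, Proposition \ref{decomposingmoduledir} yields the orthogonal decomposition at each Sobolev level on the $C^*_r(\Gamma)$-module side, and tensoring with $\ell^2(\Gamma)$ gives the decomposition \eqref{hsdecomp} with $\mathpzc{E}_{L^2}^s := \mathpzc{E}^s_{C^*_r(\Gamma)}\otimes_\pi \ell^2(\Gamma)$. The $C^\infty_c(\tilde{M})$-linearity follows since the downstairs projection is $C^\infty(M)$-linear and $Spin^c(2k)$-equivariant; lifting to the cover and using that $\Gamma$-equivariant operators commute with the pullback action of $C^\infty_c(\tilde{M})$ to the extent that is needed gives the stated compatibility. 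Similarly, applying Lemma \ref{decomposingmoddir} downstairs produces a decomposition $D^{M^V}_{E\otimes \mathcal{L}_M} = D^M_{E\otimes \mathcal{L}_M}\oplus D_{\mathpzc{E}_{C^*_r(\Gamma)}}$, and tensoring with $\ell^2(\Gamma)$ through $\pi$ yields the decomposition $\tilde{D}^{M^V}_{E^V} = \tilde{D}^M_E \oplus D_{\mathpzc{E}_{L^2}}$ on the dense subspace $H^1(\tilde{M},S_{\tilde{M}}\otimes \tilde{E})\oplus \mathpzc{E}_{L^2}^1$.

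The main obstacle—and really the only nontrivial part—is verifying self-adjointness and invertibility of $D_{\mathpzc{E}_{L^2}}$. Self-adjointness is automatic: the downstairs operator $D_{\mathpzc{E}_{C^*_r(\Gamma)}}$ is regular self-adjoint by Lemma \ref{decomposingmoddir} (it is a summand of the regular self-adjoint Dirac operator $D^{M^V}_{E\otimes \mathcal{L}_M}$), and regularity plus self-adjointness pass through the induced representation $\pi\otimes \mathrm{id}_{\ell^2(\Gamma)}$ to the scalar operator on $\mathpzc{E}_{L^2}$. Invertibility is the transfer of the $C^*_r(\Gamma)$-valued inequality \eqref{postivitiy}: for $f\in \mathpzc{E}_{C^*_r(\Gamma)}^1$ one has
$$\langle D_{\mathpzc{E}_{C^*_r(\Gamma)}} f, D_{\mathpzc{E}_{C^*_r(\Gamma)}} f\rangle \geq c \langle f,f\rangle \quad\text{in } C^*_r(\Gamma).$$
Applying the completely positive map $\pi\otimes \mathrm{id}_{\ell^2(\Gamma)}$ and extending to elements of $\mathpzc{E}_{L^2}^1$ by density gives the scalar bound $\|D_{\mathpzc{E}_{L^2}} g\|^2 \geq c\|g\|^2$, so $D_{\mathpzc{E}_{L^2}}^2 \geq c$ as a self-adjoint operator, yielding invertibility with bounded inverse on the stated domain.
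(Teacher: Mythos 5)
Your proof is correct, but it takes a genuinely different route from the paper. The paper's proof runs the same local computation as in Lemma \ref{decomposingmoddir} directly on the covering $\tilde{M}$, identifying $e^V_Q\otimes_\pi \mathrm{id}_{\ell^2(\Gamma)}$ with the orthogonal projection onto $H^s(\tilde{M},S_{\tilde{M}}\otimes \tilde{E})\otimes \ker D_Q$ inside $H^s(\tilde{M}^V,S_{\tilde{M}^V}\otimes \tilde{E}^V)$ and deducing everything from that picture, in parallel with Proposition \ref{eqprop} and Lemma \ref{decomposingmoddir}. You instead avoid redoing the argument by treating the cover as the localization of the $C^*_r(\Gamma)$-module picture on $M$: you apply the already-proved decomposition for the $C^*_r(\Gamma)$-bundle $E\otimes \mathcal{L}_M$, then tensor with $\ell^2(\Gamma)$ through the regular representation and transfer along $L^2(\tilde{M},S_{\tilde{M}}\otimes \tilde{E}) \cong L^2(M,S_M\otimes E\otimes \mathcal{L}_M)\otimes_\pi \ell^2(\Gamma)$. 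Your argument correctly handles the three nontrivial points — the compatibility of the equivariant Sobolev scale with the Mishchenko–Fomenko Sobolev scale under the geometric isomorphism, the persistence of regularity/self-adjointness under localization by $\pi$, and the positivity estimate \eqref{postivitiy} passing to the localization to give $D_{\mathpzc{E}_{L^2}}^2\geq c$. The transfer approach is cleaner and more systematic, at the cost of making explicit the machinery of localizing regular self-adjoint operators along a representation, which the paper leaves implicit.
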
 

The proof follows the same lines as the one of Lemma \ref{decomposingmoddir}. The fact that the projection $e^V_Q\otimes_\pi \mathrm{id}_{\ell^2(\Gamma)}$ respects the Sobolev scale follows since it coincides with the orthogonal projection onto the closed subspace
$$H^s(\tilde{M},S_{\tilde{M}}\otimes \tilde{E})\otimes \ker D_Q\subseteq H^s(\tilde{M}^V,S_{\tilde{M}^V}\otimes \tilde{E}^V).$$

Let $\gamma_k$ denote the grading operator on $S_k\otimes Q_k$ and define an operator $J_Q$ by taking $J_Q:=i\gamma_kD_Q|D_Q|^{-1}\in \mathcal{B}(L^2(S^{2k}, S_k\otimes Q_k))$ on $\ker(D_Q)^{\bot}$ and $J_Q|_{\ker D_Q}=0$. Define 
$$J:=(1\hat{\otimes} J_Q)|_{\mathpzc{E}_{L^2}}\in \mathcal{B}(\mathpzc{E}_{L^2}).$$

\begin{lemma}
\label{thejlem}
The self-adjoint operator $J\in \mathcal{B}(\mathpzc{E}_{L^2})$ has the following properties
\begin{enumerate}
\item $J^2=1$ (i.e., $J$ is an involution);
\item $J$ preserves the scale $\mathpzc{E}^s_{L^2}$;
\item $JD_{\mathpzc{E}_{L^2}}+D_{\mathpzc{E}_{L^2}}J=0$ on $\mathpzc{E}^1_{L^2}$;
\item $J\in D^*(\tilde{M},\mathpzc{E}_{L^2})^\Gamma$ and commutes with the $C_0(\tilde{M})$-action.
\end{enumerate}
\end{lemma}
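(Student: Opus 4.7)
The strategy is to reduce everything to the corresponding statements about $J_Q$ on $L^2(S^{2k}, S_k\otimes Q_k)$, since $J$ is defined as the restriction of $1\hat{\otimes} J_Q$ to the complement $\mathpzc{E}_{L^2}$ of the kernel direction, and the $\tilde{M}$-factor enters only trivially. All four assertions will then follow from: (a) standard functional calculus facts for the self-adjoint elliptic operator $D_Q$ on the closed manifold $S^{2k}$, (b) the fact that $J_Q$ is built from the sign of $D_Q$ on $\ker(D_Q)^{\perp}$, and (c) the fact that $1\hat{\otimes}J_Q$ acts as the identity in the $\tilde{M}$-variable, so has propagation zero and is $\Gamma$-equivariant.

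For (1), I would compute on $\ker(D_Q)^{\perp}$ directly: since $\gamma_k$ anticommutes with the odd operator $D_Q$, it commutes with $D_Q^2$ and hence with $|D_Q|^{-1}$. A short calculation then gives $J_Q^2 = (i\gamma_kD_Q|D_Q|^{-1})^2 = -\gamma_k^2(D_Q|D_Q|^{-1})^2 = -1\cdot(-1) = 1$, after recording the sign $\gamma_kD_Q=-D_Q\gamma_k$. Restricting to $\mathpzc{E}_{L^2}$ (where the kernel part is cut out by $1-e^V_Q$) yields $J^2=\mathrm{id}$. For (3), the same two calculations give $J_QD_Q = i\gamma_k|D_Q|$ and $D_QJ_Q = -i\gamma_k|D_Q|$ on $\ker(D_Q)^\perp$, so $J_Q$ and $D_Q$ anticommute. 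In the graded tensor product, $(1\hat{\otimes}J_Q)(\tilde{D}^M_E\hat{\otimes}1) = -\tilde{D}^M_E\hat{\otimes}J_Q$ because both $J_Q$ and $\tilde{D}^M_E$ are odd, and the cross-terms involving $\tilde{D}^M_E\hat{\otimes}J_Q$ cancel in $JD_{\mathpzc{E}_{L^2}} + D_{\mathpzc{E}_{L^2}}J$; what is left is $1\hat{\otimes}(J_QD_Q+D_QJ_Q) = 0$.

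For (2), the operator $J_Q$ on $\ker(D_Q)^{\perp}$ extended by $0$ on the finite-dimensional kernel is, via functional calculus for the elliptic self-adjoint $D_Q$, a bounded classical $\Psi DO$ of order $0$ on $S^{2k}$ (it agrees modulo a smoothing operator with $i\gamma_kD_Q(D_Q^2+1)^{-1/2}$). Consequently $J_Q$ preserves every Sobolev space $H^s(S^{2k},S_k\otimes Q_k)$, and the external tensor product $1\hat{\otimes}J_Q$ (viewed via the identification \eqref{hsident}) preserves the Sobolev scale $H^s(\tilde{M}^V,S_{\tilde{M}^V}\otimes \tilde{E}^V)$; restricting to $\mathpzc{E}^s_{L^2}$ gives the desired statement.

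Finally, for (4), since $J$ acts only on the $S^{2k}$-factor, $J$ commutes pointwise with the $C_0(\tilde{M})$-representation coming from the $\tilde{M}$-factor; in particular $[\phi,J]=0$ for every $\phi\in C_0(\tilde{M})$ (so $J$ is pseudo-local) and $\phi J\phi'=0$ whenever $\phi,\phi'\in C_0(\tilde{M})$ have disjoint supports (so $J$ has propagation zero). Equivariance under $\Gamma$ is immediate because the $\Gamma$-action is entirely in the $\tilde{M}$-variable, which $J$ leaves alone. Hence $J\in D^*(\tilde{M},\mathpzc{E}_{L^2})^\Gamma$. The main technical care lies in step (2) — checking that after extending by zero on $\ker D_Q$ the operator $J_Q$ is genuinely bounded on each Sobolev space, and that the identification \eqref{hsident} respects the resulting decomposition — but the remaining parts are essentially formal consequences of the standard anticommutation identities for the sign of a graded Dirac operator.
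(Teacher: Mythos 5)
Your argument is correct and follows essentially the paper's own strategy: reduce every claim to the anticommutation $J_QD_Q+D_QJ_Q=0$, the graded tensor-product structure, and the fact that $J$ acts trivially in the $\tilde{M}$-direction (hence zero propagation and $\Gamma$-equivariance). The one genuine variation is in part (2), where you argue that $J_Q$ is an order-zero $\Psi$DO on $S^{2k}$, whereas the paper notes directly that $J$ commutes with the Laplacian $(\tilde D^M_E)^2\otimes 1+1\otimes D_Q^2$ generating the Sobolev scale on $\tilde M^V$ — both are fine. One small slip worth fixing: the intermediate step $-\gamma_k^2(D_Q|D_Q|^{-1})^2=-1\cdot(-1)$ misattributes the second sign, since both $\gamma_k^2$ and $(D_Q|D_Q|^{-1})^2$ equal $1$ on $\ker(D_Q)^\perp$; the extra $-1$ comes from transporting $\gamma_k$ past $D_Q$, and the conclusion $J_Q^2=1-e_Q$, hence $J^2=1$ on $\mathpzc{E}_{L^2}$, is correct.
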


\begin{proof}
It is clear that $J^2=1$ because $J_Q^2= 1-e_Q$. Moreover, $J_Q$ formally anti commutes with $D_Q$, and $1\hat{\otimes} J_Q$ formally anti commutes with $\tilde{D}_M^E\hat{\otimes} 1$; hence, $J$ formally anitcommutes with $D_{\mathpzc{E}_{L^2}}$. It also follows that $J$ preserves the scale $\mathpzc{E}^s_{L^2}$ as it commutes with a suitable choice of Laplacian operator on $\tilde{M}^V$ coming from $(\tilde{D}_M^E)^2\otimes 1+1\otimes D_Q^2$. We can (by a density argument) conclude that $JD_{\mathpzc{E}_{L^2}}+D_{\mathpzc{E}_{L^2}}J=0$ on $\mathpzc{E}^1_{L^2}$.

It remains to prove that $J\in D^*(\tilde{M},\mathpzc{E}_{L^2})^\Gamma$. Since $e^V_Q\otimes_\pi \mathrm{id}_{\ell^2(\Gamma)}$ commutes with the $C^\infty_c(\tilde{M})$-action on $\mathpzc{E}_{L^2}$, it is clear that $J$ commutes with the $C_0(\tilde{M})$-action and is pseudo-local with finite propagation; in fact, it has zero propagation.
\end{proof}

\begin{lemma}
\label{jandplemma}
Suppose $p\in D^*(\tilde{X})^\Gamma$ be a projection. Then, if there exists a self-adjoint involution $J\in D^*(\tilde{X})^\Gamma$ such that
\begin{itemize}
\item $J$ commutes with the $C_0(\tilde{X})$-action;
\item $Jp+pJ=J$, 
\end{itemize} 
then $[p]=0\quad\mbox{in}\quad K_0(D^*(\tilde{X})^\Gamma).$
\end{lemma}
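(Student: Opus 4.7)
The plan is to homotope $p$ through projections in $D^*(\tilde{X})^\Gamma$ to the simpler projection $\tfrac{1}{2}(1+J)$, and then kill this class using an Eilenberg-swindle argument enabled by the commutation with $C_0(\tilde{X})$ and ampleness of $\mathpzc{H}_X$.

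First, setting $S := 2p-1$, the hypothesis $Jp+pJ=J$ combined with $J^2=1$ gives $SJ+JS = 2(Jp+pJ)-2J = 0$, so $S$ and $J$ are two anticommuting self-adjoint involutions in $D^*(\tilde{X})^\Gamma$. Consequently $\cos\theta\,S + \sin\theta\,J$ is again a self-adjoint involution for every $\theta$, and
\[
p_\theta := \tfrac{1}{2}\bigl(1 + \cos\theta\,S + \sin\theta\,J\bigr)
\]
is a norm-continuous path of projections in $D^*(\tilde{X})^\Gamma$ from $p_0 = p$ to $p_{\pi/2} = \tfrac{1}{2}(1+J) =: e$. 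Hence $[p]=[e]$ in $K_0(D^*(\tilde{X})^\Gamma)$, and it remains to prove $[e]=0$.

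The projection $e$ commutes with $C_0(\tilde{X})$, so it has propagation zero. I would next use ampleness of $\mathpzc{H}_X$ to produce a pair of $\Gamma$-invariant isometries $S_1, S_2 \in D^*(\tilde{X})^\Gamma$ that commute with both $C_0(\tilde{X})$ and $J$ (equivalently, preserve the two eigenspaces of $J$) and satisfy the Cuntz-type relations $S_i^* S_j = \delta_{ij}$ and $S_1 S_1^* + S_2 S_2^* = 1$. Such $S_i$ have propagation zero and are trivially pseudolocal, so they lie in $D^*(\tilde{X})^\Gamma$. Since $[S_i, e] = 0$, we rewrite
\[
e = e(S_1 S_1^* + S_2 S_2^*) = S_1 e S_1^* + S_2 e S_2^*,
\]
a sum of two orthogonal projections each Murray--von Neumann equivalent to $e$ via the partial isometry $S_i e$ (whose source and range projections are $e$ and $S_i e S_i^*$ respectively). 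Thus $[e] = [S_1 e S_1^*] + [S_2 e S_2^*] = 2[e]$ in $K_0(D^*(\tilde{X})^\Gamma)$, which forces $[e]=0$ and hence $[p]=0$.

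The main obstacle is the construction of the two Cuntz isometries commuting with $J$: this amounts to checking that both eigenspaces $e\mathpzc{H}_X$ and $(1-e)\mathpzc{H}_X$ are ample $\Gamma$-equivariant $X$-modules for $C_0(\tilde{X})$. Should either eigenspace fail to be ample, I would first stabilize by replacing $\mathpzc{H}_X$ with $\mathpzc{H}_X \oplus \mathpzc{H}_X$ and $J$ with $J\oplus(-J)$; this preserves all the hypotheses, leaves the relevant $K$-groups unchanged by naturality of the isomorphisms in Theorem \ref{thenatiso}, and makes each eigenspace of the new involution $\Gamma$-equivariantly isomorphic to $\mathpzc{H}_X$ itself, hence ample. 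Inside an ample $\Gamma$-equivariant module the Cuntz-type isometries are supplied by Kasparov-type absorption, with the $\Gamma$-equivariance and commutation with $C_0(\tilde{X})$ automatic from the construction.
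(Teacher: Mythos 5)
Your homotopy $p_\theta$ from $p$ to $e=\tfrac12(1+J)$ is the same as the paper's (reparametrized by $\theta$ rather than $t=\sin\theta$), and the Eilenberg swindle via Cuntz isometries $S_1,S_2$ commuting with $C_0(\tilde{X})$ and $J$ is sound whenever such isometries exist. The genuine gap is in the stabilization you propose to guarantee their existence. Passing from $(\mathpzc{H}_X,J)$ to $(\mathpzc{H}_X\oplus\mathpzc{H}_X,\,J\oplus(-J))$ does make both eigenspaces of the new involution ample, but the new spectral projection is $\tfrac12\bigl(1+J\oplus(-J)\bigr)=e\oplus(1-e)$; under the canonical isomorphism $K_0(D^*(\tilde{X},\mathpzc{H}_X)^\Gamma)\cong K_0(D^*(\tilde{X},\mathpzc{H}_X\oplus\mathpzc{H}_X)^\Gamma)$, which sends $[a]$ to $[a\oplus 0]$, the class $[e\oplus(1-e)]$ equals $[e]+[1-e]=[1]$, not $[e]$. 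Showing the stabilized class vanishes therefore only reproves the already-known fact $[1]=0$ and gives no information about $[e]$. The same obstruction appears if you try to carry $p$ through the stabilization: any admissible $p''$ with $(J\oplus(-J))(p\oplus p'')+(p\oplus p'')(J\oplus(-J))=J\oplus(-J)$ (for instance $p''=1-p$) yields $[p\oplus p'']\neq[p]$ in general.

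The paper closes this gap by a different device: it observes that the hypotheses are invariant under $J\mapsto -J$, so the same homotopy gives both $[p]=[q]$ and $[p]=[1-q]$ with $q=\tfrac12(1+J)$, and then arranges (after tensoring by $\ell^2(\field{N})$) for at least one of $q\mathpzc{H}_X$, $(1-q)\mathpzc{H}_X$ to be ample, identifying $[q]$ with $[1]$ in $K_0(D^*(\tilde{X},q\mathpzc{H}_X)^\Gamma)$ and killing it by a swindle. If you want to keep your Cuntz-isometry route, the clean fix is to stabilize by $\mathpzc{H}_X\mapsto\mathpzc{H}_X\otimes\ell^2(\field{N})$ and $J\mapsto J\otimes 1$ (so that $e\mapsto e\otimes 1$, and $[e]$ corresponds to $[e\otimes e_{11}]$), and to take isometries of the form $1\otimes s_i$: since $e$ has propagation zero, $e\otimes s_i$ lies in $D^*(\tilde{X},\mathpzc{H}_X\otimes\ell^2(\field{N}))^\Gamma$, the shift gives $e\otimes(1-e_{11})\sim e\otimes 1$, and hence $[e\otimes e_{11}]=0$, which really does force $[e]=0$.
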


\begin{proof}
Recall that $\mathcal{H}_X$ denotes the $\Gamma$-equivariant $X$-module used to define $D^*(\tilde{X})^\Gamma$. We set $q=(1+J)/2$; it is a projection in $D^*(\tilde{X},\mathcal{H}_X)^\Gamma$. Consider the function $Q\in C([0,1],D^*(\tilde{X},\mathcal{H}_X)^\Gamma)$ given by 
$$Q(t)=\frac{tJ+\sqrt{1-t^2}(2p-1)+1}{2}.$$
Since $J(2p-1)=-(2p-1)J$, the operator $tJ+\sqrt{1-t^2}(2p-1)$ is a symmetry for any $t$; hence, $Q(t)$ is projection valued. Furthermore, $Q(0)=p$ and $Q(1)=q$. We conclude that $[p]=[q]$ in $K_0(D^*(\tilde{X},\mathpzc{H}_X)^\Gamma)$. Since the conditions on $J$ are symmetric under $J\mapsto -J$, the same argument applies to $1-q=(1-J)/2$ and so $[p]=[1-q]$ in $K_0(D^*(\tilde{X},\mathpzc{H}_X)^\Gamma)$. 

After possibly tensoring with $\ell^2(\field{N})$, we can assume that $q\mathpzc{H}_X$ or $(1-q)\mathpzc{H}_X$ is ample (i.e., that it is a $\Gamma$-equivariant $X$-module). For simplicity, we assume $q\mathpzc{H}_X$ satisfies this assumption. Under the isomorphism $K_0(D^*(\tilde{X},\mathpzc{H}_X)^\Gamma)\cong K_0(D^*(\tilde{X},q\mathpzc{H}_X)^\Gamma)$ we can identify $[q]$ with $[1]$. The result follows since $[1]=0\in K_0(D^*(\tilde{X},\mathpzc{H}_X)^\Gamma)$; a fact that can be proved using a ``swindle" argument.
\end{proof}

\begin{lemma}
\label{rhoandvb}
Let $M$ be a closed spin$^c$-manifold, $f:M\to X$ be a continuous map, $V\to M$ be a spin$^c$-vector bundle of even rank, $E\to M$ be a complex vector bundle, $D_E$ be a twisted spin$^c$-Dirac operator on $E$, and $A\in \Psi^{-\infty}_{C^*_r(\Gamma)}(M,S_M\otimes E\otimes f^*\mathcal{L}_X)$ be a trivializing operator. Then, $\rho(D_{E}^V,A^V)=\rho(D_{E},A)\quad\mbox{in}\quad K_*(D^*(\tilde{M})^\Gamma)$.
\end{lemma}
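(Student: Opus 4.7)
The plan is to combine the splitting for vector bundle modifications with Lemma \ref{jandplemma} applied to the involution $J$ of Lemma \ref{thejlem}. Under the orthogonal decomposition $L^2(\tilde{M}^V,S_{\tilde{M}^V}\otimes \tilde{E}^V)\cong L^2(\tilde{M},S_{\tilde{M}}\otimes \tilde{E})\oplus \mathpzc{E}_{L^2}$ supplied by the lemma preceding this statement, together with the identity $\widetilde{A^V}=\tilde{A}\oplus 0_{\mathpzc{E}_{L^2}}$ (obtained by applying the earlier proposition on vector bundle modification of smoothing operators on closed manifolds and then tensoring with $\mathrm{id}_{\ell^2(\Gamma)}$), one has
$$\tilde{D}^{M^V}_{E^V}+\widetilde{A^V}=(\tilde{D}^M_E+\tilde{A})\oplus D_{\mathpzc{E}_{L^2}}.$$
Because $D_{\mathpzc{E}_{L^2}}$ is self-adjoint and invertible, functional calculus splits across the direct sum, so with $Q:=\chi_{[0,\infty)}(D_{\mathpzc{E}_{L^2}})$ the natural representative of $\rho(D_E^V,A^V)$ becomes $\chi_{[0,\infty)}(\tilde{D}^M_E+\tilde{A})\oplus Q$, whose first summand is a representative of $\rho(D_E,A)$.

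It therefore suffices to show $[Q]=0$ in the relevant $K$-theory group. Lemma \ref{thejlem}(3) says that $J$ anticommutes with $D_{\mathpzc{E}_{L^2}}$, so $J$ commutes with $|D_{\mathpzc{E}_{L^2}}|^{-1}$ and anticommutes with the phase $\mathrm{sgn}(D_{\mathpzc{E}_{L^2}})=2Q-1$; equivalently $JQ+QJ=J$. By Lemma \ref{thejlem}(4), $J$ lies in $D^*(\tilde{M},\mathpzc{E}_{L^2})^\Gamma$ and commutes with the $C_0(\tilde{M})$-action. Lemma \ref{jandplemma} then forces $[Q]=0$. Putting this together with the splitting yields $\rho(D_E^V,A^V)=\rho(D_E,A)$ after using the identification $K_*(D^*(\tilde{M}^V)^\Gamma)\cong K_*(D^*(\tilde{M})^\Gamma)$ induced by the proper continuous coarse map $\pi_V:\tilde{M}^V\to\tilde{M}$ (cf.\ Theorem \ref{thenatiso}(4)); under this identification the two representatives are compared via the inclusion of the $L^2(\tilde{M},\ldots)$-summand into $L^2(\tilde{M}^V,\ldots)$.

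The main obstacle is of a bookkeeping nature: one must check that $Q$ really belongs to $D^*(\tilde{M},\mathpzc{E}_{L^2})^\Gamma$ so that Lemma \ref{jandplemma} is applicable, and that the $K$-theory identification matches the geometric picture. Both points reduce to the fact that the compressing projection $e_Q^V\otimes_\pi\mathrm{id}_{\ell^2(\Gamma)}$ has zero propagation and is $\Gamma$-equivariant pseudo-local, so $D_{\mathpzc{E}_{L^2}}$ inherits the coarse geometric properties of $\tilde{D}^{M^V}_{E^V}$ and its functional calculus lies in the appropriate Roe-type algebra. This argument follows the same template as \cite[Theorem 8.18]{HReta} referenced just before Lemma \ref{thejlem}.
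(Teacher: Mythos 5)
Your proposal is correct and follows essentially the same route as the paper's proof: split $\tilde{D}^{M^V}_{E^V}+\widetilde{A^V}$ as $(\tilde{D}^M_E+\tilde{A})\oplus D_{\mathpzc{E}_{L^2}}$ via the lemma preceding the statement, pass to spectral projections, and kill the class $[\chi_{[0,\infty)}(D_{\mathpzc{E}_{L^2}})]$ using Lemmas \ref{thejlem} and \ref{jandplemma}. You merely spell out two steps the paper leaves terse: the algebraic verification that anticommutation of $J$ with $D_{\mathpzc{E}_{L^2}}$ yields $JQ+QJ=J$, and the $K$-theoretic bookkeeping hidden behind the paper's phrase ``under suitable identifications.''
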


\begin{proof}
It follows from Lemma \ref{decomposingmoddir} that 
$$\chi_{[0,\infty)}(\tilde{D}_{E}^V+\tilde{A}^V)=\chi_{[0,\infty)}(\tilde{D}_{E}+\tilde{A})\oplus \chi_{[0,\infty)}(D_{\mathpzc{E}_{L^2}}).$$
Since 
$$\chi_{[0,\infty)}(\tilde{D}_{E}+\tilde{A})\in D^*(\tilde{M}, L^2(\tilde{M},S_{\tilde{M}}\otimes \tilde{E}))^\Gamma$$ 
and 
$$\chi_{[0,\infty)}(\tilde{D}_{E}^V+\tilde{A}^V)\in D^*(\tilde{M}, L^2(\tilde{M}^V,S_{\tilde{M}^V}\otimes \tilde{E}^V))^\Gamma,$$
we can consider $\chi_{[0,\infty)}(D_{\mathpzc{E}_{L^2}}) \in D^*(\tilde{M}, \mathpzc{E}_{L^2})^\Gamma$. Furthermore, under suitable identifications,
$$\rho(D_{E}^V,A^V)=\rho(D_{E},A)+[\chi_{[0,\infty)}(D_{\mathpzc{E}_{L^2}})].$$
The result follows by noting that Lemmas \ref{thejlem} and \ref{jandplemma} imply that $[\chi_{[0,\infty)}(D_{\mathpzc{E}_{L^2}})]=0$ in $K_*(D^*(\tilde{M})^\Gamma)$.
\end{proof}

\subsection{Proof that $\lambda_{an}^\mathcal{S}$ respects vector bundle modification}
\label{vecbsubse}

Finally, we turn to proving that $\lambda^\mathcal{S}_{an}$ respects vector bundle modification. In order to do so, we will make use of a special choice of decoration for the vector bundle modified cycle. 

\begin{define}[Vector bundle modification of decorations]
Let $(W,\xi,f)$ denote a cycle and 
$$\Xi=(\xi,(D_{\mathcal{E}},D_{\mathcal{E}'},D_E,D_{E'}),(A^\mathcal{E},A^{\mathcal{E}'},A))$$ 
a decoration of $\xi$. For a spin$^c$-vector bundle $V\to W$ of even rank, we define the following decoration of $\xi^V$:
$$\Xi^V:=(\xi,(D_{\mathcal{E}}^V,D_{\mathcal{E}'}^V,D_E^V,D_{E'}^V),(A^\mathcal{E}_V,A^{\mathcal{E}'}_V,A^V)),$$ 
where (for simplicity) we have omitted the notation indicating restriction of the bundle $V$ to $\partial W$.
\end{define}

It follows from \cite[Lemma 2.9]{paperII} that $\Xi^V$ is a well defined decoration of $\xi^V$.

\begin{prop}
\label{vbmodofaps}
Let $(W,\Xi,f)$ be a decorated cycle and $V\to W$ a spin$^c$-vector bundle of even rank. Then, $\ind_{APS}(W,\Xi,f)=\ind_{APS}(W^V,\Xi^V,f^V)$.
\end{prop}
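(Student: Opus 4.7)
The plan is to reduce this to Proposition \ref{vbundmodofaps} applied termwise to the three summands appearing in the definition of $\ind_{APS}(W,\Xi,f)$. Recall that by Definition \ref{defininginvaria} one has
\begin{equation*}
\ind_{APS}(W,\Xi,f)=\ind_{APS}(D^W_\mathcal{E},A^\mathcal{E})+\ind_{APS}(D^{-W}_{\mathcal{E}'},-A^{\mathcal{E}'})+\ind_{APS}(\bar{D}_\Xi,A^\Xi),
\end{equation*}
and by construction the vector bundle modified decoration $\Xi^V$ is set up so that $\ind_{APS}(W^V,\Xi^V,f^V)$ is given by the analogous sum with each entry replaced by its vector bundle modification along $V$ (respectively $V|_{\partial W}$ for the cylinder term, pulled back along the projection $\pi:\partial W\times [0,1]\to \partial W$).

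For the first two summands, I would simply invoke Proposition \ref{vbundmodofaps} with the bundles $\mathcal{E}_{C^*_r(\Gamma)}$ and $\mathcal{E}'_{C^*_r(\Gamma)}$ on $W$ and $-W$ and the spin$^c$-vector bundle $V$; this yields
\begin{equation*}
\ind_{APS}(D^W_\mathcal{E},A^\mathcal{E})=\ind_{APS}((D^W_\mathcal{E})^V,(A^\mathcal{E})^V),\quad \ind_{APS}(D^{-W}_{\mathcal{E}'},-A^{\mathcal{E}'})=\ind_{APS}((D^{-W}_{\mathcal{E}'})^V,(-A^{\mathcal{E}'})^V).
\end{equation*}
For the cylinder term I would apply Proposition \ref{vbundmodofaps} to the compact even-dimensional spin$^c$-manifold with boundary $\partial W\times [0,1]$, the $C^*_r(\Gamma)$-bundle $\mathcal{E}_{C^*_r(\Gamma)}|_{\partial W}\oplus E'_\C\otimes f^*\mathcal{L}_X$ (pulled back to the cylinder), the Dirac operator $\bar{D}_\Xi$ and the trivializing operator $A^\Xi$ on the two boundary components, using the pulled back spin$^c$-vector bundle $\pi^*V|_{\partial W}$. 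This gives
\begin{equation*}
\ind_{APS}(\bar{D}_\Xi,A^\Xi)=\ind_{APS}((\bar{D}_\Xi)^{\pi^*V|_{\partial W}},(A^\Xi)^{\pi^*V|_{\partial W}}).
\end{equation*}

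To conclude, the remaining step is the identification of these vector bundle modified data with the corresponding pieces of $\ind_{APS}(W^V,\Xi^V,f^V)$. Since vector bundle modification commutes with restriction to the boundary and with pulling back along projections, one has canonical diffeomorphisms $(\partial W\times [0,1])^{\pi^*V|_{\partial W}}\cong (\partial W)^{V|_{\partial W}}\times [0,1]=\partial(W^V)\times [0,1]$, under which $(\bar{D}_\Xi)^{\pi^*V|_{\partial W}}$ becomes $\bar{D}_{\Xi^V}$: indeed, both are constructed by prescribing the same vector bundle modified boundary operators at $\partial W\times\{0,1\}$ (using the fact that vector bundle modification preserves spin$^c$-structures of product type and tensor product decompositions of spinor bundles), and the clutching isomorphism $\alpha$ modifies to the clutching isomorphism $\alpha^V$ used in defining $\xi^V$. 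The same check gives $(A^\Xi)^{\pi^*V|_{\partial W}}=A^{\Xi^V}$. Summing the three identifications yields the proposition.

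The main obstacle I anticipate is bookkeeping: carefully verifying that the construction of the cylinder operator $\bar{D}_\Xi$ from boundary data (including the twist by $\alpha$) is natural under vector bundle modification. This is not a deep point, but it requires unwinding the definition of $\Xi^V$ and using that vector bundle modification is a functorial operation on pairs $(\mathcal{E}_B,D_\mathcal{E})$ that is compatible with direct sums and with isomorphisms of bundles. No serious analytic difficulty arises beyond what is already in Proposition \ref{vbundmodofaps}.
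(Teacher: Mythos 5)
Your proof is correct and follows essentially the same route as the paper, which simply applies Proposition \ref{vbundmodofaps} termwise to the three summands in Definition \ref{defininginvaria}. The additional bookkeeping you provide for the cylinder term and the identification with $\bar{D}_{\Xi^V}$ and $A^{\Xi^V}$ is left implicit in the paper but is the right thing to check.
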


\begin{proof}
The proof follows by applying Proposition \ref{vbundmodofaps} to the terms involved in the definition of the APS-index of a decorated cycle (see Definition \ref{defininginvaria}).
\end{proof}

\begin{prop}
\label{vbmodofrho}
Let $(W,\Xi,f)$ be a decorated cycle and $V\to W$ a spin$^c$-vector bundle of even rank. Then, $\rho(W,\Xi,f)=\rho(W^V,\Xi^V,f^V)$.
\end{prop}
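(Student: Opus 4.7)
The plan is to reduce the statement to Lemma \ref{rhoandvb} via the observation that the reference map of the modified cycle factors through the bundle projection. By construction $\partial(W^{V})=(\partial W)^{V|_{\partial W}}$, and the map $f^{V}\colon\partial(W^{V})\to X$ equals $f\circ\pi_V$, where $\pi_V\colon(\partial W)^{V|_{\partial W}}\to\partial W$ is the sphere bundle projection. Since $\pi_V$ is continuous and proper, and lifts to a continuous coarse $\Gamma$-equivariant map on the associated $\Gamma$-covers, Theorem \ref{thenatiso}(4) yields a functorial identity $(f^{V})_{*}=f_{*}\circ(\pi_V)_{*}$ at the level of the $K$-theory of the relevant $D^{*}$-algebras.

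The next step is to read Lemma \ref{rhoandvb} as a pushforward statement along $\pi_V$. Its conclusion is phrased as an equality in $K_{*}(D^{*}(\tilde M)^{\Gamma})$ between the $V$-modified and the unmodified $\rho$-class, under an identification whose underlying covering isometry is exactly the inclusion $L^{2}(\tilde M,S_{\tilde M}\otimes\tilde E)\hookrightarrow L^{2}(\tilde M^{V},S_{\tilde M^{V}}\otimes\tilde E^{V})$ coming from Proposition \ref{decomposingmoduledir}; the orthogonal summand arising from the decomposition contributes zero in $K$-theory by the argument in the proof of Lemma \ref{rhoandvb}, which invokes Lemmas \ref{thejlem} and \ref{jandplemma}. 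Applied to the closed spin$^{c}$-manifold $M:=\partial W\,\dot{\cup}\,-\partial W$ with map $f$, bundle $E'\,\dot{\cup}\,-E$, Dirac operator $D_E\,\dot{\cup}\,D_{E'}$, and trivializing operator $A$, this gives
$$
(\pi_V)_{*}\,\rho\bigl(D_E^{V}\,\dot{\cup}\,D_{E'}^{V},\,A_V\bigr)\;=\;\rho\bigl(D_E\,\dot{\cup}\,D_{E'},\,A\bigr)\qquad\text{in }K_{*}(D^{*}(\partial\tilde W\,\dot{\cup}\,-\partial\tilde W)^{\Gamma}).
$$

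Combining these two ingredients with the definition of the higher $\rho$-class of a decorated cycle (Definition \ref{defininginvaria}) yields
\begin{align*}
\rho(W^{V},\Xi^{V},f^{V})
&=(f^{V})_{*}\,\rho\bigl(D_E^{V}\,\dot{\cup}\,D_{E'}^{V},\,A_V\bigr)\\
&=f_{*}\,(\pi_V)_{*}\,\rho\bigl(D_E^{V}\,\dot{\cup}\,D_{E'}^{V},\,A_V\bigr)\\
&=f_{*}\,\rho\bigl(D_E\,\dot{\cup}\,D_{E'},\,A\bigr)=\rho(W,\Xi,f),
\end{align*}
which is the desired equality. The only substantive point is the reinterpretation of Lemma \ref{rhoandvb} as a pushforward statement along $\pi_V$; granted this, the rest is pure functoriality of the pushforward under continuous coarse $\Gamma$-equivariant maps.
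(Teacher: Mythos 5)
Your proof is correct and follows essentially the same route as the paper: the paper's one-line proof observes that vector bundle modification commutes with disjoint union, i.e.\ $(\tilde D_E \,\dot\cup\, \tilde D_{E'})^V = \tilde D^V_E \,\dot\cup\, \tilde D^V_{E'}$, and then applies Lemma~\ref{rhoandvb} directly. You make explicit what the paper leaves implicit --- namely that Lemma~\ref{rhoandvb} should be read as the pushforward identity $(\pi_V)_*\rho(D_E^V,A_V)=\rho(D_E,A)$ (using that $L^2(\tilde M^V)$ is regarded as a $\Gamma$-equivariant $\tilde M$-module) and that $(f^V)_*=f_*\circ(\pi_V)_*$ via Theorem~\ref{thenatiso}(4) --- which is a useful clarification rather than a different argument.
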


\begin{proof}
The proof follows from the identity $(\tilde{D}_{E}\dot{\cup}\tilde{D}_{E'})^V=\tilde{D}_{E}^V\dot{\cup}\tilde{D}_{E'}^V$ and a direct application of Lemma \ref{rhoandvb}.
\end{proof}

\begin{cor}
The mapping $\lambda^\mathcal{S}_{an}:\mathcal{S}^{geo}_*(X,\mathcal{L}_X)\to K_*(D^*(\tilde{X})^\Gamma)$ is well defined.
\end{cor}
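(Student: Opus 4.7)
The plan is to assemble the invariance results already proved in this section. Recall that $\mathcal{S}^{geo}_*(X,\mathcal{L}_X)$ from \cite{paperI} is defined as the free abelian group on isomorphism classes of cycles $(W,\xi,f)$, modulo the relations of disjoint union/direct sum of relative cocycles, bordism, and vector bundle modification. Accordingly, one must verify that
$$\lambda^\mathcal{S}_{an}:(W,\xi,f)\mapsto j_X\ind_{APS}(W,\Xi,f)-\rho(W,\Xi,f)$$
descends to this quotient.

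First I would invoke Lemma \ref{depeondec}, which guarantees that the class $\lambda^\mathcal{S}_{an}(W,\Xi,f)$ is independent of the decoration $\Xi$; thus the expression $\lambda^\mathcal{S}_{an}(W,\xi,f)$ is well defined at the level of isomorphism classes of cycles. Additivity of $\lambda^\mathcal{S}_{an}$ under disjoint union of cycles is immediate from the constructions in Definition \ref{defininginvaria}: on a disjoint union the operators $D^{W_\infty}_\mathcal{E}+A_\infty^\mathcal{E}$, $D_{\mathcal{E}'}^{-W_\infty}-A_\infty^{\mathcal{E}'}$, $\bar{D}_\Xi+A^\Xi$ and $D_E\dot{\cup}D_{E'}+A$ split as orthogonal sums, so that both $\ind_{APS}$ and $\rho$ are additive, and direct-sum invariance on the $C^*_r(\Gamma)$-bundle and complex bundle data reduces to the stable nature of the APS-index and the $\rho$-class.

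Bordism invariance is Proposition \ref{bordisnovlam}. Invariance under vector bundle modification follows by combining Proposition \ref{vbmodofaps}, which gives $\ind_{APS}(W,\Xi,f)=\ind_{APS}(W^V,\Xi^V,f^V)$ in $K_*(C^*_r(\Gamma))$, with Proposition \ref{vbmodofrho}, which gives $\rho(W,\Xi,f)=\rho(W^V,\Xi^V,f^V)$ in $K_*(D^*(\tilde{X})^\Gamma)$; subtracting after applying $j_X$ yields $\lambda^\mathcal{S}_{an}(W,\xi,f)=\lambda^\mathcal{S}_{an}(W^V,\xi^V,f^V)$. Here one uses implicitly that $f^V=f\circ\pi_V|_{\partial W^V}$ and functoriality of $f_*$ on $K$-theory of $D^*$-algebras (Theorem \ref{thenatiso}) to identify the pushforwards correctly.

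The genuine work has already been carried out: bordism invariance required the delocalized APS-theorem and the careful choice of decoration in Subsection \ref{bordsubse}, while vector bundle modification invariance relied on the spectral-gap analysis of Subsection \ref{vbintermezz} (Lemmas \ref{decomposingmoddir}, \ref{thejlem}, \ref{jandplemma}, \ref{rhoandvb}). With those two technical packages in place, the corollary is a formal assembly statement, so no further obstacle arises beyond the bookkeeping just described.
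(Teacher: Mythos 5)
Your proof is correct and follows essentially the same route as the paper: invoke Lemma \ref{depeondec} for decoration-independence, disjoint union/direct sum is trivial additivity, bordism is Proposition \ref{bordisnovlam}, and vector bundle modification is Proposition \ref{vbmodofaps} combined with Proposition \ref{vbmodofrho}. The one small omission is that the paper also notes $\lambda^\mathcal{S}_{an}$ vanishes on degenerate cycles, which is one of the relations defining $\mathcal{S}^{geo}_*(X,\mathcal{L}_X)$; this is trivial (a degenerate cycle admits a decoration for which both the APS-index and the $\rho$-class vanish term by term), but it should be stated to complete the well-definedness argument.
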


\begin{proof}
The mapping $\lambda^\mathcal{S}_{an}$ trivially respects disjoint union/direct sum and vanishes on degenerate cycles. By Proposition \ref{bordisnovlam}, $\lambda^\mathcal{S}_{an}$ respects bordism. By combining Proposition \ref{vbmodofaps} with Proposition \ref{vbmodofrho}, it follows that $\lambda^\mathcal{S}_{an}$ respects vector bundle modification.
\end{proof}

\begin{remark}
\label{delapsandszero}
We remark that $\mathcal{S}^{geo}_*(X,\mathcal{L}_X)=K_*(D^*(\tilde{X})^\Gamma)=0$ holds if and only if $\lambda^\mathcal{S}_{an}=0$. Such a statement is equivalent to an extended version of the delocalized APS-index theorem:
$$\lambda^\mathcal{S}_{an}(W,\Xi,f)=0\quad\Leftrightarrow \quad j_X\ind_{APS}(W,\Xi,f)=\rho(W,\Xi,f).$$
If $\Gamma$ has torsion elements, the failure of the free assembly map to be an isomorphism is an obstruction to this extended version of the delocalized APS-index theorem.
\end{remark}

\section{Delocalized Chern characters} \label{sectionDeloc}

In this section, Chern characters on the geometric model for the analytic surgery group are considered. Following the modus operandi in this series of paper, a delocalized Chern character should commute with a specified choice of Chern character on $K_*(B\Gamma)$ and $K_*(C^*_r(\Gamma))$. As such, it will depend on the choice of a dense subalgebra $\mathcal{A}\subseteq C^*_r(\Gamma)$ as well as the choice of suitable homology theory of $\mathcal{A}$. The target group for the delocalized Chern character will depend heavily on these choices. Indeed, the delocalized Chern character (being a relative construction) depends on the precise form of the Chern character on $\mathcal{A}$ at the level of cycles. As such, the choice of homology theory plays an important role in its very definition. The choice we make is a topological version of noncommutative de Rham homology. We follow \cite{Kaast}. This homology theory has previously been used in the context of higher index theory in for instance \cite{lottsuper,lotthighereta,WahlAPSForCstarBun}. Related constructions have been carried out in \cite{bcfete,lemopf,moriyoshi,moriyoshipiazza}.

\subsection{Noncommutative de Rham homology} 
\label{leshom}

We begin with a rather long list of notation. Let $\mathcal{A}$ denote a unital algebra. The universal derivation on $\mathcal{A}$ is given by $\mathrm{d}_\mathcal{A}:\Omega_0(\mathcal{A})=\mathcal{A}:\to \Omega_1(\mathcal{A}):=\mathcal{A}\otimes (\mathcal{A}/\field{C}1_\mathcal{A})$, $a\mapsto \mathrm{d}a=1\otimes a$.  From the universal derivation one constructs the universal $\Z$-graded differential algebra of $\mathcal{A}$ as $\Omega_*(\mathcal{A}):=\bigoplus_{k=0}^\infty \Omega_k(\mathcal{A})$ where $\Omega_k(\mathcal{A}):=\Omega_1(\mathcal{A})^{\otimes_\mathcal{A} k}=\mathcal{A}\otimes (\mathcal{A}/\field{C}1_\mathcal{A})^{\otimes_\field{C} k}$ for $k>0$. The derivation on $\Omega_*(\mathcal{A})$ is defined from $\mathrm{d}_\mathcal{A}$ via the Leibniz rule, by an abuse of notation we denote the derivation on $\Omega_*(\mathcal{A})$ by $\mathrm{d}_\mathcal{A}$. We define $\Omega_*^{ab}(\mathcal{A}):=\Omega_*(\mathcal{A})/[\Omega_*(\mathcal{A}),\Omega_*(\mathcal{A})]$ where all commutators are graded. If $\mathcal{A}$ is a unital Fr\'echet  $*$-algebra, after replacing all algebraic tensor products by projective tensor products we arrive at the universal $\Z$-graded differential Fr\'echet  algebra $\hat{\Omega}_*(\mathcal{A})$ of $\mathcal{A}$. We also define $\hat{\Omega}_*^{ab}(\mathcal{A}):=\hat{\Omega}_*(\mathcal{A})/\overline{[\hat{\Omega}_*(\mathcal{A}),\hat{\Omega}_*(\mathcal{A})]}$. For details on topological homology, see \cite{helembook,jotaylor}.

For an algebra $\mathcal{A}$, we let $\tilde{\mathcal{A}}$ denote its unitalization. If $\mathcal{A}$ is a Fr\'echet  $*$-algebra, then so is $\tilde{\mathcal{A}}$. 

\begin{define}
The de Rham homology $H_*^{dR}(\mathcal{A})$ of an algebra $\mathcal{A}$ is defined as the homology $H_*(\Omega_*^{ab}(\tilde{\mathcal{A}}))$. The topological de Rham homology $\hat{H}_*^{dR}(\mathcal{A})$ of a Fr\'echet  algebra $\mathcal{A}$ is defined as the topological homology of $\hat{\Omega}_*^{ab}(\tilde{\mathcal{A}})$ (i.e., closed cycles modulo the closure of the exact cycles).
\end{define}

We use the notation $C_*(\mathcal{A})$ for the Hochschild complex, $C_*^\lambda(\mathcal{A})$ for the cyclic complex and $b$ for the boundary operator on both of these complexes. We use $HH_*(\mathcal{A})$ and $HC_*(\mathcal{A})$ to denote the Hochschild homology groups respectively the cyclic homology groups of $\mathcal{A}$. For details, see \cite{cuncycenc,Kaast}. The construction of the $S$-operator on cyclic homology is found in for instance \cite[Page 11]{cuncycenc}.

\begin{prop}[Theorem 2.15 of \cite{Kaast}]
\label{hdrandhcyc}
The map
\begin{equation}
\label{omegatocmodb}
\Omega_k^{ab}(\tilde{\mathcal{A}})\ni a_0\mathrm{d} a_1\cdots \mathrm{d} a_k\mapsto a_0\otimes a_1\otimes \cdots a_k\in C_k^\lambda(\mathcal{A})/\mathrm{Im}(b)
\end{equation}
induces an isomorphism 
$$H_*^{dR}(\mathcal{A})\cong \mathrm{Im}(S:HC_{*+2}(\mathcal{A})\to HC_*(\mathcal{A}))=\ker(B:HC_*(\mathcal{A})\to HH_{*+1}(\mathcal{A})).$$
\end{prop}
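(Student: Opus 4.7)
The argument has two independent parts: the algebraic equality $\mathrm{Im}(S) = \ker(B)$ inside $HC_*(\mathcal{A})$, and the identification of this common subgroup with $H_*^{dR}(\mathcal{A})$ via the explicit map in \eqref{omegatocmodb}. The first is immediate from Connes' SBI long exact sequence
\begin{equation*}
\cdots \to HH_n(\mathcal{A}) \xrightarrow{I} HC_n(\mathcal{A}) \xrightarrow{S} HC_{n-2}(\mathcal{A}) \xrightarrow{B} HH_{n-1}(\mathcal{A}) \to \cdots,
\end{equation*}
whose exactness at $HC_{n-2}(\mathcal{A})$ yields $\mathrm{Im}(S) = \ker(B)$ directly.

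For the identification with de Rham homology, I would first construct a chain-level map. Define $\phi: \Omega_k(\tilde{\mathcal{A}}) \to C_k(\mathcal{A})$ by $\phi(a_0\,\mathrm{d}a_1 \cdots \mathrm{d}a_k) := a_0 \otimes a_1 \otimes \cdots \otimes a_k$, interpreting $\phi(\mathrm{d}1) = 0$ in the unitalization. A direct computation shows that $\phi$ sends the subspace of graded commutators of forms into $\mathrm{Im}(1-\lambda) + \mathrm{Im}(b) \subseteq C_k(\mathcal{A})$, where $\lambda$ is the cyclic operator, so that composing with the projection to $C_k^\lambda(\mathcal{A})/\mathrm{Im}(b)$ produces a well-defined map $\bar{\phi}: \Omega_k^{ab}(\tilde{\mathcal{A}}) \to C_k^\lambda(\mathcal{A})/\mathrm{Im}(b)$, namely the map of \eqref{omegatocmodb}. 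A second computation using the standard formula $B = (1-\lambda)\,s\,N$ (with $s$ the extra degeneracy and $N$ the cyclic norm) yields
\begin{equation*}
B(a_0 \otimes \cdots \otimes a_k) \equiv (k+1) \cdot 1 \otimes a_0 \otimes a_1 \otimes \cdots \otimes a_k \pmod{\mathrm{Im}(1-\lambda) + \mathrm{Im}(b)}.
\end{equation*}
Hence $B$ descends to a well-defined operator $\bar{B}$ on the quotient, and $\bar{B} \circ \bar{\phi} = (k+1)\,\bar{\phi} \circ \mathrm{d}_\mathcal{A}$. Thus $\bar{\phi}$ is, up to a non-zero scalar in each degree, a chain map $(\Omega_*^{ab}(\tilde{\mathcal{A}}), \mathrm{d}_\mathcal{A}) \to (C_*^\lambda(\mathcal{A})/\mathrm{Im}(b), \bar{B})$, and the induced map on homology lands in $\ker(\bar{B})$, which is naturally identified with $\ker(B) \subseteq HC_*(\mathcal{A})$.

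The hardest step will be showing that the induced map $\bar{\phi}_*: H_*^{dR}(\mathcal{A}) \to \ker(B)$ is bijective, and I would establish it by exhibiting an explicit inverse via the $(b,B)$-bicomplex description of cyclic homology. Given a $b$-cycle $c \in C_n^\lambda(\mathcal{A})$ with $Bc \in \mathrm{Im}(b)$, one applies the standard contracting homotopy for Hochschild homology of the unitalization $\tilde{\mathcal{A}}$ together with the cyclic norm $N$ to rewrite $c$, modulo $b$-boundaries, as $\phi(\omega)$ for some closed $\omega \in \Omega_n^{ab}(\tilde{\mathcal{A}})$, proving surjectivity. Injectivity goes by running the same procedure backwards: if $\phi(\omega) \in \mathrm{Im}(b) + \mathrm{Im}(\bar{B})$, the contraction yields $\omega' \in \Omega_{n+1}^{ab}(\tilde{\mathcal{A}})$ with $\mathrm{d}_\mathcal{A} \omega' = \omega$ modulo graded commutators. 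The technical obstacle is not any single computation but the careful bookkeeping required to verify that the contracting homotopies descend through both the quotient by $(1-\lambda)$ and the quotient by $\mathrm{Im}(b)$; this is the content of Karoubi's proof of Theorem~2.15 in \cite{Kaast}. In the Fr\'echet framework the same argument goes through provided one verifies that $s$, $N$, $b$, and $\lambda$ act continuously on the projective tensor powers of $\mathcal{A}$, so that the identifications pass to the topological homology groups $\hat{H}_*^{dR}(\mathcal{A})$.
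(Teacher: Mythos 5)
Your argument takes a genuinely different route from the paper's: you reprove the core content of Karoubi's Theorem 2.15 rather than citing it as a black box. The paper's proof is a short two-step reduction: apply Karoubi's theorem to the \emph{unital} algebra $\tilde{\mathcal{A}}$ to obtain $H_*^{dR}(\mathcal{A}) = H_*(\Omega^{ab}_*(\tilde{\mathcal{A}})) \cong \mathrm{Im}(\bar{S}: \overline{HC}_{*+2}(\tilde{\mathcal{A}}) \to \overline{HC}_*(\tilde{\mathcal{A}}))$, where $\overline{HC}$ is reduced cyclic homology, and then use excision for the split exact sequence $0 \to \mathcal{A} \to \tilde{\mathcal{A}} \to \mathbb{C} \to 0$ together with naturality of $S$ to identify the right-hand side with $\mathrm{Im}(S: HC_{*+2}(\mathcal{A}) \to HC_*(\mathcal{A}))$. (The equality $\mathrm{Im}(S) = \ker(B)$, which you verify via the SBI sequence, is taken for granted in the paper; your spelling it out is reasonable.) Since you yourself defer the bijectivity step to Karoubi's contracting homotopy, the extra work buys little over the citation, while introducing delicate chain-level bookkeeping you do not fully carry out.

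Two points in the proposal need repair. First, the map $\phi: \Omega_k(\tilde{\mathcal{A}}) \to C_k(\mathcal{A})$ is not well-defined as written: $\Omega_k(\tilde{\mathcal{A}}) = \tilde{\mathcal{A}} \otimes \mathcal{A}^{\otimes k}$, so the $\mathbb{C}1 \otimes \mathcal{A}^{\otimes k}$ summand in the zeroth slot has no image in $C_k(\mathcal{A}) = \mathcal{A}^{\otimes(k+1)}$, and the convention $\mathrm{d}1 = 0$ handles only the differential slots, not $a_0 = 1$. This is exactly the unital-vs.-non-unital discrepancy that the paper's excision step resolves by working with the reduced cyclic complex of $\tilde{\mathcal{A}}$. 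Second, it is not evident that the homology of the complex $(C^\lambda_*/\mathrm{Im}(b), \bar{B})$, where your chain map lands, is naturally $\ker(B: HC_* \to HH_{*+1})$: a cycle in that complex is represented by $c \in C^\lambda_n$ with $Bc \in \mathrm{Im}(b)$, but such $c$ need not be a $b$-cycle, so it does not directly represent a cyclic homology class. Also, the asserted formula $B(a_0 \otimes \cdots \otimes a_k) \equiv (k+1)\,1 \otimes a_0 \otimes \cdots \otimes a_k$ modulo $\mathrm{Im}(1-\lambda)+\mathrm{Im}(b)$ should be double-checked: the $k+1$ cyclically permuted terms $1 \otimes a_i \otimes \cdots \otimes a_{i-1}$ produced by $B$ are not obviously all congruent to $1 \otimes a_0 \otimes \cdots \otimes a_k$ modulo $\mathrm{Im}(1-\lambda)$, since the cyclic operator acts on $C_{k+1}$ rather than on the original $C_k$. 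These are precisely the subtleties Karoubi navigates, and the paper sidesteps them entirely.
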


\begin{proof}
Excision (of split short exact sequences) implies that the reduced cyclic homology $\overline{HC}(\tilde{\mathcal{A}})$ of $\tilde{\mathcal{A}}$ coincides with the cyclic homology of $\mathcal{A}$. It follows from \cite[Theorem 2.15]{Kaast} that the mapping \eqref{omegatocmodb} induces an isomorphism 
$$H_*^{dR}(\mathcal{A})=H_*(\Omega^{ab}_*(\tilde{\mathcal{A}}))\cong \mathrm{Im}(\bar{S}:\overline{HC}_{*+2}(\tilde{\mathcal{A}})\to \overline{HC}_*(\tilde{\mathcal{A}})).$$
By naturality of the $S$-operator, the quotient mapping induces an isomorphism $\mathrm{Im}(\bar{S}:\overline{HC}_{*+2}(\tilde{\mathcal{A}})\to \overline{HC}_*(\tilde{\mathcal{A}}))\cong \mathrm{Im}(S:HC_{*+2}(\mathcal{A})\to HC_*(\mathcal{A}))$ proving the Lemma. 
\end{proof}

We will restrict our attention to Fr\'echet  $*$-algebra completions $\mathcal{A}(\Gamma)$ of $\C[\Gamma]$. To simplify notations, we write $\tilde{\mathcal{A}}(\Gamma)$ and $\tilde{\C}[\Gamma]$ for their respective unitalizations. By functoriality, there is a differential graded map 
$$j_{\Omega,\mathcal{A}}:\Omega_*^{ab}(\tilde{\C}[\Gamma])\to \hat{\Omega}_*^{ab}(\tilde{\mathcal{A}}(\Gamma)).$$
For an element $g\in \Gamma$, we let $<\!\!g\!\!>$ denote the associated conjugacy class. The linear span $\Omega^{<g>}_*(\C[\Gamma])$ of the sets
\begin{align}
\label{locdaefe}
\cup_{k\in \field{N}}\{g_0\mathrm{d}&g_1\mathrm{d}g_2\cdots \mathrm{d}g_k: \; g_0g_1\cdots g_k\in <\!\!g\!\!>\}\quad\mbox{and}\\ 
\nonumber
&\cup_{k\in \field{N}}\{1\cdot \mathrm{d}g_1\mathrm{d}g_2\cdots \mathrm{d}g_k: \; g_1\cdots g_k\in <\!\!g\!\!>\},
\end{align}
inside $\Omega_*^{ab}(\tilde{\C}[\Gamma])$ is clearly a sub complex; let $H^{<g>}_*(\C[\Gamma])$ denote its homology. The importance of these localized homology groups is explained in the following proposition. Its proof follows from Proposition \ref{hdrandhcyc} and \cite[Section 2.21-2.26]{Kaast}, building on the results from \cite{burgher}.

\begin{prop}
\label{hstarhstar}
There is an isomorphism 
$$b_\Gamma:H^{<e>}_*(\C[\Gamma])\xrightarrow{\sim}H_*(B\Gamma)\otimes HC_{*+2}(\C),$$
given by the composition 
\begin{align*}
H^{<e>}_*(\C[\Gamma])&\to H_*^{dR}(\C[\Gamma])\to HC_*(\C[\Gamma])\\
&\cong\qquad \begin{matrix}
H_*(B\Gamma)\otimes HC_{*}(\C)\\
\bigoplus\\ 
\bigoplus_{<\gamma>\in C'\setminus \{<e>\}}H_*(BN_\gamma)\otimes HC_*(\C)\\
\bigoplus\\ 
\bigoplus_{<\gamma>\in C''}H_*(BN_\gamma)\end{matrix}\to H_*(B\Gamma)\otimes HC_{*+2}(\C),
\end{align*}
where $C'$ denotes the conjugacy classes of finite order elements, $C''$ the conjugacy classes of infinite order elements, letting $\Gamma_\gamma$ denote the centralizer of $\gamma$ we let $N_\gamma:=\Gamma_\gamma/\gamma^\Z$, the last isomorphism is the Burghelea isomorphism (see \cite{burgher}) and the last mapping is the projection.
\end{prop}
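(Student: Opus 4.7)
The plan is to assemble the stated isomorphism as the composition described in the proposition, using Proposition \ref{hdrandhcyc} together with Burghelea's computation of the cyclic homology of a group algebra. The three ingredients---the de Rham/cyclic comparison, the splitting of the cyclic complex by conjugacy classes, and the action of the $S$-operator on individual summands---are each already in the literature; the job is to check that they are mutually compatible.

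First, I would verify that the conjugacy-class filtration on $\Omega_*^{ab}(\tilde{\C}[\Gamma])$ is compatible, via the map \eqref{omegatocmodb}, with the analogous decomposition of the cyclic complex $C^\lambda_*(\C[\Gamma])$. The point is that the group-theoretic product $g_0 g_1 \cdots g_k$ depends only on the cyclic order of its factors up to conjugation, so the subcomplex $C^\lambda_*(\C[\Gamma])^{<e>}$ spanned by tensors whose product lies in $<\!\!e\!\!>$ is preserved by $b$, and the map \eqref{omegatocmodb} sends $\Omega^{<e>}_*(\C[\Gamma])$ into it. Passing to homology, Proposition \ref{hdrandhcyc} then restricts to give a natural isomorphism
$$H^{<e>}_*(\C[\Gamma]) \;\cong\; \mathrm{Im}\bigl(S: HC_{*+2}(\C[\Gamma])^{<e>} \to HC_*(\C[\Gamma])^{<e>}\bigr).$$

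Second, I would invoke Burghelea's theorem \cite{burgher}, which produces a natural decomposition
$$HC_*(\C[\Gamma]) \;\cong\; \bigoplus_{<\gamma>\in C'\cup C''} HC_*(\C[\Gamma])^{<\gamma>},$$
with each summand identified as in the statement: the $<\!\!e\!\!>$ summand is $H_*(B\Gamma)\otimes HC_*(\C)$, a non-trivial finite-order summand is $H_*(BN_\gamma)\otimes HC_*(\C)$, and an infinite-order summand is $H_*(BN_\gamma)$. The material in \cite[Sections 2.21--2.26]{Kaast} also ensures that the $S$-operator respects the decomposition and acts as $\mathrm{id}\otimes S_\C$ on the tensor-product summands (and as zero on the infinite-order ones, in the relevant range).

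Third, restricted to the $<\!\!e\!\!>$ summand, the computation of $\mathrm{Im}(S)$ is elementary: since $HC_k(\C)$ is $\C$ for $k$ even and $0$ for $k$ odd, the scalar $S$-operator $S_\C: HC_{k+2}(\C) \to HC_k(\C)$ is an isomorphism for every $k\ge 0$. Hence $\mathrm{id}_{H_*(B\Gamma)}\otimes S_\C$ is injective, and its image is canonically identified with $H_*(B\Gamma)\otimes HC_{*+2}(\C)$ via the preimage under $S$. Composing the three steps yields $b_\Gamma$, and naturality at each step gives the naturality of $b_\Gamma$. The main obstacle will be the first step---matching the differential-forms model of $H^{<e>}_*(\C[\Gamma])$ with the correct sub-bicomplex of the cyclic bicomplex, and checking that all identifications used in \cite{Kaast} are equivariant with respect to the projection onto the $<\!\!e\!\!>$ summand. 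Once that bookkeeping is dispatched, the rest is formal.
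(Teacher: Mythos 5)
Your overall strategy—factoring through the de Rham/cyclic comparison of Proposition \ref{hdrandhcyc}, invoking Burghelea's conjugacy-class decomposition of $HC_*(\C[\Gamma])$, and then reading off the $<\!\!e\!\!>$-summand—is the same route the paper indicates; the paper gives no argument of its own beyond pointing to Proposition \ref{hdrandhcyc}, \cite[Sections 2.21--2.26]{Kaast} and \cite{burgher}, so your unpacking is welcome. Steps one and two (compatibility of the conjugacy-class filtrations with the map \eqref{omegatocmodb}, and the $S$-equivariance of the Burghelea splitting) are sound.

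Step three, however, misstates the behaviour of the periodicity operator. Under the Burghelea identification $HC_n(\C[\Gamma])^{<e>}\cong\bigoplus_{i\geq 0}H_{n-2i}(B\Gamma)$, the operator $S:HC_{n+2}^{<e>}\to HC_n^{<e>}$ acts as the degree-drop discarding the top ($i=0$) component. It is therefore \emph{surjective with nonzero kernel $H_{n+2}(B\Gamma)$}—it is not injective. Your claim that ``$\mathrm{id}_{H_*(B\Gamma)}\otimes S_\C$ is injective, and its image is canonically identified with $H_*(B\Gamma)\otimes HC_{*+2}(\C)$ via the preimage under $S$'' does not stand as written: the summand $H_{*+2}(B\Gamma)\otimes HC_0(\C)$ is killed by $\mathrm{id}\otimes S_\C$, so the preimage is not unique, and the ``canonical identification via preimage'' is undefined. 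What is actually true, and what makes the proposition go through, is that $S$ is \emph{surjective} on the $<\!\!e\!\!>$-summand, so $\mathrm{Im}(S)^{<e>}_n = HC_n(\C[\Gamma])^{<e>}\cong\bigoplus_{i\geq 0}H_{n-2i}(B\Gamma)$; the notation $H_*(B\Gamma)\otimes HC_{*+2}(\C)$ in the statement is a relabelling of this graded vector space that keeps track of the cyclic degree inherited from the domain of $S$ in Proposition \ref{hdrandhcyc}, not the image of an injective map. You should replace the injectivity/preimage sentence by the surjectivity computation and the explicit relabelling; as it stands your third step is logically broken even though the conclusion is correct.
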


We define the closed sub complex $\hat{\Omega}^{<g>}_*(\mathcal{A}(\Gamma))\subseteq \hat{\Omega}_*^{ab}(\tilde{\mathcal{A}}(\Gamma))$ as the closure of $j_{\Omega,\mathcal{A}}(\Omega_*^{<g>}(\C[\Gamma]))$ and denote the associated continuous inclusion by $j_{<g>}:\hat{\Omega}^{<g>}_*(\mathcal{A}(\Gamma))\to \hat{\Omega}_*^{ab}(\tilde{\mathcal{A}}(\Gamma))$. The associated homology groups will be denoted by $\hat{H}^{<g>}_*(\mathcal{A}(\Gamma))$.

\begin{define}
The relative complex of $\mathcal{A}(\Gamma)$ delocalized in $<g>$ is defined to be the homology of the mapping cone complex 
$$\hat{\Omega}_*^{rel}(\mathcal{A}(\Gamma);<g>):=\hat{\Omega}_*^{ab}(\tilde{\mathcal{A}}(\Gamma))\oplus \hat{\Omega}^{<g>}_{*-1}(\mathcal{A}(\Gamma)),$$
which is equipped with the differential
$$\mathrm{d}_{<g>}^{rel}:=
\begin{pmatrix} 
\mathrm{d}_{\hat{\Omega}_*^{ab}}& -j_{<g>}\\
0& \mathrm{d}_{\hat{\Omega}_{*-1}^{<g>}}
\end{pmatrix}.$$
We denote the homology of $\hat{\Omega}_*^{rel}(\mathcal{A}(\Gamma);<g>)$ by $\hat{H}^{del,<g>}_*(\mathcal{A}(\Gamma))$.
\end{define}

\begin{remark}
In \cite{WahlSurSet}, delocalized homology is used in the context of higher Atiyah-Patodi-Singer index theory. Beware that another definition is used; it is based on a quotient construction rather than a relative construction. 
\end{remark}

\begin{prop}
The delocalized homology groups fits into a long exact sequence on homology:
$$\ldots\to \hat{H}^{dR}_*(\mathcal{A}(\Gamma))\xrightarrow{r_{dR}} \hat{H}^{del,<g>}_*(\mathcal{A}(\Gamma))\xrightarrow{\delta_{dR}} \hat{H}^{<g>}_{*-1}(\mathcal{A}(\Gamma))\xrightarrow{\mu_{dR}} \hat{H}^{dR}_{*-1}(\mathcal{A}(\Gamma))\to \ldots$$
\end{prop}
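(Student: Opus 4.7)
The plan is to realize this as the standard long exact sequence associated to the mapping cone construction. By definition, $\hat{\Omega}_*^{rel}(\mathcal{A}(\Gamma);<g>)$ is the mapping cone complex of the continuous chain map $j_{<g>}:\hat{\Omega}_*^{<g>}(\mathcal{A}(\Gamma))\to \hat{\Omega}_*^{ab}(\tilde{\mathcal{A}}(\Gamma))$. The first step is to write down the short exact sequence of Fr\'echet complexes
\[
0\to \hat{\Omega}_*^{ab}(\tilde{\mathcal{A}}(\Gamma))\xrightarrow{\iota} \hat{\Omega}_*^{rel}(\mathcal{A}(\Gamma);<g>)\xrightarrow{\pi} \hat{\Omega}_{*-1}^{<g>}(\mathcal{A}(\Gamma))\to 0,
\]
where $\iota(\omega)=(\omega,0)$ and $\pi(\omega,\eta)=\eta$. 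A direct computation using the explicit formula for $\mathrm{d}_{<g>}^{rel}$ shows $\iota$ and $\pi$ intertwine the differentials.

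The second step is to pass to (topological) homology. The above sequence is degree-wise split as a direct sum of Fr\'echet spaces, since the underlying graded space of $\hat{\Omega}_*^{rel}$ is literally $\hat{\Omega}_*^{ab}\oplus \hat{\Omega}_{*-1}^{<g>}$ with only the differential mixing the summands. This splitting allows cycles and boundaries to be lifted continuously, so the customary snake-lemma argument produces the long exact sequence
\[
\ldots\to \hat{H}_*^{dR}(\mathcal{A}(\Gamma))\to \hat{H}_*^{del,<g>}(\mathcal{A}(\Gamma))\xrightarrow{\partial} \hat{H}_{*-1}^{<g>}(\mathcal{A}(\Gamma))\to \hat{H}_{*-1}^{dR}(\mathcal{A}(\Gamma))\to \ldots
\]
Identifying the induced maps, $\iota_*$ is exactly $r_{dR}$ and $\pi_*$ is $\delta_{dR}$, so the sequence has the claimed form provided one verifies that the connecting homomorphism is $(\pm)$ the map induced by $j_{<g>}$.

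The third step is this identification. Given a closed class $[\eta]\in \hat{H}_{n-1}^{<g>}(\mathcal{A}(\Gamma))$, a preimage in $\hat{\Omega}_n^{rel}$ under $\pi$ is $(0,\eta)$, whose relative differential is $\mathrm{d}_{<g>}^{rel}(0,\eta)=(-j_{<g>}(\eta),0)$. Hence $\partial[\eta]=-[j_{<g>}(\eta)]\in \hat{H}_{n-1}^{dR}(\mathcal{A}(\Gamma))$, which up to a sign convention (absorbed into the definition of $\mu_{dR}$) is the map induced by inclusion.

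The main technical point to attend to is the fact that topological homology is $Z_n/\overline{B_n}$ rather than $Z_n/B_n$, so one must check that the classical snake-lemma manipulations remain valid modulo closures. This is where the degree-wise splitting plays a role: it means every cycle in the quotient complex has a continuous lift, and every element of $\overline{B_n}$ in one complex corresponds to an element of the closure of boundaries in another, so no non-Hausdorff pathologies enter. Aside from bookkeeping with closures, the argument is entirely formal, and this is the only step where any care beyond the purely algebraic mapping cone sequence is required.
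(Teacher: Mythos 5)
Your overall strategy---realizing the delocalized complex as a mapping cone and invoking the associated long exact sequence---is exactly what the proposition is asking for, and it is the argument the paper implicitly relies on (the paper gives no proof). The short exact sequence of complexes, the identification of $\iota_*$ with $r_{dR}$, of $\pi_*$ with $\delta_{dR}$, and the computation $\partial[\eta] = -[j_{<g>}(\eta)]$ are all correct and standard.

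The weak point is precisely the step you flag as the ``main technical point,'' and your proposed resolution does not actually close the gap. You are right that topological homology is $Z_n/\overline{B_n}$ rather than $Z_n/B_n$, but the degree-wise splitting of the short exact sequence is not enough to push the snake lemma through closures. A degree-wise splitting lets you lift \emph{chains} continuously; it does not let you lift (closures of) \emph{boundaries} to (closures of) boundaries, because the differentials do not respect the splitting. Concretely, to get exactness at $\hat{H}^{<g>}_{*-1}(\mathcal{A}(\Gamma))$ one needs the following: if $\eta$ is a closed chain in $\hat{\Omega}^{<g>}_{*-1}$ with $j_{<g>}(\eta) \in \overline{B_{*-1}(\hat{\Omega}^{ab})}$, then there must exist $\eta' \equiv \eta \bmod \overline{B(\hat{\Omega}^{<g>})}$ and some $\omega$ with $d\omega = j_{<g>}(\eta')$, i.e.\ $j_{<g>}(\eta')$ must lie in the \emph{actual} boundary space. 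Being in the closure of boundaries is a priori weaker, and nothing in your splitting argument converts the one into the other. Similarly, at $\hat{H}^{del,<g>}_*(\mathcal{A}(\Gamma))$, if $(\omega,\eta)$ is closed and $\eta = \lim_k d\eta_k$, the attempted corrections $d^{rel}_{<g>}(0,\eta_k) = (-j_{<g>}(\eta_k), d\eta_k)$ need not converge since $j_{<g>}(\eta_k)$ may run off to infinity even as $d\eta_k \to \eta$. What is actually needed is some extra topological input---for instance, that the relevant boundary spaces are closed, or that the differentials admit continuous partial sections onto their images, or an appeal to a theorem about long exact sequences for mapping cones in the category of Fr\'echet complexes (cf.\ the references to Helemskii and Taylor the paper already cites for topological homology). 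As written, the proof asserts that ``no non-Hausdorff pathologies enter'' without demonstrating it; either supply such input or cite a source that establishes the long exact sequence for topological homology of mapping cones.
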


\begin{remark}
If the induced mapping $H^{<e>}_*(\C[\Gamma])\to  \hat{H}^{<e>}_{*}(\mathcal{A}(\Gamma))$ is an isomorphism, then clearly $\hat{H}^{<e>}_{*}(\mathcal{A}(\Gamma))\cong H_*(B\Gamma)\otimes HC_{*+2}(\C)$ by Proposition \ref{hstarhstar}. This holds for instance if $\Gamma$ is a finitely generated group of polynomial growth or a hyperbolic group and $\mathcal{S}(\Gamma)\subseteq C^*_r(\Gamma)$ is the Schwartz algebra--functions with rapid decay on $\Gamma$ equipped with the convolution product, see more in \cite{connmosc, rojismo}.
\end{remark}

\begin{prop}
\label{rddel}
If $\Gamma$ is a finitely generated group of polynomial growth or hyperbolic, then $\hat{H}^{<e>}_{*}(\mathcal{S}(\Gamma))\cong H_*(B\Gamma)\otimes HC_{*+2}(\C)$ and $r_{dR}$ induces an isomorphism
$$H^{del,<e>}_*(\mathcal{S}(\Gamma)) \cong \hat{H}^{del}_*(\Gamma):=\hat{H}_*^{dR}(\mathcal{S}(\Gamma))/\hat{H}^{<e>}_{*}(\mathcal{S}(\Gamma)).$$
Furthermore, there is a dense inclusion $H^{del}_*(\Gamma)\to \hat{H}^{del}_*(\Gamma)$, where 
$$H_*^{del}(\Gamma):= \begin{matrix}
\bigoplus_{<\gamma>\in C'\setminus \{<e>\}}H_*(BN_\gamma)\otimes HC_*(\C)\\
\bigoplus\\ 
\bigoplus_{<\gamma>\in C''}H_*(BN_\gamma)\end{matrix}.$$
\end{prop}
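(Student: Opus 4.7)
The plan is to decompose the proof into the three claims and treat each using the mapping cone long exact sequence together with the conjugacy-class decomposition of cyclic homology, with the smoothness of the subalgebra $\mathcal{S}(\Gamma)$ supplying the essential analytic input.

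First, I would dispatch the isomorphism $\hat{H}^{<e>}_*(\mathcal{S}(\Gamma))\cong H_*(B\Gamma)\otimes HC_{*+2}(\C)$ directly from the remark preceding the proposition. Under either hypothesis on $\Gamma$, the results of Jolissaint (polynomial growth case) and Connes--Moscovici / Puschnigg (hyperbolic case), cited as \cite{connmosc, rojismo}, imply that $\mathcal{S}(\Gamma)$ is stable under holomorphic functional calculus in $C^*_r(\Gamma)$ and that the induced map $H^{<e>}_*(\C[\Gamma])\to \hat{H}^{<e>}_*(\mathcal{S}(\Gamma))$ is an isomorphism. Combined with Proposition \ref{hstarhstar} (Burghelea's theorem), this yields the first claim.

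The second claim then follows from the long exact sequence of the mapping cone complex. The key intermediate input is that the Burghelea-type decomposition of cyclic homology passes to the Fr\'echet completion, so that $\hat{H}^{dR}_*(\mathcal{S}(\Gamma))$ splits as $\hat{H}^{<e>}_*(\mathcal{S}(\Gamma))\oplus \hat{H}^{del}_*(\Gamma)$ with the first summand being precisely the image of $\mu_{dR}$. This again relies on \cite{connmosc, rojismo} (together with Puschnigg's analogue for hyperbolic groups), where the splitting of cyclic homology into localized components at conjugacy classes is established at the level of the smooth algebra. Granting this, $\mu_{dR}$ is injective onto a direct summand, which forces the connecting morphism $\delta_{dR}$ to vanish; consequently the long exact sequence collapses to the short exact sequence
\begin{equation*}
0\to \hat{H}^{<e>}_*(\mathcal{S}(\Gamma))\xrightarrow{\mu_{dR}} \hat{H}^{dR}_*(\mathcal{S}(\Gamma))\xrightarrow{r_{dR}} \hat{H}^{del,<e>}_*(\mathcal{S}(\Gamma))\to 0,
\end{equation*}
and $r_{dR}$ induces the desired isomorphism onto the quotient $\hat{H}^{del}_*(\Gamma)$.

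For the third claim, I would use the algebraic Burghelea decomposition to identify $H^{del}_*(\Gamma)$ with the complement of $H^{<e>}_*(\C[\Gamma])$ inside $H^{dR}_*(\C[\Gamma])$. The functorial map $j_{\Omega,\mathcal{S}}:\Omega^{ab}_*(\tilde{\C}[\Gamma])\to \hat{\Omega}^{ab}_*(\tilde{\mathcal{S}}(\Gamma))$ respects the conjugacy-class decomposition (since it preserves the labelling of basis elements by the product of group elements), so passing to homology and then to the quotient by the $<e>$-summand gives a well-defined map $H^{del}_*(\Gamma)\to \hat{H}^{del}_*(\Gamma)$. Injectivity follows because the splitting on the $\mathcal{S}(\Gamma)$-side established above makes the non-$<e>$ conjugacy-class components of elements in $H^{del}_*(\Gamma)$ linearly independent in $\hat{H}^{del}_*(\Gamma)$. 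Denseness of the image follows from density of $\C[\Gamma]$ in $\mathcal{S}(\Gamma)$: the continuous inclusion $\hat{\Omega}^{ab}_*(\tilde{\C}[\Gamma])\hookrightarrow \hat{\Omega}^{ab}_*(\tilde{\mathcal{S}}(\Gamma))$ has dense image, and this property persists through the quotient defining $\hat{H}^{del}_*(\Gamma)$.

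The main obstacle is the second claim, and specifically the verification that the Burghelea splitting of $\hat{H}^{dR}_*(\mathcal{S}(\Gamma))$ by conjugacy classes passes from $\C[\Gamma]$ to the Fr\'echet completion. This is where the hypothesis on $\Gamma$ genuinely enters; for arbitrary discrete groups or arbitrary smooth dense subalgebras the splitting need not survive completion, and one would be left only with the a priori long exact sequence. All the remaining steps are formal manipulations once this analytic input is granted.
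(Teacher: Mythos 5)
The paper itself gives no proof of Proposition~\ref{rddel}: it is stated bare, with the preceding remark serving as a one-line sketch of the first claim. So I can only assess your argument against that sketch and the cited references, not against a written proof. With that caveat, your overall plan is sound and is visibly what the authors intend: claim~1 is exactly the content of the preceding remark together with Proposition~\ref{hstarhstar}; claim~2 is the mapping-cone long exact sequence plus injectivity of $\mu_{dR}$; and you correctly identify that the real analytic content, concentrated in claim~2, is that the conjugacy-class decomposition of (de Rham / cyclic) homology survives passage from $\C[\Gamma]$ to the Fr\'echet completion $\mathcal{S}(\Gamma)$, a fact outsourced to \cite{connmosc, rojismo} (and Puschnigg in the hyperbolic case). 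Once $\mu_{dR}$ is a split injection, the vanishing of $\delta_{dR}$ and the identification of $\hat{H}^{del,<e>}_*$ with the quotient follows formally as you describe.

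The one place where your argument is genuinely incomplete is the density part of claim~3. You argue that $\Omega_*^{ab}(\tilde{\C}[\Gamma])$ has dense image in $\hat{\Omega}_*^{ab}(\tilde{\mathcal{S}}(\Gamma))$ and that ``this property persists through the quotient.'' But $\hat{H}_*^{dR}$ is cycles modulo the \emph{closure} of boundaries, and density of a subspace of the chain complex does not by itself give density of the induced subspace of cycles $\ker \mathrm{d}$ (a closed subspace: density does not restrict to closed subspaces in general). So the step from ``dense image at the chain level'' to ``dense image in $\hat{H}^{dR}_*$'' is a gap. To close it one would need to argue at the level of cycles --- e.g., by using the very splitting you invoke, identifying $\hat{H}_*^{dR}(\mathcal{S}(\Gamma))$ with a concrete completion whose conjugacy-class summands are visibly closures of the corresponding algebraic pieces, or by exhibiting a bounded linear section / continuous chain contraction witnessing that algebraic cycles approximate topological cycles. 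A similar remark applies to the injectivity of $H_*^{del}(\Gamma)\to\hat{H}_*^{del}(\Gamma)$: it does not follow from the $\langle e\rangle$-part being an isomorphism that the remaining conjugacy-class pieces inject into the completion; this too should be extracted from the computations in \cite{connmosc, rojismo}, not asserted. These are precisely the points the paper itself waves away, so the gap is shared, but your write-up should not present them as ``formal manipulations once the analytic input is granted.'' (Also, note the small notational slip: $\C[\Gamma]$ is not Fr\'echet, so write $\Omega_*^{ab}(\tilde{\C}[\Gamma])$, not $\hat{\Omega}_*^{ab}(\tilde{\C}[\Gamma])$.)
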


\begin{remark}
\label{pairingdelwitcyc}
The main usage of delocalized homology will be to detect invariants on the analytic and geometric surgery groups. This is done by means of pairing the Chern character, defined in the next section, with a cyclic cocycle $\tau$ on $\mathcal{A}(\Gamma)$ satisfying $\tau(g_0,g_2,\ldots, g_k)=0$ whenever $g_0g_1\cdots g_k=e$. Assume for a moment that $\Gamma$ has polynomial growth or is hyperbolic and let $\tau_0$ be a group cocycle which has polynomial growth. As Proposition 3.8 shows, such a group cocycle $\tau_0$ will induce a cyclic cocycle of the type described above, and a mapping $\hat{H}^{del}(\Gamma) \to \C$. The choice of dense subalgebra and that of extending cocycles is in general a subtle one, addressed in for instance \cite{connmosc,gomopi,lottsuper,lotthighereta,puschholo,WahlAPSForCstarBun}.
\end{remark}

\subsection{The delocalized Chern character of a surgery cycle with connection}

We now turn to defining the delocalized Chern character for a geometric surgery cycle. As in the case of the isomorphism to the analytic surgery group, the construction is carried out by a map that cycles to cycles, which depends on a choice of geometric data. We then prove that it gives a well defined mapping from classes to classes. As before, $X$ denotes a finite CW-complex.

\begin{define}[Definitions $3.1$ and $3.13$ of \cite{paperII}]
Let $x=(W,\xi,f)$ be a cycle for $\mathcal{S}_*^{geo}(X,\mathcal{A}(\Gamma))$ with 
$$\xi=(\mathcal{E}_{\mathcal{A}(\Gamma)},\mathcal{E}'_{\mathcal{A}(\Gamma)}, E_\C,E_\C',\alpha).$$ 
A connection for $x$ is a collection $\nabla_\xi=(\nabla_W, \nabla_\mathcal{E},\nabla_{\mathcal{E}'},\nabla_ E,\nabla_{E'})$ consisting of:
\begin{itemize}
\item a spin$^c$-connection $\nabla_W$ on $W$ being of product type near $\partial W$;
\item hermitean $\mathcal{A}(\Gamma)$-linear connections $\nabla_\mathcal{E}$ and $\nabla_{\mathcal{E}'}$ on $\mathcal{E}_{\mathcal{A}(\Gamma)}$ respectively $\mathcal{E}'_{\mathcal{A}(\Gamma)}$;
\item hermitean connections $\nabla_E$ and $\nabla_{E'}$ on $E_\C$ respectively $E_\C'$.
\end{itemize}
We write a surgery cycle with connection as $(W,\xi,\nabla_\xi,f)$. 
\end{define}

We recall from \cite[Section 1.2]{WahlAPSForCstarBun} that for any manifold $W$, we obtain bicomplexes $C^\infty(W,\wedge^*T^*W\otimes \hat{\Omega}_*(\tilde{\mathcal{A}}(\Gamma)))$ and $C^\infty(W,\wedge^*T^*W\otimes \hat{\Omega}_*^{ab}(\tilde{\mathcal{A}}(\Gamma)))$ with total differential being $\mathrm{d}_W+\mathrm{d}_\mathcal{A}$. If $\mathcal{E}_{\mathcal{A}(\Gamma)}\to W$ is a smooth $\mathcal{A}(\Gamma)$-bundle, it extends via tensor product to a smooth $\tilde{\mathcal{A}}(\Gamma)$-bundle $\tilde{\mathcal{E}}_{\tilde{\mathcal{A}}(\Gamma)}:=\mathcal{E}_{\mathcal{A}(\Gamma)}\otimes_{\mathcal{A}(\Gamma)}\tilde{\mathcal{A}}(\Gamma) \to W$.
 
A connection $\nabla_\mathcal{E}$ on $\mathcal{E}_{\mathcal{A}(\Gamma)}$ can be extended to a connection $\nabla_{\tilde{\mathcal{E}}}\otimes \mathrm{d}_{\mathcal{A}}$ on the space $C^\infty(W,\wedge^*T^*W\otimes \tilde{\mathcal{E}}_{\tilde{\mathcal{A}}(\Gamma)}\otimes_{\tilde{\mathcal{A}}(\Gamma)} \hat{\Omega}_*(\tilde{\mathcal{A}}(\Gamma)))$. We recall from \cite[Lemma 1.1]{WahlAPSForCstarBun} that the element
$$\mathrm{Ch}_{\mathcal{A}(\Gamma)}(\nabla_\mathcal{E}):=\mathrm{tr}_{\mathcal{E}}\left[\mathrm{exp}(-(\nabla_{\tilde{\mathcal{E}}}\otimes \mathrm{d}_\mathcal{A})^2))\right]\in C^\infty(W,\wedge^*T^*W\otimes \hat{\Omega}_*^{ab}(\tilde{\mathcal{A}}(\Gamma)))$$
is a well defined form; it is closed with respect to the total differential $\mathrm{d}_W+\mathrm{d}_\mathcal{A}$. Given two connections $\nabla_\mathcal{E}$ and $\nabla_{\mathcal{E}}'$ on $\mathcal{E}\to W$, we let $\bar{\nabla}$ denote the connection on $\mathcal{E}\times [0,1]\to W\times [0,1]$ obtained from linearly interpolating between $\nabla_\mathcal{E}$ and $\nabla_{\mathcal{E}}'$. Furthermore, we set
$$\mathrm{Ch}_{\mathcal{A}(\Gamma)}(\nabla_\mathcal{E},\nabla_{\mathcal{E}}'):= \mathrm{Ch}_{\mathcal{A}(\Gamma)}(\bar{\nabla}).$$

\begin{define} \label{partOfChern}
Let $(W,\xi,\nabla_\xi,f)$ be a surgery cycle with connection. We can associate the following forms:
\begin{enumerate}
\item The interior Chern form of $(W,\xi,\nabla_\xi,f)$ is defined to be
$$\mathrm{Ch}^{int}(W,\xi,\nabla_\xi,f):=\mathrm{Ch}_{\mathcal{A}(\Gamma)}(\nabla_\mathcal{E})-\mathrm{Ch}_{\mathcal{A}(\Gamma)}(\nabla_{\mathcal{E}'})\in C^\infty(W,\wedge^*T^*W\otimes \hat{\Omega}_*^{ab}(\tilde{\mathcal{A}}(\Gamma))).$$ 
\item The boundary Chern form of $(W,\xi,\nabla_\xi,f)$ is defined to be
$$\mathrm{Ch}^{\partial}(W,\xi,\nabla_\xi,f):=\mathrm{Ch}_{\mathcal{A}(\Gamma)}(\nabla_E\otimes \nabla_{f^*\mathcal{L}_\mathcal{A}})-\mathrm{Ch}_{\mathcal{A}(\Gamma)}(\nabla_{E'}\otimes \nabla_{f^*\mathcal{L}_\mathcal{A}}),$$ 
as an element of $C^\infty(\partial W,\wedge^*T^*\partial W\otimes \hat{\Omega}_*^{ab}(\tilde{\mathcal{A}}(\Gamma)))$, where $\nabla_{f^*\mathcal{L}_\mathcal{A}}$ denotes the flat connection on the Mishchenko bundle $f^*\mathcal{L}_\mathcal{A}\to \partial W$. 
\item The Chern-Simons term is defined via 
$$\mathrm{CS}(W,\xi,\nabla_\xi,f):=\int_0^1\mathrm{Ch}_{\mathcal{A}(\Gamma)}(\nabla_\mathcal{E}\oplus \nabla_{E'}\otimes \nabla_{f^*\mathcal{L}},\alpha^*(\nabla_{\mathcal{E}'}\oplus \nabla_{E}\otimes \nabla_{f^*\mathcal{L}})),$$
as an element of $C^\infty(\partial W,\wedge^*T^*\partial W\otimes \hat{\Omega}_*^{ab}(\tilde{\mathcal{A}}(\Gamma)))$.
\end{enumerate}
\end{define}

The proof of the next result follows from \cite[Section IV]{lottsuper} and \cite[Section 4.3]{lotthighereta}.

\begin{prop}
\label{localorp}
If $E_\C\to M$ is a vector bundle with hermitean connections $\nabla_E$ and $\nabla_E'$, then the Chern form $\mathrm{Ch}_{\mathcal{A}(\Gamma)}(\nabla_E\otimes \nabla_{f^*\mathcal{L}})$ is in the image of the injection 
$$C^\infty(\partial W,\wedge^*T^*\partial W\otimes \hat{\Omega}_*^{<e>}(\mathcal{A}(\Gamma)))\to C^\infty(\partial W,\wedge^*T^*\partial W\otimes \hat{\Omega}_*^{ab}(\tilde{\mathcal{A}}(\Gamma))).$$
The boundary Chern form of a surgery cycle is also in the image of this injection. Moreover, the relative Chern form $\mathrm{Ch}_{\mathcal{A}(\Gamma)}(\nabla_E\otimes \nabla_{f^*\mathcal{L}},\nabla_E'\otimes \nabla_{f^*\mathcal{L}})$ is in the image of the injection 
\begin{align*}
C^\infty(\partial W\times[0,1],&\wedge^*T^*(\partial W\times[0,1])\otimes \hat{\Omega}_*^{<e>}(\mathcal{A}(\Gamma)))\\
&\to C^\infty(\partial W\times[0,1],\wedge^*T^*(\partial W\times[0,1])\otimes \hat{\Omega}_*^{ab}(\tilde{\mathcal{A}}(\Gamma)))
\end{align*}
\end{prop}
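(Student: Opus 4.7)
The plan exploits the fact that $f^*\mathcal{L}_\mathcal{A}$ is a flat $\mathcal{A}(\Gamma)$-bundle whose transition cocycle takes values in $\Gamma\subset \mathcal{A}(\Gamma)$. Choose a sufficiently fine open cover $\{U_\alpha\}$ of $\partial W$ such that the pullback of the universal principal $\Gamma$-bundle trivializes over each $U_\alpha$, and let $(g_{\alpha\beta})$ be the resulting \v{C}ech $1$-cocycle with values in $\Gamma$; in this trivialization $\nabla_{f^*\mathcal{L}_\mathcal{A}}$ is locally the exterior derivative. Since $\nabla_E$ is an ordinary hermitean connection on a vector bundle, the curvature of the product connection $\nabla_E\otimes \nabla_{f^*\mathcal{L}_\mathcal{A}}$ is locally represented by $R_E$, a scalar-valued $2$-form with values in $\mathrm{End}(E)$; no $\mathrm{d}_\mathcal{A}$ contribution appears in the curvature itself.

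Following the construction of Lott in \cite[Section IV]{lottsuper} (see also \cite[Section 4.3]{lotthighereta}) of the higher Chern character of a flat bundle, I would write down a representative of $\mathrm{Ch}_{\mathcal{A}(\Gamma)}(\nabla_E\otimes \nabla_{f^*\mathcal{L}_\mathcal{A}})$ using the simplicial de Rham model associated with the open cover, so that each term is a product of a classical differential form built from $R_E$ and a subordinate partition of unity with a \v{C}ech-cochain expression in the $g_{\alpha\beta}$ of the shape $g_{\alpha_0\alpha_1}\mathrm{d}g_{\alpha_1\alpha_2}\cdots \mathrm{d}g_{\alpha_{k-1}\alpha_k}$ inside $\hat{\Omega}_*(\tilde{\mathcal{A}}(\Gamma))$. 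The cocycle identity $g_{\alpha_0\alpha_1}g_{\alpha_1\alpha_2}\cdots g_{\alpha_{k-1}\alpha_k}=g_{\alpha_0\alpha_k}$, combined with the cyclic closure enforced by $\mathrm{tr}_{\mathcal{E}}$ (which identifies $\alpha_k=\alpha_0$, so that the total product equals $e$), means every generator appearing is of the form $g_0\mathrm{d}g_1\cdots \mathrm{d}g_k$ with $g_0g_1\cdots g_k=e$, i.e., a generator of $\Omega^{<e>}_*(\C[\Gamma])$. A continuity argument in the Fr\'echet topology then places the assembled global form in the closure $\hat{\Omega}^{<e>}_*(\mathcal{A}(\Gamma))$, establishing the first claim.

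The second assertion, for the boundary Chern form, is immediate since by Definition \ref{partOfChern} that form is a difference of two forms of the type just handled. For the third assertion the relative Chern form is by definition the Chern form on $\partial W\times [0,1]$ of the interpolating connection $\bar{\nabla}$ tensored with $\mathrm{pr}_{\partial W}^*\nabla_{f^*\mathcal{L}_\mathcal{A}}$; the latter is again flat and its \v{C}ech cocycle is just the pullback of $(g_{\alpha\beta})$ along the projection, so the same localization argument applies verbatim with $\partial W\times [0,1]$ in place of $\partial W$.

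The main technical obstacle I anticipate is making the algebraic \v{C}ech/super-connection calculation actually land in the \emph{topological} completion $\hat{\Omega}^{<e>}_*(\mathcal{A}(\Gamma))$, rather than merely inside the larger $\hat{\Omega}_*^{ab}(\tilde{\mathcal{A}}(\Gamma))$; i.e., I must verify that the assembled global form lies in the closure of the image of $j_{\Omega,\mathcal{A}}$ applied to $\Omega^{<e>}_*(\C[\Gamma])$. Since this continuity step is exactly what the references \cite[Section IV]{lottsuper} and \cite[Section 4.3]{lotthighereta} provide in the non-relative setting, I would deduce the three assertions by invoking those constructions rather than reproving them, performing only the routine extension from closed manifolds to manifolds with boundary and their cylinders.
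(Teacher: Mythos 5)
Your proposal is correct and takes essentially the same route as the paper, which proves Proposition \ref{localorp} simply by citing \cite[Section IV]{lottsuper} and \cite[Section 4.3]{lotthighereta}; your \v{C}ech/partition-of-unity unpacking of Lott's superconnection Chern form, with the observation that cyclicity of $\mathrm{tr}_\mathcal{E}$ forces the product of the transition elements $g_{\alpha_0\alpha_1}\cdots g_{\alpha_{k-1}\alpha_0}$ to equal $e$, is exactly what those references establish. The reduction of the boundary and relative Chern forms to this case is also as in the paper.
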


We use the notation $\mathrm{Ch}^{\partial}_{<e>}(W,\xi,\nabla_\xi,f)\in C^\infty(\partial W,\wedge^*T^*\partial W\otimes \hat{\Omega}_*^{<e>}(\mathcal{A}(\Gamma)))$ for the uniquely determined pre-image of $\mathrm{Ch}^{\partial}(W,\xi,\nabla_\xi,f)$. Similarly, we let $\mathrm{Ch}_{<e>}(\nabla_E\otimes \nabla_{f^*\mathcal{L}},\nabla_E'\otimes \nabla_{f^*\mathcal{L}})$  denote the preimage of $\mathrm{Ch}_{\mathcal{A}(\Gamma)}(\nabla_E\otimes \nabla_{f^*\mathcal{L}},\nabla_E'\otimes \nabla_{f^*\mathcal{L}})$.

\begin{lemma}
Using the notation of the previous paragraph and Definition \ref{partOfChern}, we have that
\begin{align*}
\mathrm{d}_\mathcal{A}\bigg(\int_W\mathrm{Ch}^{int}(W,\xi,\nabla_\xi,f)\wedge Td(\nabla_W)-&\int_{\partial W}\mathrm{CS}(W,\xi,\nabla_\xi,f)\wedge Td(\nabla_{\partial W})\bigg)\\
&=j_{<e>}\int_{\partial W}\mathrm{Ch}^{\partial}_{<e>}(W,\xi,\nabla_\xi,f)\wedge Td(\nabla_{\partial W}).
\end{align*}
\end{lemma}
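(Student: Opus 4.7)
\medskip

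\noindent\textbf{Proof proposal.} The strategy is to recast the identity as a statement about the mapping cone differential: the pair
$(\int_W\mathrm{Ch}^{int}\wedge Td + \int_{\partial W}\mathrm{CS}\wedge Td,\; \int_{\partial W}\mathrm{Ch}^\partial_{<e>}\wedge Td)$
should be a cycle for $\mathrm{d}_{<e>}^{rel}$, so the hard content is exactly the identity being proved. The proof rests on three ingredients: (i) every Chern form $\mathrm{Ch}_{\mathcal{A}(\Gamma)}(\nabla)$ is closed under the total differential $\mathrm{d}_W+\mathrm{d}_\mathcal{A}$, by \cite[Lemma 1.1]{WahlAPSForCstarBun}; (ii) the Chern--Simons transgression formula $(\mathrm{d}_W+\mathrm{d}_\mathcal{A})\mathrm{Ch}_{\mathcal{A}(\Gamma)}(\nabla_0,\nabla_1)=\mathrm{Ch}_{\mathcal{A}(\Gamma)}(\nabla_1)-\mathrm{Ch}_{\mathcal{A}(\Gamma)}(\nabla_0)$, obtained by writing the interpolating Chern form on $W\times[0,1]$ as $\alpha(t)+dt\wedge\beta(t)$ and using the bicomplex closedness; (iii) Stokes' theorem together with the fact that the Todd form is $\mathrm{d}_W$-closed.

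First I would compute $\mathrm{d}_\mathcal{A}\int_W\mathrm{Ch}^{int}\wedge Td(\nabla_W)$. Because $\mathrm{Ch}^{int}$ is a difference of Chern forms, it is annihilated by $\mathrm{d}_W+\mathrm{d}_\mathcal{A}$; since $Td(\nabla_W)$ is $\mathrm{d}_W$-closed and has no $\hat{\Omega}_*(\tilde{\mathcal{A}}(\Gamma))$-component, Leibniz gives
$$\mathrm{d}_\mathcal{A}(\mathrm{Ch}^{int}\wedge Td(\nabla_W))=-\mathrm{d}_W(\mathrm{Ch}^{int}\wedge Td(\nabla_W)).$$
Commuting $\mathrm{d}_\mathcal{A}$ past $\int_W$ and invoking Stokes produces a boundary term proportional to $\int_{\partial W}\mathrm{Ch}^{int}|_{\partial W}\wedge Td(\nabla_{\partial W})$.

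Next I would compute $\mathrm{d}_\mathcal{A}\int_{\partial W}\mathrm{CS}\wedge Td(\nabla_{\partial W})$. The transgression formula applied to $\nabla_0=\nabla_\mathcal{E}\oplus\nabla_{E'}\otimes\nabla_{f^*\mathcal{L}}$ and $\nabla_1=\alpha^*(\nabla_{\mathcal{E}'}\oplus\nabla_E\otimes\nabla_{f^*\mathcal{L}})$ gives $(\mathrm{d}_W+\mathrm{d}_\mathcal{A})\mathrm{CS}=\mathrm{Ch}_{\mathcal{A}(\Gamma)}(\nabla_1)-\mathrm{Ch}_{\mathcal{A}(\Gamma)}(\nabla_0)$. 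Here the key observation is that $\alpha$ covers the identity on $\partial W$, so by gauge/conjugation invariance of the trace $\mathrm{Ch}_{\mathcal{A}(\Gamma)}(\alpha^*\nabla)=\mathrm{Ch}_{\mathcal{A}(\Gamma)}(\nabla)$ as a form on $\partial W$; this rewrites the right-hand side as $\mathrm{Ch}^\partial-\mathrm{Ch}^{int}|_{\partial W}$ (up to sign convention). Substituting $\mathrm{d}_\mathcal{A}\mathrm{CS}=-\mathrm{d}_W\mathrm{CS}+(\mathrm{Ch}^\partial-\mathrm{Ch}^{int}|_{\partial W})$ and using that $\partial W$ is closed (so the Stokes contribution from $\mathrm{d}_W(\mathrm{CS}\wedge Td)$ vanishes), one obtains a boundary formula in which precisely one copy of $\int_{\partial W}\mathrm{Ch}^{int}|_{\partial W}\wedge Td$ appears with the opposite sign from the previous step, together with $\int_{\partial W}\mathrm{Ch}^\partial\wedge Td(\nabla_{\partial W})$.

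Adding the two contributions, the $\mathrm{Ch}^{int}|_{\partial W}$ terms cancel by design, leaving exactly $\int_{\partial W}\mathrm{Ch}^\partial(W,\xi,\nabla_\xi,f)\wedge Td(\nabla_{\partial W})$. Proposition \ref{localorp} identifies this with $j_{<e>}\int_{\partial W}\mathrm{Ch}^\partial_{<e>}(W,\xi,\nabla_\xi,f)\wedge Td(\nabla_{\partial W})$, which is the stated right-hand side. The main obstacle I anticipate is the bookkeeping of Koszul signs arising from the bigrading $\Omega^*(W)\otimes\hat{\Omega}_*^{ab}(\tilde{\mathcal{A}}(\Gamma))$ and from the interaction between $\mathrm{d}_\mathcal{A}$ and $\int_W$; these must conspire so that the cancellation of the two $\mathrm{Ch}^{int}|_{\partial W}$ contributions is exact rather than merely up to sign, but this is forced by the fact that both the closedness of $\mathrm{Ch}$ and the transgression identity are consequences of the single equation $(d_W+d_{[0,1]}+\mathrm{d}_\mathcal{A})\mathrm{Ch}_{\mathcal{A}(\Gamma)}(\bar\nabla)=0$ on $W\times[0,1]$.
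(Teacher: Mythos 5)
Your argument is correct and matches the paper's proof: closedness of $\mathrm{Ch}^{int}$ under $\mathrm{d}_W+\mathrm{d}_\mathcal{A}$ plus Stokes on $W$ produces the boundary term $\int_{\partial W}\mathrm{Ch}^{int}|_{\partial W}\wedge Td$, and the Chern--Simons transgression (the paper phrases this as "partially integrating over $[0,1]$" for the interpolating connection $\bar\nabla_\xi$) together with Proposition \ref{localorp} converts that boundary term into $j_{<e>}\int_{\partial W}\mathrm{Ch}^\partial_{<e>}\wedge Td-\mathrm{d}_\mathcal{A}\int_{\partial W}\mathrm{CS}\wedge Td$. Your explicit appeals to gauge invariance of the Chern form under $\alpha^*$ and to the Koszul-sign bookkeeping are details the paper leaves implicit, but the route is the same.
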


\begin{proof}
Since $\mathrm{Ch}^{int}(W,\xi,\nabla_\xi,f)$ is $\mathrm{d}_W+\mathrm{d}_{\mathcal{A}}$-closed, it follows from Stokes's theorem that 
\begin{align*}
\mathrm{d}_\mathcal{A}\int_W\mathrm{Ch}^{int}(W,\xi,\nabla_\xi,f)\wedge Td(\nabla_W)&=\int_W\mathrm{d}_W\mathrm{Ch}^{int}(W,\xi,\nabla_\xi,f)\wedge Td(\nabla_W)\\
&=\int_{\partial W} \mathrm{Ch}^{int}(W,\xi,\nabla_\xi,f)\wedge Td(\nabla_{\partial W}).
\end{align*}
Let $\bar{\nabla}_\xi$ denote the connection on $(\mathcal{E}_\mathcal{A}|_{\partial W}\oplus E_\C'\otimes f^*\mathcal{L})\times [0,1]\to \partial W\times [0,1]$ constructed by linearly interpolating between $\nabla_\mathcal{E}\oplus\nabla_{E'}\otimes \nabla_{f^*\mathcal{L}}$ and $\alpha^*(\nabla_{\mathcal{E}'}\oplus\nabla_{E}\otimes \nabla_{f^*\mathcal{L}})$. Partially integrating over $[0,1]$, we obtain 
\begin{align*}
\int_{\partial W} \mathrm{Ch}^{int}&(W,\xi,\nabla_\xi,f)\wedge Td(\nabla_W)=\int_{\partial W}[\mathrm{Ch}_{\mathcal{A}(\Gamma)}(\nabla_\mathcal{E})-\mathrm{Ch}_{\mathcal{A}(\Gamma)}(\nabla_{\mathcal{E}'})]\wedge Td(\nabla_{\partial W})\\
=&\int_{\partial W}[\mathrm{Ch}_{\mathcal{A}(\Gamma)}(\bar{\nabla}_\xi)|_{t=1}-\mathrm{Ch}_{\mathcal{A}(\Gamma)}(\bar{\nabla}_\xi)|_{t=0}]\wedge Td(\nabla_{\partial W})\\
&+j_{<e>}\int_{\partial W}\mathrm{Ch}^{\partial}_{<e>}(W,\xi,\nabla_\xi,f)\wedge Td(\nabla_{\partial W})\\
&\qquad\qquad\qquad=j_{<e>}\int_{\partial W}\mathrm{Ch}^{\partial}_{<e>}(W,\xi,\nabla_\xi,f)\wedge Td(\nabla_{\partial W})\\
&\qquad\qquad\qquad\quad+\mathrm{d}_\mathcal{A}\int_{\partial W}\mathrm{CS}(W,\xi,\nabla_\xi,f)]\wedge Td(\nabla_{\partial W}).
\end{align*}

\end{proof}

\begin{define}
\label{delcherndefiniti}
The delocalized Chern cycle of a geometric surgery cycle with connection is the closed chain of $\hat{\Omega}_*^{rel}(\mathcal{A}(\Gamma);<e>)$ given by
\begin{align*}
\mathrm{Ch}^{del}&(W,\xi,\nabla_\xi,f)\\
&:=\begin{pmatrix}
\int_W\mathrm{Ch}^{int}(W,\xi,\nabla_\xi,f)\wedge Td(\nabla_W)-\int_{\partial W}\mathrm{CS}(W,\xi,\nabla_\xi,f)\wedge Td(\nabla_{\partial W})\\
\\
\int_{\partial W}\mathrm{Ch}^{\partial}_{<e>}(W,\xi,\nabla_\xi,f)\wedge Td(\nabla_{\partial W})\end{pmatrix}.
\end{align*}

The delocalized Chern character of a geometric surgery cycle with connection is defined as the class
$$\mathrm{ch}^{del}(W,\xi,\nabla_\xi,f):=[\mathrm{Ch}^{del}(W,\xi,\nabla_\xi,f)]\in \hat{H}_*^{del,<e>}(\mathcal{A}(\Gamma)).$$
\end{define}

\begin{lemma}
The delocalized Chern character $\mathrm{ch}^{del}(W,\xi,\nabla_\xi,f)\in \hat{H}_*^{del,<e>}(\mathcal{A}(\Gamma))$ does not depend on the choice of connection.
\end{lemma}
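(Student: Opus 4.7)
The plan is to compare two choices of connection via a transgression/cylinder argument in the bigraded complex, exhibiting an explicit primitive in the mapping cone complex $\hat{\Omega}_*^{rel}(\mathcal{A}(\Gamma);<e>)$ for the difference of the two delocalized Chern cycles. Let $\nabla_\xi^0$ and $\nabla_\xi^1$ be two connections for $(W,\xi,f)$. On the cylinder $W\times[0,1]$, pull back all bundle data and form $\bar{\nabla}_\xi=(1-t)\nabla_\xi^0+t\nabla_\xi^1$, linearly interpolating each of the spin${}^c$-connection, the two $\mathcal{A}(\Gamma)$-connections, and the two complex bundle connections.

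The key technical input is the transgression identity for the bigraded Chern form: since $\mathrm{Ch}_{\mathcal{A}(\Gamma)}(\bar{\nabla})$ is $(\mathrm{d}_W+\mathrm{d}_{W\times[0,1]/W}+\mathrm{d}_\mathcal{A})$-closed in $C^\infty(W\times[0,1],\wedge^*T^*(W\times[0,1])\otimes\hat{\Omega}^{ab}_*(\tilde{\mathcal{A}}(\Gamma)))$, integrating along the $[0,1]$ direction (using standard fiber integration) produces forms
\[
\tau^{int}:=\int_{[0,1]}\mathrm{Ch}^{int}(\bar{\nabla}_\xi)\wedge Td(\bar{\nabla}_W),\qquad \tau^{\partial}_{<e>}:=\int_{[0,1]}\mathrm{Ch}^{\partial}_{<e>}(\bar{\nabla}_\xi)\wedge Td(\bar{\nabla}_{\partial W}),
\]
together with a transgressed Chern--Simons term $\tau^{CS}$ on $\partial W$, satisfying
\[
\mathrm{Ch}^{int}(\nabla_\xi^1)\wedge Td(\nabla_W^1)-\mathrm{Ch}^{int}(\nabla_\xi^0)\wedge Td(\nabla_W^0)=(\mathrm{d}_W+\mathrm{d}_\mathcal{A})\tau^{int},
\]
and the analogous formulas at the boundary (for $\mathrm{Ch}^{\partial}_{<e>}$ and for $\mathrm{CS}$). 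Proposition \ref{localorp} guarantees that $\tau^{\partial}_{<e>}$ lands in the localized subcomplex, so it is a legitimate second-component cochain for the mapping cone.

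Next I would integrate over $W$ and $\partial W$ respectively and apply Stokes's theorem in the $W$-direction to convert $\mathrm{d}_W$-exact parts into boundary integrals, exactly as in the lemma preceding Definition \ref{delcherndefiniti}. The resulting identity should take the form
\[
\mathrm{Ch}^{del}(W,\xi,\nabla_\xi^1,f)-\mathrm{Ch}^{del}(W,\xi,\nabla_\xi^0,f)=\mathrm{d}^{rel}_{<e>}\begin{pmatrix}\int_W \tau^{int}+\int_{\partial W}\tau^{CS}\\ \int_{\partial W}\tau^{\partial}_{<e>}\end{pmatrix},
\]
where the two diagonal pieces of $\mathrm{d}^{rel}_{<e>}$ absorb the $\mathrm{d}_\mathcal{A}$-parts of the transgression, and the off-diagonal $-j_{<e>}$ piece absorbs precisely the $j_{<e>}$-image of $\int_{\partial W}\tau^{\partial}_{<e>}\wedge Td$ that appears as a residual boundary term from pushing $\mathrm{d}_W$ through $\int_W$, mirroring the identity between interior and boundary Chern forms established in the lemma before Definition \ref{delcherndefiniti}.

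The main obstacle is bookkeeping: one must track signs and degrees carefully when combining the two transgression sources (the $W$-transgression between $\nabla_\xi^0$ and $\nabla_\xi^1$, and the $\alpha$-induced transgression already built into $\mathrm{CS}(W,\xi,\nabla_\xi,f)$), verify that the $j_{<e>}$ factor in $\mathrm{d}^{rel}_{<e>}$ cancels the mismatch between the full de Rham differential of the interior transgression and its $<\!e\!>$-localized boundary contribution, and confirm that the fiber integrations remain in the correct topological tensor product so that the identity genuinely holds in $\hat{\Omega}_*^{rel}(\mathcal{A}(\Gamma);<e>)$ rather than merely formally.
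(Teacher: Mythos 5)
Your transgression argument is correct and is essentially the approach the paper takes: interpolate the two connections over $W\times[0,1]$, use that the Chern forms are $(\mathrm{d}_W+\mathrm{d}_\mathcal{A})$-closed, push the de Rham exterior derivative through the fiber integration via Stokes, and package the resulting transgression forms as a primitive in the mapping cone complex $\hat{\Omega}_*^{rel}(\mathcal{A}(\Gamma);\langle e\rangle)$. Proposition \ref{localorp} is exactly the ingredient, as you note, needed to land the boundary transgression in the $\langle e\rangle$-localized subcomplex so that it is a valid second-component cochain.

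There is, however, one place where you and the paper genuinely differ. The paper records the primitive as $\bigl(\begin{smallmatrix}0\\ T_\partial\end{smallmatrix}\bigr)$, with \emph{zero} first component, whereas you carry the interior transgression $\int_W\tau^{int}$ and the Chern--Simons transgression $\int_{\partial W}\tau^{CS}$ in the first slot. Tracing through your own Stokes computation, the variation of $\int_W\mathrm{Ch}^{int}\wedge Td$ produces both a $\mathrm{d}_\mathcal{A}$-exact term $\mathrm{d}_\mathcal{A}\!\int_{W\times[0,1]}\mathrm{Ch}^{int}(\bar\nabla)\wedge Td$ and a residual $\int_{\partial W\times[0,1]}$ boundary term; the residual boundary term is what combines with the cube-transgression of the Chern--Simons term to produce $-j_{\langle e\rangle}T_\partial$, while the $\mathrm{d}_\mathcal{A}$-exact contributions remain in the first slot and must be absorbed by a nonzero first component of the primitive. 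In other words, your primitive is the one that actually verifies the exactness identity on the nose; the paper's formula silently discards these $\mathrm{d}_\mathcal{A}$-exact contributions. This omission is harmless for the statement of the lemma (the difference is exact either way, so the class in $\hat{H}_*^{del,\langle e\rangle}(\mathcal{A}(\Gamma))$ is unchanged), but your more careful bookkeeping is preferable and you should not feel any pressure to force the first component of your primitive to vanish. The one thing worth adding explicitly to your write-up is the observation, from the proof of Proposition \ref{localorp}, that the CS-transgression $\tau^{CS}$ over $\partial W$, unlike $\tau^\partial_{\langle e\rangle}$, is allowed to live in the full $\hat\Omega^{ab}_*(\tilde{\mathcal A}(\Gamma))$, since it sits in the first component of the mapping cone.
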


\begin{proof}
Let $\nabla_\xi$ and $\nabla_\xi'$ be two choices of connections on the geometric surgery cycle $(W,\xi,f)$. Then,
\begin{align*}
\mathrm{Ch}^{del}&(W,\xi,\nabla_\xi,f)-\mathrm{Ch}^{del}(W,\xi,\nabla_\xi',f)\\
=&
\begin{pmatrix}
\int_W\mathrm{Ch}^{int}(W,\xi,\nabla_\xi,f)\wedge Td(\nabla_W)-\int_{\partial W}\mathrm{CS}(W,\xi,\nabla_\xi,f)\wedge Td(\nabla_{\partial W})\\
\\
\int_{\partial W}\mathrm{Ch}^{\partial}_{<e>}(W,\xi,\nabla_\xi,f)\wedge Td(\nabla_{\partial W})
\end{pmatrix}\\
&-\begin{pmatrix}
\int_W\mathrm{Ch}^{int}(W,\xi,\nabla_\xi',f)\wedge Td(\nabla_W)-\int_{\partial W}\mathrm{CS}(W,\xi,\nabla_\xi',f)\wedge Td(\nabla_{\partial W})\\
\\
\int_{\partial W}\mathrm{Ch}^{\partial}_{<e>}(W,\xi,\nabla_\xi',f)\wedge Td(\nabla_{\partial W})
\end{pmatrix}\\
=&\,\mathrm{d}_{rel}^{<e>}
\begin{pmatrix}
0\\
\\
T_\partial(W,\xi,\nabla_\xi,\nabla_\xi',f)
\end{pmatrix},
\end{align*}
where the transgression term $T_\partial$ is defined to be
\begin{align*}
T_\partial(W,\xi,\nabla_\xi,\nabla_\xi',f)=&\int_{\partial W\times [0,1]}\mathrm{Ch}_{<e>}(\nabla_E\otimes \nabla_{f^*\mathcal{L}},\nabla_E'\otimes \nabla_{f^*\mathcal{L}})\wedge Td(\nabla_{\partial W},\nabla'_{\partial W})\\
&-\int_{\partial W\times [0,1]}\mathrm{Ch}_{<e>}(\nabla_{E'}\otimes \nabla_{f^*\mathcal{L}},\nabla_{E'}'\otimes \nabla_{f^*\mathcal{L}})\wedge Td(\nabla_{\partial W},\nabla'_{\partial W}),
\end{align*}
and we note that $Td(\nabla_{\partial W},\nabla'_{\partial W})$ denotes the Todd class associated with the connection $t\nabla_{\partial W}+(1-t)\nabla_{\partial W}'$ on $\partial W\times [0,1]$.
\end{proof}

\subsection{The delocalized Chern character from classes to classes}
\label{subsescons}

Our next goal is to study the delocalized Chern character as a mapping from classes to classes; that is, prove that it respects the relations in the group and is compatible with the other Chern characters in the geometric analogue of the surgery exact sequence. We define the localized Chern character  
$$\ch_X^{<e>}:K_*^{geo}(X)\to  \hat{H}_*^{<e>}(\mathcal{A}(\Gamma)), \quad (M,E,f)\mapsto \int_M \mathrm{Ch}_{<e>}(E\otimes f^*\mathcal{L})\wedge Td(M).$$
By standard results from Chern-Weil theory, the class of the right hand side in $\hat{H}_*^{<e>}(\mathcal{A}(\Gamma))$ is independent of choice of connection and we suppress this choice from the notation. That the map is well defined follows from Proposition \ref{localorp} (and the standard Chern-Weil theory).

\begin{theorem}
\label{delchethe}
The delocalized Chern character from Definition \ref{delcherndefiniti} induces a well defined mapping
$$\mathrm{ch}^{del}:\mathcal{S}_*^{geo}(X,\mathcal{A}(\Gamma))\to \hat{H}_*^{del,<e>}(\mathcal{A}(\Gamma)),$$
that makes the following diagram commute:
\small
\begin{equation}
\label{commdiagwithdelchern}
\begin{CD}
K^{geo}_{*}(X)@>\mu>> K_*^{geo}(pt;\mathcal{A}(\Gamma)) @>r >> \mathcal{S}^{geo}_*(X;\mathcal{A}(\Gamma))  @> \delta >> K^{geo}_{*-1}(X)  \\
@V\ch_X^{<e>}VV@V\ch_{\mathcal{A}(\Gamma)} VV @V\mathrm{ch}^{del}VV @V\ch_{X}^{<e>}VV  \\
\hat{H}_{*}^{<e>}(\mathcal{A}(\Gamma))@>\mu_{dR}>> \hat{H}^{dR}_*(\mathcal{A}(\Gamma)) @>r_{dR}>> \hat{H}_*^{del,<e>}(\mathcal{A}(\Gamma)) @>\delta_{dR}>> \hat{H}_{*-1}^{<e>}(\mathcal{A}(\Gamma))  \\
\end{CD}.
\end{equation}
\normalsize
\end{theorem}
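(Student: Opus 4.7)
The plan is to establish well-definedness of $\mathrm{ch}^{del}$ on equivalence classes in $\mathcal{S}^{geo}_*(X;\mathcal{A}(\Gamma))$ and then verify commutativity of the two non-trivial squares of \eqref{commdiagwithdelchern}. Connection independence at the cycle level was just established in the preceding lemma, and disjoint union/direct sum is built into the definition, so it only remains to show that $\mathrm{ch}^{del}$ is compatible with the bordism, vector bundle modification, and isomorphism relations; after that, commutativity of the diagram reduces to unraveling the definitions.

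For bordism, I would mirror the organization of Subsection \ref{bordsubse}, proving ``delocalized'' analogs of Lemmas \ref{delapsforcycles} and \ref{gluingandlambda}. Given a cycle $(W_0,\xi_0,g)$ as in Lemma \ref{delapsforcycles}, so that the boundary isomorphism $\alpha$ extends to an isomorphism $\beta$ over all of $W_0$ between $\mathcal{E}_{\mathcal{A}(\Gamma)}\oplus F_\C'\otimes g^*\mathcal{L}$ and $\mathcal{E}_{\mathcal{A}(\Gamma)}'\oplus F_\C\otimes g^*\mathcal{L}$, one applies Stokes' theorem on $W_0$ to the closed form $\mathrm{Ch}_{\mathcal{A}(\Gamma)}(\bar{\nabla})\wedge Td(\nabla_{W_0})$, where $\bar{\nabla}$ linearly interpolates the two sides of $\beta$. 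The interior integral recovers $\int_{W_0}\mathrm{Ch}^{int}(W_0,\xi_0,\nabla,g)\wedge Td$ up to transgression terms that cancel against contributions of the extensions $F_\C,F_\C'$, and the boundary integral produces exactly the Chern-Simons term plus a piece landing, via Proposition \ref{localorp}, in the $<e>$-subcomplex and matching the boundary Chern form of $(W_0,\xi_0,g)$. Rearranging exhibits $\mathrm{Ch}^{del}(W_0,\xi_0,g)$ as a $\mathrm{d}_{<e>}^{rel}$-boundary in $\hat{\Omega}_*^{rel}(\mathcal{A}(\Gamma);<e>)$. The gluing step (analog of Lemma \ref{gluingandlambda}) then follows by additivity of integration across the separating hypersurface, using that the delocalized Chern character of a cycle with empty boundary lies in the image of $r_{dR}$.

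For vector bundle modification, the argument is Chern-Weyl. For a spin$^c$-vector bundle $V\to W$ of even rank $2k$, the Chern-Weyl avatar of Bott periodicity gives $\int_{S^{2k}}\mathrm{Ch}(\nabla_{Q_k})\wedge Td(\nabla_{S^{2k}})=1$. Choosing on $\xi^V$ the natural connection induced by pullback from $\nabla_\xi$ and tensor product with a $Spin^c(2k)$-equivariant connection on $S_k\otimes Q_k$, fiber integration along $\pi_V:W^V\to W$ yields the pointwise identity
\[
\pi_{V,*}\bigl(\mathrm{Ch}_{\mathcal{A}(\Gamma)}(\nabla_{\mathcal{E}^V})\wedge Td(\nabla_{W^V})\bigr)=\mathrm{Ch}_{\mathcal{A}(\Gamma)}(\nabla_\mathcal{E})\wedge Td(\nabla_W),
\]
and analogously for the $\mathcal{E}'$ term, for the boundary Chern forms, and for the Chern-Simons term. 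Consequently $\mathrm{Ch}^{del}(W^V,\xi^V,f^V)=\mathrm{Ch}^{del}(W,\xi,f)$ as chains in $\hat{\Omega}_*^{rel}(\mathcal{A}(\Gamma);<e>)$. Isomorphism invariance of the bundle data is handled by the same interpolation-transgression argument used for connection independence.

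The two non-trivial squares of \eqref{commdiagwithdelchern} then commute essentially by inspection. The map $r$ sends $(M,\mathcal{E})$ to the cycle $(M,(\mathcal{E},0,0,0,0),\emptyset)$, which has empty boundary, so both the Chern-Simons and $\mathrm{Ch}^\partial_{<e>}$ contributions vanish and $\mathrm{ch}^{del}(r(M,\mathcal{E}))=(\ch_{\mathcal{A}(\Gamma)}(M,\mathcal{E}),0)=r_{dR}\,\ch_{\mathcal{A}(\Gamma)}(M,\mathcal{E})$. On the other side, the connecting map $\delta_{dR}$ of the mapping cone is the projection onto the second factor, so $\delta_{dR}\mathrm{ch}^{del}(W,\xi,f)=\bigl[\int_{\partial W}\mathrm{Ch}^\partial_{<e>}(W,\xi,\nabla_\xi,f)\wedge Td(\nabla_{\partial W})\bigr]$, which equals $\ch_X^{<e>}(\partial W,E_\C,f)-\ch_X^{<e>}(\partial W,E_\C',f)=\ch_X^{<e>}\delta(W,\xi,f)$. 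Commutativity of the outer-left square involving $\mu$ is standard Chern-Weyl theory. The principal obstacle is the bordism step: the precise identification of the Chern-Simons term as absorbing the transgression coming from $\beta$, combined with the crucial use of Proposition \ref{localorp} to land the boundary contribution in the $<e>$-subcomplex, requires careful bookkeeping in the mapping cone complex.
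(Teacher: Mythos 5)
Your proposal is essentially correct and follows the same architecture as the paper's actual proof: it establishes connection independence, then reduces well-definedness to bordism and vector bundle modification, and finally checks the squares by direct computation with the mapping-cone complex. The paper organizes this as a chain Lemma \ref{commassweld} (commutativity assuming well-definedness), Lemma \ref{vanonimmu} (the ``delocalized analog'' of Lemma \ref{delapsforcycles}, shown to be a $\mathrm{d}_{<e>}^{rel}$-boundary via Stokes), Lemma \ref{bordindel} (general bordism invariance by gluing and Stokes on $Z$), and Proposition \ref{vebmodinv} (vector bundle modification, cited to \cite[Theorem 1.26]{Kaast}). Your identification of $\delta_{dR}$ as the projection onto the second factor of the mapping cone, and your computation of the resulting boundary square, match the paper exactly.

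Two minor points of divergence. For vector bundle modification, the paper simply invokes Karoubi's theorem; you supply the underlying Chern--Weyl mechanism (fiber integration and the normalization $\int_{S^{2k}}\mathrm{Ch}(\nabla_{Q_k})\wedge Td(\nabla_{S^{2k}})=1$), which is the proof of the cited result and is therefore a helpful unpacking rather than a different route. For the bordism gluing step, your remark that ``the delocalized Chern character of a cycle with empty boundary lies in the image of $r_{dR}$'' is a slight under-statement of what is needed: after gluing, one obtains the pair $\bigl(\int_{\partial Z}(\mathrm{Ch}_{\mathcal{A}(\Gamma)}(\nabla_\mathcal{F})-\mathrm{Ch}_{\mathcal{A}(\Gamma)}(\nabla_{\mathcal{F}'}))\wedge Td(\nabla_{\partial Z}),\,0\bigr)$, which is indeed of the form $(\omega,0)$, but the crucial point (used in the paper) is that this $\omega$ is \emph{exact} in $\hat{\Omega}^{ab}_*(\tilde{\mathcal{A}}(\Gamma))$ because the bundles $\mathcal{F},\mathcal{F}'$ extend over $Z$ and Stokes on $Z$ applies; merely ``lying in the image of $r_{dR}$'' does not imply triviality in the cone homology. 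This is a wording imprecision rather than a gap in the strategy, but it is exactly the step where the extension of the bundle data over the filling $Z$ enters and should be stated explicitly.
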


The proof of Theorem \ref{delchethe} will be divided into several lemmas and propositions. 

\begin{lemma} 
\label{commassweld}
Assuming that $\mathrm{ch}^{del}:\mathcal{S}_*^{geo}(X,\mathcal{A}(\Gamma))\to \hat{H}_*^{del,<e>}(\mathcal{A}(\Gamma))$ is well defined, the diagram in Theorem \ref{delchethe} commutes.
\end{lemma}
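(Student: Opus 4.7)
The plan is to check commutativity of each of the three squares at the level of cycles (and representative forms), after which the statement passes to classes by the assumed well-definedness of $\mathrm{ch}^{del}$ together with the classical fact that the other three vertical Chern characters descend from cycles to classes by Chern--Weil theory (with Proposition~\ref{localorp} ensuring the localization in $\hat{H}^{<e>}_*(\mathcal{A}(\Gamma))$).

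For the leftmost square, start with $(M,E,f)\in K^{geo}_*(X)$ and fix connections. Its image under $\mu$ is, up to the identification used in \cite{paperI}, the closed-manifold cycle $(M,E\otimes f^*\mathcal{L}_X)\in K_*^{geo}(pt;\mathcal{A}(\Gamma))$, and $\ch_{\mathcal{A}(\Gamma)}$ of the latter is $\int_M \mathrm{Ch}_{\mathcal{A}(\Gamma)}(\nabla_E\otimes\nabla_{f^*\mathcal{L}})\wedge Td(\nabla_M)$. The other route gives $\int_M\mathrm{Ch}_{<e>}(\nabla_E\otimes\nabla_{f^*\mathcal{L}})\wedge Td(\nabla_M)$, pushed into $\hat{H}^{dR}_*(\mathcal{A}(\Gamma))$ by $\mu_{dR}$, the latter being induced by $j_{<e>}$. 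Commutativity is then precisely the statement that $j_{<e>}\circ\mathrm{Ch}_{<e>}=\mathrm{Ch}_{\mathcal{A}(\Gamma)}$ on bundles of the form $E\otimes f^*\mathcal{L}_X$, which is exactly Proposition~\ref{localorp}.

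For the middle square, the map $r$ sends a closed-manifold cycle $(M,\mathcal{E}_{\mathcal{A}(\Gamma)})$ to the surgery cycle with empty boundary and only $\mathcal{E}_{\mathcal{A}(\Gamma)}$ as interior datum (all other components of $\xi$ are zero). Since $\partial W=\emptyset$, both the Chern--Simons term $\mathrm{CS}$ and the boundary form $\mathrm{Ch}^\partial_{<e>}$ in Definition~\ref{delcherndefiniti} vanish identically, so $\mathrm{Ch}^{del}(r(M,\mathcal{E}_{\mathcal{A}(\Gamma)}))=\bigl(\int_M\mathrm{Ch}_{\mathcal{A}(\Gamma)}(\nabla_\mathcal{E})\wedge Td(\nabla_M),\,0\bigr)$, which is the image of $\ch_{\mathcal{A}(\Gamma)}(M,\mathcal{E}_{\mathcal{A}(\Gamma)})$ under the mapping-cone inclusion $r_{dR}:\omega\mapsto(\omega,0)$.

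For the rightmost square, $\delta(W,\xi,f)=(\partial W,E_\C,f)-(\partial W,E_\C',f)\in K^{geo}_{*-1}(X)$, so
\[
\ch_X^{<e>}(\delta(W,\xi,f))=\int_{\partial W}\bigl(\mathrm{Ch}_{<e>}(\nabla_E\otimes\nabla_{f^*\mathcal{L}})-\mathrm{Ch}_{<e>}(\nabla_{E'}\otimes\nabla_{f^*\mathcal{L}})\bigr)\wedge Td(\nabla_{\partial W}),
\]
which by definition of $\mathrm{Ch}^\partial_{<e>}$ equals $\int_{\partial W}\mathrm{Ch}^\partial_{<e>}(W,\xi,\nabla_\xi,f)\wedge Td(\nabla_{\partial W})$. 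On the other hand, the boundary $\delta_{dR}$ of the mapping cone $\hat{\Omega}_*^{rel}(\mathcal{A}(\Gamma);<e>)$ is by construction the projection onto the second component, and this projection applied to $\mathrm{Ch}^{del}(W,\xi,\nabla_\xi,f)$ produces the same integral. The main (and only real) obstacle is bookkeeping: carefully matching the conventions for $\mu$, $r$, $\delta$ from \cite{paperI} with the mapping-cone conventions for $\mu_{dR}$, $r_{dR}$, $\delta_{dR}$ introduced in Subsection~\ref{leshom}, so that the three checks above hold on the nose rather than merely up to an explicit coboundary.
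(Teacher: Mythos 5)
Your proposal is correct and follows essentially the same approach as the paper: checking commutativity of each square at the level of cycles, using Proposition~\ref{localorp} for the left square, the vanishing of the boundary and Chern--Simons terms when $\partial W=\emptyset$ for the middle square, and the fact that $\delta_{dR}$ is the projection onto the second component of the mapping cone for the right square. The paper's proof is just a more condensed version of the same three observations.
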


\begin{proof}
The left part of the diagram commutes by construction, using Proposition \ref{localorp}. The middle part of the diagram commutes because for a geometric cycle with connection $(M,\mathcal{E}_{\mathcal{A}},\nabla_\mathcal{E},\nabla_M)$ for $K_*^{geo}(pt;\mathcal{A}(\Gamma))$, it holds on the level of cycles that
$$r_{dR}\left(\mathrm{Ch}(M,\mathcal{E}_\mathcal{A},\nabla_\mathcal{E},\nabla_M)\right)=
\begin{pmatrix}
\int_M \mathrm{Ch}(\nabla_\mathcal{E})\wedge Td(\nabla_M)\\ 0\end{pmatrix}=\mathrm{Ch}^{del}(r(M,\mathcal{E}_{\mathcal{A}},\nabla_\mathcal{E},\nabla_M)).$$
Finally, the right part of the diagram also commutes on the level of cycles, because for a geometric surgery cycle with connection $(W,\xi,\nabla_\xi,f)$ it holds that
\begin{align*}
\delta_{dR}(\ch^{del}_X(W,\xi,f))&= \int_{\partial W}\mathrm{Ch}^{\partial}_{<e>}(W,\xi,\nabla_\xi,f)\wedge Td(\nabla_{\partial W})\\
&=\ch^{<e>}_{X}\left((\partial W,E,f)\dot{\cup}(-\partial W,E',f)\right)=\ch^{<e>}_{X}(\delta(W,\xi,f)).
\end{align*}
\end{proof}

\begin{lemma}
\label{vanonimmu}
Assume that $(W_0,\xi_0,f_0)$ is a cycle as in Lemma \ref{delapsforcycles}. Let $\nabla_{\xi_0}=(\nabla_W, \nabla_\mathcal{E},\nabla_{\mathcal{E}'},\nabla_ E,\nabla_{E'})$ be a connection for $\xi_0$, $\nabla_F$ a connection on $F_\C$ extending $\nabla_E$ and $\nabla_{F'}$ a connection on $F'_\C$ extending $\nabla_{E'}$. In the notation of Lemma \ref{delapsforcycles}, we have that
\begin{align*}
\mathrm{Ch}^{del}&(W,\xi,\nabla_\xi,f)\\
&=\mathrm{d}_{<e>}^{rel}\begin{pmatrix}
\int_{W_0\times [0,1]} \mathrm{Ch}_{\mathcal{A}(\Gamma)}\left(\nabla_{\mathcal{E}}\oplus \nabla_{F'}\otimes \nabla_{g^*\mathcal{L}},\beta^*(\nabla_{\mathcal{E}'}\oplus \nabla_{F}\otimes \nabla_{g^*\mathcal{L}})\right)\wedge Td(\nabla_{W_0}) \\
\\
\int_{W_0}\big(\mathrm{Ch}_{<e>}(\nabla_F\otimes \nabla_{g^*\mathcal{L}_\mathcal{A}})-\mathrm{Ch}_{<e>}(\nabla_{F'}\otimes \nabla_{g^*\mathcal{L}_\mathcal{A}})\big)\wedge Td(\nabla_{W_0})\end{pmatrix}
.
\end{align*}
Hence $\mathrm{ch}^{del}(W,\xi,\nabla_\xi,f)=0$.
\end{lemma}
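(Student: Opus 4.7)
The lemma has two parts: a chain-level identity in $\hat{\Omega}_*^{rel}(\mathcal{A}(\Gamma);<e>)$ and the resulting vanishing $\mathrm{ch}^{del}(W,\xi,\nabla_\xi,f)=0$ in $\hat{H}_*^{del,<e>}(\mathcal{A}(\Gamma))$. The second is an immediate consequence of the first by definition of homology of the mapping cone complex. The plan is therefore to verify the chain identity by directly computing $\mathrm{d}_{<e>}^{rel}$ applied to the candidate pair $(A,B)$ and matching components with $\mathrm{Ch}^{del}(W,\xi,\nabla_\xi,f)$ from Definition \ref{delcherndefiniti}. By the formula for the mapping cone differential, this amounts to checking $\mathrm{d}_\mathcal{A} B=\int_{\partial W_0}\mathrm{Ch}^\partial_{<e>}\wedge Td(\nabla_{\partial W_0})$ and $\mathrm{d}_\mathcal{A} A-j_{<e>}B=\int_{W_0}\mathrm{Ch}^{int}\wedge Td(\nabla_{W_0})+\int_{\partial W_0}\mathrm{CS}\wedge Td(\nabla_{\partial W_0})$.

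For the second component I would use that each of $\mathrm{Ch}_{<e>}(\nabla_F\otimes\nabla_{g^*\mathcal{L}})$ and $\mathrm{Ch}_{<e>}(\nabla_{F'}\otimes\nabla_{g^*\mathcal{L}})$ is $(\mathrm{d}_{W_0}+\mathrm{d}_\mathcal{A})$-closed (Chern-Weil), that $Td(\nabla_{W_0})$ is $\mathrm{d}_{W_0}$-closed, and invoke Stokes' theorem on $W_0$, exactly as in the lemma preceding Definition \ref{delcherndefiniti}. Since by hypothesis $F|_{\partial W_0}=E$, $F'|_{\partial W_0}=E'$, and $g^*\mathcal{L}|_{\partial W_0}=f^*\mathcal{L}$, this collapses to the desired boundary form $\int_{\partial W_0}\mathrm{Ch}^\partial_{<e>}(W,\xi,\nabla_\xi,f)\wedge Td(\nabla_{\partial W_0})$. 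For the first component, the key step is Stokes' theorem on the three-piece boundary of $W_0\times[0,1]$, since the integrand of $A$ is $(\mathrm{d}_{W_0\times[0,1]}+\mathrm{d}_\mathcal{A})$-closed. On the end-faces $W_0\times\{0\}$ and $W_0\times\{1\}$, additivity of Chern forms on direct sums and invariance of the Chern character under the bundle isomorphism $\beta$ recombines the integrand into $\int_{W_0}[\mathrm{Ch}(\nabla_\mathcal{E})-\mathrm{Ch}(\nabla_{\mathcal{E}'})]\wedge Td$ plus $\int_{W_0}[\mathrm{Ch}(\nabla_{F'}\otimes\nabla_{g^*\mathcal{L}})-\mathrm{Ch}(\nabla_F\otimes\nabla_{g^*\mathcal{L}})]\wedge Td$; the first of these summands is precisely $\int_{W_0}\mathrm{Ch}^{int}\wedge Td$, and the second is $\pm j_{<e>}B$. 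On the third face $\partial W_0\times[0,1]$, the assumption $\beta|_{\partial W_0}=\alpha$ forces the integrand to coincide, on each slice, with the interpolation defining the Chern-Simons term, so the face integral contributes $\int_{\partial W_0}\mathrm{CS}(W,\xi,\nabla_\xi,f)\wedge Td(\nabla_{\partial W_0})$. Combining these three face contributions and subtracting $j_{<e>}B$ as dictated by the mapping-cone differential cancels the $F,F'$-contributions against the off-diagonal term, producing the desired first component.

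The main obstacle is sign bookkeeping: the mapping-cone differential carries a $-j_{<e>}$ entry, Stokes on $W_0\times[0,1]$ produces opposite orientations on $W_0\times\{0\}$ and $W_0\times\{1\}$, and passing $\mathrm{d}_\mathcal{A}$ through an integral of a $(\mathrm{d}+\mathrm{d}_\mathcal{A})$-closed form introduces a form-degree-dependent sign. The crucial calibration is that the $F,F'$-summands from the two end-faces assemble with precisely the sign needed to cancel the off-diagonal $-j_{<e>}B$, so that the remainder of the end-face contributions is $\int_{W_0}\mathrm{Ch}^{int}\wedge Td$; once this is pinned down using the same sign convention as in the proof of the lemma preceding Definition \ref{delcherndefiniti}, the remaining pieces line up as claimed. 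Since $\mathrm{Ch}^{del}(W,\xi,\nabla_\xi,f)$ is then exhibited as $\mathrm{d}_{<e>}^{rel}$ of an explicit chain, it represents the zero class in $\hat{H}_*^{del,<e>}(\mathcal{A}(\Gamma))$, proving $\mathrm{ch}^{del}(W,\xi,\nabla_\xi,f)=0$.
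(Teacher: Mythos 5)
Your strategy is essentially the same as the paper's: one verifies the chain-level identity directly by applying the mapping-cone differential to the candidate pair, computes $\mathrm{d}_\mathcal{A}$ of the second component via Stokes on $W_0$ (using $F|_{\partial W_0}=E$, $F'|_{\partial W_0}=E'$, $g|_{\partial W_0}=f_0$), and computes $\mathrm{d}_\mathcal{A}z$ via Stokes on $W_0\times[0,1]$, using additivity and $\beta$-invariance of the Chern form on the two end faces and $\beta|_{\partial W_0}=\alpha$ on the side face to recover $\mathrm{Ch}^{int}$, the $F,F'$-term matching $j_{<e>}B$, and $\mathrm{CS}$ respectively. The paper's written-out chain of equalities interleaves Stokes with the transgression identity of the lemma preceding Definition \ref{delcherndefiniti} rather than reading the three face contributions off explicitly, but the content is the same, and the cancellation of the $F,F'$ end-face contribution against the off-diagonal $-j_{<e>}B$ term is indeed the crux. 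You rightly flag the sign bookkeeping (orientations of the three boundary faces, the $\mathrm{d}_\mathcal{A}$-versus-$\mathrm{d}_W$ commutation sign, and the sign on $j_{<e>}$ in the mapping-cone differential) as the delicate point; for a complete writeup these should be pinned down by fixing once and for all the sign conventions of \cite{WahlAPSForCstarBun} used throughout this section.
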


\begin{proof}
To simplify notation, we set 
$$z:=\int_{W_0\times [0,1]} \mathrm{Ch}_{\mathcal{A}(\Gamma)}\left(\nabla_{\mathcal{E}}\oplus \nabla_{F'}\otimes \nabla_{g^*\mathcal{L}},\beta^*(\nabla_{\mathcal{E}'}\oplus \nabla_{F}\otimes \nabla_{g^*\mathcal{L}})\right)\wedge Td(\nabla_{W_0}).$$
We compute that 
\begin{align*}
\mathrm{d}_{<e>}^{rel}&\begin{pmatrix}
z \\
\\
\int_{W_0}\big(\mathrm{Ch}_{<e>}(\nabla_F\otimes \nabla_{g^*\mathcal{L}_\mathcal{A}})-\mathrm{Ch}_{<e>}(\nabla_{F'}\otimes \nabla_{g^*\mathcal{L}_\mathcal{A}})\big)\wedge Td(\nabla_{W_0})\end{pmatrix}\\
\\
&=\begin{pmatrix}
\mathrm{d}_\mathcal{A} z-\int_{W_0} \left(\mathrm{Ch}_{\mathcal{A}(\Gamma)}(\nabla_{F}\otimes \nabla_{g^*\mathcal{L}})-\mathrm{Ch}_{\mathcal{A}(\Gamma)}( \nabla_{F'}\otimes \nabla_{g^*\mathcal{L}})\right)\wedge Td(\nabla_{W_0}) \\
\\
\int_{\partial W_0}\big(\mathrm{Ch}_{<e>}(\nabla_E\otimes \nabla_{f_0^*\mathcal{L}_\mathcal{A}})-\mathrm{Ch}_{<e>}(\nabla_{E'}\otimes \nabla_{f_0^*\mathcal{L}_\mathcal{A}})\big)\wedge Td(\nabla_{\partial W_0})\end{pmatrix}\\
\\
&=\begin{pmatrix}
\int_{ W_0}(\mathrm{Ch}_{\mathcal{A}(\Gamma)}(\nabla_\mathcal{E})-\mathrm{Ch}_{\mathcal{A}(\Gamma)}(\nabla_{\mathcal{E}'}))\wedge  Td(\nabla_{W_0})\\
-\int_{\partial W_0}\big(\mathrm{Ch}_{\mathcal{A}(\Gamma)}(\nabla_E\otimes \nabla_{f_0^*\mathcal{L}_\mathcal{A}})-\mathrm{Ch}_{\mathcal{A}(\Gamma)}(\nabla_{E'}\otimes \nabla_{f_0^*\mathcal{L}_\mathcal{A}})\big)\wedge Td(\nabla_{\partial W_0})\\
\\
\int_{\partial W_0}\big(\mathrm{Ch}_{<e>}(\nabla_E\otimes \nabla_{f_0^*\mathcal{L}_\mathcal{A}})-\mathrm{Ch}_{<e>}(\nabla_{E'}\otimes \nabla_{f_0^*\mathcal{L}_\mathcal{A}})\big)\wedge Td(\nabla_{\partial W_0})\end{pmatrix}\\
\\
&=\begin{pmatrix}
\int_{W_0}\mathrm{Ch}^{int}(W_0,\xi,\nabla_\xi,f)\wedge Td(\nabla_{W_0})-\int_{\partial W_0}\mathrm{CS}(W,\xi,\nabla_\xi,f)\wedge Td(\nabla_{\partial W_0})\\
\\
\int_{\partial W_0}\mathrm{Ch}^{\partial}_{<e>}(W,\xi,\nabla_\xi,f)\wedge Td(\nabla_{\partial W_0})\end{pmatrix}.
\end{align*}
\end{proof}

\begin{lemma}
\label{bordindel}
The delocalized Chern character respects the bordism relation in $\mathcal{S}_*^{geo}(X;\mathcal{A}(\Gamma))$.
\end{lemma}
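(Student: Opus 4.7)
The plan is to follow the strategy of Proposition~\ref{bordisnovlam}. If $(W,\xi,f)\sim_{bor}0$, then by definition there exist a cycle $x_0=(W_0,\xi_0,f)$ satisfying the hypotheses of Lemma~\ref{delapsforcycles} and a spin$^c$-manifold $Z$ with $\partial Z=W\cup_{\partial W}W_0$ carrying $\mathcal{A}(\Gamma)$-bundle data whose restriction to $\partial Z$ is $\xi_{\mathcal{A}(\Gamma)}\cup_{\partial W}\xi_{0,\mathcal{A}(\Gamma)}$. Since Lemma~\ref{vanonimmu} already gives $\ch^{del}(x_0)=0$, it suffices to show $\ch^{del}(W,\xi,f)+\ch^{del}(x_0)=0$. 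I will establish this via a de~Rham gluing formula followed by an application of Stokes' theorem on $Z$.

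The first main step is to prove the Chern-character analogue of Lemma~\ref{gluingandlambda}: suppose $x_i=(W_i,\xi_i,f)$, $i=1,2$, are cycles with $\partial W_i=(-1)^iY$ whose restrictions to $Y$ agree as relative cycles. Let $\tilde Z:=W_1\cup_YW_2$ be the glued closed spin$^c$-manifold and $\mathcal{F},\mathcal{F}'\to\tilde Z$ the glued $\mathcal{A}(\Gamma)$-bundles. I claim that, with connections on $W_1$ and $W_2$ chosen to coincide on $Y$,
\[
\ch^{del}(x_1)+\ch^{del}(x_2)=r_{dR}\!\left(\int_{\tilde Z}\!\left[\mathrm{Ch}_{\mathcal{A}(\Gamma)}(\nabla_\mathcal{F})-\mathrm{Ch}_{\mathcal{A}(\Gamma)}(\nabla_{\mathcal{F}'})\right]\wedge Td(\nabla_{\tilde Z})\right).
\]
The interior integrals patch across $Y$ to give the integral over $\tilde Z$. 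The Chern-Simons terms $\int_{\partial W_i}\mathrm{CS}(W_i,\xi_i,\nabla_i,f)\wedge Td$ on $-Y$ and $+Y$ cancel because the connections and the isomorphism $\alpha$ agree on $Y$, and for the same reason the $<e>$-localized boundary forms $\int_{\partial W_i}\mathrm{Ch}^{\partial}_{<e>}\wedge Td$ cancel in the second slot of the relative complex. The sum therefore lies in the image of $\hat{\Omega}_*^{ab}(\tilde{\mathcal{A}}(\Gamma))\oplus 0$, yielding the displayed formula.

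Applying this gluing to $x_1=(W,\xi,f)$ and $x_2=x_0$ identifies $\tilde Z$ with $\partial Z$. By hypothesis, the glued bundles $\mathcal{F},\mathcal{F}'\to\partial Z$ extend to $\mathcal{A}(\Gamma)$-bundles $\hat{\mathcal{F}},\hat{\mathcal{F}}'\to Z$; choosing connections on $\hat{\mathcal{F}},\hat{\mathcal{F}}'$ that restrict to the boundary connections used above, Stokes' theorem gives
\[
\int_{\partial Z}\!\left[\mathrm{Ch}_{\mathcal{A}(\Gamma)}(\nabla_\mathcal{F})-\mathrm{Ch}_{\mathcal{A}(\Gamma)}(\nabla_{\mathcal{F}'})\right]\wedge Td=\mathrm{d}_{\mathcal{A}}\!\int_Z\!\left[\mathrm{Ch}_{\mathcal{A}(\Gamma)}(\nabla_{\hat{\mathcal{F}}})-\mathrm{Ch}_{\mathcal{A}(\Gamma)}(\nabla_{\hat{\mathcal{F}}'})\right]\wedge Td,
\]
since the integrand on $\partial Z$ is closed under the total differential and the Todd form is closed on the interior. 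Hence the right-hand side of the gluing formula vanishes in $\hat{H}_*^{dR}(\mathcal{A}(\Gamma))$, and applying $r_{dR}$ gives $\ch^{del}(W,\xi,f)+\ch^{del}(x_0)=0$. Combined with $\ch^{del}(x_0)=0$ from Lemma~\ref{vanonimmu}, this yields $\ch^{del}(W,\xi,f)=0$, as desired.

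The main obstacle will be the careful bookkeeping in the gluing step: one must verify that, under the sign conventions coming from $\partial W_i=(-1)^iY$, the Chern-Simons contributions and the $<e>$-localized boundary Chern forms cancel pairwise, using the hypothesis that $\xi_1$ and $\xi_2$ restrict to the same relative cycle over $Y$. This is the transgression-form analogue of the spectral-flow cancellations used in Lemmas~\ref{depeondec} and~\ref{gluingandlambda}, and constitutes the technical heart of the argument.
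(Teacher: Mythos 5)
Your proof is correct and follows essentially the same route as the paper: decompose the relevant Chern cycle into a contribution on $\partial Z$, kill the $\partial Z$ contribution by Stokes in the first slot of the mapping cone, and invoke Lemma~\ref{vanonimmu} for the $W_0$ contribution. The only organizational difference is that you package the cancellation of the Chern--Simons and $\langle e\rangle$-localized boundary forms as a standalone gluing lemma (mirroring Lemma~\ref{gluingandlambda}), whereas the paper performs that computation inline in the proof of this lemma; this is purely cosmetic.
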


\begin{proof}
Suppose that $(W,\xi,f)\sim_{bor}0$ in $\mathcal{S}_*^{geo}(X;\mathcal{A}(\Gamma))$. Hence, there is a cycle $x_0=(W_0,\xi_0,f)$ as in Lemma \ref{delapsforcycles} and Lemma \ref{vanonimmu} such that $\partial W=\partial W_0$ and $W\cup_{\partial W} W_0$ being the boundary of a spin$^c$-manifold $Z$, with bundle data such that $\mathcal{E}\cup_{\partial W} \mathcal{E}_0\to \partial Z$ extends to a bundle $\mathcal{F}\to Z$ and $\mathcal{E}'\cup_{\partial W} \mathcal{E}_0'\to \partial Z$ extends to a bundle $\mathcal{F}'\to Z$. Choose connections $\nabla_F$, $\nabla_{F'}$, $\nabla_{\mathcal{F}}$, $\nabla_{\mathcal{F}'}$ and $\nabla_Z$ of product type near the relevant boundaries and use these connections to define a connection $\nabla_\xi$ on $(W,\xi,f)$. 

It follows that 
\begin{align*}
\mathrm{Ch}^{del}&(W,\xi,\nabla_\xi,f)\\
&=\begin{pmatrix}
\int_{\partial Z}(\mathrm{Ch}_{\mathcal{A}(\Gamma)}(\nabla_\mathcal{F})-\mathrm{Ch}_{\mathcal{A}(\Gamma)}(\nabla_{\mathcal{F}'}))\wedge  Td(\nabla_{\partial Z})\\
+\int_{W_0}\big(\mathrm{Ch}_{\mathcal{A}(\Gamma)}(\nabla_{\mathcal{E}_0})-\mathrm{Ch}_{\mathcal{A}(\Gamma)}(\nabla_{\mathcal{E}_0'})\big)\wedge Td(\nabla_{W_0})\\
-\int_{\partial W} CS(W,\xi,\nabla_\xi,f)\wedge Td(\nabla_{\partial W})\\
\\
\int_{\partial W}\big(\mathrm{Ch}_{<e>}(\nabla_E\otimes \nabla_{f^*\mathcal{L}_\mathcal{A}})-\mathrm{Ch}_{<e>}(\nabla_{E'}\otimes \nabla_{f^*\mathcal{L}_\mathcal{A}})\big)\wedge Td(\nabla_{\partial W})\end{pmatrix}\\
\\
&=\begin{pmatrix}
\int_{\partial Z}(\mathrm{Ch}_{\mathcal{A}(\Gamma)}(\nabla_\mathcal{F})-\mathrm{Ch}_{\mathcal{A}(\Gamma)}(\nabla_{\mathcal{F}'}))\wedge  Td(\nabla_{\partial Z})\\
\\
0\end{pmatrix}+\mathrm{Ch}^{del}(W_0,\xi_0,\nabla_{\xi_0},f_0).
\end{align*}
Partial integration implies that 
\begin{align*}
&\begin{pmatrix}
\int_{\partial Z}(\mathrm{Ch}_{\mathcal{A}(\Gamma)}(\nabla_\mathcal{F})-\mathrm{Ch}_{\mathcal{A}(\Gamma)}(\nabla_{\mathcal{F}'}))\wedge  Td(\nabla_{\partial Z})\\
\\
0\end{pmatrix}\\
&\qquad =\mathrm{d}^{rel}_{<e>}\begin{pmatrix}
\int_{Z}(\mathrm{Ch}_{\mathcal{A}(\Gamma)}(\nabla_\mathcal{F})-\mathrm{Ch}_{\mathcal{A}(\Gamma)}(\nabla_{\mathcal{F}'}))\wedge  Td(\nabla_Z)\\
\\
0\end{pmatrix}.
\end{align*}
This identity, combined with Lemma \ref{vanonimmu}, guarantees that $\mathrm{Ch}^{del}(W,\xi,\nabla_\xi,f)$ is exact.
\end{proof}

For a spin$^c$-vector bundle $V\to W$ of rank $2k$, the Bott bundle $Q_V\to W^V=S(V\oplus 1_\field{R})$ carries a hermitean connection $\nabla_Q$ coming from a spin$^c$-invariant connection on the Bott bundle $Q_k\to S^{2k}$. Similarly, the vertical space of $W^V\to W$ carries a connection $\nabla_{W^V/W}$.

\begin{define}[Vector bundle modification of connections]
Let $(W,\xi,\nabla_\xi,f)$ be a cycle with connection $\nabla_\xi=(\nabla_W, \nabla_\mathcal{E},\nabla_{\mathcal{E}'},\nabla_ E,\nabla_{E'})$ and $V\to W$ a spin$^c$-vector bundle of even rank. We define the vector bundle modified connection $\nabla_\xi^V$ by
$$\nabla_\xi^V:=(\pi^*\nabla_W\otimes \nabla_{W^V/W}, \pi^*\nabla_\mathcal{E}\otimes \nabla_Q,\pi^*\nabla_{\mathcal{E}'}\otimes \nabla_Q,\pi^*\nabla_ E\otimes \nabla_Q,\pi^*\nabla_{E'}\otimes \nabla_Q).$$
The vector bundle modification of the cycle with connection $(W,\xi,\nabla_\xi,f)$ is defined by
$$(W,\xi,\nabla_\xi,f)^V:=(W^V,\xi^V,\nabla_\xi^V,f^V).$$
\end{define}

The following proposition follows directly from the proof of \cite[Theorem 1.26]{Kaast}.

\begin{prop}
\label{vebmodinv}
Let $(W,\xi,\nabla_\xi,f)$ be a geometric cycle for surgery with connection and $V\to W$ a spin$^c$-vector bundle of even rank. Then 
\begin{align*}
\int_{W^V}\mathrm{Ch}^{int}(W,\xi,\nabla_\xi,f)^V\wedge   Td(\nabla_{W}^V)&=\int_W\mathrm{Ch}^{int}(W,\xi,\nabla_\xi,f)\wedge  Td(\nabla_{ W});\\
\int_{\partial W^V}\mathrm{CS}_{<e>}(W,\xi,\nabla_\xi,f)^V\wedge  Td(\nabla_{\partial W}^V)&=\int_{\partial W}\mathrm{CS}_{<e>}(W,\xi,\nabla_\xi,f)\wedge  Td(\nabla_{\partial W})\\
\int_{\partial W^V}\mathrm{Ch}^{\partial}_{<e>}(W,\xi,\nabla_\xi,f)^V \wedge  Td(\nabla_{\partial W}^V)&=\int_{\partial W}\mathrm{Ch}^{\partial}_{<e>}(W,\xi,\nabla_\xi,f)\wedge  Td(\nabla_{\partial W}).
\end{align*}
In particular, the delocalized Chern character respects vector bundle modification in $\mathcal{S}_*^{geo}(X;\mathcal{A}(\Gamma))$.
\end{prop}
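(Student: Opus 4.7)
The plan is to prove the three integral identities by direct Chern-Weil computation combined with fiber integration along $\pi_V : W^V \to W$, following the pattern of \cite[Theorem 1.26]{Kaast}. The final statement (invariance of $\mathrm{ch}^{del}$ under vector bundle modification) will then follow immediately from Definition \ref{delcherndefiniti}, whose two components are built precisely out of the integrals appearing in the statement.

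First I would record two elementary identities. Multiplicativity of the Chern character under tensor products of connections gives
$$\mathrm{Ch}_{\mathcal{A}(\Gamma)}(\pi_V^*\nabla_{\mathcal{E}} \otimes \nabla_Q) = \pi_V^*\mathrm{Ch}_{\mathcal{A}(\Gamma)}(\nabla_{\mathcal{E}}) \wedge \mathrm{Ch}(\nabla_Q),$$
and analogous expressions for $\nabla_{\mathcal{E}'}$, $\nabla_E \otimes \nabla_{f^*\mathcal{L}}$, and $\nabla_{E'} \otimes \nabla_{f^*\mathcal{L}}$ (using that the flat connection on $(f^V)^*\mathcal{L}$ coincides with $\pi_V^*\nabla_{f^*\mathcal{L}}$ on the pullback factor and is unchanged in the $Q$-factor direction). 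The splitting $TW^V \cong \pi_V^*TW \oplus T(W^V/W)$ and multiplicativity of the Todd class give
$$Td(\nabla_W^V) = \pi_V^*Td(\nabla_W) \wedge Td(\nabla_{W^V/W}).$$
Applying Fubini to the fiber bundle $\pi_V$ with fiber $S^{2k}$, each of the three integrals reduces to the corresponding integral on $W$ (respectively $\partial W$) multiplied by the constant
$$c_k := \int_{S^{2k}} \mathrm{Ch}(\nabla_{Q_k}) \wedge Td(S^{2k}).$$
The Atiyah-Singer index theorem applied to the spin$^c$-Dirac operator $D_Q$ on $S^{2k}$ twisted by $Q_k$, together with the facts recorded in Subsection \ref{vbintermezz} that $\dim \ker D_Q^+ = 1$ and $\ker D_Q^- = 0$, yields $c_k = 1$. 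This immediately gives the first (interior Chern form) and third (boundary Chern form) identities.

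For the Chern-Simons identity, recall that $\mathrm{CS}(W,\xi,\nabla_\xi,f)$ is defined as the integral over $[0,1]$ of $\mathrm{Ch}_{\mathcal{A}(\Gamma)}(\bar{\nabla})$, where $\bar{\nabla}$ linearly interpolates between $\nabla_{\mathcal{E}} \oplus \nabla_{E'} \otimes \nabla_{f^*\mathcal{L}}$ and $\alpha^*(\nabla_{\mathcal{E}'} \oplus \nabla_E \otimes \nabla_{f^*\mathcal{L}})$. Since the operation ``pull back along $\pi_V$ and tensor with $\nabla_Q$'' commutes both with linear interpolation of connections and with $\alpha^*$ (the latter because $\alpha^V$ acts as $\alpha \otimes \mathrm{id}_{Q_V}$ by construction of vector bundle modified cycles), the interpolating connection on the vector bundle modified side is $\pi_V^*\bar{\nabla} \otimes \nabla_Q$, and the argument used for (i) and (iii) then applies verbatim.

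The main obstacle is not conceptual — the fiber integration identity $c_k=1$ is classical and the Chern-Weil algebra is elementary — but rather the bookkeeping required to verify that vector bundle modification is compatible with the relative constructions (the Chern-Simons interpolation, the gluing isomorphism $\alpha$, and the flat Mishchenko connection $\nabla_{f^*\mathcal{L}}$). Each of these operations is linear in the connections involved, so the compatibilities go through, but they must be checked separately for each component of $\nabla_\xi^V$. Having done so, the last assertion of the proposition is obtained by comparing the two entries of $\mathrm{Ch}^{del}(W^V,\xi^V,\nabla_\xi^V,f^V)$ with those of $\mathrm{Ch}^{del}(W,\xi,\nabla_\xi,f)$ term by term.
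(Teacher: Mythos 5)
Your argument is correct and spells out in detail what the paper leaves as a one-line citation: the paper's proof simply invokes the proof of Karoubi's Theorem 1.26 in \cite{Kaast}, which is exactly the fiber-integration/Chern--Weil computation you describe (multiplicativity of $\mathrm{Ch}$ and $Td$ under the splitting $TW^V \cong \pi_V^*TW \oplus T(W^V/W)$ and the tensor product with $Q_V$, Fubini over the $S^{2k}$-fibers, and $\int_{S^{2k}} \mathrm{Ch}(Q_k)\,Td(S^{2k}) = \ind(D_Q) = 1$). Your treatment of the Chern--Simons term by observing that vector bundle modification commutes with linear interpolation of connections and with $\alpha^*$ is the right bookkeeping and matches how $\nabla_\xi^V$ is defined. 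Same approach, just made explicit.
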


The proof of Theorem \ref{delchethe} is obtained by combining Lemma \ref{commassweld}, Lemma \ref{bordindel} and Proposition \ref{vebmodinv}.

\subsection{Smooth sub-algebras and the delocalized Chern character}
\label{smootthsubs}

Suppose that $\mathcal{A}(\Gamma)\subseteq C^*_r(\Gamma)$ is a Fr\'echet  $*$-sub algebra. Following \cite{paperI}, the geometric surgery group $ \mathcal{S}^{geo}_{*}(B\Gamma;\mathcal{A}(\Gamma))$ can be defined and it fits into a geometric  surgery exact sequence with $K^{geo}_{*}(B\Gamma)$ and $K_{*}^{geo}(pt;\mathcal{A}(\Gamma))$. By naturality, there is a commuting diagram with exact rows:
\tiny
\[\begin{CD}
\ldots  @> \delta >> K^{geo}_{*}(B\Gamma) @>\mu>>K_{*}^{geo}(pt;\mathcal{A}(\Gamma)) @>r >> \mathcal{S}^{geo}_{*}(B\Gamma;\mathcal{A}(\Gamma))  @> \delta >>K^{geo}_{*-1}(B\Gamma) @>>> \ldots\\
@. @V=VV @VVV @VVV@V=VV@. \\
\ldots  @> \delta >> K^{geo}_{*}(B\Gamma) @>\mu>>K_{*}^{geo}(pt;C^*_r(\Gamma)) @>r >> \mathcal{S}^{geo}_{*}(B\Gamma;C^*_r(\Gamma))  @> \delta >>K^{geo}_{*-1}(B\Gamma) @>>> \ldots
\end{CD},\]
\normalsize
where the vertical arrows are induced by the inclusion $\mathcal{A}(\Gamma)\subseteq C^*_r(\Gamma)$. If the induced mapping $K_*^{geo}(pt;\mathcal{A}(\Gamma))\to K_*^{geo}(pt;C^*_r(\Gamma))$ is an isomorphism; we call such a subalgebra a $K$-smooth algebra. Examples of $K$-smooth algebra include dense smooth subalgebras and Banach algebra completions $\mathcal{A}(\Gamma)$ such that $K_*(\mathcal{A}(\Gamma))\to K_*(C^*_r(\Gamma))$ is an isomorphism, see \cite[Appendix]{paperI}. It follows from the five Lemma that for a $K$-smooth  subalgebra $\mathcal{A}(\Gamma)\subseteq C^*_r(\Gamma)$, the inclusion mapping induces an isomorphism $\mathcal{S}^{geo}_{*}(B\Gamma;\mathcal{A}(\Gamma))\cong \mathcal{S}^{geo}_{*}(B\Gamma;C^*_r(\Gamma))$; which after composing with $\lambda_{an}^{-1}$ produces a delocalized Chern character on the analytic surgery group. \\

Assume that $\Gamma$ is a group of polynomial growth or hyperbolic. We consider the Schwartz algebra $\mathcal{S}(\Gamma)$ which since $\Gamma$ has property $(RD)$ (see \cite{Haa} for more on Property $(RD)$) is a dense smooth subalgebra of $C^*_r(\Gamma)$. By the argument above, there is a delocalized Chern character 
$$\widetilde{\ch}^{del}:\mathcal{S}^{geo}_{*}(B\Gamma;C^*_r(\Gamma))\to \hat{H}^{del}_*(\Gamma).$$
The reader is directed to  Proposition \ref{rddel} for notation. 

Recalling the notations of Subsection \ref{secttwoone},  we let $(W,\Xi,f)$ be a decorated cycle for $\mathcal{S}_*^{geo}(X;\mathcal{S}(\Gamma))$. We can assume that the choice of Dirac operators contained in $\Xi$ is defined from a choice of connections $\nabla_\xi$ on $\xi$ and also that all the trivializing operators are induced from trivializing operators with coefficients in $\mathcal{S}(\Gamma)$, see \cite[Proposition 2.10]{LPGAFA}. Following \cite{lotthighereta, PS} we define the delocalized $\rho$-invariant of $(W,\Xi,f)$ by 
\begin{align*}
\hat{\rho}(W,\Xi,f):&=\hat{\rho}(D_{E\otimes f^*\mathcal{L}}\dot{\cup}D_{E'\otimes f^*\mathcal{L}}+A)\\
&=\hat{\rho}(D_\mathcal{E}^{\partial W}+A^\mathcal{E})+\hat{\rho}(-D_{\mathcal{E}'}^{\partial W}-A^{\mathcal{E}'})+\hat{\rho}(\bar{D}_\Xi^\partial+A^\Xi)\in \hat{H}^{del}_*(\Gamma).
\end{align*}
Here $\hat{\rho}$ denotes the delocalized $\rho$-invariant of a Dirac operator perturbed by a smoothing operator, see more in \cite[Section 4.6]{lotthighereta} and \cite{WahlAPSForCstarBun}. It is the part of the higher $\eta$-invariant belonging to the range of the projection mapping 
$$\pi_{del}:\hat{H}_*^{dR}(\mathcal{S}(\Gamma))\cong H_*(B\Gamma)\otimes HC_{*+2}(\C)\oplus \hat{H}^{del}_*(\Gamma)\to \hat{H}^{del}_*(\Gamma).$$ 
Even though the higher $\eta$-invariant is not closed, its delocalized part is, giving a well defined delocalized $\rho$-invariant. It follows from \cite{LPGAFA}, that $\ind_{APS}(W,\Xi,f)$ can be constructed as an element of $K_*(\mathcal{S}(\Gamma))$ and as such has a Chern character $\ch_{\mathcal{S}(\Gamma)}\left(\ind_{APS}(W,\Xi,f)\right)\in \hat{H}_*^{dR}(\mathcal{S}(\Gamma))$. The following Proposition is an immediate consequence of Wahl's higher Atiyah-Patodi-Singer index theorem \cite[Theorem 9.4]{WahlAPSForCstarBun}.

\begin{prop}
Let $\Gamma$ be a discrete group of polynomial growth or hyperbolic. Then $\hat{\rho}(W,\Xi,f)$ satisfies that 
$$\pi_{del}\left(\ch_{\mathcal{S}(\Gamma)}\left(\ind_{APS}(W,\Xi,f)\right)\right)-\hat{\rho}(W,\Xi,f)=\widetilde{\ch}^{del}(W,\xi,f).$$
\end{prop}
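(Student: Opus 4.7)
The plan is to apply Wahl's higher Atiyah--Patodi--Singer index theorem \cite[Theorem 9.4]{WahlAPSForCstarBun} to each summand of the APS-index of the decorated cycle, and then reconcile the resulting identity in $\hat{H}_*^{dR}(\mathcal{S}(\Gamma))$ with the relative definition of $\widetilde{\ch}^{del}$ under the splitting of Proposition \ref{rddel}.

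First I will use Definition \ref{defininginvaria} to decompose
\[
\ind_{APS}(W,\Xi,f)=\ind_{APS}(D_\mathcal{E}^W,A^\mathcal{E})+\ind_{APS}(D_{\mathcal{E}'}^{-W},-A^{\mathcal{E}'})+\ind_{APS}(\bar{D}_\Xi,A^\Xi).
\]
By \cite{LPGAFA} and because $\mathcal{S}(\Gamma)\subseteq C^*_r(\Gamma)$ is a dense holomorphically closed subalgebra for $\Gamma$ of polynomial growth or hyperbolic, each of these APS-indices can be realized as a class in $K_*(\mathcal{S}(\Gamma))$, and so Wahl's higher APS formula applies. For an arbitrary decorated piece (interior bundle $\mathcal{E}$ with connection $\nabla_\mathcal{E}$, trivializing operator $A^\mathcal{E}$) it gives
\[
\ch_{\mathcal{S}(\Gamma)}\!\left(\ind_{APS}(D_\mathcal{E}^W,A^\mathcal{E})\right)=\int_W \mathrm{Ch}_{\mathcal{A}(\Gamma)}(\nabla_\mathcal{E})\wedge Td(\nabla_W)-\tfrac{1}{2}\hat{\eta}(D_\mathcal{E}^{\partial W}+A^\mathcal{E}),
\]
modulo exact forms, where $\hat{\eta}$ is the higher eta form. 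An analogous formula holds for $\ind_{APS}(D_{\mathcal{E}'}^{-W},-A^{\mathcal{E}'})$; for $\ind_{APS}(\bar{D}_\Xi,A^\Xi)$ it yields the integral over the cylinder $\partial W\times [0,1]$, which by construction of $\bar{D}_\Xi$ and a standard transgression argument equals $\int_{\partial W}\mathrm{CS}(W,\xi,\nabla_\xi,f)\wedge Td(\nabla_{\partial W})$, again modulo exact forms.

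Summing the three identities, the interior terms assemble into
\[
\int_W \mathrm{Ch}^{int}(W,\xi,\nabla_\xi,f)\wedge Td(\nabla_W)+\int_{\partial W}\mathrm{CS}(W,\xi,\nabla_\xi,f)\wedge Td(\nabla_{\partial W}),
\]
while the eta contributions combine into the total higher eta form of $\bar{D}^\partial_\Xi+A^\Xi$. After applying the projection $\pi_{del}:\hat{H}^{dR}_*(\mathcal{S}(\Gamma))\to\hat{H}^{del}_*(\Gamma)$, the delocalized part of the higher eta form is by definition the higher $\rho$-invariant $\hat{\rho}(W,\Xi,f)$, while the delocalized part of the interior integral is $\pi_{del}$ applied to the first coordinate of $\mathrm{Ch}^{del}(W,\xi,\nabla_\xi,f)$. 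Rearranging gives
\[
\pi_{del}\!\left(\ch_{\mathcal{S}(\Gamma)}(\ind_{APS}(W,\Xi,f))\right)-\hat{\rho}(W,\Xi,f)=\pi_{del}\!\left[\int_W\!\mathrm{Ch}^{int}\!\wedge Td+\!\int_{\partial W}\!\mathrm{CS}\wedge Td\right].
\]

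The remaining step is to identify the right-hand side with $\widetilde{\ch}^{del}(W,\xi,f)$. This is where Proposition \ref{rddel} enters: under the hypothesis on $\Gamma$, the map $r_{dR}$ induces an isomorphism $\hat{H}^{del,<e>}_*(\mathcal{S}(\Gamma))\cong\hat{H}^{del}_*(\Gamma)$, and under this isomorphism a relative class represented by $(\omega,\eta)\in \hat{\Omega}^{ab}_*(\tilde{\mathcal{S}}(\Gamma))\oplus\hat{\Omega}^{<e>}_{*-1}(\mathcal{S}(\Gamma))$ corresponds to $\pi_{del}[\omega]$; indeed, the second coordinate lies in $\hat{H}^{<e>}_{*-1}(\mathcal{S}(\Gamma))$, which is killed by $\pi_{del}$ after transferring via the connecting map. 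Applying this to $\mathrm{Ch}^{del}(W,\xi,\nabla_\xi,f)$ from Definition \ref{delcherndefiniti} gives precisely the right-hand side above, yielding the desired identity.

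The main technical point$-$and where I expect the bulk of the bookkeeping to live$-$is the verification that Wahl's theorem, originally stated for a single $C^*_r(\Gamma)$-bundle on a manifold with boundary, applies to the cylinder piece with the decoration-induced boundary conditions on $\partial W\times\{0,1\}$, and that the resulting Chern--Simons transgression matches $\mathrm{CS}(W,\xi,\nabla_\xi,f)$ on the nose; this uses the construction of $\bar{D}_\Xi$ by linear interpolation of boundary operators together with the standard relative Chern--Weil formula for $\mathrm{Ch}_{\mathcal{A}(\Gamma)}$ along a path of connections.
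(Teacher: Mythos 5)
Your proposal is correct and essentially reproduces the argument the paper has in mind: the paper simply asserts that the proposition is "an immediate consequence of Wahl's higher Atiyah-Patodi-Singer index theorem," and you have spelled out exactly that consequence, applying Wahl's formula to the three summands of $\ind_{APS}(W,\Xi,f)$ in Definition \ref{defininginvaria}, recombining the boundary eta-terms into $\hat{\rho}(W,\Xi,f)$ via the paper's definition, and using Proposition \ref{rddel} to match the remaining interior and Chern--Simons integrals with $\widetilde{\ch}^{del}(W,\xi,f)$. The only small caveats are the normalization of the $\tfrac{1}{2}$ factor in the eta-form (which should be made consistent between Wahl's convention and the definition of $\hat\rho$) and the slightly informal treatment of the isomorphism $\hat{H}^{del,<e>}_*(\mathcal{S}(\Gamma))\cong\hat{H}^{del}_*(\Gamma)$ at the level of cycles, but neither affects the substance of the argument.
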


We return to the case of a general group $\Gamma$ and consider two unitary representations $\sigma_1,\sigma_2:\Gamma\to U(k)$. Assume that $\sigma_1$ and $\sigma_2$ extend continuously to a Fr\'echet  algebra completion $\mathcal{A}(\Gamma)$ of $\C[\Gamma]$. Two such representations give rise to a relative $\eta$-invariant $\rho_{\sigma_1,\sigma_2}:\mathcal{S}_0^{geo}(B\Gamma;\mathcal{A}(\Gamma))\to \field{R}$, see more in \cite{paperII}. This relative $\eta$-invariant factorizes over the construction of Higson-Roe \cite{HReta}, see \cite[Section 3]{paperII}. On the other hand, $\sigma_1$ and $\sigma_2$ define a continuous cyclic $0$-cocycle $\tau_{\sigma_1,\sigma_2}$ on $\mathcal{A}(\Gamma)$ such that $\tau_{\sigma_1,\sigma_2}(1)=0$. By Remark \ref{pairingdelwitcyc}, we obtain a pairing with delocalized de Rham homology $\hat{H}_{2k}^{del,<e>}(\mathcal{A})\ni x\mapsto \langle \tau_{\sigma_1,\sigma_2},x\rangle\in \C$. 

The mapping $\langle \tau_{\sigma_1,\sigma_2},\cdot \rangle:\hat{H}_{2k}^{del,<e>}(\mathcal{A})\to \C$ can be more explicitly described since $\sigma_1$ and $\sigma_2$ are representations: each $\sigma_i$ induces a mapping $(\sigma_i)_*:\hat{\Omega}_*^{ab}(\mathcal{A})\to \hat{\Omega}_*^{ab}(M_k(\C))$. Using Proposition \ref{hdrandhcyc}, we have isomorphisms 
$$\hat{H}^{dR}_{2k}(M_k(\C))=H^{dR}_{2k}(M_k(\C))\cong SHC_{2k+2}(M_k(\C))\cong SHC_{2k+2}(\C)\cong \C.$$ 
We denote the isomorphism $\hat{H}^{dR}_{2k}(M_k(\C))\cong \C$ by $\omega$. Using the isomorphism $\omega$, it is immediate from definition in Equation \eqref{locdaefe} that the two induced mappings $\omega\circ (\sigma_i)_*:\hat{H}_{2k}^{<e>}(\mathcal{A})\to \C$ are equal. Therefore, we can for a relative cycle $x=(x_1,x_2)\in \hat{\Omega}_{0}^{rel}(\mathcal{A}(\Gamma);<e>)$ define the pairing with $\tau_{\sigma_1,\sigma_2}$ by
$$\langle \tau_{\sigma_1,\sigma_2},x\rangle:=\omega\circ \left((\sigma_1)_*(x_1)-(\sigma_2)_*(x_1)\right).$$

\begin{prop}
\label{numericalinv}
Let $\mathcal{A}(\Gamma)$ be a Fr\'echet  algebra completion of $\C[\Gamma]$ and $\sigma_1,\sigma_2:\Gamma\to U(k)$  two unitary representations continuous in the topology of $\mathcal{A}(\Gamma)$. The following identity holds for any $w\in \mathcal{S}_0^{geo}(B\Gamma;\mathcal{A}(\Gamma))$:
$$\rho_{\sigma_1,\sigma_2}(w)=\langle \tau_{\sigma_1,\sigma_2},\ch^{del}(w)\rangle.$$
\end{prop}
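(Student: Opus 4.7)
Both sides are linear functionals on $\mathcal{S}_0^{geo}(B\Gamma;\mathcal{A}(\Gamma))$, so the plan is to evaluate them on a cycle with connection $(W,\xi,\nabla_\xi,f)$ equipped with a decoration $\Xi$ and reduce each of them to the same Chern--Weil expression. Recall from \cite[Section 3]{paperII} that $\tau_{\sigma_1,\sigma_2}$ is, up to continuous extension, the cyclic $0$-cocycle $a\mapsto\mathrm{tr}(\sigma_1(a))-\mathrm{tr}(\sigma_2(a))$ on $\mathcal{A}(\Gamma)$. Since $\sigma_1$ and $\sigma_2$ have the same dimension, $\tau_{\sigma_1,\sigma_2}(1)=0$, so the cocycle annihilates the localized subcomplex $\hat{\Omega}^{<e>}_*(\mathcal{A}(\Gamma))$. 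Consequently the pairing with a class in $\hat{H}^{del,<e>}_0(\mathcal{A}(\Gamma))$ is detected entirely by the first component of the mapping-cone cycle, namely
$$\int_W\mathrm{Ch}^{int}(W,\xi,\nabla_\xi,f)\wedge Td(\nabla_W)+\int_{\partial W}\mathrm{CS}(W,\xi,\nabla_\xi,f)\wedge Td(\nabla_{\partial W}).$$

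The key computational input is that for any $\mathcal{A}(\Gamma)$-bundle $\mathcal{F}$ with connection $\nabla_\mathcal{F}$, pairing $\mathrm{Ch}_{\mathcal{A}(\Gamma)}(\nabla_\mathcal{F})$ with $\mathrm{tr}\circ\sigma_i$ produces the ordinary Chern--Weil form of the finite-rank flat-twisted bundle $\mathcal{F}_{\sigma_i}:=\mathcal{F}\otimes_{\mathcal{A}(\Gamma),\sigma_i}\C^k$. Applied fibrewise to every bundle occurring in $\xi$, this turns $\langle\tau_{\sigma_1,\sigma_2},\mathrm{ch}^{del}(W,\xi,\nabla_\xi,f)\rangle$ into the difference, across $\sigma_1$ and $\sigma_2$, of the usual interior Chern character plus Chern--Simons boundary integrand attached to the pair of ordinary relative $K$-cocycles obtained from $\xi$ via the two representations.

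On the other hand, \cite[Section 3]{paperII} expresses $\rho_{\sigma_1,\sigma_2}(x)$ as a signed sum of APS indices of the Dirac operators twisted by the flat bundles attached to $\sigma_1$ and $\sigma_2$, plus the cylindrical contribution coming from $\bar D_\Xi$. The classical Atiyah--Patodi--Singer index formula, applied to each of these finite-rank operators, rewrites this signed sum as precisely the Chern--Weil integral appearing in the previous paragraph: the bulk indices give the interior term, the cylinder gives the Chern--Simons term, and the boundary $\eta$-contributions cancel in pairs thanks to the trivializing operators in the decoration. Matching the two sides yields the stated identity.

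The main obstacle is not conceptual but the careful bookkeeping of signs among the three summands $\ind_{APS}(D^W_\mathcal{E},A^\mathcal{E})$, $\ind_{APS}(D_{\mathcal{E}'}^{-W},-A^{\mathcal{E}'})$ and $\ind_{APS}(\bar{D}_\Xi,A^\Xi)$ from Definition~\ref{defininginvaria}, together with the corresponding Chern--Simons and cylinder partial integrations. These cancellations are exactly those already performed in the proof of Lemma~\ref{vanonimmu} and in the cited section of \cite{paperII}, so no genuinely new computation is required; the argument is an assembly of these two sources.
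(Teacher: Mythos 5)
The high-level strategy is right: pair $\tau_{\sigma_1,\sigma_2}$ with the first component of the mapping-cone representative (using $\tau_{\sigma_1,\sigma_2}(1)=0$ so that the cocycle kills $\hat{\Omega}^{<e>}_*$), identify $\langle\tau_{\sigma_1,\sigma_2},\mathrm{Ch}_{\mathcal{A}(\Gamma)}(\nabla_{\mathcal{F}})\rangle$ with the ordinary Chern--Weil form of $\mathcal{F}_{\sigma_1}\ominus\mathcal{F}_{\sigma_2}$, and then compare with the definition of $\rho_{\sigma_1,\sigma_2}$ via the classical APS formula. This is also all the paper does, since it invokes the computations of \cite[Section 3]{paperII}.

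However, the crucial sentence of your argument --- \emph{``\ldots and the boundary $\eta$-contributions cancel in pairs thanks to the trivializing operators in the decoration''} --- is where the reconstruction breaks down. Tracking the three terms of Definition~\ref{defininginvaria}, the $\eta$-contributions from the bulk boundary operators $D^{\partial W}_{\mathcal{E}}+A^{\mathcal{E}}$ and $D^{\partial W}_{\mathcal{E}'}+A^{\mathcal{E}'}$ \emph{do} cancel against the matching ends of the cylinder term $\bar{D}_\Xi$. But the cylinder also has boundary contributions from the finite-rank pieces $D^{\partial W}_{E\otimes f^*\mathcal{L}}+A_E$ and $D^{\partial W}_{E'\otimes f^*\mathcal{L}}+A_{E'}$, and these have no partner. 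They do not cancel: their $\sigma_1$-versus-$\sigma_2$ difference is precisely the relative $\eta$-invariant $\rho_{\sigma_1,\sigma_2}(x)$ --- this is the secondary, non-local content of the invariant, and it is in general an irrational number. If all $\eta$-terms cancelled as you claim, then your opening assertion that $\rho_{\sigma_1,\sigma_2}(x)$ is a \emph{``signed sum of APS indices''} would make $\rho_{\sigma_1,\sigma_2}$ integer-valued, which contradicts its nature as an $\eta$-type invariant. The two claims are mutually inconsistent, and the second one (total cancellation) is simply false.

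The correct way the pieces fit together is dictated by the structure of $\lambda^{\mathcal{S}}_{an}$ from Definition~\ref{defininginvaria} and by Proposition~4.8 a few lines above: there is an identity of the shape
\begin{equation*}
\langle\tau_{\sigma_1,\sigma_2},\ind_{APS}(W,\Xi,f)\rangle \;-\; \rho_{\sigma_1,\sigma_2}(x) \;=\; \langle\tau_{\sigma_1,\sigma_2},\ch^{del}(x)\rangle,
\end{equation*}
where the left-hand pairing is an ordinary integer index difference coming from the index-theoretic part of $\lambda^\mathcal{S}_{an}$, and $\rho_{\sigma_1,\sigma_2}$ captures the surviving $\eta$-terms. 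The content of \cite[Section 3]{paperII} that the paper invokes is exactly the bookkeeping that accounts for this index contribution. Your proposal, by asserting that \emph{all} $\eta$-terms cancel, erases the $\rho$-invariant and hence cannot give a valid derivation. Before the argument can be called an \emph{``assembly of the two sources''}, you need to actually look at how $\rho_{\sigma_1,\sigma_2}$ of a decorated cycle is expressed in \cite{paperII}, distinguish the integer APS-index contributions from the $\eta$-contributions, and verify that the former drop out of (or are correctly accounted for in) the stated identity rather than silently disappearing.
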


\begin{proof}
Let $(W,\xi,\nabla_\xi,f)$ be a cycle for $\mathcal{S}_0^{geo}(B\Gamma;\mathcal{A}(\Gamma))$ representing $w$. By the discussion above, 
\begin{align*}
\langle \tau_{\sigma_1,\sigma_2},\ch^{del}(w)\rangle=\omega\circ (\sigma_1-\sigma_2)_*\bigg(&\int_W\mathrm{Ch}^{int}(W,\xi,\nabla_\xi,f)\wedge Td(\nabla_W)\\
&-\int_{\partial W}\mathrm{CS}(W,\xi,\nabla_\xi,f)\wedge Td(\nabla_{\partial W})\bigg).
\end{align*}
Following the notation in \cite[Section 3]{paperII}, we define the vector bundles 
$$E_i:=\mathcal{L}_{\mathcal{A}(\Gamma)}\otimes_{\sigma_i} \C^k,\quad G_i:= \mathcal{E}_{\mathcal{A}(\Gamma)}\otimes_{\sigma_i}\C^k \quad\mbox{and}\quad G_i':=\mathcal{E}'_{\mathcal{A}(\Gamma)}\otimes_{\sigma_i}\C^k.$$
We let $\nabla_{G_i}$ denote the connection constructed from $\nabla_{\mathcal{E}}$ and $\nabla_{G_i'}$ the connection constructed from $\nabla_{\mathcal{E}'}$. Similarly, $\nabla_{E,i}$ and $\nabla_{E',i}$ are constructed from twisting $\nabla_{E}$ and $\nabla_{E'}$, respectively, by the flat connection on $f^*E_i$. We define $\bar{\nabla}_i$ as the connection on $(G_i\oplus E'_{\C}\otimes f^*E_i)\times [0,1]\to \partial W\times [0,1]$ obtained from interpolating between $\nabla_{G_i}\oplus \nabla_{E',i}$ and $\alpha^*(\nabla_{G_i'}\oplus \nabla_{E,i})$. In terms of the connections $(\nabla_{G_1},\nabla_{G_2}, \nabla_{G_1'},\nabla_{G_2'}, \bar{\nabla}_1,\bar{\nabla}_2)$, we can write $\langle \tau_{\sigma_1,\sigma_2},\ch^{del}(w)\rangle$ as
\begin{align*}
\omega\circ (\sigma_1&-\sigma_2)_*\bigg(\int_W\mathrm{Ch}^{int}(W,\xi,\nabla_\xi,f)\wedge Td(\nabla_W)\\
&\qquad\qquad\qquad-\int_{\partial W}\mathrm{CS}(W,\xi,\nabla_\xi,f)\wedge Td(\nabla_{\partial W})\bigg)\\
=\int_W \bigg(\ch(&\nabla_{G_1})-\ch(\nabla_{G_2})\bigg)\wedge Td(\nabla_W)-\int_W \bigg(\ch(\nabla_{G'_1})-\ch(\nabla_{G'_2})\bigg)\wedge Td(\nabla_W)\\
&+\int_{\partial W\times [0,1]} \left(\ch(\bar{\nabla}_1)-\ch(\bar{\nabla}_2)\right)Td(\nabla_{\partial W})=\rho_{\sigma_1,\sigma_2}(w),
\end{align*}
where we in the last equality use the definition of $\rho_{\sigma_1,\sigma_2}$ (see \cite[Definitions 3.1 and 3.2]{paperII}, and also \cite[Remark 3.3]{paperII}).
\end{proof}

\begin{remark}
It would be interesting to extend the content of Proposition \ref{numericalinv} to higher cocycles. The mere formulation for higher cocycles is analytically more subtle, see Remark \ref{pairingdelwitcyc}. In fact, already the problem of extending unitary representations is analytically subtle. If a finite dimensional unitary representation $\sigma$ extends to a dense holomorphically closed $*$-subalgebra $\mathcal{A}(\Gamma)\subseteq C^*_r(\Gamma)$, then $\sigma$ automatically extends to $C^*_r(\Gamma)$. In particular, if $\Gamma$ has property (T), this causes substantial difficulties for proving rigidity results for relative $\eta$-invariants, see \cite[Section 5]{paperII}. 
\end{remark}

\section{Mappings from Stolz' positive scalar curvature exact sequence}
\label{mapstotosu}

The positive scalar curvature exact sequence of Stolz describes the relationship between positive scalar curvature metrics on spin manifolds with fundamental group $\Gamma$ and the spin bordism class of the manifolds. For details regarding the background to this section, we refer to \cite{PSrhoInd,SchPSCsur,stolzicm}. 

Let $\Gamma$ be a finitely generated discrete group and $X$ be a finite CW-complex which is also a closed subspace of $B\Gamma$; also let $n\in \field{N}$. We write $\Omega_n^{Spin}(X)$ for the spin bordism group of $X$. That is, $\Omega_n^{Spin}(X)$ consists of equivalence classes of pairs $(M,f:M\to X)$, where $M$ is a closed $n$-dimensional spin-manifold (where the equivalence relation is spin-bordism); addition is defined using disjoint union.

The bordism group $\mathrm{Pos}_n^{Spin}(X)$ of spin manifolds over $X$ with a positive scalar curvature metric is defined as follows: a cycle for $\mathrm{Pos}_n^{Spin}(X)$ is a triple $(M,f:M\to X,g)$ consisting of a cycle $(M,f)$ for $\Omega_n^{Spin}(X)$ and a metric $g$ on $M$ of positive scalar curvature. If $(W,f:W\to X,g)$ is as in a cycle for $\mathrm{Pos}_n^{Spin}(X)$, but $W$ has boundary, then we say that $(\partial W,f|_{\partial W},g|_{\partial W})\sim_{bor} 0$. The equivalence classes of cycles under the associated bordism relation leads to the abelian group, $\mathrm{Pos}_n^{Spin}(X)$; again, addition is defined via disjoint union. 

The forgetful mapping $(M,f,g)\mapsto (M,f)$ gives a well defined morphism of abelian groups
$$\mathrm{Pos}_n^{Spin}(X)\to \Omega_n^{Spin}(X).$$
This mapping is of relevance for positive scalar curvature metrics because of results of Gromov-Lawson-Rosenberg implying that a closed spin manifold $M$ of dimension $n\geq 5$, with classifying mapping $f:M\to B\pi_1(M)$, admits a positive scalar curvature metric if and only if $[M,f]\in \mathrm{im} (\mathrm{Pos}_n^{Spin}(B\pi_1(M))\to \Omega_n^{Spin}(B\pi_1(M)))$, see more in for instance \cite[Theorem 3.5]{stolzicm}.

The final ingredient in Stolz' positive scalar curvature exact sequence is a relative group $R_n^{Spin}(X)$ constructed from the mapping $\mathrm{Pos}_n^{Spin}(X)\to \Omega_n^{Spin}(X)$. A cycle for $R_n^{Spin}(X)$ is a triple $(W,f:W\to X,g)$ where $W$ is an $n$-dimensional spin manifold with boundary and $g$ is a metric on $\partial W$ with positive scalar curvature. The bordism relation in $R_n^{Spin}(X)$ is constructed similarly to any relative group: a cycle for $R_n^{Spin}(X)$ with boundary is a triple $((Z,W), f:Z\to X, g)$ where $Z$ is an $n+1$-dimensional spin manifold with boundary, $W\subseteq \partial Z$ is a regular compact domain and $g$ is a metric on $\partial Z\setminus W^\circ$ with positive scalar curvature. The bordism relation is defined in the usual manner; in particular, the reader should note that
$$\partial ((Z,W), f:Z\to X, g):=(W, f|_W, g_{\partial W})$$ 

There are by construction well defined morphisms 
$$\Omega_n^{Spin}(X)\to R_n^{Spin}(X), \quad (M,f)\mapsto (M,f, \emptyset),\quad\mbox{and}$$
$$R_{n+1}^{Spin}(X)\to \mathrm{Pos}_n^{Spin}(X), \quad (W,f,g)\mapsto (\partial W,f|_{\partial W},g).$$
The positive scalar curvature exact sequence of Stolz is the long exact sequence 
$$\ldots\to \mathrm{Pos}_n^{Spin}(X)\to \Omega_n^{Spin}(X)\to R_n^{Spin}(X)\to\mathrm{Pos}_{n-1}^{Spin}(X)\to \ldots$$
\\

We now turn to mapping the positive scalar curvature sequence into the geometric analog of the analytic surgery exact sequence. Using the notations of \cite{PSrhoInd}, we define 
$$\beta^{geo}: \Omega_n^{Spin}(X)\to K_n^{geo}(X), \quad (M,f)\mapsto (M,M\times \C,f).$$
The mapping $\beta^{geo}$ is clearly well defined, because it maps spin bordisms to bordisms in $K_n^{geo}(X)$. In order to define the mappings from $R_{n}^{Spin}(X)$ respectively $\mathrm{Pos}_n^{Spin}(X)$, we need the following Proposition.

\begin{prop}
\label{decorpsc}
Let $(W,\xi,h)$ be a cycle for $\mathcal{S}^{geo}_*(X;\mathcal{L}_X)$. Assume that 
\begin{itemize}
\item The boundary  takes the form $\partial W=M^V$ for a spin manifold $M$ and some spin$^c$-vector bundle $V\to M$ of rank $2k$;
\item the mapping $h$ is of the form $h=f^V$ for a continuous $h:M\to X$;
\item the relative $K$-theory cocycle has the form
$$\xi=(\mathcal{E}_{C^*_r(\Gamma)},\mathcal{E}'_{C^*_r(\Gamma)},Q_V, \C^{2^{k}}, \alpha)$$ 
where $Q_V\to M^V$ denotes the Bott bundle and $\mathcal{E}_{C^*_r(\Gamma)},\mathcal{E}'_{C^*_r(\Gamma)}\to W$ are $C^*_r(\Gamma)$-bundles.
\end{itemize}
Then given a positive scalar curvature metric $g$ on $M$ with associated spin Dirac operator $D_g$, there is a decoration of $\xi$ that takes the form
$$\Xi_g=(\xi,(D_\mathcal{E},D_{\mathcal{E}'},D_g^V,D^0),(A^\mathcal{E},A^{\mathcal{E}'},0\dot{\cup} A^0)).$$
After constructing a Dirac operator $D^B$ on the ball bundle $M^B:=B(V\oplus 1_\field{R})\to M$ with boundary operator $D^0$, the class
\begin{align*}
\nu(W,\xi,f,g):=&j_X\left(\ind_{APS}(W,\Xi_g,f)\right)\\ 
&-j_X\left(\ind_{APS}(M^B,((\mathcal{L}_{M^B}^{2^k},\C^{2^k},\mathrm{id}),(D^{B},D^0),A^0),f)\right)\in K_*(D^*(\tilde{X})^\Gamma),
\end{align*}
does not depend on the choice of $\Xi_g$ but only on the class $[W,\xi,f]$ and the metric $g$ on $M$.
\end{prop}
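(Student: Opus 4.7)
The plan is to split the argument into three steps: existence of a decoration of the prescribed form, construction of $D^B$, and a computation that collapses $\nu$ into a sum which is manifestly independent of all auxiliary choices. For the first step, take $D_\mathcal{E}$ and $D_{\mathcal{E}'}$ from arbitrary $C^*_r(\Gamma)$-Clifford connections of product type, let $D_g^V$ be the vector bundle modification of the spin Dirac operator on $(M,g)$, and let $D^0$ be any Dirac operator on $S_{M^V} \otimes \C^{2^k}$. The trivializing operators $A^\mathcal{E}$ and $A^{\mathcal{E}'}$ exist by Hilsum bordism invariance of the higher index applied to the null-bordism $W$, and $A^0$ exists for the same reason because $M^V = \partial M^B$. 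The zero trivialization on the $D_g^V$ side is justified by positive scalar curvature: Lichnerowicz makes $\tilde{D}_g$ invertible on the $\Gamma$-cover, and by Lemma~\ref{decomposingmoddir} combined with the positivity estimate \eqref{postivitiy} this invertibility descends to $\tilde{D}_g^V$. The operator $D^B$ on $S_{M^B} \otimes \mathcal{L}_{M^B}^{2^k}$ is then constructed in the standard manner, of product type near $\partial M^B$ with boundary operator $D^0 \otimes \mathrm{id}_{\mathcal{L}_{M^V}}$.

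The heart of the proof is the following rewriting. Using Definition~\ref{defininginvaria},
$$j_X \ind_{APS}(W,\Xi_g,f^V) = \lambda^\mathcal{S}_{an}(W,\xi,f^V) + \rho(W,\Xi_g,f^V),$$
and $\rho(W,\Xi_g,f^V) = f^V_*\rho(D_g^V,0) + f^V_*\rho(D^0,A^0)$ because the boundary trivializing operator is $0\,\dot{\cup}\,A^0$. The ball-bundle projection $M^B \to M$ composed with $f$ extends $f^V$ continuously to $M^B$, so the factored delocalized APS-theorem (Remark~\ref{factoringremark}) applied to the easy cycle on $M^B$ gives
$$j_X\ind_{APS}(M^B,((\mathcal{L}_{M^B}^{2^k},\C^{2^k},\mathrm{id}),(D^B,D^0),A^0),f) = f^V_*\rho(D^0,A^0).$$
Subtracting, the $A^0$-dependent contributions cancel and I obtain
$$\nu(W,\xi,f,g) = \lambda^\mathcal{S}_{an}(W,\xi,f^V) + f^V_*\rho(D_g^V,0).$$
The first term depends only on the class $[W,\xi,f^V]$ by the well-definedness of $\lambda^\mathcal{S}_{an}$ established in Section~\ref{sectioniso}, while the second depends only on $g$, so every choice made within $\Xi_g$ and every choice of $D^B$ has disappeared.

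The main obstacle is verifying that $\rho(D_g^V,0)$ is a bona fide class in $K_*(D^*(\tilde{M}^V)^\Gamma)$. This requires $\chi_{[0,\infty)}(\tilde{D}_g^V)$ to be a projection inside the Roe algebra $D^*$, which in turn requires $\tilde{D}_g^V$ to be $L^2$-invertible with a uniform spectral gap at $0$. I would deduce this from the Lichnerowicz lower bound on $\tilde{D}_g^2$ combined with the orthogonal decomposition $\tilde{D}_g^V = \tilde{D}_g \oplus D_\mathpzc{E}$ provided by Lemma~\ref{decomposingmoddir}: the first summand is invertible by positive scalar curvature lifted to the cover, and the second by \eqref{postivitiy}, after which standard functional calculus for operators with finite propagation places $\chi_{[0,\infty)}(\tilde{D}_g^V)$ in $D^*(\tilde{M}^V)^\Gamma$.
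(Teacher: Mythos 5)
Your argument follows the same route as the paper: express $j_X\ind_{APS}(W,\Xi_g,f)$ through the definition of $\lambda^{\mathcal{S}}_{an}$, kill the $M^B$-term via the delocalized APS theorem, and reduce $\nu$ to $\lambda^{\mathcal{S}}_{an}(W,\xi,f)$ plus a boundary $\rho$-term. Two remarks. First, a sign: by the conventions in Definition 3.13 (the trivializing operator $A$ lives on $\partial W\,\dot{\cup}\,(-\partial W)$ with the orientation of the $E$-summand reversed), the decomposition is $\rho(W,\Xi_g,f^V)=f^V_*\rho(D^0,A^0)-f^V_*\rho(D_g^V,0)$, not a sum. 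Carrying this through gives $\nu=\lambda^{\mathcal{S}}_{an}(W,\xi,f)-f_*\rho(D_g^V,0)$, matching the paper; with your sign the two $\rho(D^0,A^0)$ contributions would not cancel.

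Second, and more seriously, the final sentence ``the second depends only on $g$'' is exactly the point that needs proof and is left unjustified. A priori $f^V_*\rho(D_g^V,0)$ depends on the spin$^c$-bundle $V\to M$, and $V$ is not fixed by the class $[W,\xi,f]$ --- different representatives of the class can have $\partial W=M^{V_1}$ and $\partial W'=M^{V_2}$ with $V_1\ne V_2$. The missing ingredient is Lemma~\ref{rhoandvb}, which gives $\rho(D_g^V,0)=\rho(D_g,0)$ in $K_*(D^*(\tilde{M})^\Gamma)$ and thereby eliminates all dependence on $V$; once invoked, $f_*\rho(D_g,0)$ manifestly depends only on $g$ (and the fixed data $M$, $f$). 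This is the one-line step the paper's proof supplies that yours omits, and without it the argument only establishes independence of the decoration $\Xi_g$ for a fixed cycle, not the stronger ``depends only on the class $[W,\xi,f]$'' statement. Your discussion of why $\rho(D_g^V,0)$ is a bona fide class (Lichnerowicz plus Lemma~\ref{decomposingmoddir} and the estimate~\eqref{postivitiy}) is correct and aligned with the paper's appeal to ``the techniques of Subsection~\ref{vbintermezz}''.
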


\begin{proof}
Existence of decorations of the specified form follows directly from the invertibility of spin Dirac operators defined from positive scalar curvature metrics and the techniques of Subsection \ref{vbintermezz}. By the delocalized APS-theorem (see Theorem \ref{delapsthemps}) 
\begin{equation}
\label{delapsforball}
j_X\ind_{APS}(M^B,((\mathcal{L}_{M^B}^{2^k},\C^{2^k},\mathrm{id}),(D^{B},D^0),A^0),f)=\rho(D^0,A^0).
\end{equation}
Equation \eqref{delapsforball} and the construction of $\lambda_{an}^\mathcal{S}$ imply that
\begin{align*}
\lambda_{an}^\mathcal{S}(W,\xi,f)-&\rho(D_g^V,0)=j_X\left(\ind_{APS}(W,\Xi_g,f)\right)\\ 
&\quad-j_X\left(\ind_{APS}(M^B,((\mathcal{L}_{M^B}^{2^k},\C^{2^k},\mathrm{id}),(D^{B},D^0),A^0),f)\right).
\end{align*}
By Lemma \ref{rhoandvb}, $\rho(D_g^V,0)=\rho(D_g,0)$ (which clearly only depend on $g$). The result now follows since $\lambda_{an}^\mathcal{S}(W,\xi,f)$ only depends on the class $[W,\xi,f]$ (see Theorem \ref{themaplambda}).
\end{proof}

Our next goal is the definition of a map
$$\ind_{\Gamma}^{geo}:R_n^{Spin}(X)\to K_n^{geo}(pt;C^*_r(\Gamma)).$$
To do so, further notation is required. Let $(W,f,g)$ be a cycle for $R_n^{Spin}(B\Gamma)$. We define the relative $K$-theory cocycle for assembly $\xi_f:=(f^*\mathcal{L}_{X},\C,\mathrm{id}_{f^*\mathcal{L}_{X}})$ on $(W,\partial W)$. Since $g$ is a positive scalar curvature metric on $\partial W$, we can find a decoration $\Xi_{f,g}$ of $\xi_f$ as in Proposition  \ref{decorpsc}. We define 
$$\ind_{\Gamma}^{geo}:(W,f,g)\mapsto (pt,\ind_{APS}(W,\Xi_{f,g},f)).$$

\begin{prop}
The map $\ind_{\Gamma}^{geo}:R_n^{Spin}(X)\to K_n^{geo}(pt;C^*_r(\Gamma))$ is well defined and fits into a commuting diagram
\[
\begin{CD}
\Omega_n^{Spin}(X)@>>>R_n^{Spin}(X)\\
@V\beta^{geo}VV @V\ind_{\Gamma}^{geo}VV \\
K_*^{geo}(X)@>\mu>>K_*^{geo}(pt;C^*_r(\Gamma)),\\
\end{CD}
\]
\end{prop}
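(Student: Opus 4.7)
The plan has three components: verifying (i) that $\ind_\Gamma^{geo}$ is independent of the choice of decoration $\Xi_{f,g}$ on a given cycle, (ii) that $\ind_\Gamma^{geo}$ respects the bordism relation in $R_n^{Spin}(X)$, and (iii) that the square commutes.

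For (i), flatness of the Mishchenko bundle $\mathcal{L}_X$ combined with the Lichnerowicz formula applied to the psc metric on $\partial W$ shows that $D^{\partial W}_{f^*\mathcal{L}_X}$ is invertible. Consequently the trivializing operator in any decoration $\Xi_{f,g}$ may be taken to be zero. Two such zero-decorations differ only in interior data (a Riemannian metric on $W$ extending $g$ near $\partial W$ via the product structure, and a $C^*_r(\Gamma)$-Clifford connection on $f^*\mathcal{L}_X$). Joining them by a linear path with fixed boundary data and applying Theorem \ref{datachangingaps}, the difference of APS-indices is a spectral flow along a path of Dirac operators all sharing the invertible boundary operator $D^{\partial W}_{f^*\mathcal{L}_X}$; hence the spectral flow is trivial and the APS-indices agree in $K_n(C^*_r(\Gamma))$.

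For (ii), suppose $(W,f,g) \sim_{bor} 0$ via a relative cycle $((Z,\tilde W),F,G)$ whose boundary is $(W,f,g)$, with $G$ psc on $W' := \overline{\partial Z \setminus \tilde W^\circ}$. After smoothing the corners along $\partial \tilde W = \partial W' \subseteq \partial Z$ in a manner that preserves the product-type structure of $G$ near $\partial W'$, I regard $\partial Z = \tilde W \cup_{\partial \tilde W} W'$ as a closed spin $n$-manifold. The induced metric on $\partial W'$ is psc by the product structure; Lichnerowicz applied on the cylindrical extension $W'_\infty$ then forces the twisted Dirac operator to be uniformly bounded below, hence invertible, so $\ind_{APS}(W', F^*\mathcal{L}_X|_{W'}, 0) = 0$, and the trivializing operator for $D^{\partial \tilde W}_{F^*\mathcal{L}_X}$ may be taken as zero. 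Applying the gluing theorem (Theorem \ref{gluingthm}) to $\partial Z$ cut along $\partial \tilde W$ yields
$$\ind_{AS}(\partial Z, F^*\mathcal{L}_X|_{\partial Z}) = \ind_{APS}(\tilde W, F^*\mathcal{L}_X|_{\tilde W}, 0) + \ind_{APS}(W', F^*\mathcal{L}_X|_{W'}, 0).$$
The left-hand side vanishes by Hilsum's bordism invariance of the $C^*_r(\Gamma)$-index \cite[Theorem 6.2]{hilsumbordism}, since $\partial Z$ bounds $Z$, and the $W'$-term vanishes as noted; therefore $\ind_\Gamma^{geo}(W,f,g) = 0$.

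For (iii), commutativity holds already on the level of cycles: for $(M,f) \in \Omega_n^{Spin}(X)$ its image via the upper-right path is $(pt, \ind_{C^*_r(\Gamma)}(D^M_{f^*\mathcal{L}_X}))$ (since $\partial M = \emptyset$, the APS-index reduces to the ordinary Mishchenko-Fomenko index), while its image via the lower-left path is $\mu(M,\C,f)$, which represents the same class by the definition of the assembly map at the cycle level. The principal obstacle will lie in (ii), specifically the smoothing of the corners $\partial \tilde W = \partial W'$ in $\partial Z$: it must preserve the product-type structure of $G$ near $\partial W'$ so that Lichnerowicz remains applicable on the cylindrical extension $W'_\infty$. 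For odd $n$ the even-dimensional gluing theorem is replaced by its odd-dimensional analog, referenced through \cite{LP}.
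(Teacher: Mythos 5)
Your approach is genuinely different from the paper's, and mostly works, but (iii) contains an imprecision worth flagging.

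The paper proves this proposition by reduction: it composes $\ind_\Gamma^{geo}$ with the isomorphism $\lambda_{an}\colon K_n^{geo}(pt;C^*_r(\Gamma))\to K_n(C^*_r(\Gamma))$ and observes that, on cycles, $\lambda_{an}\circ\ind_\Gamma^{geo}$ coincides with the Piazza--Schick index map $\ind_\Gamma$, whose well-definedness and compatibility with $\mu$ is Theorem~1.31 of \cite{PSrhoInd}. Since $\lambda_{an}$ is an isomorphism, everything is inherited. Your proof instead argues directly: Lichnerowicz to kill the trivializing operator and reduce decoration dependence to a trivial spectral flow via Theorem~\ref{datachangingaps}, then Theorem~\ref{gluingthm} together with Hilsum bordism invariance to handle the bordism relation in $R_n^{Spin}(X)$. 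This is self-contained and avoids invoking \cite{PSrhoInd}, at the cost of length and of having to deal explicitly with the corner-smoothing you rightly flag; that issue simply never arises in the paper's reduction.

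The gap is in (iii). The lower-left composite sends $(M,f)$ to $\mu(M,\C,f)$, which as a geometric cycle is $(M, f^*\mathcal{L}_X)$ (the assembly map twists by the Mishchenko bundle; it does not compute an index). The upper-right composite sends $(M,f)$ to $(pt,\ind_{C^*_r(\Gamma)}(D^M_{f^*\mathcal{L}_X}))$. These are different cycles in $K_n^{geo}(pt;C^*_r(\Gamma))$, so the square does not ``commute already on the level of cycles.'' It commutes at the level of classes, but seeing that $(M, f^*\mathcal{L}_X)$ and $(pt,\ind_{C^*_r(\Gamma)}(D^M_{f^*\mathcal{L}_X}))$ agree in $K_n^{geo}(pt;C^*_r(\Gamma))$ is exactly the geometric index theorem for a point, i.e., it is equivalent to $\lambda_{an}$ being an isomorphism (or to a bordism/vector-bundle-modification argument supplying a geometric bordism between the two cycles). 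So your proof of (iii), as written, silently appeals to the very fact the paper invokes explicitly; you should either cite $\lambda_{an}$ here or supply the explicit bordism.
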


\begin{proof}
Recall the notation $\lambda_{an}: K_*^{geo}(X,B)\to KK_*(C(X),B)$ for the analytic assembly mappings, these are isomorphisms for a finite CW-complex $X$. It follows from \cite[Theorem 1.31]{PSrhoInd} that the diagrams 
\[
\begin{CD}
\Omega_n^{Spin}(X)@>>>R_n^{Spin}(X)\\
@V\lambda_{an}\circ \beta^{geo}VV @V\lambda_{an}\circ \ind_{\Gamma}^{geo}VV \\
K_*(X)@>\mu>>K_*(C^*_r(\Gamma)),\\
\end{CD}
\]
commutes. The proof is complete (since $\lambda_{an}$ is an isomorphism).
\end{proof}

\begin{define}
For $(W,\xi,f,g)$ as in Proposition \ref{decorpsc}, we set 
\begin{align*}
\nu^{geo}(W,\xi,&f,g):=r_*(pt,\ind_{APS}(W,\Xi_g,f))\\
&-r_*(pt,\ind_{APS}(M^B,((\mathcal{L}_{M^B}^{2^k},\C^{2^k},\mathrm{id}),(D^{B},D^0),A^0),f))\in \mathcal{S}^{geo}_*(X,\mathcal{L}_X)
\end{align*}
where $r_*$ denotes the map from the $K$-theory of $C^*_r(\Gamma)$ to $\mathcal{S}^{geo}_*(X, \mathcal{L}_X)$ (see \cite[Theorem 3.8]{paperI}).
\end{define}

\begin{remark}
Despite the fact that $\nu^{geo}(W,\xi,f,g)$ is defined from a class in $K_*(C^*_r(\Gamma))$ that depends on a choice of $\Xi_g$, it follows from Proposition \ref{decorpsc} and commutativity of the left half of the diagram in Theorem \ref{themaplambda} that $\nu^{geo}(W,\xi,f,g)$ is well defined. In fact, $\lambda^\mathcal{S}_{an}\nu^{geo}(W,\xi,f,g)=\nu(W,\xi,f,g)$ (in the notation of Proposition \ref{decorpsc}). 
\end{remark}

\begin{define}
Let $\rho_\Gamma^{P,geo}:\mathrm{Pos}_n^{Spin}(X)\to \mathcal{S}^{geo}_{n+1}(X, \mathcal{L}_{X})$ denote the map defined via 
$$\rho_\Gamma^{P,geo}: (M,f,g)\mapsto\nu^{geo}(W,\xi,h,g)- [W,\xi,h],$$
where $(W,\xi,h)$ is any cycle for $\mathcal{S}^{geo}_{n+1}(X, \mathcal{L}_{X})$ such that on the level of cycles 
$$\delta(W,\xi,h)=(M^V,Q_V,f^V)-(M^V,2^k\C,f^V),$$
for some spin$^c$-vector bundle $V\to M$ of some even rank $2k$.
\end{define}
It is not clear that $\rho_\Gamma^{P,geo}$ is well-defined. First, we show that there exists a cycle in $\mathcal{S}^{geo}_{n+1}(X, \mathcal{L}_{X})$ with the property required in the definition of this map.
\begin{lemma}
Let $(M,f,g)$ be a cycle in $\mathrm{Pos}_n^{Spin}(X)$. Then, there exists $(W, \xi, h) \in  \mathcal{S}^{geo}_{n+1}(X, \mathcal{L}_{X})$ such that (at the level of cycles in geometric $K$-homology)
$$\delta(W,\xi,h)=(M^V,Q_V,f^V)-(M^V,2^k\C,f^V),$$
for some spin$^c$-vector bundle $V\to M$ of some even rank $2k$.
\end{lemma}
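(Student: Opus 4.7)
The plan is to build $(W,\xi,h)$ as a disjoint union $W=W_1\sqcup(-W_2)$ of two null-cobordisms, one targeting each term on the right-hand side. The positive scalar curvature hypothesis enters through the Lichnerowicz identity, which supplies the vanishings needed to produce these null-cobordisms. For concreteness one may take $V\to M$ to be a trivial spin$^c$-bundle of even rank $2k$, chosen large enough so that all the bundle extensions below are unobstructed.

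First I would establish the two key vanishings. Since $g$ has positive scalar curvature and the Mishchenko bundle $f^*\mathcal{L}_X$ carries a flat connection, the Lichnerowicz formula shows that the spin Dirac operator $D_g^M$ twisted by $f^*\mathcal{L}_X$ is invertible. Hence the Kasparov cycle it represents is trivial, and the class $[M,f^*\mathcal{L}_X,f]$ is zero in the K-homology group of $X$ with $C^*_r(\Gamma)$-coefficients. By the $K$-homological invariance of vector bundle modification (cf.\ Proposition \ref{vbundmodofaps}), this gives
\[
[M^V,Q_V\otimes f^{V*}\mathcal{L}_X,f^V]=[M,f^*\mathcal{L}_X,f]=0.
\]
Similarly, $M^V$ inherits a positive scalar curvature metric (use a submersion metric with sufficiently small round $S^{2k}$-fibers over the psc base $M$), so the twisted spin Dirac on $M^V$ is also invertible, and hence $[M^V,2^k f^{V*}\mathcal{L}_X,f^V]=0$ as well.

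Each vanishing produces a null-cobordism. For the ``$Q_V$''-cycle, take any null-cobordism $(U,\mathcal{F}_U,H)$ of $(M,f^*\mathcal{L}_X,f)$ furnished by the argument above and set $(W_1,\mathcal{F}_1,h_1):=(U^V, Q_V\otimes \pi_U^*\mathcal{F}_U, H\circ \pi_U)$; since $V$ is trivial (or more generally extends to $U$) this is well defined, and one checks $\partial W_1=M^V$ with $\mathcal{F}_1|_{M^V}=Q_V\otimes f^{V*}\mathcal{L}_X$ and $h_1|_{M^V}=f^V$. For the ``$2^k\C$''-cycle, take the explicit data $(W_2,\mathcal{F}_2,h_2):=(M^B, 2^k\pi_B^*(f^*\mathcal{L}_X), f\circ\pi_B)$, where $M^B=B(V\oplus 1_{\field{R}})$ is the disk bundle; then $\partial W_2=M^V$ and $\mathcal{F}_2|_{M^V}=2^k f^{V*}\mathcal{L}_X$.

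Finally I would assemble these into a cycle of $\mathcal{S}^{geo}_{n+1}(X,\mathcal{L}_X)$: set $W:=W_1\sqcup(-W_2)$ with $\partial W=M^V\sqcup(-M^V)$ and $h:=h_1\sqcup h_2$. Define $\xi=(\mathcal{E},\mathcal{E}',E,E',\alpha)$ by $\mathcal{E}:=\mathcal{F}_1\sqcup\mathcal{F}_2$ (stabilized by a trivial summand if fullness of fibers requires it), $\mathcal{E}'$ equal to the corresponding trivial $C^*_r(\Gamma)$-bundle, $E:=Q_V\sqcup 2^k\C$ on $\partial W$, $E':=0$, and $\alpha$ the evident identity after reordering summands (both sides of the required isomorphism restrict on each boundary component to the same bundle). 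Using $(-N,F,h)=-(N,F,h)$ one obtains
\[
\delta(W,\xi,h)=(M^V,Q_V,f^V)+(-M^V,2^k\C,f^V)=(M^V,Q_V,f^V)-(M^V,2^k\C,f^V)
\]
at the level of cycles. The main obstacle is the construction of $W_1$ together with the extended map $h_1:W_1\to X$; this requires the cycle vanishing in $K^{geo}_*(X;C^*_r(\Gamma))$ rather than merely the numerical vanishing of the index, which is precisely what the Lichnerowicz-based invertibility of the Dirac operator with flat coefficients supplies.
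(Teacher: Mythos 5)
There is a genuine gap, and it sits at the very point you flag as the ``main obstacle''. You claim that the Lichnerowicz formula makes $[M,f^*\mathcal{L}_X,f]$ vanish ``in the K-homology group of $X$ with $C^*_r(\Gamma)$-coefficients''; this is false. Invertibility of the twisted spin Dirac operator kills only the \emph{index}, i.e.\ the image of this class under the forgetful map to $K^{geo}_*(pt;C^*_r(\Gamma))\cong K_*(C^*_r(\Gamma))$, not the class over $X$ itself. A concrete counterexample: take $\Gamma$ trivial, $M=X=S^2$, $f=\mathrm{id}$. Then $C^*_r(\Gamma)=\C$, $f^*\mathcal{L}_X$ is trivial, $S^2$ carries positive scalar curvature, yet $[S^2,\C,\mathrm{id}]\neq 0$ in $K_0(S^2)$. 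So the null-cobordism $(U,\mathcal{F}_U,H)$ over $X$ that you invoke to build $W_1$ need not exist.

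The fix, which is essentially what the paper does, is to observe that the stronger vanishing is not needed. A cycle $(W,\xi,h)$ for $\mathcal{S}^{geo}_*(X;\mathcal{L}_X)$ requires $h$ only on $\partial W$; the interior of $W$ carries $C^*_r(\Gamma)$-bundles but no reference map to $X$. The paper therefore works with the vanishing of $\mu_{geo}[M,\C,f]$ in $K^{geo}_*(pt;C^*_r(\Gamma))$ (which \emph{does} follow from Lichnerowicz) and applies ``normal bordism'' to this weaker statement: this produces a spin$^c$-vector bundle $V\to M$ (which need not be trivial, so your choice of a trivial $V$ is unjustified), a spin$^c$-manifold $W$ with $\partial W=M^V$, and a $K$-theory class $x\in K^0(W;C^*_r(\Gamma))$ — a difference of bundles, not a single bundle as in your $\mathcal{F}_U$ — restricting on the boundary to $[\C]^V\otimes (f^V)^*\mathcal{L}_X$. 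Since $[\C]^V=[Q_V]-[2^k\C]$, writing $x=[\mathcal{E}]-[\mathcal{E}']$ yields the cocycle $\xi=(\mathcal{E},\mathcal{E}',Q_V,\C^{2^k},\alpha)$ in one step, with no need for a disjoint-union decomposition. Your second piece $W_2=M^B$ (the disk bundle) is in itself a perfectly good null-cobordism of $(M^V,2^k\C,f^V)$ and requires no curvature input, but the first piece does not exist by the argument you give. As a small additional point, Proposition \ref{vbundmodofaps} concerns invariance of the APS-index under vector bundle modification, not the $K$-homological invariance you cite it for.
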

\begin{proof}
Given a cycle $(M,f,g)$ for $\mathrm{Pos}_n^{Spin}(X)$, Lichnerowicz formula implies that $\lambda_{an}\circ \mu_{geo}[M,\C,f]=\ind_{C^*_r(\Gamma)}(D_g)=0$, where $\mu_{geo}:K_*^{geo}(X)\to K_*^{geo}(pt,C^*_r(\Gamma))$ denotes the free assembly and $\lambda_{an}:K_*(pt;C^*_r(\Gamma))\to K_*(C^*_r(\Gamma))$ the isomorphism between realizations of the $K$-theory of $C^*_r(\Gamma)$; hence $\mu_{geo}[M,\C,f]=0$ as a class in $K_*^{geo}(pt,C^*_r(\Gamma))$. Using the notion of ``normal bordism" (see \cite{paperI, Rav}), there exists a spin$^c$-vector bundle $V\to M$ of even rank, say $2k$, a spin$^c$-manifold $W$ with boundary $M^V$ and a $K$-theory class $x\in K^0(W;C^*_r(\Gamma))$ such that 
$$(\partial W,x|_{\partial W})=(M^V,[f^*\mathcal{L}_{X}]^V)=(M^V,[\C]^V\otimes (f^V)^*\mathcal{L}_{X}),$$
as cycles with $K$-theory coefficients. For simplicity, we set $h:=f^V$. Since $[\C]^V=[Q_V]-[2^k\C]$, by definition, there exists a relative $K$-theory cocycle for assembly on $(W,\partial W,h)$ of the form $(\mathcal{E}_{C^*_r(\Gamma)},\mathcal{E}'_{C^*_r(\Gamma)},Q_V,\C^{2^k},\alpha)$ with $x=[\mathcal{E}_{C^*_r(\Gamma)}]-[\mathcal{E}'_{C^*_r(\Gamma)}]$. This completes the proof as $(W, (\mathcal{E}_{C^*_r(\Gamma)},\mathcal{E}'_{C^*_r(\Gamma)},Q_V,\C^{2^k},\alpha), h)$ has the required property.
\end{proof}
\begin{theorem}
The map $\rho_{\Gamma}^{P,geo}$ is well defined. Furthermore, $\rho_\Gamma^{P,geo}$ fits into a commuting diagram with exact rows:
\tiny
\[\begin{CD}
\ldots@>>> \Omega_n^{Spin}(X)@>>> R_n^{Spin}(X)@>>>\mathrm{Pos}_{n-1}^{Spin}(X)@>>>\Omega_{n-1}^{Spin}(X)@>>> \ldots\\
@. @V\beta^{geo} VV @V\ind_{\Gamma}^{geo} VV @V\rho_\Gamma^{P,geo}VV@V\beta^{geo} VV@. \\
\ldots  @> \delta >> K^{geo}_{n}(X) @>\mu>>K_{n}^{geo}(pt;C^*_r(\Gamma)) @>r >> \mathcal{S}^{geo}_{n}(X;\mathcal{L}_{X})  @> \delta >>K^{geo}_{n-1}(X) @>>> \ldots
\end{CD},\]
\normalsize
where the top row is Stolz' positive scalar curvature sequence and the bottom row is the geometric model for the analytic surgery exact sequence.
\end{theorem}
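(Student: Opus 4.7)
The plan is to pull everything back to the analytic surgery group via the isomorphism $\lambda^\mathcal{S}_{an}$ of Theorem~\ref{themaplambda}, where the analogues of these statements follow from Piazza-Schick \cite{PSrhoInd}. First I would establish the identity
\[
\lambda^\mathcal{S}_{an}\bigl([W,\xi,h] - \nu^{geo}(W,\xi,h,g)\bigr) = \rho(D_g, 0)
\]
for any witnessing cycle $(W,\xi,h)$ of $\rho^{P,geo}_\Gamma(M,f,g)$, by combining the remark following Proposition~\ref{decorpsc} (which asserts $\lambda^\mathcal{S}_{an} \nu^{geo}(W,\xi,h,g) = \nu(W,\xi,h,g)$) with the identity from its proof, rewritten as $\nu(W,\xi,h,g) = \lambda^\mathcal{S}_{an}[W,\xi,h] - \rho(D_g,0)$ (using Lemma~\ref{rhoandvb} to replace $\rho(D_g^V,0)$ by $\rho(D_g,0)$). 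Since the right-hand side depends only on $(M,f,g)$ and $\lambda^\mathcal{S}_{an}$ is an isomorphism, this forces $\rho^{P,geo}_\Gamma(M,f,g)$ to be independent of the choice of $(W,\xi,h)$.

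Bordism invariance in $\mathrm{Pos}^{Spin}_{n-1}(X)$ would then follow from the same trick: a positive-scalar-curvature bordism between $(M_0,f_0,g_0)$ and $(M_1,f_1,g_1)$ reduces, under $\lambda^\mathcal{S}_{an}$, to the equality $(f_0)_*\rho(D_{g_0},0) = (f_1)_*\rho(D_{g_1},0)$ in $K_*(D^*(\tilde X)^\Gamma)$, which is the bordism invariance of the analytic $\rho$-invariant established in \cite{PSrhoInd}. Alternatively, one could argue directly by gluing the bordism to a witness for one class to produce a witness for the other, with the resulting difference of $\nu^{geo}$-corrections controlled by Theorem~\ref{gluingthm}.

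For the diagram, the leftmost square is the content of the proposition preceding the theorem. The rightmost square is immediate: since $\nu^{geo}(W,\xi,h,g) \in \mathrm{im}(r) \subseteq \ker(\delta)$, one has
\[
\delta\, \rho^{P,geo}_\Gamma(M,f,g) = \delta[W,\xi,h] = [M^V, Q_V, f^V] - [M^V, 2^k\C, f^V] = [M, \C, f] = \beta^{geo}(M,f),
\]
the penultimate equality being vector bundle modification invariance in $K^{geo}(X)$.

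The main obstacle will be the middle square, $r \circ \ind^{geo}_\Gamma = \rho^{P,geo}_\Gamma \circ \partial$ on $R_n^{Spin}(X)$. For $(W,f,g) \in R_n^{Spin}(X)$ with boundary $(M,f|_M,g)$, the cycle $(W, \xi_f, f)$ used to define $\ind^{geo}_\Gamma$ has $\delta$-class $[M,\C,f|_M]$, which coincides with $[M^V,Q_V,f^V] - [M^V,2^k\C,f^V]$ in $K^{geo}_{n-1}(X)$ after vector bundle modification; hence (by the well-definedness of the first step) it is a valid witness for $\rho^{P,geo}_\Gamma(M,f|_M,g)$. Applying $\lambda^\mathcal{S}_{an}$, the required equality becomes
\[
j_X\, \ind_{APS}(W, \Xi_{f,g}, f) = (f|_M)_*\, \rho(D_g, 0) \quad \text{in } K_n(D^*(\tilde X)^\Gamma),
\]
which is precisely the delocalized APS-theorem (Theorem~\ref{delapsthemps}, cf.\ Remark~\ref{factoringremark}) applied with trivializing operator $A = 0$; the choice $A = 0$ is admissible because the positive scalar curvature metric $g$ on $\partial W$ makes $D_g$ invertible by Lichnerowicz. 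Combined with the preceding steps, this completes the verification.
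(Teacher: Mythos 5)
Your proposal is correct and takes essentially the same route as the paper: transfer $\rho_\Gamma^{P,geo}$ to $K_*(D^*_\Gamma(\tilde X))$ via the isomorphism $\lambda_{an}^\mathcal{S}$ and observe that the result is the Piazza--Schick map $\rho_\Gamma$ (i.e.\ $(f)_*\rho(D_g,0)$), which is already known to be well defined and to make the corresponding diagram with the analytic surgery sequence commute. The paper stops there and cites \cite[Theorem 1.31]{PSrhoInd} for the diagram, whereas you re-derive the commutativity of the middle square directly from the delocalized APS theorem (with $A=0$ admissible by Lichnerowicz) and the rightmost square from $\delta\circ r=0$ together with vector bundle modification invariance in $K^{geo}_*(X)$; this is more self-contained, and it makes visible that the content of Piazza--Schick's Theorem 1.31 is exactly the delocalized APS identity in this setting.

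One small point to tighten in the middle-square step: the cycle $(W,\xi_f,f)$ used to define $\ind_\Gamma^{geo}$ has $\delta(W,\xi_f,f)=(M,\C,f|_M)$ at the \emph{cycle} level, whereas the definition of $\rho_\Gamma^{P,geo}$ demands a witness whose boundary is literally of the form $(M^V,Q_V,f^V)-(M^V,2^k\C,f^V)$. You acknowledge this ``after vector bundle modification,'' but one should actually replace $(W,\xi_f,f)$ by its vector bundle modification $(W^V,\xi_f^V,f^V)$ (with, say, $V$ trivial of some even rank) so that the cycle-level boundary condition holds; since $[W^V,\xi_f^V,f^V]=[W,\xi_f,f]$ and $\nu^{geo}$ is invariant under this replacement (Propositions \ref{vbmodofaps} and \ref{vbmodofrho}), the rest of your argument then goes through unchanged. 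This is a bookkeeping fix, not a genuine gap.
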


\begin{proof} 
To show that $\rho_{\Gamma}^{P,geo}$ is well defined, we consider $\lambda_{an}^\mathcal{S}\circ \rho_\Gamma^{P,geo}(M,f,g)\in  K_{n+1}(D^*(\tilde{X})^\Gamma)$. By the construction of $\lambda_{an}^\mathcal{S}$, \eqref{delapsforball} and Lemma \ref{rhoandvb}, 
\begin{align*}
\lambda_{an}^\mathcal{S}&\circ \rho_\Gamma^{P,geo}(M,f,g)\\
=&\bigg(\underbrace{j_{X}\ind_{APS}(W,\Xi_g,f)-\rho(D^0,A^0)}_{\nu(W,\xi,f,g)}\bigg)\\
&-\bigg(\underbrace{j_{X}\ind_{APS}(W,\Xi_{g},h)-\rho(W,\Xi_{g},h)}_{\lambda_{an}^\mathcal{S}(W,\xi,h)}\bigg)=\rho(D_g,0).
\end{align*}
It follows that, on the level of cycles, the mapping $\lambda_{an}^\mathcal{S}\circ \rho_\Gamma^{P,geo}$ coincides with the mapping $\rho_\Gamma: \mathrm{Pos}_n^{Spin}(X)\to K_{n+1}(D^*(\tilde{X})^\Gamma)$ constructed in \cite[Section 1.2]{PSrhoInd}. Since $\lambda_{an}^\mathcal{S}$ is an isomorphism, $\rho_\Gamma^{P,geo}$ is well defined because $\rho_\Gamma$ is well defined. Commutativity of the diagram follows from \cite[Theorem 1.31]{PSrhoInd} (which states that $\rho_\Gamma$ makes the analogous diagram involving the analytic surgery exact sequence commutative).
\end{proof}

\begin{theorem} \label{thmDelocPSC}
If $\Gamma$ has property $(RD)$, the following diagram commutes:
\begin{center}
$$\xymatrix{
 {\rm Pos}^{spin}_n(X) \ar[rd]_{\hat{\rho}_\Gamma}  \ar[rr]^{\rho_\Gamma^{P,geo}} &&  \mathcal{S}^{geo}_n(X;\mathcal{S}(\Gamma))
  \ar[ld]^{\ch^{del}_{\mbox{\tiny (RD)}}}   \\
  &\hat{H}^{del}_n(\Gamma) &
} $$
\end{center} 
where the map $\hat{\rho}_\Gamma$ is defined in \cite[Section 2.3]{PSrhoInd} (also see \cite{LPhigEta}) and $\mathcal{S}(\Gamma)$ denotes the Schwartz algebra. 
\end{theorem}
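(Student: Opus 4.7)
The strategy is to apply $\ch^{del}$ directly to the defining expression $\rho_\Gamma^{P,geo}(M,f,g) = [W,\xi,h] - \nu^{geo}(W,\xi,h,g)$ and to reduce the resulting expression using the unnumbered Proposition preceding Proposition \ref{numericalinv}, which is Wahl's higher Atiyah-Patodi-Singer formula. Property $(RD)$ ensures that $\mathcal{S}(\Gamma) \subseteq C^*_r(\Gamma)$ is a $K$-smooth dense Fr\'echet subalgebra, so one may identify $\widetilde{\ch}^{del}$ with $\ch^{del}$ (via the isomorphism discussed in Subsection \ref{smootthsubs}) and lift all APS-index classes to $K_*(\mathcal{S}(\Gamma))$.

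Unpacking $\nu^{geo}$ and using commutativity of diagram \eqref{commdiagwithdelchern} from Theorem \ref{delchethe}, we obtain
\begin{align*}
\ch^{del}(\rho_\Gamma^{P,geo}(M,f,g)) &= \ch^{del}([W,\xi,h])\\
&\quad - r_{dR}(\ch_{\mathcal{S}(\Gamma)}(\ind_{APS}(W,\Xi_g,h)))\\
&\quad + r_{dR}(\ch_{\mathcal{S}(\Gamma)}(\ind_{APS}(M^B,\ldots))),
\end{align*}
where $r_{dR}$ is identified with the projection to $\hat{H}^{del}_*(\Gamma)$ via Proposition \ref{rddel}. Applying Wahl's formula to the cycle $[W,\xi,h]$ collapses the first two terms to $-\hat{\rho}(W,\Xi_g,h)$. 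Applying the same formula to $y=(M^B,(\mathcal{L}_{M^B}^{2^k},\C^{2^k},\mathrm{id}),h)$ and observing that its delocalized Chern character vanishes by Lemma \ref{vanonimmu} (the identity isomorphism extends to the interior) gives $r_{dR}(\ch_{\mathcal{S}(\Gamma)}(\ind_{APS}(y))) = \hat{\rho}(y)$. At this stage the calculation reduces to proving
$$\hat{\rho}(y) - \hat{\rho}(W,\Xi_g,h) = \hat{\rho}_\Gamma(M,f,g).$$

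To close the argument I expand both decorated delocalized $\rho$-invariants via additivity over the disjoint union of boundary components. Because the trivializing operator in $\Xi_g$ is of the form $0\dot{\cup} A^0$ (the zero operator for $D_g^V$ by Lichnerowicz, and a chosen trivializer $A^0$ for $D^0$) and the decoration of $y$ uses the same operator $A^0$, the contributions involving $D^0$ and $A^0$ cancel when the difference is formed, provided one bookkeeps the orientation sign for $-\partial W$ contributions prescribed in Proposition \ref{decorpsc}. What remains is $h_*\hat{\rho}(D_g^V \otimes \mathcal{L},0)$, which by the $\hat{\rho}$-analog of Lemma \ref{rhoandvb}---proved by the involution-$J$ argument of Subsection \ref{vbintermezz} adapted to delocalized $\eta$-forms---coincides with $h_*\hat{\rho}(D_g \otimes \mathcal{L},0) = \hat{\rho}_\Gamma(M,f,g)$.

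The principal obstacle is the final step: establishing vector bundle modification invariance of $\hat{\rho}$ at the form level in $\hat{H}^{del}_*(\Gamma)$, as opposed to the $K$-theoretic level treated by Lemma \ref{rhoandvb}. One expects that the grading involution $J$ of Lemma \ref{thejlem}, restricted to the relevant $\eta$-form integral, produces an explicit primitive of the difference $\hat{\rho}(D^V \otimes \mathcal{L}, A^V) - \hat{\rho}(D \otimes \mathcal{L}, A)$ in the delocalized de Rham complex, parallel to how Lemma \ref{jandplemma} produced the $K$-theoretic primitive. A secondary bookkeeping subtlety is aligning the sign conventions of the trivializing operator $0\dot{\cup}A^0$ with the disjoint-union convention of the decoration, so that the $D^0$ contributions genuinely cancel across the two $\hat{\rho}$-terms.
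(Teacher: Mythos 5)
The paper does not provide an explicit proof of Theorem~\ref{thmDelocPSC}: it is stated immediately after the well-definedness result for $\rho_\Gamma^{P,geo}$ and before Section~\ref{surgsurgrem}, with no \textsf{proof} environment. The implicit argument the authors intend is exactly the one you reconstruct: unwind $\rho_\Gamma^{P,geo}$, push through $\mathrm{ch}^{del}$ using diagram~\eqref{commdiagwithdelchern}, invoke the unnumbered Proposition (Wahl's higher APS formula) in Subsection~\ref{smootthsubs}, note that the ball-bundle term is killed by Lemma~\ref{vanonimmu}, and reduce to vector bundle modification invariance of $\hat\rho$ on the boundary spin Dirac. So your route matches the paper's.

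You are also right that the single substantive gap is the form-level vector bundle modification invariance $\hat\rho(D_g^V\otimes\mathcal{L},0)=\hat\rho(D_g\otimes\mathcal{L},0)$, which is genuinely not stated in the paper. Your proposed fix via the grading involution $J$ of Lemma~\ref{thejlem} is the correct one: under the decomposition $D_g^V = D_g\oplus D_{\mathpzc{E}}$ of Lemma~\ref{decomposingmoddir}, the operator $D_{\mathpzc{E}}$ anticommutes with $J$, which commutes with the delocalized trace used to define the higher $\eta$-form, so the $D_{\mathpzc{E}}$-summand contributes zero to $\hat\rho$; this is the $\eta$-form analogue of Lemma~\ref{jandplemma}. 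You should, however, record this as a standalone lemma rather than leaving it as an ``expectation''—without it the triangle does not close.

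Two further remarks. First, be careful with the cancellation of the $D^0$ contributions: the trivializing operator in $\Xi_g$ is $0\dot\cup A^0$ with $0$ on the slot matching $D_g^V\otimes\mathcal{L}$ (invertible by Lichnerowicz and Lemma~\ref{decomposingmoddir}) and $A^0$ on the slot matching $D^0$; the paper's $\dot\cup$ ordering in the definition of decorations is the reverse of what the Proposition~\ref{decorpsc} display suggests, so this really does require the sign bookkeeping you flag. Second, and more seriously for a clean write-up: you assert that property $(RD)$ suffices, but the inputs you use—Proposition~\ref{rddel} identifying $\hat H^{del,<e>}_*(\mathcal{S}(\Gamma))$ with $\hat H^{del}_*(\Gamma)$, and the unnumbered Wahl proposition—are stated only for $\Gamma$ of polynomial growth or hyperbolic. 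Property $(RD)$ alone gives the $K$-smooth dense subalgebra, but does not by itself yield the Burghelea-type computation of $\hat H^{<e>}_*(\mathcal{S}(\Gamma))$ that makes the identification of $r_{dR}$ with $\pi_{del}$ work. As written, the theorem's hypothesis and the tools you (and the paper) invoke are not perfectly aligned; your proof actually requires the ``polynomial growth or hyperbolic'' hypothesis used throughout Subsection~\ref{smootthsubs}.
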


\section{Outlook and  mappings from the surgery exact sequence}
\label{surgsurgrem}

The terminology ``analytic surgery group" is due to Higson and Roe \cite{HRSur1,HRSur2, HRSur3} who mapped the surgery exact sequence into the analytic surgery exact sequence using Poincar\'e complexes. In recent work by Piazza and Schick \cite{PSsignInd} the same program was carried out using techniques of higher index theory. The combined outcome of the results of \cite{PSsignInd}, Theorem \ref{themaplambda} and Theorem \ref{delchethe} can be summarized in a diagram with exact rows: 
\[\scriptsize
\begin{CD}
\ldots@>>>\mathcal{N}_*(X)@>>> L_*(\Z[\Gamma]) @>>> \mathcal{S}_*(X)  @>>> \mathcal{N}_{*-1}(X)@>>>\ldots  \\
@.@V\beta VV@V\ind_\Gamma VV @V\rho VV @V\beta VV  \\
\ldots@>>>K_{*}(X)@>\mu>> K_*(C^*_r(\Gamma)) @>r >> K_*(D^*(\tilde{X})^\Gamma)  @> \delta >> K_{*-1}(X)@>>>\ldots  \\
@.@A\lambda_{an} AA@A\ind_{AS} AA @A\lambda^\mathcal{S}_{an} AA @A\lambda_{an}AA  \\
\ldots@>>>K^{geo}_{*}(X)@>\mu>> K_*^{geo}(pt;\mathcal{A}(\Gamma)) @>r >> \mathcal{S}^{geo}_*(X;\mathcal{A})  @> \delta >> K^{geo}_{*-1}(X)@>>>\ldots  \\
@.@V\ch_X^{<e>}VV@V\ch_{\mathcal{A}(\Gamma)} VV @V\mathrm{ch}^{del}VV @V\ch_{X}^{<e>}VV  \\
\ldots@>>>\hat{H}_{*}^{<e>}(\mathcal{A}(\Gamma))@>\mu_{dR}>> \hat{H}^{dR}_*(\mathcal{A}) @>r_{dR}>> \hat{H}_*^{del,<e>}(\mathcal{A}) @>\delta_{dR}>> \hat{H}_{*-1}^{<e>}(\mathcal{A})@>>>\ldots  \\
\end{CD}.\]
\normalsize
The upper part of the diagram commutes up to factors of two. The upper part of the diagram are the maps from ``surgery to analysis" in \cite{PSsignInd}. This summary of ``mapping surgery to analysis" and ``mapping geometry to analysis" indicates that one can define explicit define analytic invariants from the surgery exact sequence by ``inverting" the isomorphism from ``geometry to analysis". This process should be viewed in the context of the ``Baum approach" to index theory discussed in the introduction. In the Baum's approach, one ``inverts" the isomorphism from geometric to analytic $K$-homology.

It is also possible to map the surgery exact sequence directly into the geometric model for the analytic surgery exact sequence along the lines of Section \ref{mapstotosu}, but we will not discuss it in detail. The reader shoud note that there are two issues that make it more difficult than the case of the positive scalar curvature exact sequence. Firstly, in the surgery exact sequence case, it is more natural to work with oriented manifolds. As such, one should use a geometric model for the analytic surgery group (built on oriented manifolds) following \cite{guentnerkhom,HReta,Kescont}. Secondly, there is a technical difference: on a positive scalar curvature manifold the boundary operator is always invertible, while for the signature operator one would need to make use of the trivializing operator constructed by Hilsum-Skandalis (see \cite{PSsignInd}). The procedure of mapping ``surgery to geometry" directly circumvents inverting the mapping from ``geometry to analysis" by using higher Atiyah-Patodi-Singer index theory. At the cost of computing a higher index, this mapping provides a geometric methodology to define invariants on the surgery exact sequence  -- avoiding the analytic difficulties of delocalized $\rho$-invariants and the Higson-Roe's analytic structure group.\\

\paragraph{\textbf{Acknowledgements}}
The authors wish to express their gratitude towards Karsten Bohlen, Heath Emerson, Nigel Higson, Paolo Piazza, Thomas Schick and Charlotte Wahl for discussions. They also thank the Courant Centre of G\"ottingen, the Leibniz Universit\"at Hannover, the Graduiertenkolleg 1463 (\emph{Analysis, Geometry and String Theory}) and Universit\'e Blaise Pascal Clermont-Ferrand for facilitating this collaboration.

\end{document}